\DeclareFontFamily{OML}{rsfs}{\skewchar\font'177}
\DeclareFontShape{OML}{rsfs}{m}{n}{ <5> <6> rsfs5 <7> <8> <9> rsfs7
  <10> <10.95> <12> <14.4> <17.28> <20.74> <24.88> rsfs10 }{}
\DeclareMathAlphabet{\mathfs}{OML}{rsfs}{m}{n}
\newcommand{\x}{\ensuremath{\underline{x}}}
\newtheorem{thm}{Theorem}[section]
\newtheorem{lem}[thm]{Lemma}
\newtheorem{prop}[thm]{Proposition}
\newtheorem{defi}[thm]{Definition}
\newtheorem*{theorem*}{Theorem}
\newtheorem{proposition}{Proposition}[subsection]
\newtheorem*{example*}{Example}
\numberwithin{equation}{section}
\newcommand{\del}{\partial}
\renewcommand{\epsilon}{\varepsilon}
\def\wt{\widetilde}
\def\text#1{\textrm{#1}}
\def\emptyset{\varnothing}
\def\e{\epsilon}
\def\vf{\varphi}
\def\a{\alpha}
\def\b{\beta}
\def\d{\delta}
\def\g{\gamma}
\def\l{\lambda}
\def\L{\Lambda}
\def\s{\sigma}
\def\x{\times}
\def \R{\mathbb R}
\def \N{{\mathbb N}}
\def \Z{\mathbb Z}
\def\ov{\overline}
\def\un{\underline}
\def\wh{\widehat}
\def\({\biggl(}
\def\){\biggr)}
\def\<{\bold\langle}
\def\>{\bold\rangle}
\DeclareMathOperator\tr{tr}
\DeclareMathOperator\diam{diam}
\DeclareMathOperator\dist{dist}
\DeclareMathOperator\GL{GL}
\DeclareMathOperator\Lip{Lip}
\DeclareMathOperator\Span{span}
\DeclareMathOperator\NUH{NUH}
\DeclareMathOperator\graph{graph}
\DeclareMathOperator\Hol{H\ddot{o}l}
\DeclareMathOperator\id{Id}
\title[Symbolic dynamics for  surface diffeomorphisms]{Symbolic dynamics for surface diffeomorphisms with positive entropy}
\author{Omri M. Sarig}\thanks{This work was partially supported by NSF grant DMS--0400687 and by ERC award  ERC-2009-StG n$^\circ$ 239885.}
\date{January 17, 2011}
\keywords{Markov partitions, symbolic dynamics, periodic points, Lyapunov exponents}
\subjclass[2010]{37D25 (primary), 37D35 (secondary)}
\address{Faculty of Mathematics and Computer Science\\ The Weizmann Institute of Science\\ POB 26, Rehovot, Israel}
\email{omsarig@gmail.com}
\begin{document}
\maketitle
\begin{abstract}
Let $f$ be a  $C^r$ diffeomorphism ($r>1$)   on a compact orientable smooth surface. Suppose the topological entropy $h_{top}(f)$ is positive. Given $0<\chi<h_{top}(f)$, we construct a countable Markov partition for the restriction of $f$ to an invariant set  which is  ``large" in the sense that it has full measure with respect to every  ergodic invariant probability measure with entropy greater than $\chi$. The following results follow: (1) $f$
 has at most countably many ergodic measures of maximal entropy (a conjecture of J. Buzzi), and (2) if $f$
 is $C^\infty$,  then  $\limsup\limits_{n\to\infty}e^{-n h_{top}(f)}\#\{x:f^n(x)=x\}>0$  (a conjecture of A. Katok).
\end{abstract}
\tableofcontents

\addtocounter{part}{-1}
\part{Introduction and statement of results}
\addtocounter{section}{1}
\subsection{Results}
Let $M$ be a compact orientable $C^\infty$ Riemannian manifold of dimension two, and let $f:M\to M$ be a $C^{1+\b}$ diffeomorphism, where $0<\b<1$.
We assume throughout that the topological entropy of $f$ is positive.

Let $P_n(f):=|\{x\in M: f^n(x)=x\}|$. Anatole Katok showed in \cite{KatokIHES} and \cite{KatokICM} that
$
\limsup\limits_{n\to\infty}\frac{1}{n}\log P_n(f)\geq h_{top}(f),
$
 and  conjectured in  \cite{KatokJMD} that if $f$ is $C^\infty$ then
 $
 \limsup\limits_{n\to\infty} e^{-nh_{top}(f)}P_n(f)>0.
 $
\begin{thm}\label{Theorem_Main_PP}
Suppose $f$ is a $C^{1+\b}$ diffeomorphism of a compact orientable smooth surface, and assume $h_{top}(f)>0$. If $f$ has a measure of maximal entropy, then $\exists p\in\N$ s.t.
$
\liminf\limits_{n\to\infty, p|n} e^{-nh_{top}(f)}P_n(f)>0.
$
\end{thm}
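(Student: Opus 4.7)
The strategy is to push the problem down to a countable topological Markov shift (TMS) where it becomes an application of the Gurevich--Sarig theory of positive recurrence.

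First I would fix $\chi\in(0,h_{top}(f))$ and invoke the main symbolic dynamics theorem announced in the abstract: there exists a countable TMS $(\Sigma,\sigma)$ and a H\"older continuous factor map $\pi\colon\Sigma\to M$ with $\pi\circ\sigma=f\circ\pi$, such that $\pi$ is finite-to-one on a set $\Sigma^\#$ of full measure for every $\sigma$-invariant ergodic Borel probability of entropy $>\chi$, and $\pi(\Sigma^\#)$ has full measure for every $f$-invariant ergodic Borel probability of entropy $>\chi$. Given a measure of maximal entropy $\mu$ for $f$ (which by assumption exists) I would ergodically decompose and pick an ergodic component, still of entropy $h_{top}(f)>\chi$, and lift it to an ergodic $\hat{\mu}$ on $\Sigma$ of the same entropy (the finite-to-one property preserves entropy by Abramov/Rokhlin). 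Thus $\hat{\mu}$ is a measure of maximal entropy for some irreducible component $\Sigma'\subseteq\Sigma$.

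Next I would apply the Gurevich--Sarig theory: since $\Sigma'$ carries a measure of maximal entropy equal to its Gurevich entropy, $\Sigma'$ is positively recurrent with respect to the constant potential, and its Gurevich entropy equals $h_{top}(f)$. Let $p$ be the period of $\Sigma'$ (the gcd of loop lengths at any state). The generalized Ruelle--Perron--Frobenius theorem for positively recurrent TMS then gives, for any state $a$ of $\Sigma'$,
\[
Z_n(a):=\#\{\underline{x}\in\Sigma':\sigma^n\underline{x}=\underline{x},\ x_0=a\}
\]
satisfies $Z_n(a)\sim C_a\, e^{nh_{top}(f)}$ as $n\to\infty$ along multiples of $p$, with $C_a>0$. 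In particular the total number of $\sigma$-periodic points in $\Sigma'$ of period $n$ grows like $e^{nh_{top}(f)}$ along $p\N$.

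The remaining step, which I expect to be the main obstacle, is transferring this lower bound from $\Sigma'$ to $f$ without losing the exponential rate. Each $\sigma$-periodic orbit in $\Sigma'$ lies in $\Sigma^\#$ (periodic orbits are in $\Sigma^\#$ automatically in these constructions), so it projects via $\pi$ to an $f$-periodic orbit in $M$ of the same period. The issue is controlling how many distinct $\Sigma'$-periodic orbits can project to the same $f$-periodic orbit. Here I would rely on the finite-to-one property of $\pi|_{\Sigma^\#}$: since $|\pi^{-1}(x)\cap\Sigma^\#|\le N$ for some uniform $N$ (or at worst a number bounded along each orbit, as established in the construction), distinct $\sigma$-periodic orbits of period $n$ project to at least $Z_n(a)/N$ distinct $f$-periodic points of period $n$. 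This yields
\[
P_n(f)\ \ge\ \frac{Z_n(a)}{N}\ \ge\ \frac{C_a}{2N}\, e^{nh_{top}(f)}\qquad (p\mid n,\ n\text{ large}),
\]
which is exactly $\liminf_{n\to\infty,\,p\mid n}e^{-nh_{top}(f)}P_n(f)>0$. Verifying the uniform finite-to-one bound for periodic orbits, and confirming that periodic orbits indeed sit inside $\Sigma^\#$, is the delicate part; everything else is a direct application of the thermodynamic formalism for countable Markov shifts.
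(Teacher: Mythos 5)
Your proposal is correct and follows essentially the same route as the paper: lift an ergodic maximal-entropy measure to the shift via Theorem \ref{Theorem_Main_Lift}, restrict to the irreducible component carrying it, apply Gurevich's theory to get $\asymp e^{nh_{top}(f)}$ periodic points through a fixed state $a$ along $p\N$, and push them down by $\pi_\chi$. The ``delicate'' uniform bound you flag is exactly what Theorem \ref{Theorem_Main_Finite_To_One} provides: every periodic point counted has $a$ occurring infinitely often in both directions, so the multiplicity is at most $\vf_\chi(a,a)$, which plays the role of your $N$.
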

\noindent
This proves Katok's conjecture, because  $C^\infty$ diffeomorphisms on compact manifolds have measures of maximal entropy (Newhouse \cite{N}).
Theorem \ref{Theorem_Main_PP} was conjectured to hold as stated above by J\'er\^{o}me Buzzi  \cite{BuzziAffine}.

It was also conjectured in \cite{BuzziAffine} that $f$ admits at most countably many different ergodic measures of maximal entropy. This turns out to be correct:

\begin{thm}\label{Theorem_Main_Intrinsic_Ergodicity}
Suppose $f$ is a $C^{1+\b}$ diffeomorphism of a compact orientable smooth surface. If $h_{top}(f)>0$  then $f$ possesses at most countably many ergodic invariant probability measures with maximal entropy.
\end{thm}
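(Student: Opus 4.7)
The plan is to reduce Theorem~\ref{Theorem_Main_Intrinsic_Ergodicity} to symbolic dynamics by invoking the countable Markov partition announced in the abstract, and then to apply classical results on countable-state Markov shifts to bound the number of measures of maximal entropy.

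First I would fix any $\chi\in(0, h_{top}(f))$ and apply the countable Markov partition theorem to obtain a countable-state topological Markov shift $(\Sigma,\sigma)$ together with a H\"older continuous factor map $\pi:\Sigma\to M$ whose image carries every ergodic $f$-invariant Borel probability measure of entropy exceeding $\chi$. Any ergodic measure of maximal entropy $\mu$ of $f$ satisfies $h_\mu(f)=h_{top}(f)>\chi$, hence $\mu(\pi(\Sigma))=1$, and one should be able to lift $\mu$ to an ergodic $\sigma$-invariant probability measure $\hat\mu$ on $\Sigma$ with $\pi_*\hat\mu=\mu$ and $h_{\hat\mu}(\sigma)=h_{top}(f)$. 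Constructing such a lift uses in an essential way that $\pi$ is finite-to-one on a set of full $\mu$-measure, a structural property expected to come packaged with the Markov partition.

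Next, because $\hat\mu$ is ergodic it is concentrated on a single topologically transitive component of $\Sigma$. Since $\Sigma$ has a countable alphabet, there are only countably many such components $\Sigma_i$, and each is itself a topologically transitive countable Markov shift. The Gurevich theory of such shifts (developed by Gurevich and extended by Sarig, Buzzi, and others) guarantees that each $\Sigma_i$ carries at most one ergodic shift-invariant probability measure whose metric entropy equals the Gurevich entropy of $\Sigma_i$; in particular, at most one ergodic measure of entropy $h_{top}(f)$. It follows that $\Sigma$ supports at most countably many ergodic $\sigma$-invariant probability measures of entropy $h_{top}(f)$, and pushing these forward by $\pi$ yields a countable set of $f$-invariant probability measures that contains every ergodic measure of maximal entropy of $f$. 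Since distinct $\mu$ on $M$ necessarily admit distinct lifts, the theorem follows.

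The main obstacle is the lifting step in the second paragraph: because $\pi$ is typically many-to-one, one must show both that an ergodic lift of $\mu$ with the same entropy exists and that its restriction to a single topologically transitive component of $\Sigma$ has entropy equal to the Gurevich entropy of that component. Both facts rely on detailed information about $\pi$ (finiteness of fibres on full-measure sets, H\"older regularity, and faithfulness to the Pesin/Lyapunov structure of $f$) that the body of the paper is devoted to establishing. Once those properties are in hand, the counting step above is essentially formal.
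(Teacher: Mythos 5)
Your proposal is correct and follows the paper's overall reduction — fix $\chi<h_{top}(f)$, lift each ergodic measure of maximal entropy to the coding $\Sigma$ with the same entropy (this is exactly Theorem \ref{Theorem_Main_Lift}, and the finite-to-one property you flag is Theorem \ref{Theorem_Main_Finite_To_One}), and invoke Gurevich's uniqueness theorem on a topologically transitive piece. Where you genuinely diverge is the counting step. You decompose $\Sigma$ into its (at most countably many) maximal topologically transitive components, i.e.\ the communication classes of the countable vertex set, note that every ergodic lift is carried by one of them, and observe that each component carries at most one ergodic measure of entropy $h_{top}(f)$ because its Gurevich entropy is at most $h_{top}(f)$ (any shift-invariant measure projects, with equal entropy, to an $f$-invariant measure, so the variational principle applies); injectivity of $\mu\mapsto$ component then gives countability. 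The paper instead works with the irreducible \emph{support} subgraph of each lift — of which there could a priori be uncountably many — and needs Salama's theorem \cite{Sal} (adding a vertex or edge to a transitive shift carrying a measure of maximal entropy strictly increases the Gurevich entropy) to conclude that the relevant subgraphs are pairwise vertex-disjoint, hence countable in number. Your route avoids Salama entirely and is arguably more economical, at the modest cost of checking that the ambient component's maximal entropy does not exceed $h_{top}(f)$; the paper's argument yields the slightly finer conclusion that the supports of distinct lifts occupy disjoint sets of vertices, but that refinement is not needed for Theorem \ref{Theorem_Main_Intrinsic_Ergodicity}. As you acknowledge, both arguments defer all the analytic content to Theorems \ref{Theorem_Main_Extension}--\ref{Theorem_Main_Lift}, which is also how the paper proceeds.
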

\noindent
Buzzi  conjectured that if $f$ is $C^\infty$, then the number of different ergodic invariant measures of maximal entropy is finite. This conjecture remains open.

Katok's conjecture and Buzzi's conjectures were previously known to hold  in the following cases: Hyperbolic automorphisms of the torus \cite{AW}, Anosov diffeomorphisms \cite{Sinai, SinaiGibbs},  \cite{M}, Axiom A diffeomorphisms \cite{B}, \cite{PP}, continuous piecewise affine  homeomorphisms of affine surfaces \cite{BuzziAffine}. There are also results on non--invertible maps, see \cite{Hof,Hof2} and  \cite{BuzziIntrinsic,BuzziPuzzles}.

\subsection{Symbolic dynamics}\label{SectionSymbDyn} The proof of Theorems  \ref{Theorem_Main_PP} and \ref{Theorem_Main_Intrinsic_Ergodicity} is based on   a change of coordinates which simplifies the iteration of $f$.
The idea, which goes back to the work of Hadamard and Artin on geodesic flows, is to semi-conjugate $f$ on a large set to the left shift  on a {\em topological Markov shift}. We recall the definition.

Let $\mathfs G$ be a directed graph with a countable collection of vertices $\mathfs V$ s.t. every vertex has at least one  edge coming in, and at least one edge coming out. The  {\em topological Markov shift} associated to $\mathfs G$ is the set
$$
\Sigma=\Sigma(\mathfs G):=\{(v_i)_{i\in\Z}\in\mathfs V^{\mathbb Z}:v_i\to v_{i+1}\textrm{ for all }i\}.
$$
We equip $\Sigma$ with the {\em natural metric}: $d(\un{u},\un{v}):=\exp[-\min\{|i|:u_i\neq v_i\}]$, thus turning it into a complete separable metric space. $\Sigma$ is compact iff $\mathfs G$ is finite. $\Sigma$  is locally compact iff every vertex of $\mathfs G$ has finite degree.

The {\em left shift map} $\s:\Sigma\to \Sigma$ is defined by $\s[(v_i)_{i\in\Z}]=(v_{i+1})_{i\in\Z}$.

Let
$
\Sigma^\#:=\{(v_i)_{i\in\Z}\in\Sigma:\exists u,v\in\mathfs V\exists n_k,m_k\uparrow\infty\textrm{ s.t. } v_{-m_k}=u, v_{n_k}=v\}.
$
$\Sigma^\#$ contains all the periodic points of $\s$, and   by the Poincar\'e Recurrence Theorem, every $\s$--invariant probability measure gives $\Sigma^\#$ full measure.

We say that a set $\Omega\subset M$ is {\em $\chi$--large}, if $\mu(\Omega)=1$ for every ergodic invariant probability measure $\mu$ whose entropy is greater than $\chi$. We prove:
\begin{thm}\label{Theorem_Main_Extension}
For every $0<\chi<h_{top}(f)$ there exists a locally compact topological Markov shift $\Sigma_\chi$ and a H\"older continuous map $\pi_\chi:\Sigma_\chi\to M$ s.t.  $\pi_\chi\circ \s=f\circ \pi_\chi$;
 $\pi_\chi[\Sigma_\chi^\#]$ is $\chi$--large; and s.t.
every point in $\pi_\chi[\Sigma_\chi^\#]$ has finitely many pre-images.
\end{thm}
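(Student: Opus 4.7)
The plan is to construct $\Sigma_\chi$ by a Bowen-style shadowing construction adapted to the non-uniformly hyperbolic (Pesin) setting. The overall strategy has five stages: (i) isolate a $\chi$-large invariant set of points with uniform hyperbolicity bounds; (ii) build Pesin/Lyapunov charts at these points; (iii) produce a countable family of charts by discretizing their parameters; (iv) define an admissibility relation on consecutive charts, which becomes the edge set of $\mathfs G$; and (v) show that every admissible sequence shadows a unique orbit, giving $\pi_\chi$.

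The starting point is Ruelle's inequality on surfaces: any ergodic $\mu$ with $h_\mu(f)>\chi$ is hyperbolic, with one Lyapunov exponent exceeding $\chi$ and one less than $-\chi$. Thus the Pesin-regular set $\NUH_\chi$ of points whose Oseledets exponents are bounded away from zero by $\chi$ is $\chi$-large. On $\NUH_\chi$ I would construct Pesin charts $\Psi_x$ depending measurably on a tempered scale parameter $q(x)>0$, in which the local representative $\Psi_{fx}^{-1}\circ f\circ \Psi_x$ is a small $C^{1+\beta}$ perturbation of a hyperbolic linear map whose expansion/contraction rates dominate $\chi$. Temperedness, $q(fx)/q(x)\in[e^{-\epsilon},e^\epsilon]$, is critical, since it ensures that charts along an orbit remain mutually compatible.

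I would then discretize the chart parameters on a geometric scale (e.g.\ powers of $e^{\epsilon/3}$) to obtain a countable vertex set $\mathfs V$, and put an edge $v\to w$ whenever the pair is "$\epsilon$-admissible": $f$ maps the core of $\Psi_v$ well inside the domain of $\Psi_w$, with controlled distortion and near-alignment of stable/unstable axes. Let $\Sigma_\chi$ be the topological Markov shift on the resulting graph. For $\un v\in\Sigma_\chi$, the graph-transforms of admissible stable/unstable curves across the charts converge exponentially to a single "admissible stable" curve and a single "admissible unstable" curve at vertex $v_0$; their unique intersection is defined to be $\pi_\chi(\un v)$. Uniform domination by $\chi$ guarantees single-valuedness, Hölder continuity in the shift metric, and the intertwining $\pi_\chi\circ\s=f\circ\pi_\chi$ directly from the admissibility relation. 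Local compactness of $\Sigma_\chi$ should come for free from the parameter discretization, since only finitely many successors $w$ are compatible with a given $v$ at a given scale.

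The main obstacle will be the finite-to-one property on $\Sigma_\chi^\#$. A priori a given orbit could be shadowed by uncountably many sequences of discretized charts, the parameters having some overlap. To rule this out I would prove an "inverse theorem": if $\un v\in\Sigma_\chi^\#$, so that some symbols recur infinitely often on both sides, then $\pi_\chi(\un v)$ determines, along its entire orbit, the Oseledets splitting and the \emph{optimal} Pesin scale, up to finitely many discretization choices. Recurrence is essential: it is what upgrades asymptotic information at $\pm\infty$ into pointwise rigidity. This rigidity is the conceptual heart of the argument and explains the restriction to $\Sigma^\#$. The $\chi$-largeness of $\pi_\chi[\Sigma_\chi^\#]$ would follow by combining Birkhoff's theorem with Poincaré recurrence on the discretized chart types: for $\mu$ with $h_\mu(f)>\chi$, $\mu$-a.e.\ $x$ lies in $\NUH_\chi$ and produces an admissible coding whose symbols recur in both directions.
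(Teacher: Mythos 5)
Your stages (i)--(v) follow the paper's Parts 1--2 quite closely: $\chi$-largeness of the Pesin set via Ruelle's inequality, Pesin charts with a tempered scale $Q_\e$, coarse graining of the chart parameters into a countable, discrete family, an overlap/edge relation producing a countable graph of finite degree (hence a locally compact shift), graph transforms converging to stable/unstable admissible manifolds whose intersection defines $\pi_\chi$, and the commutation, H\"older continuity, and $\chi$-largeness of $\pi_\chi[\Sigma^\#_\chi]$ via recurrence. You also correctly identify the ``inverse theorem'' for recurrent chains as the conceptual heart; this is exactly the paper's Theorem \ref{Theorem_Pi_Almost_1-1}.

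The genuine gap is the last claim: finiteness of the fibers. The inverse theorem only says that if $\pi_\chi(\un u)=\pi_\chi(\un v)$ for regular chains, then for each fixed $i$ the symbols $u_i,v_i$ are close, so each coordinate of a preimage ranges over a \emph{finite} set. That is a per-coordinate (local finiteness) statement, not a bound on $|\pi_\chi^{-1}(x)|$: a sequence whose $i$-th entry is confined to a finite set of size at least $2$ for infinitely many $i$ still admits a continuum of candidates, and nothing in the shadowing construction forces distinct such admissible sequences to have distinct images. Indeed, the paper never proves that the chain shift itself is finite-to-one; instead it uses the inverse theorem only to show that the cover $\mathfs Z=\{Z(v)\}$ is locally finite (Theorem \ref{Theorem_LF}), then performs the Bowen--Sinai refinement into a pairwise disjoint Markov partition $\mathfs R$ with product structure and the Markov property, passes to the second shift $\wh{\Sigma}=\Sigma(\wh{\mathfs G})$ with the map $\wh{\pi}$ defined by shrinking cylinders, and proves finiteness of $\wh{\pi}$-fibers over $\wh{\pi}(\wh{\Sigma}^\#)$ by Bowen's affiliation argument (Theorem \ref{Theorem_Finite_To_One}): assuming more than $N(R)N(S)$ preimages, recurrence plus the pigeonhole principle, the Smale bracket, and the Markov property produce two points which lie in distinct, hence disjoint, cylinders of $\mathfs R$ and yet must coincide. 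The disjointness of the partition elements is what upgrades coordinatewise rigidity to a bound on entire fibers. So your plan needs this additional layer (disjointification of the locally finite cover and the counting argument via affiliated rectangles), or some substitute statement that bounds whole preimage sets rather than single coordinates; as written, the finite-to-one property does not follow.
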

\begin{thm}\label{Theorem_Main_Finite_To_One}
Denote the set of states of $\Sigma_\chi$ by $\mathfs V_\chi$. There exists a function $\vf_\chi:\mathfs V_\chi\x \mathfs V_\chi\to\N$ s.t. if $x=\pi_\chi[(v_i)_{i\in\Z}]$ and $v_i=u$ for infinitely many negative $i$, and $v_i=v$ for infinitely many positive $i$, then $|\pi_\chi^{-1}(x)|\leq \vf_\chi(u,v)$.
\end{thm}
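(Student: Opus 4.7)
The plan is to exploit the geometric content of the vertex set $\mathfs V_\chi$. Each vertex of $\mathfs V_\chi$ carries a pair of ``stable/unstable scales'' (discretized Pesin--chart parameters $q^s,q^u$) and an associated ``Markov set'' $Z(v)\subset M$; the point $\pi_\chi(\un v)$ is the unique intersection of the local stable and unstable manifolds prescribed by the charts along $\un v$. The first step is a \emph{local finiteness lemma}: for every $u\in\mathfs V_\chi$, the set
\[
N(u):=\{u'\in\mathfs V_\chi\,:\,Z(u)\cap Z(u')\neq\varnothing \text{ and the scales of }u' \text{ are comparable to those of }u\}
\]
is finite. This should follow from (i) the logarithmic discretization of the scale parameters built into the construction of $\Sigma_\chi$, and (ii) a uniform bound on the number of Pesin charts of a given scale that can cover a compact piece of $M$.

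Next I would anchor any other coding of $x$. Let $\un v,\un w\in\Sigma_\chi^\#$ both satisfy $\pi_\chi(\un v)=\pi_\chi(\un w)=x$, with $v_{-n_k}=u$ for $n_k\uparrow\infty$ and $v_{m_k}=v$ for $m_k\uparrow\infty$. Since $\un w\in\Sigma_\chi^\#$ as well, one can extract common subsequences of anchor times along which $f^{-n_k}(x)\in Z(u)\cap Z(w_{-n_k})$ and $f^{m_k}(x)\in Z(v)\cap Z(w_{m_k})$. Because both $\un v$ and $\un w$ code the same non--uniformly hyperbolic orbit, the Pesin--chart parameters of $w_{-n_k}$ and $w_{m_k}$ are constrained (via an Inverse--Theorem--type rigidity statement that must underpin the construction of $\Sigma_\chi$) to be comparable to those of $u$ and $v$ respectively. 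Hence $w_{-n_k}\in N(u)$ and $w_{m_k}\in N(v)$ for all large $k$, and both are finite sets by Step~1.

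The remaining step is to argue that once the anchor vertices are restricted to the finite sets $N(u)$ and $N(v)$, only finitely many sequences $\un w$ survive. The Markov property together with the unique--intersection definition of $\pi_\chi$ should force any two elements of $\Sigma_\chi^\#$ coding the same point and sharing the same anchor vertex at arbitrarily far negative and positive times to agree. Combined with the finite number of choices at the anchors, this would bound the total number of preimages by a quantity depending only on $u$ and $v$, so one could take, e.g., $\vf_\chi(u,v):=|N(u)|\cdot|N(v)|$.

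The main obstacle will be this last propagation step. In the non--uniformly hyperbolic setting, the Markov property by itself does not determine a bi--infinite admissible sequence from its values at two distant times, and a priori the number of admissible completions is infinite. Overcoming this will require a rigidity statement of the following flavour: if $\un w,\un w'\in\Sigma_\chi^\#$ code the same point and agree at times $-n_k$ and $m_k$ for arbitrarily large $k$, then $\un w=\un w'$. Proving such rigidity, and ensuring that the resulting bound depends only on the anchor vertices $u,v$ and not on the sequence itself, is the technical heart of the theorem; I would expect to recycle the graph--transform and shadowing machinery already used to construct $\Sigma_\chi$ and to prove Theorem~\ref{Theorem_Main_Extension}.
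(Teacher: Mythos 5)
Your outline has the right skeleton---a local finiteness statement, anchoring of any second coding via the inverse theorem, and a pigeonhole over the finitely many possible anchor symbols---but the decisive step, which you yourself flag as the ``technical heart'', is missing, and the rigidity statement you propose to fill it is not the one that the machinery actually yields. First, the states of $\Sigma_\chi$ are the elements $R$ of the refined \emph{partition} $\mathfs R$, not the charts, and the quantity that must be finite is not the number of overlapping sets of comparable scale (that is Theorem \ref{Theorem_LF}) but $N(R)$, the number of pairs $(R',Z')$ with $R'$ affiliated to $R$ and $Z'\supset R'$. The reason is that each putative preimage $\un{R}^{(j)}$ has to be accompanied by an auxiliary chain $\un{v}^{(j)}$ with $R^{(j)}_i\subset Z(v^{(j)}_i)$ (Lemma \ref{Lemma_G_G_hat}), and the pigeonhole must be applied to the pairs $\bigl(R^{(j)}_{-k},v^{(j)}_{-k}\bigr)$ and $\bigl(R^{(j)}_{\ell},v^{(j)}_{\ell}\bigr)$: agreement of the rectangles alone at the anchor times is not enough, because the two auxiliary chains are spliced at the anchors and admissibility of the spliced sequence requires the chart symbols there to coincide. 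This is why $\vf_\chi(R,S)=N(R)N(S)$ rather than a product of vertex-neighbourhood counts.

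Second, the propagation step is not achieved by proving that two codings of $x$ agreeing at arbitrarily distant anchors are equal; it is a contradiction argument in the style of Bowen. One assumes $N(R)N(S)+1$ distinct preimages, chooses $m$ beyond which all central words differ and anchor times $-k,\ell\ge m$ with $R^{(0)}_{-k}=R$, $R^{(0)}_{\ell}=S$, extracts two preimages sharing the anchor quadruple, and then uses non-emptiness of cylinders, Smale brackets and the Markov property (Lemma \ref{Lemma_SB}, Theorem \ref{Theorem_Markov_Property}) to produce points $z_A\in{}_{-k}[A_{-k},\ldots,A_\ell]$ and $z_B\in{}_{-k}[B_{-k},\ldots,B_\ell]$; splicing the auxiliary chains at the anchors and invoking the shadowing characterization of $\pi$ (Proposition \ref{Prop_V}(4) together with Lemma \ref{Lemma_Overlapping_Z}) forces $z_A=z_B$, while the \emph{pairwise disjointness} of $\mathfs R$ makes the two cylinders disjoint, so $z_A\neq z_B$---contradiction. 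Note that disjointness is exactly what your chain-level picture lacks: the sets $Z(v)$ overlap, the chain-level coding is never claimed to be finite-to-one, and coordinatewise finiteness (which is all the inverse theorem plus discreteness gives) does not bound the number of bi-infinite admissible completions, as you rightly worry. So the ``unique intersection'' definition of $\pi_\chi$ cannot by itself force the propagation; without the refinement to a genuine partition and the contradiction argument above (or a true substitute for it), the proposal does not yield the theorem.
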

\begin{thm}\label{Theorem_Main_Lift}
 Every ergodic $f$--invariant probability measure $\mu$ on $M$ such that $h_\mu(f)>\chi$ equals
$\wh{\mu}\circ\pi_\chi^{-1}$ for some   ergodic $\s$--invariant probability measure $\wh{\mu}$ on $\Sigma_\chi$ with the same entropy.
\end{thm}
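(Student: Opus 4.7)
The plan is to exploit the finite-to-one structure of $\pi_\chi$ (Theorem \ref{Theorem_Main_Finite_To_One}) to build an explicit $\sigma$-invariant lift of $\mu$, extract an ergodic component, and verify entropy preservation via the Abramov--Rokhlin conditional entropy formula.

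First, since $h_\mu(f)>\chi$, the $\chi$-largeness of $\pi_\chi[\Sigma_\chi^\#]$ (Theorem \ref{Theorem_Main_Extension}) gives $\mu(\pi_\chi[\Sigma_\chi^\#])=1$, and on this set Theorem \ref{Theorem_Main_Finite_To_One} forces $\pi_\chi^{-1}(x)$ to be finite. Because $\sigma$ is a bijection of $\Sigma_\chi$ and $\pi_\chi\sigma=f\pi_\chi$, it restricts to a bijection $\pi_\chi^{-1}(x)\to\pi_\chi^{-1}(f(x))$, so the fiber cardinality $N(x):=|\pi_\chi^{-1}(x)|$ is $f$-invariant and, by ergodicity of $\mu$, equal to a constant $N$ almost everywhere. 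I then define
$$
\nu \;:=\; \int_M \frac{1}{N}\sum_{\underline v\in\pi_\chi^{-1}(x)}\delta_{\underline v}\,d\mu(x),
$$
a $\sigma$-invariant Borel probability measure on $\Sigma_\chi$ with $\pi_{\chi,*}\nu=\mu$. Let $\nu=\int\wh\mu_y\,d\eta(y)$ be its ergodic decomposition. Each $\pi_{\chi,*}\wh\mu_y$ is $f$-ergodic (the $\sigma$-preimage of any $f$-invariant set is $\sigma$-invariant), and since distinct ergodic measures are mutually singular, the equality $\mu=\int\pi_{\chi,*}\wh\mu_y\,d\eta(y)$ of an ergodic measure with an integral of ergodic measures forces $\pi_{\chi,*}\wh\mu_y=\mu$ for $\eta$-a.e.\ $y$. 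Any such $\wh\mu:=\wh\mu_y$ is the desired ergodic lift.

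The heart of the argument is then the entropy equality $h_{\wh\mu}(\sigma)=h_\mu(f)$. The inequality $h_\mu(f)\le h_{\wh\mu}(\sigma)$ is immediate from the factor map property. For the reverse I apply the Abramov--Rokhlin formula
$$
h_{\wh\mu}(\sigma) \;=\; h_\mu(f) + h_{\wh\mu}\bigl(\sigma\,\bigm|\,\pi_\chi^{-1}\mathcal{B}_M\bigr)
$$
and show the relative term vanishes. This is where the full strength of Theorem \ref{Theorem_Main_Finite_To_One} enters: picking any state $u\in\mathfs V_\chi$ with $\wh\mu\{\underline v:v_0=u\}>0$, ergodicity of $\wh\mu$ together with Poincar\'e recurrence ensures that $\wh\mu$-a.e.\ sequence visits $u$ infinitely often in both directions, so $|\pi_\chi^{-1}(\pi_\chi\underline v)|\le K:=\vf_\chi(u,u)$ on a set of full $\wh\mu$-measure. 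Since every conditional measure on a fiber is then supported on at most $K$ atoms, for any finite partition $\alpha$ of $\Sigma_\chi$ and every $n$,
$$
H_{\wh\mu}\Bigl(\,\bigvee_{i=0}^{n-1}\sigma^{-i}\alpha\,\Bigm|\,\pi_\chi^{-1}\mathcal{B}_M\Bigr)\;\le\;\log K,
$$
and dividing by $n$ yields $h_{\wh\mu}(\sigma\,|\,\pi_\chi^{-1}\mathcal{B}_M)=0$.

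The main obstacle I anticipate is not conceptual but technical: invoking the Abramov--Rokhlin formula in the present setting of a non-compact (merely locally compact) Polish Markov shift requires citing a sufficiently general standard-Borel version of the result. A secondary routine check is the measurability of $x\mapsto\pi_\chi^{-1}(x)$, needed to make sense of $\nu$; this follows from $\pi_\chi$ being continuous and finite-to-one on a Borel set of full $\mu$-measure, via a standard measurable-selection argument.
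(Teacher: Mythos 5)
Your proposal is correct and follows essentially the same route as the paper: the same fiber-averaged lift (the paper's formula (\ref{Mu Tilde})), the same passage to an ergodic component, which must project to $\mu$ because $\mu$ is ergodic, and entropy preservation from the finite-to-one property of $\wh{\pi}$. The only differences are in the level of detail, not the method: where you expand the entropy step via the Abramov--Rokhlin formula together with the uniform fiber bound $\vf_\chi(u,u)$ on a set of full measure, the paper simply cites the standard fact that finite extensions preserve entropy, and where you defer the measurability of $x\mapsto\pi_\chi^{-1}(x)$ as a routine selection argument, the paper carries it out explicitly using Lusin's theorem.
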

\noindent
The other direction is trivial: If $\wh{\mu}$ is an ergodic $\s$--invariant probability  measure on $\Sigma_\chi$, then $\mu:=\wh{\mu}\circ\pi_\chi^{-1}$ is an ergodic $f$--invariant probability measure on $M$, and $\mu$ has the same entropy as $\wh{\mu}$ because $\pi_\chi$ is finite-to-one.

\medskip
We explain how to use these results to prove
Theorems \ref{Theorem_Main_PP} and \ref{Theorem_Main_Intrinsic_Ergodicity}. This reduction  was already known to Katok and Buzzi \cite{KatokJMD},\cite{BuzziAffine}.

Write $\Sigma_\chi=\Sigma(\mathfs G)$.
By Theorem \ref{Theorem_Main_Lift}, every ergodic measure of maximal entropy $\mu$ for $f$ lifts to  an ergodic measure of maximal entropy $\wh{\mu}$ for $\s$. By ergodicity, $\wh{\mu}$ is  carried by a set $\Sigma(\mathfs G')$ where (1) $\mathfs G'$ is a subgraph of $\mathfs G$, and (2) $\mathfs G'$ is {\em irreducible}: for any two vertices $v_0,v_1$ there exists a path in $\mathfs G'$ from $v_0$ to $v_1$. Since $\wh{\mu}$ is a  measure of maximal entropy for $\s:\Sigma(\mathfs G)\to\Sigma(\mathfs G)$, it is also a measure of maximal entropy for  $\s:\Sigma(\mathfs G')\to\Sigma(\mathfs G')$.

The irreducibility of $\mathfs G'$ means that $\s:\Sigma(\mathfs G')\to\Sigma(\mathfs G')$ is topologically transitive.
Gurevich proved in \cite{Gu1,Gu2} that a topologically transitive topological Markov shift $\Sigma(\mathfs G')$ admits at most one measure of maximal entropy, and that such a measure exists iff $\exists p\in\N$ s.t. for every vertex $v_0$ in $\mathfs G'$,
$$
|\{\un{v}\in\Sigma(\mathfs G'):\s^n(\un{v})=\un{v}, v_0=v\}|\asymp \exp[n h_{\max}(\Sigma(\mathfs G'))]\textrm{ as $n\to\infty$ in  $p\N$},
$$
where  $h_{\max}(\Sigma(\mathfs G'))=\sup\{h_\mu(\s):\mu\textrm{ a $\s$--invariant Borel prob. measure on $\Sigma(\mathfs G')$}\}$, and $h_\mu(\s)$ denotes the metric entropy of $\mu$ w.r.t. $\s$. Here and throughout, $\asymp$ means equality up to bounded multiplicative error.

Since $\pi_\chi\circ\s=f\circ\pi_\chi$, the collection $\{\un{v}\in\Sigma(\mathfs G'):\s^n(\un{v})=\un{v}, v_0=v\}$ is mapped by $\pi_\chi$ to a collection of points $x\in M$ s.t. $f^n(x)=x$. By Theorem \ref{Theorem_Main_Finite_To_One}, the mapping is bounded-to-one, with the number of pre-images bounded by $\vf_\chi(v_0,v_0)$. Thus
$
\liminf_{n\to\infty, p|n}e^{-nh_{\max}(\Sigma(\mathfs G'))}P_n(f)>0.
$
By construction, $h_{\max}(\Sigma(\mathfs G'))=h_{\wh{\mu}}(\s)=h_\mu(f)=\max\{h_{\nu}(f):\nu \textrm{ $f$--inv.}\}$. The last quantity is equal to $h_{top}(f)$ by  the variational principle \cite{G}.
Theorem \ref{Theorem_Main_PP} follows.

This argument also shows that the cardinality of the collection of measures of maximal entropy for $f$ is bounded by the cardinality of the collection of subgraphs $\mathfs G'\subset\mathfs G$ s.t.
(1) $\mathfs G'$ is irreducible, (2) $\Sigma(\mathfs G')$ has a measure of maximal entropy,  and (3) $h_{\max}(\Sigma(\mathfs G'))=h_{\max}(\Sigma(\mathfs G))$.

By a theorem of Salama \cite{Sal} (see also Ruette \cite{Rut}), if $\Sigma(\mathfs G')$ carries a measure of maximal entropy, then  every addition of a vertex or an edge to $\mathfs G'$ increases  $h_{\max}(\Sigma(\mathfs G'))$.  This implies that the subgraphs  $\mathfs G'\subset\mathfs G$ which satisfy (1), (2), and (3) have disjoint sets of vertices. Since $\mathfs G$ is countable, there can be at most countably many such subgraphs, and  Theorem \ref{Theorem_Main_Intrinsic_Ergodicity} follows.

\subsection{Markov partitions}
As in \cite{AW, Sinai, BowenMP},  the symbolic description of $f$ relies on the existence of  a {\em countable Markov partition}. This is a pairwise disjoint collection $\mathfs R$ of Borel sets with the following properties:
\begin{enumerate}
\item {\bf Covering property\/:} The union of $\mathfs R$  is $\chi$--large.
\item {\bf Product structure\/:} There are  $W^s(x,R), W^u(x,R)\subset R$ $(x\in R\in\mathfs R)$ s.t.
\begin{enumerate}
\item $W^u(x,R)\cap W^s(x,R)=\{x\}$.
\item $\forall x,y\in R$,  $\exists z\in R$  s.t.  $W^u(x,R)\cap W^s(y,R)=\{z\}$.
\item $\forall x,y\in R$, $W^s(x,R)$ and $W^s(y,R)$ are equal, or they are disjoint. Similarly for  $W^u(x,R),W^u(y,R)$.
\end{enumerate}
\item {\bf Hyperbolicity:} If $y,z\in W^s(x,R)$, then $d(f^n(y),f^n(z))\xrightarrow[n\to\infty]{}0$. If
$y,z\in W^u(x,R)$, then $d(f^{-n}(y),f^{-n}(z))\xrightarrow[n\to\infty]{}0$.
\item {\bf Markov property\/:} Suppose $R_1,R_2\in\mathfs R$ and $x\in R_1, f(x)\in R_2$, then
$f[W^s(x,R_1)]\subseteq W^s(f(x),R_2)$ and $f^{-1}[W^u(f(x),R_2)]\subseteq W^u(x,R_1)$.
\end{enumerate}
We do not ask for the sets $R$ to be the closure of their interiors.

\subsection{Comparison to other results in the literature}\

{\em Markov partitions for diffeomorphisms\/}. These were previously constructed in the following cases: Hyperbolic toral automorphisms \cite{Berg},\cite{AW}, Anosov diffeomorphisms \cite{Sinai}, pseudo--Anosov diffeomorphisms \cite{FS}, and  Axiom A diffeomorphisms \cite{BowenMP,BowenTAMS}.
This  paper treats   the general case, in dimension two.

{\em Katok horseshoes \cite{KatokIHES,KatokICM},\cite{KM}.} Katok showed that if a $C^{1+\b}$ surface diffeomorphism $f$ has positive entropy, then for every $\e>0$ there is a compact invariant subset $\L_\e$ s.t. $f:\Lambda_\e\to\Lambda_\e$ has a finite Markov partition, and $$h_{top}(f|_{\Lambda_\e})>h_{top}(f)-\e.$$
Typically,  $\L_\e$ will have zero measure w.r.t. any ergodic invariant measure with large entropy. This paper constructs a ``horseshoe" $\pi_\chi(\Sigma_\chi)$ with full measure for all ergodic invariant measures with  large entropy. But  (a) our horseshoe is not compact, (b) its Markov partition is infinite, and (c) the semi-conjugacy $\pi_\chi$ is  not one-to-one as in \cite{KM}. (a) and (b) seem to be unavoidable.

{\em Tower extensions \cite{Ta},\cite{Hof},\cite{Y}\/:} These are representations of certain maps  as  infinite-to-one factors of  other maps (``towers")  which possess   obvious infinite Markov partitions.
Such extensions have been used in the study of  one--dimensional systems with great success, see e.g.  \cite{Hof2},\cite{BuzziIntrinsic}, \cite{Br},\cite{Keller}, \cite{PSZ},\cite{Z}. For  higher dimension, see  \cite{BuzziAffine,BuzziMulti,BuzziPuzzles}, \cite{BT}, \cite{BY}, \cite{Y}.

Unlike tower extensions, our coding is  finite-to-one. This ensures that any ergodic invariant measure with high entropy can be lifted to the symbolic space (Theorem \ref{Theorem_Main_Lift}, see also (\ref{Mu Tilde})). For tower extensions proving the existence of a lift is highly non-trivial, and there are very few results in dimension higher than one,  see  \cite{Ke}, \cite{BuzziAffine}, \cite{BT}, \cite{PSZ} and references therein.

{\em Symbolic extensions \cite{BD},\cite{DN},\cite{BFF}\/.}  These are representations of a diffeomorphism as a  topological factor of $\s:\L\to\L$ where $\L\subset \{1,\ldots,N\}^{\mathbb Z}$ is closed and shift invariant and  $\s$ is the left shift (``subshift").  Recently, Burguet has shown that every  $C^2$ surface diffeomorphism has a symbolic extension \cite{Bur}.

Unlike  symbolic extensions, our coding is by a  non--compact shift space. On the positive side, our space has Markov structure. This gives us access to many results which are not true for  general subshifts, e.g. Gurevich's theory mentioned in  the end of \S\ref{SectionSymbDyn}.

\subsection{Overview of the construction of a Markov partition}\label{SectionOverview}
It is useful first to recall Bowen's construction in the case of an Anosov  diffeomorphisms  \cite{B}.

Bowen's idea was to use  {\em $\e$--pseudo--orbits}. These are sequences of points $\un{x}=\{x_i\}_{i\in\Z}$ such that $d(x_{i+1},f(x_i))<\e$ for all $i$. A pseudo--orbit $\un{x}$ is said to {\em $\d$--shadow} a real orbit $\{f^i(x)\}_{i\in\Z}$ if $d(x_i,f^i(x))<\d$ for all $i\in\Z$.
 Anosov showed that for every $\d$ small enough, there exists an $\e>0$ s.t.
\begin{enumerate}
\item[(A1)] Every $\e$--pseudo--orbit $\un{x}$ $\d$--shadows the real orbit of some unique point $\pi(\un{x})$.
\item[(A2)] ``Finite alphabet suffices": There exists a finite set of points $A$ such that
$\{\pi(\un{x}):\un{x}\in A^\Z\textrm{ is an $\e$--pseudo-orbit}\}$ is the entire manifold.
\item[(A3)] ``Inverse problem": If two pseudo--orbits $\un{x},\un{y}$ $\d$--shadow the same  orbit, then their corresponding coordinates  are close,  $d(x_i,y_i)<2\d$ for all $i\in\Z$.
\end{enumerate}

Since pseudo--orbits are defined in terms of nearest neighbor constraints, one can view the collection of pseudo--orbits in $A^\Z$ as the collection of infinite paths on the graph with set of vertices $A$, and edges $a\to b$ when $d(f(a),b)<\e$.   (A1) and (A2) say that $f$ is a factor of the topological Markov shift
$$
\Sigma:=\{\un{x}\in A^\Z: d(x_{i+1},f(x_i))<\e\textrm{ for all }i\in\Z\}.
$$
The factor map is $\pi$. It is an  infinite--to--one map.

The sets $_0[a]:=\{\un{x}\in\Sigma: x_0=a\}$ form a natural Markov partition for the left shift on $\Sigma$.\footnote{The product structure is given by $W^u(\un{x},{_0[a]}):=\{\un{y}\in\Sigma:y_i=x_i\ (i\leq 0)\}$, $W^s(\un{x},{_0[a]}):=\{\un{y}\in\Sigma:y_i=x_i\ (i\geq 0)\}$.} Their projections
$
Z(a)=\{\pi(\un{x}):\un{x}\in\Sigma\ , x_0=a\}$ $(a\in A)$
would have been natural candidates for a Markov partition, had they not overlapped.  Sinai came up with a set--theoretic procedure for refining
$$
\mathfs Z:=\{Z(a):a\in A\}
$$ into a partition without destroying the product structure. This  partition is a Markov partition \cite{B}.

\medskip
Our proof follows a similar strategy.  But since Anosov's theory of pseudo--orbits relies on uniform hyperbolicity and our setting is only non-uniformly hyperbolic, we have to use  a different device to generate orbits from  symbolic sequences. This problem was previously considered by Kr\"uger \& Troubetzkoy \cite{KT}, but their construction does not work in our setting.

\medskip
In part 1, we introduce  {\em $\e$--chains} as a replacement to  $\e$--pseudo--orbits in the non--uniformly hyperbolic setup. Much like a pseudo--orbit, a chain is a sequence of symbols which satisfies nearest neighbor conditions. Each symbol contains partial information on the location of the point and the position and size of its local stable and unstable manifolds. The nearest neighbor conditions are  tailored in such a way that the following analogues of parts (A1) and (A2) of Anosov's theorem hold for a suitable choice of $\e$:
\begin{enumerate}
\item[(A1')] Every $\e$--chain $\un{v}$ corresponds to a unique real orbit $\pi(\un{u})$;
\item[(A2')] There is a countable set $A$ of symbols s.t.
$
\{\pi(\un{u}): \un{u}\in A^\Z\textrm{ is an $\e$--chain}\}
$
is $\chi$--large. $A$ and $\e$ depend on $\chi$.
\end{enumerate}
As a result, we obtain a representation of $f$ (restricted to a large invariant set) as a factor of a topological Markov shift.

The next step is to construct $\mathfs Z$ as before and try to apply Sinai's method to obtain a countable refining partition. Here we run into a serious problem: whereas Sinai dealt with a finite cover, our cover is infinite, and a  general countable cover need not have a countable refining partition. To avoid such pathologies one needs to ensure that $\mathfs Z$ is {\em locally finite}: Every $Z\in\mathfs Z$  intersects at most finitely many other $Z'\in \mathfs Z$.
This difficulty turns out to be  the heart of the matter.

We deal with this issue in  part 2. Here we obtain the following analogue of part (A3) of Anosov's theorem:
\begin{itemize}
\item[(A3')] If two $\e$--chains $\un{v}, \un{u}$ are ``regular" and $\pi(\un{u})=\pi(\un{v})$, then $u_i$ and $v_i$ are ``close" for every $i\in\Z$ (see \S\ref{Section_Strategy} for the precise statement).
\end{itemize}
Unlike (A3), this is not a trivial statement, because the symbols $u_i,v_i$ contain much more information than mere location. The fact that $\e$--chains satisfy (A3') is the main point of this work.

The alphabet $A$ from part 1 can be chosen s.t. (a) for every $u\in A$, the number of  $v\in A$  ``close" to $u$ is finite, and (b) $\{\pi(\un{u}):\un{u}\in A^\Z,\text{ $\un{u}$ is a regular $\e$--chain}\}$ has full measure  w.r.t. any ergodic invariant probability measure with entropy more than $\chi$. As a result, the  sets $Z(v):=\{\pi(\un{v}):\un{v}\in A^\Z\textrm{ is a regular $\e$-chain}\}$ form a   locally finite cover $\mathfs Z$ of a large set.

Sinai's refinement procedure can now be safely applied to $\mathfs Z$. In part 3, we check that the elements of $\mathfs Z$ have the ``product structure" and ``symbolic Markov properties" needed to push through Bowen's proof that Sinai's  refinement is a Markov partition. We also explain how to deduce Theorems \ref{Theorem_Main_Extension}, \ref{Theorem_Main_Finite_To_One}, and \ref{Theorem_Main_Lift}.
The proofs are modeled on  \cite{B,Bsimple}.

Some of the lemmas we need to develop the theory of $\e$--chains are routine modification  of well--known results in Pesin Theory. Part 4 collects their proofs.

\subsection{Notational conventions} In what follows, $M$ is a compact orientable $C^\infty$ Riemannian manifold of dimension two, and
$f:M\to M$ is  a $C^{1+\b}$ diffeomorphism where $0<\b<1$. We assume that  the topological entropy of $f$ is positive, and we fix once and for all a constant $0<\chi<h_{top}(f)$.

Suppose $P$ is a property. The statement  {\em ``for all $\e$ small enough $P$ holds"} means {\em ``$\exists\e_0>0$ which only depends on $f,M,\b$ and $\chi$ s.t. for all $0<\e<\e_0$ $P$ holds"}.

$T_x M$ is the  tangent space to $M$ at $x$.  The exponential map is denoted by $\exp_x: T_x M\to M$.
The Riemannian norm and inner product on $T_x M$ are denoted by $\|\cdot\|_x$ and $\<\cdot,\cdot\>_x$. Sometimes, we drop the subscript $x$. Given two non-zero vectors $\un{u},\un{v}\in T_x M$, the angle from $\un{u}$ to $\un{v}$ is denoted by $\measuredangle(\un{u},\un{v})$. This is a signed quantity.

Let $V$ be a vector space. The zero element in $V$ is denoted by $\un{0}$.  We identify the tangent space to $V$ at $\un{v}\in V$ with $V$. Let $A:V\to W$ be a linear map between two linear vector space $V,W$. We identify $(dA)_{\un{v}}: T_{\un{v}}V\to T_{A\un{v}}W$ with $A:V\to W$.

Suppose $a,b,c\in\R$.  We write $a=b\pm c$ if $b-c\leq a\leq b+c$, and
 $a=e^{\pm c} b$ if $e^{-c} b\leq a\leq e^c b$.
Let $a_n,b_n>0$, then $a_n\sim b_n$ means that $\frac{a_n}{b_n}\xrightarrow[n\to\infty]{}1$, and   $a_n\asymp b_n$ means that $\exists N,c$ s.t. $\forall n>N$ $(e^{-c}b_n\leq a_n\leq e^c b_n)$.
 Finally, $a\wedge b:=\min\{a,b\}$.

Some abbreviations: {\em s.t.} is ``{such that}", {\em w.r.t} is ``{with respect to}", {\em i.o.} is ``infinitely often",  {\em resp.} is ``{respectively}", and {\em w.l.o.g} is ``without loss of generality".

\part{Chains as pseudo--orbits}
\section{Pesin charts}
\subsection{Non-uniform hyperbolicity}\label{Section_NUH}
 By the variational principle, $f$ admits ergodic invariant probability measures of  entropy larger than $\chi$ (see \cite{G}).
 Quite a lot is known about the properties  of these measures. We will use the following fact, which follows from Ruelle's Entropy Inequality \cite{Ruelle1} and the Oseledets Multiplicative Ergodic Theorem \cite{Os} (see \cite{BP}):
 \begin{thm}[Oseledets, Ruelle]
 Any ergodic invariant probability measure $\mu$ for $f$ s.t. $h_{\mu}(f)>\chi$ gives full probability to the (invariant) set $\NUH_\chi(f)$ of  points $x\in M$ for which there is a decomposition $T_x M=E^s(x)\oplus E^u(x)$ so that
 \begin{enumerate}
 \item $E^s(x)=\Span\{\un{e}^s(x)\}$, $\|\un{e}^s(x)\|_x=1$,  $\lim\limits_{n\to\pm\infty}\frac{1}{n}\log\|(df^n)_x \un{e}^s(x)\|_{f^n(x)}<-\chi$;
 \item $E^u(x)=\Span\{\un{e}^u(x)\}$, $\|\un{e}^u(x)\|_x=1$,  $\lim\limits_{n\to\pm\infty}\frac{1}{n}\log\|(df^n)_x \un{e}^u(x)\|_{f^n(x)}>\chi$;
 \item $\lim\limits_{n\to \pm\infty}\frac{1}{n}\log|\sin\a(f^n(x))|=0$, where $\a(x):=\measuredangle(\un{e}^s(x),\un{e}^u(x))$;
 \item $df_x[E^s(x)]=E^s(f(x))$ and $df_x[E^u(x)]=E^u(f(x))$.
 \end{enumerate}
 \end{thm}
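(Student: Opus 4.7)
The plan is to invoke the Oseledets Multiplicative Ergodic Theorem together with Ruelle's entropy inequality, applied to both $f$ and $f^{-1}$. Because $f\in C^{1+\b}$ on a compact surface, the derivative cocycle $\{(df^n)_x\}$ has $\log\|df^{\pm 1}\|$ uniformly bounded and hence $\mu$-integrable for any $f$-invariant $\mu$. Oseledets' theorem applied to this $\GL(2,\R)$-valued cocycle then produces, $\mu$-a.e., Lyapunov exponents $\lambda_1\geq\lambda_2$ (constant by ergodicity) and a measurable $df$-invariant splitting $T_xM=F_1(x)\oplus F_2(x)$ (or a single block if $\lambda_1=\lambda_2$) with
$$
\lim_{n\to\pm\infty}\frac{1}{n}\log\|(df^n)_x\un{v}\|_{f^n(x)}=\lambda_i\quad\text{for every nonzero }\un{v}\in F_i(x).
$$
A standard additional output of Oseledets is that the angle between the subspaces is tempered along orbits, i.e.\ $\frac{1}{n}\log|\sin\measuredangle(F_1(f^n x),F_2(f^n x))|\to 0$.

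To control the signs of the exponents I use Ruelle's inequality. On a surface, for an ergodic measure,
$$
\chi<h_\mu(f)\leq\int\sum_i\lambda_i^+\,d\mu=\lambda_1^+,
$$
so $\lambda_1>\chi>0$. The same inequality applied to $f^{-1}$ (which preserves $\mu$, has the same entropy, and whose Lyapunov exponents are $-\lambda_1\leq-\lambda_2$) gives $-\lambda_2>\chi$, i.e.\ $\lambda_2<-\chi<0$. Hence the splitting is genuinely two-dimensional, each block is one-dimensional, and I may set $E^u(x):=F_1(x)$, $E^s(x):=F_2(x)$ and choose measurable unit vectors $\un{e}^u(x),\un{e}^s(x)$ spanning them.

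Properties (1), (2) and (4) are then read off directly from the Oseledets conclusion, while property (3) is precisely the tempered angle statement. I do not foresee a real obstacle: the whole theorem is a packaging of two classical results, and the only verifications needed are the hypotheses of Oseledets (integrability of $\log\|df^{\pm 1}\|$, immediate from compactness of $M$) and of Ruelle's inequality ($C^1$-smoothness of $f$, which is more than satisfied). The one mildly subtle point is (3): one must remember that the subexponential behaviour of the angle is a separate conclusion of the Oseledets--Pesin theorem, not a formal consequence of the signs of $\lambda_1,\lambda_2$, so it must be cited explicitly rather than deduced from (1) and (2).
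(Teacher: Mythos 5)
Your proposal is correct and is essentially the paper's own approach: the paper offers no independent proof, but states that the theorem "follows from Ruelle's Entropy Inequality and the Oseledets Multiplicative Ergodic Theorem," applied exactly as you do (Oseledets for the splitting, the limits in (1),(2),(4) and the tempered angle (3); Ruelle for $f$ and $f^{-1}$ to force $\lambda_1>\chi$ and $\lambda_2<-\chi$). The only cosmetic point is the order of deductions: the identity $\sum_i\lambda_i^+=\lambda_1^+$ already presupposes $\lambda_2\leq 0$, so one should first note that Ruelle applied to $f$ and to $f^{-1}$, together with $h_\mu(f)>\chi>0$, rules out $\lambda_1\leq 0$ and $\lambda_2\geq 0$ respectively, and only then refine to $\lambda_1\geq h_\mu(f)>\chi$ and $-\lambda_2\geq h_\mu(f)>\chi$ — a two-line rearrangement, not a genuine gap.
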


 The splitting $T_x M=E^s(x)\oplus E^u(x)$ is unique, but the vectors $\un{e}^s(x), \un{e}^u(x)$ are only determined up to a sign.  Fix a measurable family of positively oriented bases $(\un{e}_x^1,\un{e}_x^2)$ of $T_x M$ $(x\in M)$.  Choose the signs of  $\un{e}^{s/u}(x)$ in such a way that
 $\measuredangle(\un{e}^1_x,\un{e}^s(x))\in [0,\pi)$ and  $(\un{e}^s(x), \un{e}^u(x))$ has positive orientation.

The set $\NUH(f):=\bigcup_{\chi>0}\NUH_\chi(f)$ is called the {\em non-uniformly hyperbolic set} of $f$, and is $f$--invariant. This set has full probability w.r.t. any ergodic invariant probability measure with positive entropy.

The linear spaces $E^s(x), E^u(x)$ are called, respectively, the {\em stable} and {\em unstable} spaces of $df$. The numbers
 \begin{align*}
 \begin{aligned}
 \log\l(x)&:=\lim\limits_{n\to\pm\infty}\frac{1}{n}\log\|(df^n)_x \un{e}^s(x)\|_{f^n(x)}\\
 \log\mu(x)&:=\lim\limits_{n\to\pm\infty}\frac{1}{n}\log\|(df^n)_x \un{e}^u(x)\|_{f^n(x)}
 \end{aligned}\hspace{2cm} (x\in \NUH(f))
 \end{align*}
  are called  the {\em Lyapunov exponents} of $x$. They are  $f$--invariant, whence constant a.e. w.r.t. any ergodic invariant measure. The value  depends on the measure. On $\NUH_\chi(f)$, $\log\l(x)<-\chi$ and $\log\mu(x)>\chi$.

\subsection{Lyapunov change of coordinates}\label{Section_LCC}
The splitting $T_x M=E^s(x)\oplus E^u(x)$ can be used to diagonalize the action of $df$  on $\{T_x M: x\in\NUH(f)\}$ (``Oseledets--Pesin Reduction").

We describe a  change of coordinates which achieves this.
The construction depends on $\chi$.
Given $x\in\NUH_\chi(f)$, let
 \begin{align*}
 s_\chi(x)&:=\sqrt{2}\left(\sum_{k=0}^\infty e^{2k\chi}\|(df^k)_x \un{e}^s(x)\|^2_{f^k(x)}\right)^{1/2};\\
  u_\chi(x)&:=\sqrt{2}\left(\sum_{k=0}^\infty e^{2k\chi}\|(df^{-k})_x \un{e}^u(x)\|^2_{f^{-k}(x)}\right)^{1/2}.
 \end{align*}
(The factor $\sqrt{2}$ is needed for Lemma \ref{Lemma_C_contracts} below.)
 \begin{defi}
The {\em Lyapunov change of coordinates} (with parameter $\chi$) is the linear map
$
C_\chi(x):\R^2\to T_x M\ \ \ (x\in \NUH_\chi(f))
$
s.t. $C_\chi(x)\un{e}_1=s_\chi(x)^{-1}\un{e}^s(x)$, and
$C_\chi(x)\un{e}_2=u_\chi(x)^{-1}\un{e}^u(x)$, where $\un{e}_1={1\choose 0}$ and $\un{e}_2={0\choose 1}$.
\end{defi}
\noindent
 Notice that $C_\chi(x)$ preserves orientation.
\begin{thm}[Oseledets--Pesin Reduction Theorem]\label{TheoremOPR}
There exists a constant $C_f$ which only depends on $f$ s.t. for every $x\in\NUH_\chi(f)$,
$$
C_\chi(f(x))^{-1}\circ df_x\circ C_\chi(x)=\left(
\begin{array}{cc}
\l_\chi(x) & 0\\
0 & \mu_\chi(x)
\end{array}
\right)
$$
where $C_f^{-1}<|\l_\chi(x)|< e^{-\chi}$ and $e^{\chi}<|\mu_\chi(x)|<C_f$.
\end{thm}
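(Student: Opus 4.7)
My plan is to compute $C_\chi(f(x))^{-1} \circ df_x \circ C_\chi(x)$ directly on the standard basis $(\un{e}_1,\un{e}_2)$, and then estimate the two diagonal entries using recursive identities for $s_\chi$ and $u_\chi$.

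First, since $df_x$ preserves the Oseledets splitting, there are unique scalars $A(x),B(x)$ with
$df_x\un{e}^s(x) = A(x)\un{e}^s(f(x))$ and $df_x\un{e}^u(x) = B(x)\un{e}^u(f(x))$. Applying $C_\chi(x)$, then $df_x$, then $C_\chi(f(x))^{-1}$ to $\un{e}_1$ gives $A(x)\,s_\chi(f(x))/s_\chi(x)\cdot\un{e}_1$, and similarly $\un{e}_2\mapsto B(x)\,u_\chi(f(x))/u_\chi(x)\cdot\un{e}_2$. So the conjugated matrix is diagonal with
\[
\l_\chi(x)=A(x)\,\frac{s_\chi(f(x))}{s_\chi(x)},\qquad \mu_\chi(x)=B(x)\,\frac{u_\chi(f(x))}{u_\chi(x)}.
\]

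Next I derive the key recursions. Splitting off the $k=0$ term of the series defining $s_\chi(x)^2$ and applying the chain rule $(df^k)_x\un{e}^s(x) = A(x)(df^{k-1})_{f(x)}\un{e}^s(f(x))$ yields
\[
s_\chi(x)^2 \;=\; 2+e^{2\chi}A(x)^2 s_\chi(f(x))^2.
\]
The analogous manipulation for the unstable sum, using $df_{f(x)}^{-1}\un{e}^u(f(x))=B(x)^{-1}\un{e}^u(x)$, gives
\[
u_\chi(f(x))^2 \;=\; 2+e^{2\chi}B(x)^{-2} u_\chi(x)^2.
\]
Substituting these back into the formulas for $\l_\chi$ and $\mu_\chi$ yields the clean identities
\[
\l_\chi(x)^2=e^{-2\chi}\bigl(1-2/s_\chi(x)^2\bigr),\qquad \mu_\chi(x)^2=\frac{e^{2\chi}}{1-2/u_\chi(f(x))^2}.
\]
The upper bound $|\l_\chi(x)|<e^{-\chi}$ and lower bound $|\mu_\chi(x)|>e^\chi$ are then immediate (strict, since $s_\chi,u_\chi$ are finite on $\NUH_\chi(f)$ and each is strictly larger than $\sqrt{2}$ because the $k=1$ term in each sum is nonzero).

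For the opposite, uniform bounds, I use compactness of $M$: let $M_f:=\|df\|_\infty$ and $m_f:=\|df^{-1}\|_\infty^{-1}$, both finite and positive. Since $\un{e}^s(x),\un{e}^u(x)$ are unit vectors, $A(x)=\|df_x\un{e}^s(x)\|\ge m_f$ and $B(x)=\|df_x\un{e}^u(x)\|\le M_f$. Plugging these into the recursions gives $s_\chi(x)^2\ge 2+2e^{2\chi}m_f^2$ and $u_\chi(f(x))^2\ge 2+2e^{2\chi}M_f^{-2}$, and hence
\[
\l_\chi(x)^2\ge \frac{m_f^2}{1+e^{2\chi}m_f^2},\qquad \mu_\chi(x)^2\le M_f^2+e^{2\chi}.
\]
Setting $C_f$ larger than both $(M_f^2+e^{2\chi})^{1/2}$ and $(m_f^{-2}+e^{2\chi})^{1/2}$ completes the proof. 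The only step requiring care is bookkeeping in the series manipulation (getting the $e^{2\chi}$ factor and the shift from $x$ to $f(x)$ right, and handling the $k=0$ boundary term that produces the additive $2$); everything else is an algebraic rearrangement.
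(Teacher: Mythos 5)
Your proposal is correct and follows essentially the same route as the paper: evaluate $C_\chi(f(x))^{-1}\circ df_x\circ C_\chi(x)$ on the standard basis to get $\l_\chi(x)=\pm\frac{s_\chi(f(x))}{s_\chi(x)}\|df_x\un{e}^s(x)\|$ and $\mu_\chi(x)=\pm\frac{u_\chi(f(x))}{u_\chi(x)}\|df_x\un{e}^u(x)\|$, then exploit the one-step recursion of the defining series; your exact identities $s_\chi(x)^2=2+e^{2\chi}A(x)^2s_\chi(f(x))^2$ and $u_\chi(f(x))^2=2+e^{2\chi}B(x)^{-2}u_\chi(x)^2$ are just the sharpened form of the inequalities the paper uses, and they correctly yield $|\l_\chi|<e^{-\chi}$, $|\mu_\chi|>e^{\chi}$, and the uniform bounds via $m_f,M_f$. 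One cosmetic point: your final $C_f$ still contains $e^{2\chi}$, so to get a constant depending only on $f$ you should add the remark that $\chi<h_{top}(f)<\infty$ (the paper does this via $h_{top}(f)\le 2\log M_f$ with $M_f=\max_x\{\|df_x\|,\|df_x^{-1}\|\}$), a one-line fix.
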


Pesin's original construction in \cite{Pesin}  is slightly different. He defined $s_\chi(x)$ and $u_\chi(x)$ with $e^{-2k\e}\l(x)^{-2k}$ or $e^{-2k\e}\mu(x)^{2k}$ replacing  $e^{2k\chi}$. His method   gives  better bounds on $\l_\chi(x)$ and $\mu_\chi(x)$, and makes sense on all of $\NUH(f)$. Our method can only be guaranteed to work on $\NUH_\chi(f)$, but  it has the advantage that  $C_\chi(x)$ is not sensitive to the values of  $\l(x), \mu(x)$. This is important, because we want to capture the dynamics of all orbits with exponents bounded away from $\chi$, therefore we have to work with points with different Lyapunov exponents.

We need the following definition from linear algebra: suppose $L:V\to W$ is an invertible linear map between two finite dimensional vector spaces equipped with inner products, then the {\em operator norm} of $L$ is $\|L\|:=\max\{\|L\un{v}\|_W:\|\un{v}\|_V=1\}$, and the {\em Frobenius norm} of $L$ is $\|L\|_{Fr}:=\sqrt{\tr(\Theta^t L^t L\Theta)}$, where $\Theta$ is some (any) isometry $\Theta: W\to V$. $\|L\|_{Fr}$ is well defined,\footnote{Proof:
$\tr(\Theta_2^t L^t L \Theta_2)=\tr[\Theta_2^t\Theta_1(\Theta_1^t L^t L\Theta_1)(\Theta_2^t\Theta_1)^t]=\tr(\Theta_1^t L^t L\Theta_1)$.
} and  $\|L\|\leq \|L\|_{Fr}\leq \sqrt{2}\|L\|$.\footnote{Proof: Let $s_1(L)\geq s_2(L)$ denote the singular values of $L$ (equal by definition to the eigenvalues of $\sqrt{L^\ast L}$), then $\|L\|=s_1(L)$, and $\|L\|_{Fr}=\sqrt{s_1(L)^2+s_2(L)^2}$.} One of the advantages of the Frobenius norm is that it has an explicit formula: If $L$ is represented by the matrix $(a_{ij})$ w.r.t. to some (any) orthonormal bases for $V,W$, then \label{FrobeniusPage} $\|L\|_{Fr}=\left(\sum_{ij} a_{ij}^2\right)^{1/2}$.\footnote{Proof: Let $\Theta:W\to V$ be the isometry which maps the base we chose for $W$ to the base we chose for $V$, then $L\Theta:W\to W$ is represented w.r.t. the base we chose for $W$ by the matrix $(a_{ij})$. A calculation shows that $\tr(\Theta^t L^t L \Theta)=\sum a_{ij}^2$.}

Some more information on $C_\chi(x)$ (see the appendix for proofs):
\begin{lem}\label{Lemma_C_norm}
$\|C_\chi(x)^{-1}\|_{Fr}=\sqrt{s_\chi(x)^2+u_\chi(x)^2}/|\sin\a(x)|$.
\end{lem}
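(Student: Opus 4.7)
The plan is a direct matrix computation, using the formula $\|L\|_{Fr}^2 = \sum a_{ij}^2$ from the footnote on page~\pageref{FrobeniusPage} applied to orthonormal bases of $\R^2$ and $T_xM$. The natural orthonormal basis of $T_xM$ is the measurable frame $(\un{e}^1_x, \un{e}^2_x)$ already fixed in \S\ref{Section_NUH}, and the standard basis $(\un{e}_1, \un{e}_2)$ is already orthonormal in $\R^2$, so I only need to find the four matrix entries of $C_\chi(x)^{-1}:T_xM\to\R^2$ in these bases and sum their squares.

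First I would write the Oseledets frame in coordinates. Setting $\theta:=\measuredangle(\un{e}^1_x,\un{e}^s(x))$, the sign conventions on page immediately preceding give
\[
\un{e}^s(x)=\cos\theta\,\un{e}^1_x+\sin\theta\,\un{e}^2_x,\qquad \un{e}^u(x)=\cos(\theta+\a)\,\un{e}^1_x+\sin(\theta+\a)\,\un{e}^2_x,
\]
where $\a=\a(x)$. Inverting the $2\times 2$ change-of-basis matrix (whose determinant is $\sin\a$) expresses $\un{e}^1_x, \un{e}^2_x$ in the basis $\{\un{e}^s(x),\un{e}^u(x)\}$:
\[
\un{e}^1_x=\frac{\sin(\theta+\a)}{\sin\a}\un{e}^s(x)-\frac{\sin\theta}{\sin\a}\un{e}^u(x),\quad \un{e}^2_x=-\frac{\cos(\theta+\a)}{\sin\a}\un{e}^s(x)+\frac{\cos\theta}{\sin\a}\un{e}^u(x).
\]

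Next, by the very definition of $C_\chi(x)$, its inverse sends $\un{e}^s(x)\mapsto s_\chi(x)\un{e}_1$ and $\un{e}^u(x)\mapsto u_\chi(x)\un{e}_2$. Applying $C_\chi(x)^{-1}$ to the two displayed expressions gives the matrix
\[
[C_\chi(x)^{-1}]=\frac{1}{\sin\a}\begin{pmatrix} s_\chi(x)\sin(\theta+\a) & -s_\chi(x)\cos(\theta+\a)\\ -u_\chi(x)\sin\theta & u_\chi(x)\cos\theta\end{pmatrix}.
\]
Summing the squares of the four entries and using $\sin^2+\cos^2=1$ in each row gives
\[
\|C_\chi(x)^{-1}\|_{Fr}^2=\frac{s_\chi(x)^2+u_\chi(x)^2}{\sin^2\a(x)},
\]
and taking the positive square root (with $|\sin\a(x)|$ since the norm is nonnegative) yields the claim.

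There is no real obstacle here — the only care needed is in keeping the angle conventions consistent (namely, that $\measuredangle$ is signed and that $\un{e}^u(x)$ lies at signed angle $\a(x)$ from $\un{e}^s(x)$), so that the $2\times 2$ determinant is exactly $\sin\a(x)$ rather than $\pm\sin\a(x)$ times some extra factor. Once that is in place, the Pythagorean cancellation $\sin^2+\cos^2=1$ does all the work.
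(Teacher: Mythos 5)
Your proof is correct and is essentially the paper's argument: both write the matrix of $C_\chi(x)^{-1}$ with respect to orthonormal bases and apply the explicit formula $\|L\|_{Fr}=\bigl(\sum a_{ij}^2\bigr)^{1/2}$, the only difference being that the paper takes the adapted orthonormal frame $(\un{e}^s(x),\un{e}^s(x)^\perp)$ on $T_xM$, which makes the matrix upper triangular and avoids your auxiliary angle $\theta$ (which cancels anyway by invariance of the Frobenius norm). One small point: the frame $(\un{e}^1_x,\un{e}^2_x)$ fixed in the paper is only assumed to be a positively oriented basis, not orthonormal, so you should work with an arbitrary positively oriented orthonormal frame instead — harmless, since the Frobenius norm is independent of that choice.
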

\begin{lem}\label{Lemma_C_contracts}
 $C_\chi(x)$ is a contraction: $\|C_\chi(x){\xi\choose\eta}\|_x\leq \|{\xi\choose\eta}\|$ for all $\xi,\eta\in\R$.
\end{lem}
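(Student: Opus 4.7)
The plan is to expand a general vector in the image basis and then use the triangle inequality together with the lower bounds on $s_\chi(x)$ and $u_\chi(x)$ that come from the $k=0$ term of the defining series (this is exactly where the $\sqrt{2}$ in the definitions of $s_\chi$ and $u_\chi$ pays for itself).

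First, by the defining equations $C_\chi(x)\un{e}_1=s_\chi(x)^{-1}\un{e}^s(x)$ and $C_\chi(x)\un{e}_2=u_\chi(x)^{-1}\un{e}^u(x)$, linearity gives
$$
C_\chi(x)\binom{\xi}{\eta}=\frac{\xi}{s_\chi(x)}\un{e}^s(x)+\frac{\eta}{u_\chi(x)}\un{e}^u(x).
$$
Since $\|\un{e}^s(x)\|_x=\|\un{e}^u(x)\|_x=1$, the triangle inequality in $T_xM$ yields
$$
\left\|C_\chi(x)\binom{\xi}{\eta}\right\|_x\le \frac{|\xi|}{s_\chi(x)}+\frac{|\eta|}{u_\chi(x)}.
$$
Squaring and applying the elementary bound $(a+b)^2\le 2(a^2+b^2)$ gives
$$
\left\|C_\chi(x)\binom{\xi}{\eta}\right\|_x^{2}\le \frac{2\xi^{2}}{s_\chi(x)^{2}}+\frac{2\eta^{2}}{u_\chi(x)^{2}}.
$$

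Next I would observe that every term in the series defining $s_\chi(x)^2/2$ and $u_\chi(x)^2/2$ is non-negative, so by keeping only the $k=0$ summand and using $\|\un{e}^s(x)\|_x=\|\un{e}^u(x)\|_x=1$,
$$
s_\chi(x)^2\ge 2\qquad\text{and}\qquad u_\chi(x)^2\ge 2.
$$
Hence $2/s_\chi(x)^2\le 1$ and $2/u_\chi(x)^2\le 1$, and substituting into the previous display gives
$$
\left\|C_\chi(x)\binom{\xi}{\eta}\right\|_x^{2}\le \xi^{2}+\eta^{2}=\left\|\binom{\xi}{\eta}\right\|^{2},
$$
which is the claimed inequality.

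There is essentially no obstacle here: the only ``trick'' is the $\sqrt{2}$ built into the definitions of $s_\chi,u_\chi$, which guarantees that the loss of a factor $2$ in the inequality $(a+b)^2\le 2(a^2+b^2)$ is exactly compensated. I would not even need Lemma \ref{Lemma_C_norm}, and the argument does not use the angle $\a(x)$ between $\un{e}^s(x)$ and $\un{e}^u(x)$ beyond the fact that each vector has unit length.
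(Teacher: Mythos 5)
Your proof is correct and is essentially the paper's argument: the paper packages the same inequalities via an auxiliary inner product $\<\cdot,\cdot\>_x^\ast$ in which $\un{e}^s(x),\un{e}^u(x)$ are orthogonal with lengths $s_\chi(x),u_\chi(x)$, but the chain it uses — triangle inequality, $|\xi|+|\eta|\le\sqrt{2(\xi^2+\eta^2)}$, and $s_\chi,u_\chi>\sqrt{2}$ from the $k=0$ term — is exactly yours.
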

\begin{lem}\label{Lemma_C_tempered}
There is an $\chi$--large invariant set $\NUH^\ast_\chi(f)\subset\NUH_\chi(f)$ s.t.  for every  $x\in\NUH^\ast_\chi(f)$,
 \begin{enumerate}
 \item $\lim\limits_{k\to\pm\infty}\frac{1}{k}\log\|C_\chi(f^k(x))^{-1}\|=0$;
 \item $\lim\limits_{k\to\pm\infty}\frac{1}{k}\log\|C_\chi(f^k(x))\un{e}_i\|_{f^k(x)}=0$, where $\un{e}_1={1\choose 0}$ and  $\un{e}_2={0\choose 1}$;
 \item $\lim\limits_{k\to\pm\infty}\frac{1}{k}\log|\det C_\chi(f^k(x))|=0$.
 \end{enumerate}
\end{lem}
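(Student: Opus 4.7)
The plan is to define $\NUH^\ast_\chi(f)$ as the set of $x\in\NUH_\chi(f)$ at which the three scalar tempered growth conditions
\begin{equation*}
\tfrac{1}{k}\log s_\chi(f^k x)\to 0,\qquad \tfrac{1}{k}\log u_\chi(f^k x)\to 0,\qquad \tfrac{1}{k}\log|\sin\alpha(f^k x)|\to 0\quad(k\to\pm\infty)
\end{equation*}
all hold, and then to verify (a) that this set is $f$-invariant and has full measure with respect to every ergodic invariant probability measure of entropy greater than $\chi$, and (b) that the lemma's conditions (1)--(3) follow pointwise from the three scalar conditions. Part (b) is routine algebra. From the definition of $C_\chi(x)$ we read off $\|C_\chi(x)\un{e}_1\|_x=s_\chi(x)^{-1}$ and $\|C_\chi(x)\un{e}_2\|_x=u_\chi(x)^{-1}$, giving (2). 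Computing the determinant of $C_\chi(x)$ in any orthonormal basis of $T_x M$ yields $|\det C_\chi(x)|=|\sin\alpha(x)|/(s_\chi(x)u_\chi(x))$, giving (3). Lemma \ref{Lemma_C_norm} combined with the universal inequality $\|L\|\leq\|L\|_{Fr}\leq\sqrt{2}\|L\|$ gives $\log\|C_\chi(x)^{-1}\|=\log\sqrt{s_\chi(x)^2+u_\chi(x)^2}-\log|\sin\alpha(x)|+O(1)$, from which (1) follows. The $f$-invariance of $\NUH^\ast_\chi(f)$ is automatic, since a finite shift of the index leaves the limit $k\to\pm\infty$ unchanged.

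\textbf{Proofs of the tempering conditions.} The angle tempering is exactly item (3) of the Oseledets--Ruelle theorem quoted at the beginning of the section. The main work is the tempering of $s_\chi$; the case of $u_\chi$ is symmetric after replacing $(f,\un{e}^s,\log\lambda)$ by $(f^{-1},\un{e}^u,-\log\mu)$. Set $A_n(x):=\|(df^n)_x\un{e}^s(x)\|_{f^n(x)}$. The $df$-invariance of $E^s$ makes $A_n$ a multiplicative cocycle, $A_{m+n}(x)=A_m(f^n x)A_n(x)$, and substituting this into the series defining $s_\chi(f(x))$ produces the clean recursion
\begin{equation*}
s_\chi(x)^2=2+e^{2\chi}A_1(x)^2\,s_\chi(f(x))^2,
\end{equation*}
whose iteration yields, for every $n\geq 0$,
\begin{equation*}
s_\chi(f^n x)^2=\frac{e^{-2n\chi}}{A_n(x)^2}\cdot 2\sum_{j=n}^{\infty}e^{2j\chi}A_j(x)^2,
\end{equation*}
with the $n<0$ case handled by iterating the recursion in the reverse direction. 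On an ergodic measure $\mu$ with $h_\mu(f)>\chi$, $\log\lambda(x)$ is $\mu$-a.e.\ a constant strictly less than $-\chi$, and Oseledets furnishes a $\mu$-a.e.\ finite $C_\epsilon(x)$ with $C_\epsilon(x)^{-1}e^{n\log\lambda-|n|\epsilon}\leq A_n(x)\leq C_\epsilon(x)e^{n\log\lambda+|n|\epsilon}$ for every $n\in\Z$ and every small $\epsilon>0$. Plugging these bounds into the displayed identity, the prefactor $e^{-2n\chi}A_n(x)^{-2}$ is of order $e^{-2n(\chi+\log\lambda)+O(|n|\epsilon)}$ and the tail sum is of order $e^{2n(\chi+\log\lambda)+O(|n|\epsilon)}$; the leading exponents cancel exactly, yielding $s_\chi(f^n x)^2\leq C'_\epsilon(x)e^{4|n|\epsilon}$. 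Together with the trivial lower bound $s_\chi\geq\sqrt{2}$, this shows $\tfrac{1}{|k|}\log s_\chi(f^k x)\to 0$, since $\epsilon$ was arbitrary.

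\textbf{Main obstacle.} The crux is the exact cancellation in the product ``prefactor $\cdot$ tail''. Because Oseledets supplies only sub-exponential control $e^{O(|n|\epsilon)}$ of the relative error in $A_n(x)$, careless bookkeeping easily leaves a residual exponential rate proportional to $\chi+\log\lambda\neq 0$. The clean form of the recursion---which is precisely what the normalising factor $\sqrt{2}$ in the definitions of $s_\chi$ and $u_\chi$ is designed to produce---is what makes the cancellation transparent; carrying it out rigorously for both signs of $n$, and symmetrically for $u_\chi$, is the bulk of the technical work, while the passage to $\NUH^\ast_\chi(f)$ itself and the algebraic reductions in the first paragraph are straightforward.
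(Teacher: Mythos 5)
Your proposal is correct, but it proves the lemma by a genuinely different route than the paper. The paper's proof applies the Multiplicative Ergodic Theorem a second time, to the reduced cocycle $A_\chi(x)=C_\chi(f(x))^{-1}\circ df_x\circ C_\chi(x)$ (which is diagonal with uniformly bounded entries by Theorem \ref{TheoremOPR}); it takes the full-measure set where both METs hold and then cuts it down by Poincar\'e recurrence to points whose matrices $C_\chi(f^{n}(x))$ return close to $C_\chi(x)$ along subsequences $n_k\uparrow\infty$, $m_k\downarrow-\infty$. Along those recurrence times the norms $\|C_\chi\|,\|C_\chi^{-1}\|$ are bounded, which forces the Lyapunov data of $A_\chi^{(n)}$ to coincide with that of $df^n$, and (1)--(3) then follow by cocycle algebra and the determinant--angle identity. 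You instead reduce (1)--(3) to sub-exponential behaviour of the three scalars $s_\chi$, $u_\chi$, $|\sin\a|$ along the orbit, using the explicit triangular representation of $C_\chi$ and Lemma \ref{Lemma_C_norm} (this part is the same algebra the paper uses for its parts (2)--(3)), and then prove the tempering of $s_\chi$, $u_\chi$ directly from their defining series: the cocycle identity gives the exact formula $s_\chi(f^nx)^2=2\,e^{-2n\chi}A_n(x)^{-2}\sum_{j\geq n}e^{2j\chi}A_j(x)^2$ with $A_j(x)=\|df^j_x\un{e}^s(x)\|$, and the two-sided Lyapunov regularity of $A_j(x)$ at the single base point $x$ (which is already built into the definition of $\NUH_\chi(f)$, since $\log\l(x)<-\chi$ ensures convergence and the needed sign of $\chi+\log\l$) yields $s_\chi(f^nx)=e^{o(|n|)}$; I checked the cancellation for both signs of $n$ and it is exact, as you claim. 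Your route is more elementary --- one application of the MET, no recurrence argument --- and in fact shows that (1)--(3) hold pointwise on all of $\NUH_\chi(f)$, so your $\NUH^\ast_\chi(f)$ is at least as large as the paper's; since the lemma only asserts existence of a $\chi$--large invariant set with these properties, and the rest of the paper uses $\NUH^\ast_\chi(f)$ only through (1)--(3), invariance and largeness, this is fully adequate. What the paper's argument buys is independence from the explicit $2\times 2$ formulas for $s_\chi,u_\chi,\sin\a$: the reduced-cocycle-plus-recurrence scheme carries over verbatim to higher-dimensional or block-diagonal settings, where your scalar series manipulation would have to be replaced by operator estimates, and it additionally equips $\NUH^\ast_\chi(f)$ with the recurrence property of $C_\chi$ (unused in the sequel).
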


\subsection{Pesin Charts}\label{SectionPC}
Having diagonalized the action of the differential of $f$, we turn to  the action of $f$ itself. The basic result (due to Pesin \cite{Pesin}) is that  $\NUH_\chi(f)$ has an atlas of charts with respect to which  $f$ is   close to a linear hyperbolic  map.

Some notation. Let $\exp_x: T_x M\to M$ denote the exponential map. We denote the zero vector (in $T_x M$ or $\R^2$) by $\un{0}$. Balls and boxes are denoted as follows:
$$
\begin{array}{ll}
B_\eta(x):=\{y\in M: d(x,y)<\eta\} &
B_\eta(\un{0}):=\{\un{v}\in\R^2:\un{v}={v_1\choose v_2},
\sqrt{v_1^2+v_2^2}<\eta\}\\
B_\eta^x(\un{0})=\{\un{v}\in T_x M: \|\un{v}\|_x<\eta\} &
R_\eta(\un{0}):=\{\un{v}\in\R^2:\un{v}={v_1\choose v_2}, |v_1|,|v_2|<\eta\}
\end{array}
$$

Since $M$ is compact, there exist  $r(M), \rho(M)>0$ s.t. for every $x\in M$
\begin{equation}
\label{r_M}
\textrm{$\exp_x$ maps $B_{2r(M)}^x(\un{0})$ diffeomorphically onto a neighborhood of $B_{\rho(M)}(x)$.}
\end{equation}
We take  $\rho(M)$ so small that
$(x,y)\mapsto \exp_{x}^{-1}(y)$ is well defined and $2$--Lipschitz on $B_{\rho(M)}(z)\x B_{\rho(M)}(z)$ for all $z\in M$, and so small that  $\|(d\exp_x^{-1})_{y}\|\leq 2$ for all $y\in B_{\rho(M)}(x)$ (see e.g. \cite[chapter 9]{Spivak}). Since $C_\chi$ is a contraction,
\begin{equation}\label{OPC}
\Psi_x:=\exp_x\circ C_\chi(x)
\end{equation}
maps $R_{r(M)}(\un{0})$ diffeomorphically into $M$. Since $C_\chi(x)$ preserves orientation, $\Psi_x$ preserves orientation.

Let $f_x:=\Psi_{f(x)}^{-1}\circ f\circ \Psi_x$, then the linearization of $f_x$ at $\un{0}$ is the linear hyperbolic map $\left(\begin{array}{cc}
\l_\chi(x) & 0 \\
0 & \mu_\chi(x)
\end{array}
\right)$.
The question is how large is the neighborhood of $\un{0}$ where $f_x$ can be approximated by its linearization. The size of the neighborhood is known. For reasons that will become clear later, we prefer to define it as a quantity taking values in $I_\e:=\{e^{-\frac{1}{3}\ell\e}:\ell\in\N\}$, where $\e$ will be determined later. Set
\begin{equation}\label{Qdef}
\begin{aligned}
Q_\e(x)&:=\max\{q\in I_\e: q\leq \wt{Q}_\chi(x)\}\textrm{ where }\\ \wt{Q}_\chi(x)&:=\e^{3/\b}\bigr(\|C_\chi(x)^{-1}\|_{Fr}
\bigl)^{-12/\b}
\end{aligned}
\end{equation}
\begin{thm}[Pesin]\label{Theorem_OP_charts}
For all $\e$ small enough, and for every $x\in\NUH_\chi(f)$,
\begin{enumerate}
\item $\Psi_x\!:\! R_{10 Q_\e(x)}(\un{0})\!\to\! M$ is a diffeomorphism,  $\Psi_x(\un{0})=x$, and $\|(d\Psi_x)_{\un{u}}\|\leq 2$ on $R_{10 Q_\e(x)}(\un{0})$;
\item $f_x:=\Psi_{f(x)}^{-1}\circ f\circ\Psi_x$ is well defined and injective on $R_{10Q_\e(x)}(\un{0})$ and
\begin{enumerate}
\item $f_x(\un{0})=\un{0}$ and $(df_x)_{\un{0}}=\left(\begin{array}{cc}
A(x) & 0\\
0 & B(x)
\end{array}
\right)$ where  $C_f^{-1}<|A(x)|<e^{-\chi}$ and $e^{\chi}<|B(x)|<C_f$ (cf. Theorem. \ref{TheoremOPR});
\item The $C^{1+\frac{\b}{2}}$-distance between $f_x$ \!and $(\!df_x\!)_{\un{0}}$ on $R_{10 Q_\e(x)}\!(\un{0})$ is less than $\e$.
\end{enumerate}
\item The symmetric statement holds for $f_x^{-1}=\Psi_x^{-1}\circ f^{-1}\circ \Psi_{f(x)}$.
\end{enumerate}
\end{thm}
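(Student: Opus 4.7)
The plan is to exploit the decomposition $f_x = C_\chi(f(x))^{-1}\circ \wt{f}_x\circ C_\chi(x)$, where $\wt{f}_x:=\exp_{f(x)}^{-1}\circ f\circ \exp_x$ is the representation of $f$ in smooth normal coordinates. Since $M$ is compact and $f\in C^{1+\beta}$, the family $\{\wt{f}_x\}_{x\in M}$ admits a uniform $C^{1+\beta}$ bound $K_f$ on some fixed neighborhood of $\un{0}\in\R^2$. The whole game is to absorb the unbounded factor $\|C_\chi(f(x))^{-1}\|$ arising from the left composition, and the high negative exponent $-12/\beta$ in the definition $\wt{Q}_\chi(x)=\epsilon^{3/\beta}\|C_\chi(x)^{-1}\|_{Fr}^{-12/\beta}$ is precisely what will give us the room to do so.

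Parts (1) and (2a) are structural. Since $C_\chi(x)$ is a contraction by Lemma~\ref{Lemma_C_contracts} and $Q_\epsilon(x)\leq \epsilon^{3/\beta}$, the image $C_\chi(x)[R_{10Q_\epsilon(x)}(\un{0})]$ lies in $B^x_{r(M)}(\un{0})$ for $\epsilon$ small, so $\Psi_x=\exp_x\circ C_\chi(x)$ restricts to a diffeomorphism there, and the bound $\|(d\Psi_x)_{\un{u}}\|\leq 2$ follows from chaining $\|(d\exp_x)\|\leq 2$ with $\|C_\chi(x)\|\leq 1$. To see that $f_x$ is well defined on $R_{10Q_\epsilon(x)}(\un{0})$ one checks that $f\circ\Psi_x$ maps this rectangle into $\Psi_{f(x)}(R_{r(M)})$; applying the Lipschitz constant of $f$ to $\Psi_x(R_{10Q_\epsilon(x)})$ yields a ball around $f(x)$ of diameter $O(Q_\epsilon(x))$, which fits inside the ellipse $\Psi_{f(x)}(R_{r(M)})$ once one invokes the bookkeeping bound $\|C_\chi(f(x))^{-1}\|\leq K\|C_\chi(x)^{-1}\|$ derived below together with the generous exponent in $Q_\epsilon(x)$. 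Finally $f_x(\un{0})=\Psi_{f(x)}^{-1}(f(x))=\un{0}$, and the chain rule together with $(d\exp_x)_{\un{0}}=\id$ gives $(df_x)_{\un{0}}=C_\chi(f(x))^{-1}(df)_x C_\chi(x)$, which Theorem~\ref{TheoremOPR} identifies with the diagonal matrix of (2a).

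The heart of the proof is (2b). Writing $L:=(df_x)_{\un{0}}$, I would bound separately $\|f_x(\un{u})-L\un{u}\|$, $\|(df_x)_{\un{u}}-L\|$, and the $\beta/2$-H\"older seminorm of $\un{u}\mapsto(df_x)_{\un{u}}$. For each, Taylor expansion of $\wt{f}_x$ (whose derivative is $\beta$-H\"older with constant $K_f$) produces a bound of the shape $K_f\|C_\chi(f(x))^{-1}\|\cdot\|C_\chi(x)\un{u}\|^{\alpha}$ with $\alpha\in\{1+\beta,\beta,\beta/2\}$; the $\beta/2$ exponent emerges from splitting $\|\un{u}_1-\un{u}_2\|^\beta=\|\un{u}_1-\un{u}_2\|^{\beta/2}\cdot(20\sqrt{2}Q_\epsilon(x))^{\beta/2}$. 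The contraction property reduces this to $K_f\|C_\chi(f(x))^{-1}\|\|\un{u}\|^\alpha$. Rearranging Theorem~\ref{TheoremOPR} as $C_\chi(f(x))^{-1}=D\,C_\chi(x)^{-1}(df)_x^{-1}$ with $D=\mathrm{diag}(\l_\chi(x),\mu_\chi(x))$, the uniform bounds on $\|(df)^{-1}\|$ and $\|D\|$ yield $\|C_\chi(f(x))^{-1}\|\leq K\|C_\chi(x)^{-1}\|$ for a constant depending only on $f$ and $\chi$. Substituting $\|\un{u}\|\leq 10\sqrt{2}Q_\epsilon(x)\leq (10\sqrt{2})\epsilon^{3/\beta}\|C_\chi(x)^{-1}\|^{-12/\beta}$ turns each bound into a constant times $\epsilon^{3\alpha/\beta}\|C_\chi(x)^{-1}\|^{1-12\alpha/\beta}$. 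Since $12\alpha/\beta>1$ for each admissible $\alpha$ and $\|C_\chi(x)^{-1}\|\geq 1$ (as $C_\chi(x)$ is a contraction implies $\|C_\chi(x)^{-1}\|\geq 1/\|C_\chi(x)\|\geq 1$), this is $O(\epsilon^{3/2})$, hence $\leq\epsilon$ for $\epsilon$ small. Injectivity of $f_x$ on $R_{10Q_\epsilon(x)}(\un{0})$ then follows by applying the mean value inequality to $L^{-1}f_x-\id$, using $\|L^{-1}\|\leq C_f$ from Theorem~\ref{TheoremOPR}.

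Part (3) follows by the same argument applied to $f^{-1}$, which is also $C^{1+\beta}$; the symmetric reduction diagonalizes $df^{-1}$ to $\mathrm{diag}(\l_\chi(x)^{-1},\mu_\chi(x)^{-1})$, and the companion bound $\|C_\chi(f^{-1}(x))^{-1}\|\leq K\|C_\chi(x)^{-1}\|$ is derived identically. The principal obstacle throughout is the bookkeeping in (2b): every occurrence of the unbounded norm $\|C_\chi(f(x))^{-1}\|$ must be dominated by a suitable power of $Q_\epsilon(x)$, and the seemingly extravagant exponent $-12/\beta$ in $\wt{Q}_\chi(x)$ is calibrated precisely to handle all three norms in the $C^{1+\beta/2}$ estimate simultaneously with a uniform positive power of $\epsilon$ left over.
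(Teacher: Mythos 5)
Your proposal is correct and follows essentially the same route as the paper's appendix proof: conjugate the normal-coordinate representation $\exp_{f(x)}^{-1}\circ f\circ\exp_x$ by the Lyapunov change of coordinates, use its uniform $C^{1+\b}$ bound, halve the H\"older exponent to produce a factor $Q_\e(x)^{\b/2}$, and absorb the unbounded norm $\|C_\chi(f(x))^{-1}\|$ via the exponent $-12/\b$ in the definition of $Q_\e$ together with the comparability $\|C_\chi(f(x))^{-1}\|\leq K\|C_\chi(x)^{-1}\|$. The differences are cosmetic: the paper bounds only the $\b/2$-H\"older seminorm of $dr_x$, where $r_x:=f_x-(df_x)_{\un{0}}$, and recovers the $C^0$ and $C^1$ estimates from $r_x(\un{0})=\un{0}$, $(dr_x)_{\un{0}}=0$ and the mean value theorem, and it obtains well-definedness and injectivity of $f_x$ simply by checking that $f\circ\Psi_x$ lands inside the injectivity radius of $\exp_{f(x)}$ (so $\Psi_{f(x)}^{-1}$ is a composition of injective maps), rather than inside $\Psi_{f(x)}[R_{r(M)}(\un{0})]$ with a perturbation-of-identity argument.
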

\noindent
This is a version of \cite[Theorem 5.6.1]{BP}. See the appendix for the proof.

\begin{defi}
Suppose $x\in \NUH_\chi(f)$ and $0<\eta\leq Q_\e(x)$. The {\em Pesin chart} $\Psi_x^\eta$ is the map
$\Psi_x: R_\eta(\un{0})\to M$.
\end{defi}

\noindent
Some  additional information on $Q_\e(x)$ (see the appendix for proofs):

\begin{lem}\label{Lemma_Q_tempered}
The following holds for all $\e$ small enough:
\begin{enumerate}
\item $Q_\e(x)<\e^{3/\b}$ on $\NUH_\chi(f)$;
\item $\|C_\chi(f^i (x))^{-1}\|^{12}<\e^{2/\b}/Q_\e(x)$ for $i=-1,0,1$;
\item $\{Q_\e(x):Q_\e(x)>t,x\in\NUH_\chi(f)\}$ is finite for all $t>0$;
\item $\frac{1}{n}\log Q_\e(f^n(x))\xrightarrow[n\to\infty]{}0$ on $\NUH^\ast_\chi(f)$ (cf. Lemma \ref{Lemma_C_tempered});
\item $F^{-1}\leq Q_\e\circ f/Q_\e\leq F$ on $\NUH_\chi(f)$, where $F$ is independent of $\e$;
\item there exists a function $q_\e:\NUH^\ast_\chi(f)\to (0,1)$ so that $q_\e(x)<\e Q_\e(x)$ and $e^{-\e/3}\leq q_\e\circ f/q_\e\leq e^{\e/3}$ on $\NUH^\ast_\chi(f)$.
\end{enumerate}
\end{lem}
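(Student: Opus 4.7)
All six assertions share the same starting point: by definition $Q_\e(x)$ is the largest element of $I_\e=\{e^{-\ell\e/3}:\ell\in\N\}$ that does not exceed $\wt Q_\chi(x)$, and successive elements of $I_\e$ differ by the factor $e^{-\e/3}$, so
$$
e^{-\e/3}\wt Q_\chi(x)\leq Q_\e(x)\leq \wt Q_\chi(x).
$$
This sandwich lets me translate every statement about $Q_\e$ into a statement about $\wt Q_\chi$ (with at most an $e^{\pm\e/3}$ error), and the explicit formula $\wt Q_\chi(x)^\b\|C_\chi(x)^{-1}\|_{Fr}^{12}=\e^3$ lets me translate further into statements about $\|C_\chi^{-1}\|_{Fr}$, where the temperedness input from Lemmas \ref{Lemma_C_contracts} and \ref{Lemma_C_tempered} is directly available.

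Parts (1)--(3) and (5) are algebraic. For (1), Lemma \ref{Lemma_C_contracts} gives $\|C_\chi(x)^{-1}\|_{Fr}\geq\|C_\chi(x)^{-1}\|\geq 1$, hence $\wt Q_\chi(x)\leq \e^{3/\b}$. For (3), $I_\e\cap(t,1]$ is plainly finite. For (2) with $i=0$, I multiply both sides and use (1):
$$
\|C_\chi(x)^{-1}\|^{12}\,Q_\e(x)\leq \e^3\wt Q_\chi(x)^{1-\b}\leq \e^3(\e^{3/\b})^{1-\b}=\e^{3/\b},
$$
which is $\leq \e^{2/\b}$ for $\e$ small since $\b<1$. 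For $i=\pm 1$, the Oseledets--Pesin identity (Theorem \ref{TheoremOPR}) gives $C_\chi(f(x))^{-1}=\mathrm{diag}(\l_\chi(x),\mu_\chi(x))\,C_\chi(x)^{-1}\,(df_x)^{-1}$, whence $\|C_\chi(f^{\pm1}(x))^{-1}\|\leq K\|C_\chi(x)^{-1}\|$ for an $f$-dependent constant $K$, and the $i=0$ bound absorbs the extra factor of $K^{12}$ because there is one extra power of $\e^{1/\b}$ to spare. The same identity yields $\|C_\chi\circ f^{\pm1}\|_{Fr}\asymp\|C_\chi^{-1}\|_{Fr}$ with $f$-dependent constants, so via the explicit formula $\wt Q_\chi\circ f/\wt Q_\chi$ is bounded above and below by an $\e$-independent constant, and the sandwich transfers this to $Q_\e$; this is (5).

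Part (4) is an immediate corollary of Lemma \ref{Lemma_C_tempered}(1): since $\|\cdot\|_{Fr}\leq\sqrt 2\|\cdot\|$, one has $\frac{1}{n}\log\|C_\chi(f^n(x))^{-1}\|_{Fr}\to 0$ on $\NUH^\ast_\chi(f)$, and taking logs in $\wt Q_\chi(x)=\e^{3/\b}\|C_\chi(x)^{-1}\|_{Fr}^{-12/\b}$ and dividing by $n$ gives $\frac{1}{n}\log\wt Q_\chi(f^n(x))\to 0$; the sandwich carries this through to $Q_\e$.

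The one genuinely creative step is (6). The candidate I would try is
$$
q_\e(x):=\e\inf_{n\in\Z}e^{\e|n|/3}Q_\e(f^n(x)).
$$
Taking $n=0$ in the infimum gives $q_\e(x)\leq\e Q_\e(x)$, and $q_\e<1$ then follows from (1). Positivity on $\NUH^\ast_\chi(f)$ is exactly where (4) is needed: since $\frac{1}{n}\log Q_\e(f^n(x))\to 0$, for any $0<\gamma<\e/3$ one has $Q_\e(f^n(x))\geq e^{-\gamma|n|}$ for $|n|$ large, so $e^{\e|n|/3}Q_\e(f^n(x))\to+\infty$ and the infimum is attained at some finite $n$ and is strictly positive. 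Finally, rewriting
$$
q_\e(f(x))=\e\inf_{j\in\Z}e^{\e|j-1|/3}Q_\e(f^j(x))
$$
and using $||j-1|-|j||\leq 1$ gives $e^{-\e/3}q_\e(x)\leq q_\e(f(x))\leq e^{\e/3}q_\e(x)$ by a one-line reindexing. The anticipated obstacle is entirely concentrated here: the positivity of $q_\e$ hinges on the temperedness of $Q_\e$, which is why (4) must be proved first.
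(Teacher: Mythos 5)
Your argument is correct, and its skeleton coincides with the paper's: (1) and (3) are immediate from $\|C_\chi(x)^{-1}\|_{Fr}\ge 1$ and the discreteness of $I_\e$, (4) follows from Lemma \ref{Lemma_C_tempered} together with $Q_\e\asymp\|C_\chi^{-1}\|_{Fr}^{-12/\b}$, and (2), (5) reduce to the fact that $\|C_\chi(f^{\pm1}(x))^{-1}\|$ and $\|C_\chi(x)^{-1}\|$ are comparable up to an $\e$-independent constant. Where you genuinely diverge is in how that comparability and the tempering function $q_\e$ are produced. The paper gets the comparability through Lemma \ref{Lemma_C_norm}, bounding the three ratios $s_\chi\circ f/s_\chi$, $u_\chi\circ f/u_\chi$ and $|\sin\a\circ f|/|\sin\a|$ separately via (\ref{lambda_e}), (\ref{mu_e}) and (\ref{Angle_Formula}); you read it off in one line from the reduction identity $C_\chi(f(x))^{-1}=\mathrm{diag}(\l_\chi(x),\mu_\chi(x))\,C_\chi(x)^{-1}(df_x)^{-1}$ of Theorem \ref{TheoremOPR}, which is cleaner and yields the same kind of constant (depending only on $f$), and the inequality $\|\cdot\|\le\|\cdot\|_{Fr}\le\sqrt2\,\|\cdot\|$ transfers it to the Frobenius norm, which is what (5) needs. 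For (6), the paper uses the sum-type tempering kernel $1/q_\e(x)=\e^{-1}\sum_{k\in\Z}e^{-|k|\e/3}/Q_\e(f^k(x))$, whose convergence rests on exactly the two-sided temperedness ($n\to\pm\infty$, supplied by Lemma \ref{Lemma_C_tempered}) that you invoke to show your infimum $\e\inf_{n\in\Z}e^{\e|n|/3}Q_\e(f^n(x))$ is attained and positive; the two kernels are interchangeable for the purposes of this lemma, and your one-line reindexing for the ratio bound is the same mechanism that makes the paper's sum work.

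Two cosmetic points. First, your inf-definition only gives $q_\e(x)\le\e Q_\e(x)$, with equality possible when the infimum is attained at $n=0$; to get the stated strict inequality simply replace the prefactor $\e$ by $\e/2$ (nothing downstream is affected). Second, in (2) the final step $\e^{3/\b}\le\e^{2/\b}$ holds because $\e<1$, not because $\b<1$ — immaterial, since the lemma is only asserted for all $\e$ small enough.
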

\subsection{$\NUH_\chi^\#(f)$}\label{SectionSharp}
The set $\NUH_\chi^\ast(f)$ constructed in Lemma \ref{Lemma_C_tempered} is $\chi$--large. By the Poincar\'e Recurrence Theorem,  the  set
\begin{equation}\label{NUH_sharp}
\NUH_\chi^\#(f):=\{x\in\NUH_\chi^\ast(f):\limsup_{n\to\infty} q_\e(f^n(x)), \limsup_{n\to\infty}q_\e(f^{-n}(x))\neq 0\}
\end{equation}
is $\chi$--large. This is the set that we will attempt to cover by  a Markov partition.

\section{Overlapping charts}
We would like to replace
$
\mathfs C:=\{\Psi_x^\eta: x\in \NUH^\ast_\chi(f),0<\eta\leq Q_\e(x)\}
$ by a countable collection $\mathfs A$ in such a way that every  element of $\mathfs C$  ``overlaps" some element of $\mathfs A$ ``well". Later, we will    use $\mathfs A$ to construct  the set of vertices of a directed graph  related the dynamics of  $f$.

\subsection{The overlap condition}\label{SectionOC}
We need to compare the maps $C_\chi(x): \R^2\to T_x M$ for different $x\in M$, even though they take values in different spaces.
 We circumvent the problem as follows.
Every $x\in M$ has an open neighborhood $D$ of diameter less than $\rho(M)$ and a smooth map $\Theta_D:TD\to \R^2$ s.t.
\begin{enumerate}
\item $\Theta_D: T_x M\to \R^2$ is a linear isometry for every $x\in D$;
\item let $\vartheta_x:=(\Theta_D|_{T_x M})^{-1}: \R^2\to T_x M$, then  $(x,\un{u})\mapsto(\exp_x\circ \vartheta_x)(\un{u})$ is smooth and Lipschitz on $D\x B_2(\un{0})$ with respect to the metric $d(x,x')+\|\un{u}-\un{u}'\|$;
\item $x\mapsto \vartheta_x^{-1}\circ\exp_x^{-1}$ is a Lipschitz map from $D$ into  $C^{2}(D,\R^2)$, the space of $C^2$ maps  from $D$ to $\R^2$.
\end{enumerate}
Let $\mathfs D$ be an finite cover of $M$ by such neighborhoods. Let $\e(\mathfs D$) be a Lebesgue number for $\mathfs D$. If $d(x,y)<\e(\mathfs D)$, then $x,y$ fall in some element $D$. Instead of comparing $C_\chi(x)$ to $C_\chi(y)$, we will compare $\Theta_D\circ C_\chi(x)$ to $\Theta_D\circ C_\chi(y)$ (two linear maps from $\R^2$ to $\R^2$).

\begin{defi}
Two Pesin charts $\Psi_{x_1}^{\eta_1}, \Psi_{x_2}^{\eta_2}$   {\em $\e$--overlap} if $e^{-\e}<\frac{\eta_1}{\eta_2}<e^{\e}$, and for some $D\in\mathfs D$,
$x_1, x_2\in D$ and
 $
 d(x_1,x_2)+\|\Theta_D\circ C_\chi(x_1)-\Theta_D\circ C_\chi(x_2)\|<\eta_1^4\eta_2^4.
 $
\end{defi}
The overlap condition is symmetric. It is also monotone: if  $\Psi_{x_i}^{\eta_i}$ $\e$--overlap, then $\Psi_{x_i}^{\xi_i}$
 $\e$--overlap for all  $\eta_i\leq\xi_i\leq Q_\e(x_i)$ s.t. $e^{-\e}<\xi_1/\xi_2<e^{\e}$.
 Notice that the overlap requirement is stronger at areas of $\NUH_\chi(f)$ where $s_\chi(x)$ or $u_\chi(x)$ are large or where $\un{e}^s(x)$ and $\un{e}^u(x)$ are nearly parallel. This is because by construction
$$
\eta_i\leq Q_\e(x_i)\ll \|C_\chi(x_i)^{-1}\|^{-1}_{Fr}=\frac{\sqrt{s_\chi(x)^2+u_\chi(x)^2}}{|\sin\a(x)|}.
$$
The following proposition explains what the overlap condition means.

\begin{prop}\label{Prop_Overlap_Meaning}
The following holds for all $\e$ small.
If $\Psi_{x_1}:R_{\eta_1}(\un{0})\to M$ and $\Psi_{x_2}:R_{\eta_2}(\un{0})\to M$ $\e$--overlap, then
\begin{enumerate}
\item $\Psi_{x_1}[R_{e^{-2\e}\eta_1}(\un{0})]\subset\Psi_{x_2}[R_{\eta_2}(\un{0})]$ and $\Psi_{x_2}[R_{e^{-2\e}\eta_2}(\un{0})]\subset\Psi_{x_1}[R_{\eta_1}(\un{0})]$;
\item $\dist_{C^{1+\frac{\b}{2}}}(\Psi_{x_i}^{-1}\circ\Psi_{x_j},\id)<\e\eta_i^2\eta_j^2$  $(\{i,j\}=\{1,2\})$, where the $C^{1+\frac{\b}{2}}$--distance is calculated on $R_{e^{-\e}r(M)}(\un{0})$ and $r_M$ is defined in  $(\ref{r_M})$.
\end{enumerate}
\end{prop}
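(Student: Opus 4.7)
The plan is to decompose the transition map in a way that separates the linear Lyapunov change of coordinates from the nonlinear change of basepoint, then estimate each piece. Pick $D\in\mathfs D$ containing both $x_1$ and $x_2$ (guaranteed by the overlap condition) and set $L_i:=\Theta_D\circ C_\chi(x_i):\R^2\to\R^2$ and $\Phi_{x_i}:=\exp_{x_i}\circ\vartheta_{x_i}:\R^2\to M$, so that $\Psi_{x_i}=\Phi_{x_i}\circ L_i$. Then
$$
\Psi_{x_2}^{-1}\circ\Psi_{x_1}=L_2^{-1}\circ T\circ L_1,\qquad T:=\Phi_{x_2}^{-1}\circ\Phi_{x_1}.
$$
Property (3) of the atlas $\mathfs D$ says $x\mapsto\Phi_x^{-1}$ is Lipschitz from $D$ into $C^2(D,\R^2)$, from which I would deduce $\|T-\id\|_{C^{1+\b/2}}\leq K\,d(x_1,x_2)$ on a fixed neighborhood of $\un 0$, for some constant $K$ depending only on $\mathfs D$. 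Writing $F:=\Psi_{x_2}^{-1}\Psi_{x_1}$ this gives
$$
F-\id=L_2^{-1}(T-\id)L_1+L_2^{-1}(L_1-L_2),
$$
from which both conclusions can be read off by estimating the right-hand side.

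For assertion (1), since $C_\chi$ is a contraction (Lemma~\ref{Lemma_C_contracts}), $\|L_1\|\leq 1$, and so $\|F-\id\|_{C^0}\leq \|L_2^{-1}\|\bigl(\|T-\id\|_{C^0}+\eta_1\|L_1-L_2\|\bigr)$ on $R_{\eta_1}(\un 0)$. The overlap bound $d(x_1,x_2)+\|L_1-L_2\|<\eta_1^4\eta_2^4$, together with the temperedness estimate $\|C_\chi(x_2)^{-1}\|^{12}<\e^{2/\b}/Q_\e(x_2)\leq\e^{2/\b}/\eta_2$ from Lemma~\ref{Lemma_Q_tempered}(2), gives $\|F-\id\|_{C^0}\lesssim\e^{1/(6\b)}\eta_2^{-1/12}\eta_1^4\eta_2^4$, which is much smaller than $(1-e^{-\e})\eta_2$. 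Since $\eta_1/\eta_2<e^\e$ forces $e^{-2\e}\eta_1<e^{-\e}\eta_2$, this is enough to push $R_{e^{-2\e}\eta_1}(\un 0)$ inside $R_{\eta_2}(\un 0)$; the reverse inclusion follows by symmetry.

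For assertion (2), I use the standard chain-rule fact that pre- and post-composition with linear maps $A,B$ multiply the $C^{1+\b/2}$-norm of a map by at most $\|B\|\cdot\|A\|^{1+\b/2}$. Applied termwise to the identity for $F-\id$ above, this gives
$$
\|F-\id\|_{C^{1+\frac{\b}{2}}}\;\lesssim\;\|L_2^{-1}\|\cdot\|L_1\|^{1+\frac{\b}{2}}\,\|T-\id\|_{C^{1+\frac{\b}{2}}}+\|L_2^{-1}\|\cdot\|L_1-L_2\|.
$$
Combining $\|T-\id\|_{C^{1+\b/2}}\lesssim d(x_1,x_2)$, the overlap inequality, $\|L_1\|\leq 1$, and the bound on $\|L_2^{-1}\|$ from Lemma~\ref{Lemma_Q_tempered}(2) gives $\|F-\id\|_{C^{1+\b/2}}\lesssim\e^{1/(6\b)}\eta_1^4\eta_2^{47/12}$. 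Lemma~\ref{Lemma_Q_tempered}(1) says $\eta_i\leq\e^{3/\b}$, which absorbs the factor $\e^{1/(6\b)}\eta_2^{-1/12}$ into one extra power of $\e$ with room to spare, so the bound is smaller than $\e\,\eta_1^2\eta_2^2$ once $\e$ is small.

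The main obstacle — and really the whole point — is the bookkeeping of exponents. The fourth-power factor $\eta_1^4\eta_2^4$ in the overlap condition and the $12/\b$ exponent in the definition of $Q_\e$ were tailored precisely so that $\|C_\chi(x_2)^{-1}\|\cdot\eta_1^4\eta_2^4\ll \e\,\eta_1^2\eta_2^2$ in spite of the possibly large factor $\|C_\chi(x_2)^{-1}\|$. Once one checks that these exponents are matched, the remainder is routine chain-rule arithmetic and an appeal to the Lipschitzness of the background atlas $\mathfs D$ supplied by property (3).
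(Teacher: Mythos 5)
Your proposal is correct and follows essentially the same route as the paper: the same decomposition of $\Psi_{x_2}^{-1}\circ\Psi_{x_1}$ into the linear discrepancy $C_2^{-1}C_1-\id$ plus a term controlled by the Lipschitz dependence of $x\mapsto\vartheta_x^{-1}\circ\exp_x^{-1}$ (property (3) of the atlas $\mathfs D$), with the same exponent bookkeeping in which $\eta_i\leq\e^{3/\b}\|C_\chi(x_i)^{-1}\|^{-12/\b}$ absorbs the factor $\|C_\chi(x_2)^{-1}\|\,\eta_1^4\eta_2^4$ well below $\e\eta_1^2\eta_2^2$. The one point you leave implicit---that $\Psi_{x_2}^{-1}\circ\Psi_{x_1}$ is actually defined on the macroscopic box $R_{e^{-\e}r(M)}(\un{0})$, i.e. $\Psi_{x_1}[R_{e^{-\e}r(M)}(\un{0})]\subset\Psi_{x_2}[R_{r(M)}(\un{0})]$---is handled in the paper by the same ball-image argument used for part (1), so it is a gloss rather than a gap.
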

\begin{proof}
Suppose $\Psi_{x_i}^{\eta_i}$ $\e$--overlap, and fix some $D\in\mathfs D$ which contains $x_1$ and $x_2$
such that $d(x_1,x_2)+\|\Theta_D\circ C_\chi(x_1)
-\Theta_D\circ C_\chi(x_2)\|<\eta_1^4\eta_2^4$. Write $C_i:=\Theta_D\circ C_\chi(x_i)$, then
 $
 \Psi_{x_i}=\exp_{x_i}\circ\vartheta_{x_i}\circ C_i.
 $

By the  definition of Pesin charts, $\eta_i\leq Q_\e(x_i)$, where $Q_\e(x_i)$ is given by
(\ref{Qdef}). Lemma \ref{Lemma_C_contracts} and  the general inequality   $\|\cdot\|_{Fr}\geq \|\cdot\|$ (see page \pageref{FrobeniusPage}) guarantee that
\begin{equation}\label{settle}
\eta_i\leq \e^{3/\b}\|C_\chi(x_i)^{-1}\|^{-12/\b}.
\end{equation}
In particular, $\eta_i<\e^{3/\b}$.

Our first constraint on $\e$ is that it be so small that
\begin{equation}\label{irene}
\e^{3/\b}<\frac{\min\{1,r(M),\rho(M)\}}{5(L_1+L_2+L_3+L_4)^3},
\end{equation}
 where $r(M)$ and $\rho(M)$ are given by (\ref{r_M}), and
\begin{enumerate}
\item $L_1$ is a common Lipschitz constant for the maps $(x,\un{v})\mapsto(\exp_x\circ\vartheta_x)(\un{v})$ on $D\x B_{r(M)}(\un{0})$ $(D\in\mathfs D)$;\label{L_Page}
\item $L_2$ is a common Lipschitz constant for the maps $x\mapsto \vartheta_x^{-1}\circ\exp_x^{-1}$ from $D$ into $C^{2}(D,\R^2)$ ($D\in\mathfs D$);
\item $L_3$ is a  common Lipschitz constant for $\exp_x^{-1}: B_{\rho(M)}(x)\to T_x M$ $(x\in M)$;
\item $L_4$ is a common Lipschitz constant for $\exp_x: B_{r(M)}^x(\un{0})\to M$ $(x\in M)$.
\end{enumerate}
We assume w.l.o.g. that these constants are all larger than one.

\medskip
\noindent
{\em Part 1.\/} $\Psi_{x_1}[R_{e^{-2\e}\eta_1}(\un{0})]\subset\Psi_{x_2}[R_{\eta_2}(\un{0})]$.

\medskip
\noindent
{\em Proof.\/}
 Suppose $\un{v}\in R_{e^{-2\e}\eta_1}(\un{0})$. Lemma \ref{Lemma_C_contracts} says that
 $C_\chi(x_1)$ is  a contraction, therefore
 $
\|C_1\un{v}\|=\|C_\chi(x_1)\un{v}\|\leq \|\un{v}\|$, and $(x_1,{C_1}\un{v}), (x_2,{C_1}\un{v})\in D\x B_{r(M)}(\un{0})$.  Since $d(x_1,x_2)<\eta_1^4\eta_2^4$,
 $$
 d\left(\exp_{x_2}\circ\vartheta_{x_2}[C_1\un{v}],\exp_{x_1}\circ\vartheta_{x_1}[C_1\un{v}]\right)<L_1\eta_1^4\eta_2^4.
 $$
It follows that $\Psi_{x_1}(\un{v})\in B_{L_1\eta_1^4\eta_2^4}(\exp_{x_2}\circ\vartheta_{x_2}(C_1\un{v}))$. Call this ball $B$.

The radius of $B$ is less than $\rho(M)$ because of our assumptions on $\e$. Therefore $\exp_{x_2}^{-1}$ is well defined and Lipschitz on $B$, and its Lipschitz constant  is at most $L_3$. Writing $B=\exp_{x_2}[\exp_{x_2}^{-1}(B)]$, we deduce that
$$
\Psi_{x_1}(\un{v})\in B\subset \exp_{x_2}[B_{L_3L_1\eta_1^4\eta_2^4}^{x_2}(\vartheta_{x_2}(C_1\un{v}))]=:\Psi_{x_2}[E],
$$
where ${E}:=C_\chi(x_2)^{-1}[B_{L_3L_1\eta_1^4\eta_2^4}^{x_2}(\vartheta_{x_2}(C_1\un{v}))]$.

We claim that $E\subset R_{\eta_2}(\un{0})$.  First note that
$
E\subset B_{\|C_\chi(x_2)^{-1}\| L_3 L_1 \eta_1^4\eta_2^4}(C_2^{-2}C_1\un{v}),
$
 therefore if $\un{w}\in E$, then
\begin{align*}
\|\un{w}\|_\infty&\leq \|C_2^{-1}C_1\un{v}\|_\infty+\|C_\chi(x_2)^{-1}\| L_3 L_1 \eta_1^4\eta_2^4\\
&\leq \|(C_2^{-1}C_1-\id)\un{v}\|_\infty+ \|\un{v}\|_\infty+\|C_\chi(x_2)^{-1}\| L_3 L_1 \eta_1^4\eta_2^4\\
&\leq \|\un{v}\|_\infty +\sqrt{2}\|C_2^{-1}\|\|C_1-C_2\|\|\un{v}\|_\infty+\|C_\chi(x_2)^{-1}\| L_3 L_1 \eta_1^4\eta_2^4\\
&\leq e^{-2\e}\eta_1+\|C_\chi(x_2)^{-1}\|(\eta_1^4\eta_2^4\sqrt{2} e^{-2\e}\eta_1+L_3 L_1 \eta_1^4\eta_2^4)\ \ (\because \|C_1-C_2\|<\eta_1^4\eta_2^4)\\
&\leq e^{-2\e}\eta_1+\|C_\chi(x_2)^{-1}\|\eta_2^4\cdot [(e^{-2\e}\sqrt{2}\eta_1+L_3 L_1)\eta_1^3]\cdot\eta_1\\
&< e^{-2\e}\eta_1+\e^2 \eta_1,\ \ \textrm{ because of (\ref{settle}) and (\ref{irene})}\\
&<e^\e(e^{-2\e}+\e^2)\eta_2<\eta_2, \textrm{ because $\eta_1<e^{\e}\eta_2$ and $0<\e<\tfrac{1}{5}$ by (\ref{irene})}.
\end{align*}
It follows that $E\subset R_{\eta_2}(\un{0})$. Thus  $\Psi_{x_1}(\un{v})\in \Psi_{x_2}[R_{\eta_2}(\un{0})]$. Part 1 follows.

\medskip
\noindent
{\em Part 2.\/} The $C^{1+\b/2}$--distance between $\Psi_{x_1}^{-1}\circ\Psi_{x_2}$  on
$R_{e^{-\e}r(M)}(\un{0})$ is less than $\e\eta_1$.

\medskip
\noindent
{\em Proof.\/} One can show exactly as in the proof of part 1 that $\Psi_{x_1}[R_{e^{-\e}r(M)}(\un{0})]\subset \Psi_{x_2}[R_{r(M)}(\un{0})]$, therefore $\Psi_{x_1}^{-1}\circ\Psi_{x_2}$  is well defined on
$R_{e^{-\e}r(M)}(\un{0})$.
We calculate the distance of this map from the identity:
\begin{align*}
\Psi_{x_1}^{-1}\circ \Psi_{x_2}&=C_1^{-1}\circ\vartheta_{x_1}^{-1}\circ \exp_{x_1}^{-1}\circ\exp_{x_2}\circ\vartheta_{x_2}\circ C_2\\
&=C_1^{-1}\circ[\vartheta_{x_1}^{-1}\circ \exp_{x_1}^{-1}+\vartheta_{x_2}^{-1}\circ\exp_{x_2}^{-1}-\vartheta_{x_2}^{-1}\circ\exp_{x_2}^{-1}]\circ\exp_{x_2}\circ\vartheta_{x_2}\circ C_2\\
&=C_1^{-1}C_2+C_1^{-1}\circ[\vartheta_{x_1}^{-1}\circ \exp_{x_1}^{-1}-\vartheta_{x_2}^{-1}\circ\exp_{x_2}^{-1}]\circ\Psi_{x_2}\\
&=\id+C_1^{-1}(C_2-C_1)+C_1^{-1}\circ[\vartheta_{x_1}^{-1}\circ \exp_{x_1}^{-1}-\vartheta_{x_2}^{-1}\circ\exp_{x_2}^{-1}]\circ\Psi_{x_2}.
\end{align*}
The $C^{1+\b/2}$--norm of the  second summand is less than $\|C_1^{-1}\|\eta_1^4\eta_2^4$. The $C^{1+\b/2}$--norm of the third summand is less than
$$
\|C_1^{-1}\|\cdot L_2d(x_1,x_2)\cdot L_4^{1+\frac{\b}{2}}.
$$
This is less than $\|C_1^{-1}\|L_2 L_4^2 \eta_1^4\eta_2^4$.

It follows that $\dist_{C^{1+\b/2}}(\Psi_{x_1}^{-1}\circ\Psi_{x_2},\id)<\|C_1^{-1}\|(1+L_2 L_4^2)\eta_1^4\eta_2^4$. This is (much) smaller than $\e\eta_1^2\eta_2^2$, because of (\ref{settle}) and (\ref{irene}).
\end{proof}
We record the following fact for future reference:
\begin{lem}\label{Lemma_QQ}
Suppose $\Psi_{x_1}^{\eta_1} ,\Psi_{x_2}^{\eta_2}$ $\e$--overlap, then
$$
\frac{s_\chi(x_1)}{s_\chi(x_2)},\frac{u_\chi(x_1)}{u_\chi(x_2)}\in [ e^{-Q_\e(x_1)Q_\e(x_2)},e^{Q_\e(x_1)Q_\e(x_2)}].
$$
\end{lem}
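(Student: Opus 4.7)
The strategy is to extract the $s_\chi, u_\chi$ information from the overlap condition by evaluating the compared maps on the basis vectors $\un{e}_1, \un{e}_2$. Because $\Theta_D$ restricts to a linear isometry on each $T_{x_i}M$ and $C_\chi(x_i)\un{e}_1 = s_\chi(x_i)^{-1}\un{e}^s(x_i)$ (and similarly for $\un{e}_2, u_\chi$), one has $\|\Theta_D\circ C_\chi(x_i)\un{e}_1\| = s_\chi(x_i)^{-1}$. Applying the operator norm inequality to $\un{e}_1$ and the reverse triangle inequality to the overlap bound $\|\Theta_D\circ C_\chi(x_1) - \Theta_D\circ C_\chi(x_2)\| < \eta_1^4\eta_2^4$ immediately yields $|s_\chi(x_1)^{-1} - s_\chi(x_2)^{-1}| < \eta_1^4\eta_2^4$, hence
\[
\left|\tfrac{s_\chi(x_1)}{s_\chi(x_2)} - 1\right| \leq s_\chi(x_1)\,\eta_1^4\eta_2^4,
\]
and analogously for the ratio $u_\chi(x_1)/u_\chi(x_2)$ by evaluating on $\un{e}_2$.

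Next I would convert the right-hand side into a bound depending only on $Q_\e$. Lemma \ref{Lemma_C_norm} gives $s_\chi(x_1)\leq\|C_\chi(x_1)^{-1}\|_{Fr}$, and the definition \eqref{Qdef} of $Q_\e$ rearranges to $\|C_\chi(x_1)^{-1}\|_{Fr} \leq \e^{1/4}Q_\e(x_1)^{-\b/12}$. Since $\eta_i \leq Q_\e(x_i) < 1$ by Lemma \ref{Lemma_Q_tempered}(1) and $4 - \b/12 > 1$, this combines to
\[
s_\chi(x_1)\,\eta_1^4\eta_2^4 \leq \e^{1/4}Q_\e(x_1)^{4-\b/12}Q_\e(x_2)^4 \leq \e^{1/4}\,Q_\e(x_1)Q_\e(x_2),
\]
which is at most $\tfrac12 Q_\e(x_1)Q_\e(x_2)$ once $\e$ is small. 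Feeding this into the elementary inequality ``$|y-1|\leq t/2$ with $t\in(0,1]$ implies $|\log y|\leq t$,'' applied with $t = Q_\e(x_1)Q_\e(x_2)<1$, produces the desired two-sided exponential bound $s_\chi(x_1)/s_\chi(x_2)\in[e^{-Q_\e(x_1)Q_\e(x_2)},e^{Q_\e(x_1)Q_\e(x_2)}]$.

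The $u_\chi$ case is literally identical with $\un{e}_2$ in place of $\un{e}_1$, so no further argument is needed. I do not anticipate any serious obstacle; the entire proof is bookkeeping. The only point requiring genuine care is the power count in the middle step, where the negative exponent $-\b/12$ picked up from $\|C_\chi^{-1}\|_{Fr}$ must be absorbed by the positive exponent $4$ coming from $\eta_1^4$. This is precisely why the exponent $12/\b$ was built into the definition of $\wt Q_\chi$ in \eqref{Qdef}: it guarantees a comfortable gap so that one full factor $Q_\e(x_1)Q_\e(x_2)$ survives for the exponential bound, with everything else swallowed by the $\e^{1/4}$ prefactor.
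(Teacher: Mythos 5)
Your proof is correct and follows essentially the same route as the paper: compare $s_\chi(x_i)^{-1}=\|\Theta_D C_\chi(x_i)\un{e}_1\|$ (resp.\ $u_\chi(x_i)^{-1}$ with $\un{e}_2$), bound the difference by the distance between the matrices $\Theta_D\circ C_\chi(x_i)$, and absorb the factor $s_\chi(x_1)\le\|C_\chi(x_1)^{-1}\|_{Fr}$ via the definition (\ref{Qdef}) of $Q_\e$, finishing with the elementary conversion $|y-1|\le\tfrac12 Q_\e(x_1)Q_\e(x_2)\Rightarrow y\in[e^{-Q_\e(x_1)Q_\e(x_2)},e^{Q_\e(x_1)Q_\e(x_2)}]$, which is exactly the paper's concluding step. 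The only divergence is in the middle: you read $\|\Theta_D\circ C_\chi(x_1)-\Theta_D\circ C_\chi(x_2)\|<\eta_1^4\eta_2^4$ directly off the definition of $\e$--overlap, whereas the paper re-derives a weaker (but still sufficient) estimate $\|C_1-C_2\|<2\e\|C_2^{-1}\|\eta_1^2\eta_2^2$ by differentiating $\Psi_{x_2}^{-1}\circ\Psi_{x_1}$ at $\un{0}$ and invoking Proposition \ref{Prop_Overlap_Meaning}; your shortcut is legitimate and, if anything, cleaner.
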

\begin{proof} We use the notation of the previous proof.
 $\Psi_{x_2}^{-1}\circ\Psi_{x_1}$ maps $R_{e^{-\e}\eta_1}(\un{0})$ into $\R^2$. Its derivative at the origin is
\begin{align*}
 A&:=
C_\chi(x_2)^{-1} d(\exp_{x_2}^{-1})_{x_1}C_\chi(x_1)=C_2^{-1}d[\vartheta_{x_2}^{-1}\exp_{x_2}^{-1}]_{x_1}\vartheta_{x_1} C_1\\
&=C_2^{-1} C_1+C_2^{-1}[d[\vartheta_{x_2}^{-1}\exp_{x_2}^{-1}]_{x_1}-\vartheta_{x_1}^{-1}] \vartheta_{x_1}C_1\\
&\equiv C_2^{-1} C_1+C_2^{-1}\left(d[\vartheta_{x_2}^{-1}\exp_{x_2}^{-1}]_{x_1}-d[\vartheta_{x_1}^{-1}\exp_{x_1}^{-1}]_{x_1}\right)\vartheta_{x_1} C_1.
\end{align*}
Since  $
\|d[\vartheta_{x_2}^{-1}\exp_{x_2}^{-1}]_{x_1}-d[\vartheta_{x_1}^{-1}\exp_{x_1}^{-1}]_{x_1}\|<L_2 d(x_1,x_2)<L_2 \eta_1^4\eta_2^4<\e\eta_1^2\eta_2^2$, and
$
\|A-\id\|<\dist_{C^1}(\Psi_{x_2}^{-1}\circ\Psi_{x_1},\id)<\e\eta_1^2\eta_2^2,
$
$$
\|C_2^{-1}C_1-\id\|<2\e\eta_1^2\eta_2^2.
$$
It follows that $\|C_2-C_1\|<2\e\|C_2^{-1}\|\eta_1^2\eta_2^2$.

Recall that $s_\chi(x_i)^{-1}=\|C_\chi(x_i)\un{e}_1\|$ and $s_\chi(x_i)=\|C_\chi(x_i)^{-1}\un{e}^s(x_i)\|$, so
\begin{align*}
\left|\frac{s_\chi(x_1)}{s_\chi(x_2)}-1\right|&=\left|\frac{s_\chi(x_2)^{-1}-s_\chi(x_1)^{-1}}{s_\chi(x_1)^{-1}}\right|\\
&\leq \|C_\chi(x_1)^{-1}\|\cdot\bigl| \|C_\chi(x_1)\un{e}_1\|-\|C_\chi(x_2)\un{e}_1\|\bigr|\\
&=\|C_1^{-1}\|\cdot\bigl| \|C_1\un{e}_1\|-\|C_2\un{e}_1\|\bigr|\\
&\leq \|C_1^{-1}\|\cdot\|C_1-C_2\|<2\e\|C_1^{-1}\|\|C_2^{-1}\|\eta_1^2\eta_2^2< \e \eta_1\eta_2.
\end{align*}
Similarly
$
\left|\frac{u_\chi(x_1)}{u_\chi(x_2)}-1\right|<\e\eta_1\eta_2.
$
Since $\eta_i<Q_\e(x_i)$, the lemma follows.
\end{proof}

\subsection{The form of $f$ in overlapping charts} Theorem \ref{Theorem_OP_charts} says that $\Psi_{f(x)}^{-1}\circ f\circ\Psi_x$ is close to a linear hyperbolic map. This remains the case if we replace  $\Psi_{f(x)}$ by some overlapping chart $\Psi_y$:
\begin{prop}\label{Prop_f_xy}
The following holds for all $\e$ small enough. Suppose $x,y\in\NUH_\chi(f)$ and $\Psi_{f(x)}^\eta$ $\e$--overlaps $\Psi_y^{\eta'}$, then
$
f_{xy}:=\Psi_y^{-1}\circ f\circ \Psi_x
$
is a  well defined injective map from  $R_{10Q_\e(x)}(\un{0})$ to $\R^2$, and $f_{xy}$  can be put in the form
\begin{equation}\label{offenbach}
f_{xy}(u,v)=(Au+h_1(u,v), Bv+h_2(u,v)),
\end{equation}
where $C_f^{-1}<|A|<e^{-\chi}$, $e^\chi<|B|<C_f$ (cf. Theorem \ref{TheoremOPR}),  $|h_i(\un{0})|<\e\eta$, $\|\nabla h_i(\un{0})\|<\e\eta^{\b/3}$, and $\|\nabla h_i(\un{u})-\nabla h_i(\un{v})\|\leq \e\|\un{u}-\un{v}\|^{\b/3}$ on $R_{10 Q_\e(x)}(\un{0})$.

\medskip
\noindent
A similar  statement holds for $f_{xy}^{-1}$, assuming that
 $\Psi_{f^{-1}(y)}^{\eta'}$ $\e$--overlaps $\Psi_x^{\eta}$.
\end{prop}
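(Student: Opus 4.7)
The plan is to factor $f_{xy}$ through the Pesin chart at $f(x)$, separating its ``hyperbolic'' content from its ``chart--change'' content:
\[
f_{xy}=\Psi_y^{-1}\circ f\circ\Psi_x=(\Psi_y^{-1}\circ\Psi_{f(x)})\circ(\Psi_{f(x)}^{-1}\circ f\circ\Psi_x)=:G\circ f_x.
\]
The inner factor $f_x$ is precisely the object governed by Theorem \ref{Theorem_OP_charts}: it is a well-defined injection on $R_{10Q_\e(x)}(\un 0)$, fixes the origin, has diagonal linearization at $\un 0$ with eigenvalues $A,B$ satisfying the required bounds, and is $C^{1+\b/2}$--close to this linearization. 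The outer factor $G$ is a change of coordinates between two overlapping Pesin charts, and Proposition \ref{Prop_Overlap_Meaning}(2) asserts that it is a $C^{1+\b/2}$--perturbation of the identity of size $<\e\eta^2(\eta')^2$ on $R_{e^{-\e}r(M)}(\un 0)$. Composing these two factors should yield the required form, with the nonlinear remainder $h_i$ inheriting small values and small H\"older derivatives from both sources.

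Executing this, I would first write $f_x(u,v)=(Au+\phi_1(u,v),Bv+\phi_2(u,v))$, where Theorem \ref{Theorem_OP_charts} gives $\phi_i(\un 0)=0$, $\nabla\phi_i(\un 0)=\un 0$, and a $\b/2$--H\"older bound on $\nabla\phi_i$ with constant $\e$. Since $\|df_x\|\leq C_f+\e$ on the domain and $Q_\e(x)<\e^{3/\b}$ by Lemma \ref{Lemma_Q_tempered}(1), the image $f_x(R_{10Q_\e(x)}(\un 0))$ sits well inside $R_{e^{-\e}r(M)}(\un 0)$, on which $G$ is controlled; thus $f_{xy}=G\circ f_x$ is well defined, and injectivity follows from that of $f_x$ combined with $G$ being a near-identity diffeomorphism. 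Writing $G(u,v)=(u+\Delta_1(u,v),v+\Delta_2(u,v))$, the composition takes the form $f_{xy}(u,v)=(Au+h_1,Bv+h_2)$ with $h_i=\phi_i+\Delta_i\circ f_x$. The three bounds on $h_i$ then follow termwise: at the origin, $|h_i(\un 0)|=|\Delta_i(\un 0)|<\e\eta^2(\eta')^2<\e\eta$; the gradient at the origin is $(d\Delta_i)_{\un 0}\circ(df_x)_{\un 0}$, bounded by $C_f\e\eta^2(\eta')^2<\e\eta^{\b/3}$ (since $\eta^{2-\b/3}(\eta')^2$ is tiny for $\e$ small); and the $\b/2$--H\"older estimate on $\nabla h_i$ with constant $<\e$ (the chain rule applied to $\Delta_i\circ f_x$ contributes only a negligible $\e\eta^2(\eta')^2\|df_x\|^{1+\b/2}$ beyond the constant $\e$ coming from $\nabla\phi_i$) converts to a $\b/3$--H\"older estimate with strictly smaller constant, because the domain has diameter $\leq 20Q_\e(x)<1$ and $\|\un p-\un q\|^{\b/2}\leq(20Q_\e(x))^{\b/6}\|\un p-\un q\|^{\b/3}$.

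The main obstacle is the bookkeeping in the H\"older estimate: one must verify that the chain-rule ``loss'' does not eat into the prefactor $\e\eta^2(\eta')^2$ and that converting from exponent $\b/2$ to $\b/3$ really improves the constant on this domain rather than costing something. The symmetric statement for $f_{xy}^{-1}$ follows from the dual factorization $f_{xy}^{-1}=(\Psi_x^{-1}\circ\Psi_{f^{-1}(y)})\circ(\Psi_{f^{-1}(y)}^{-1}\circ f^{-1}\circ\Psi_y)$, applying part (3) of Theorem \ref{Theorem_OP_charts} to the inner factor and Proposition \ref{Prop_Overlap_Meaning} to the hypothesized overlap of $\Psi_{f^{-1}(y)}^{\eta'}$ with $\Psi_x^\eta$.
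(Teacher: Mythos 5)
Your proposal is correct and follows essentially the same route as the paper's own proof: the identical factorization $f_{xy}=(\Psi_y^{-1}\circ\Psi_{f(x)})\circ f_x$, with Theorem \ref{Theorem_OP_charts} controlling $f_x$ and Proposition \ref{Prop_Overlap_Meaning}(2) controlling the chart change, and the same trick of trading the $\b/2$--H\"older bound for a $\b/3$--H\"older bound with strictly smaller constant using the small diameter of $R_{10Q_\e(x)}(\un{0})$.
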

\begin{proof}
We write $f_{xy}=(\Psi_y^{-1}\circ\Psi_{f(x)})\circ f_x$ where $f_x=\Psi_{f(x)}^{-1}\circ f\circ\Psi_x$, and treat $f_{xy}$ as a perturbation of $f_x$.

By Theorem \ref{Theorem_OP_charts},  if $\e$ is small enough, then $f_x$ has the following properties:
\begin{enumerate}
\item It is is well--defined, differentiable, and injective on $R_{10Q_\e(x)}(\un{0})$.
\item $f_x(\un{0})=\un{0}$ and $(df_x)_{\un{0}}=\left(\begin{array}{cc}
A & 0 \\
0 & B
\end{array}\right)$\! where $C_f^{-1}<|A|<e^{-\chi}$, $e^\chi<|B|<C_f$.
\item For all $\un{u},\un{v}\in R_{10 Q_\e(x)}(\un{0})$, $\|(df_x)_{\un{u}}-(df_x)_{\un{v}}\|\leq \e\|\un{u}-\un{v}\|^{\b/2}$ (because the $C^{1+\frac{\b}{2}}$ distance between $f_x$ and $(df_x)_{\un{0}}$ on $R_{10 Q_\e(x)}(\un{0})$ is less than $\e$).
\item For every $0<\eta<10Q_\e(x)$ and $\un{u}\in R_\eta(\un{0})$, $\|(df_x)_{\un{u}}\|<3C_f$, provided $\e$ is small enough (because $\|(df_x)_{\un{u}}\|\leq \|(df_x)_{\un{0}}\|+\e\eta^{\b/2}<2C_f+\e$).
\end{enumerate}

(2) and (4) imply that  $f_x[R_{10 Q_\e(x)}(\un{0})]\subset B_{30 Q_\e(x)C_f}(\un{0})$. Since $Q_\e(x)<\e^{3/\b}$,
$
f_x[R_{10 Q_\e(x)}(\un{0})]\subset B_{30C_f\e^{3/\b}}(\un{0}).
$
If $\e$ so small that $30C_f\e^{3/\b}<e^{-\e}r(M)$, then
$
f_x[R_{10 Q_\e(x)}(\un{0})]\subset R_{e^{-\e}r(M)}(\un{0}).
$
 $R_{e^{-\e}r(M)}(\un{0})$ is in the domain of $\Psi_{y}^{-1}\circ\Psi_{f(x)}$ (Proposition \ref{Prop_Overlap_Meaning}, part 2), therefore  $f_{xy}$ is well defined, differentiable, and injective on $R_{10 Q_\e(x)}(\un{0})$.

\medskip
Equation  (\ref{offenbach}) can be used to define the functions $h_i(u,v)$. We check that the resulting functions satisfy the properties proclaimed by the proposition.

We have $(h_1(\un{0}),h_2(\un{0}))=f_{xy}(\un{0})=\Psi_y^{-1}(f(x))=(\Psi_y^{-1}\circ\Psi_{f(x)})(\un{0})$, therefore $\|(h_1(\un{0}),h_2(\un{0}))\|\leq \dist_{C^0}(\Psi_y^{-1}\circ\Psi_{f(x)}, \id)<\e\eta^2(\eta')^2<\e\eta$.

\medskip
We differentiate the identity $f_{xy}=(\Psi_y^{-1}\circ\Psi_{f(x)})\circ f_x$ at an arbitrary $\un{u}\in R_\eta(\un{0})$. The result, after some rearrangement is
\begin{equation}\label{julietta}
(df_{xy})_{\un{u}}=[d(\Psi_y^{-1}\circ\Psi_{f(x)})_{f_x(\un{u})}-\id](df_x)_{\un{u}}+[(df_x)_{\un{u}}-(df_x)_{\un{0}}]+(df_x)_{\un{0}}.
\end{equation}
The norm of the first summand is less than $3C_f\dist_{C^1}(\Psi_y^{-1}\circ\Psi_{f(x)},\id)$, which  by Proposition \ref{Prop_Overlap_Meaning}   is less than $3C_f\e\eta^2(\eta')^2<3C_f\e\eta^2$. The norm of the second summand is less than $\e\|\un{u}\|^{\b/2}<2\e\eta^{\b/2}$. The third term is $\left(\begin{array}{cc}
A & 0 \\ 0 & B
\end{array}\right)$. Thus
\begin{align*}
\left\|\frac{\partial(h_1,h_2)}{\partial(u,v)}\right\|&=\left\|(df_{xy})_{\un{u}}-\left(\begin{array}{cc}
A & 0 \\ 0 & B
\end{array}\right)\right\|<\e[3C_f+2]\eta^{\b/2}\\
&<\e\eta^{\b/3}\cdot [3C_f+2]\eta^{\b/6}<\e\eta^{\b/3}\cdot [3C_f+2]\sqrt{\e}\ \ \textrm{by (\ref{settle})}.
\end{align*}
If  $\e$ is so small that $[3C_f+2]\sqrt{\e}<1$, then  $\|\nabla h_i\|<\e\eta^{\b/3}$ on $R_\eta(\un{0})$. In particular, $\|\nabla h_i(\un{0})\|<\e\eta^{\b/3}$.

Equation (\ref{julietta}) also shows that for every $\un{u},\un{v}\in R_{10 Q_\e(x)}(\un{0})$,
\begin{align*}
\|(df_{xy})_{\un{u}}-(df_{xy})_{\un{v}}\|&\leq \|d(\Psi_y^{-1}\circ\Psi_{f(x)})_{f_x(\un{u})}-d(\Psi_y^{-1}\circ\Psi_{f(x)})_{f_x(\un{v})}\|\cdot\|(df_x)_{\un{u}}\|\\
&\hspace{1cm}+\|(df_x)_{\un{u}}-(df_x)_{\un{v}}\|\cdot\left(\|d(\Psi_y^{-1}\circ\Psi_{f(x)})_{f_x(\un{v})}\|+1\right).
\end{align*}
By Proposition \ref{Prop_Overlap_Meaning}, $\dist_{C^{1+\b/2}}(\Psi_y^{-1}\circ\Psi_{f(x)},\id)<\e\eta^2(\eta')^2$, therefore
\begin{align*}
\|(df_{xy})_{\un{u}}-(df_{xy})_{\un{v}}\|&\leq  \e\eta^2(\eta')^2\cdot\|f_x(\un{u})-f_x(\un{v})\|^{\frac{\b}{2}}\cdot 3C_f+\e\|\un{u}-\un{v}\|^{\frac{\b}{2}}\left(\e\eta^2(\eta')^2+2
\right)\\
&\leq\e\eta^2\cdot\sup_{\un{w}\in R_{10 Q_\e(x)}(\un{0})}\|(df_x)_{\un{w}}\|^{\frac{\b}{2}}\cdot \|\un{u}-\un{v}\|^{\frac{\b}{2}}\cdot 3C_f+3\e\|\un{u}-\un{v}\|^{\frac{\b}{2}}\\
&\leq \e((3C_f)^{1+{\frac{\b}{2}}}\eta^2+3)\|\un{u}-\un{v}\|^{\frac{\b}{2}}
\leq\e((3C_f)^{1+{\frac{\b}{2}}}\e^{6/\b}+3)\|\un{u}-\un{v}\|^{\frac{\b}{2}}\\
&\leq 4\e\|\un{u}-\un{v}\|^{\frac{\b}{2}},\textrm{ provided $\e$ is small enough}\\
&\leq 3\e(30 Q_\e(x))^{\b/6}\|\un{u}-\un{v}\|^{\b/3}<6\e^{3/2}\|\un{u}-\un{v}\|^{\b/3}\ \ (\because Q_\e<\e^{3/\b})\\
&\leq \frac{1}{3}\e\|\un{u}-\un{v}\|^{\b/3},\textrm{ provided $\e$ is small enough.}
\end{align*}
It follows that $\|\frac{\partial(h_1,h_2)}{\partial(u,v)}(\un{u})-\frac{\partial(h_1,h_2)}{\partial(u,v)}(\un{v})\|
<\frac{1}{3}\e\|\un{u}-\un{v}\|^{\b/3}$ for all $\un{u},\un{v}\in R_{10 Q_\e(x)}(\un{0})$, whence  $\|\nabla h_i(\un{u})-\nabla h_i(\un{v})\|\leq \frac{1}{3}\e\|\un{u}-\un{v}\|^{\b/3}$ $(i=1,2)$ for all
$\un{u},\un{v}\in R_{10 Q_\e(x)}(\un{0})$.
\end{proof}

\subsection{Coarse graining} \label{SectionA}
We replace $\mathfs C:=\{\Psi_x^\eta:x\in\NUH^\ast_\chi(f), 0<\eta\leq Q_\e(x)\}$
by a ``sufficient" countable subset $\mathfs A$. We remind the reader that  $\NUH_\chi^\ast$ is defined in Lemma \ref{Lemma_C_tempered}, and that $I_\e=\{e^{-\frac{1}{3}k\e}:k\in\N\}$.

\begin{prop}\label{Prop_A}
The following holds for all $\e$ small.
There exists a countable collection $\mathfs A$ of Pesin charts  with the following properties:
\begin{enumerate}
\item {\em Sufficiency:} For every $x\in\NUH_\chi^\ast(f)$ and for every sequence of positive numbers $0<\eta_n\leq e^{-\e/3}Q_\e(f^n(x))$ in $I_\e$ s.t. $e^{-\e}\leq \eta_n/\eta_{n+1}\leq e^\e$, there exists a sequence $\{\Psi_{x_n}^{\eta_n}\}_{n\in\Z}$ of elements of $\mathfs A$   s.t. for every $n$,
    \begin{enumerate}
    \item $\Psi_{x_n}^{\eta_n}$ $\e$--overlaps $\Psi_{f^n(x)}^{\eta_n}$ and $e^{-\e/3}\leq Q_\e(f^n(x))/Q_\e(x_n)\leq e^{\e/3}$;
    \item $\Psi_{f(x_n)}^{\eta_{n+1}}$ $\e$--overlaps $\Psi_{x_{n+1}}^{\eta_{n+1}}$;
    \item $\Psi_{f^{-1}(x_n)}^{\eta_{n-1}}$ $\e$--overlaps $\Psi_{x_{n-1}}^{\eta_{n-1}}$;
        \item $\Psi_{x_n}^{\eta_n'}\in\mathfs A$ for all $\eta_n'\in I_\e$ s.t.  $\eta_n\leq\eta_n'\leq \min\{Q_\e(x_n), e^\e\eta_n\}$.
    \end{enumerate}
\item {\em Discreteness:} $\{\Psi_x^\eta\in \mathfs A:\eta>t\}$ is finite for every $t>0$.
\end{enumerate}
\end{prop}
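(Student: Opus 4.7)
The plan is to construct $\mathfs A$ as a countable union of finite $\d$--nets indexed by a discrete parameter $\omega$ that bundles the chart size $\eta$ with the Pesin data (ambient cells and values of $Q_\e$) at the whole orbit triple $(f^{-1}(x),x,f(x))$. Encoding neighbor data is forced by the fact that conditions (1b)--(1c) in the proposition couple consecutive choices $x_{n-1},x_n,x_{n+1}$, so a naive per--position net cannot suffice.

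Fix an enumeration of $\mathfs D$ and let $D(y)\in\mathfs D$ be the first cell containing $y$. An \emph{admissible parameter} is a tuple $\omega=(\eta;q_{-1},q_0,q_{+1};D_{-1},D_0,D_{+1})$ with $\eta,q_i\in I_\e$, $\eta\leq q_0$, and $D_i\in\mathfs D$; there are countably many. Put
\begin{equation*}
\Lambda(\omega):=\{x\in\NUH_\chi(f):f^i(x)\in D_i,\ Q_\e(f^i(x))=q_i\text{ for }i=-1,0,+1\},
\end{equation*}
and define $\Phi_\omega:\Lambda(\omega)\to M^3\x(\R^{2\x 2})^3$ by
\begin{equation*}
\Phi_\omega(x):=\bigl(f^{-1}(x),x,f(x);\Theta_{D_{-1}}C_\chi(f^{-1}(x)),\Theta_{D_0}C_\chi(x),\Theta_{D_1}C_\chi(f(x))\bigr).
\end{equation*}
Lemma \ref{Lemma_C_contracts} bounds every matrix entry by $1$, so $\Phi_\omega(\Lambda(\omega))$ lies in a compact subset of $M^3\x(\R^{2\x 2})^3$. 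Choose a finite $\d(\omega)$--net in the image with $\d(\omega):=\tfrac{1}{10}\eta^{10}$, let $\mathfs A_\omega\subset\Lambda(\omega)$ be any lift of the net, and set
\begin{equation*}
\mathfs A:=\bigcup_\omega\{\Psi_x^{\eta'}:x\in\mathfs A_\omega,\,\eta'\in I_\e,\,\eta\leq\eta'\leq\min(Q_\e(x),e^\e\eta)\}.
\end{equation*}
Property (1d) is built in, and $\mathfs A$ is a countable collection of Pesin charts.

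For discreteness, $\Psi_x^{\eta'}\in\mathfs A$ with $\eta'>t$ forces $\eta\in I_\e\cap[e^{-\e}t,1]$ (finite), $q_0=Q_\e(x)\in I_\e\cap[\eta',1]$ (finite), $q_{\pm 1}\in I_\e\cap[F^{-1}q_0,Fq_0]$ (finite, by Lemma \ref{Lemma_Q_tempered}(5)), $D_i\in\mathfs D$ (finite), and each admissible $\omega$ contributes finitely many centers and at most four values of $\eta'$. For sufficiency, given $x\in\NUH_\chi^\ast(f)$ and $\{\eta_n\}$ as in the hypothesis, put $q_{n,i}:=Q_\e(f^{n+i}(x))$, $D_{n,i}:=D(f^{n+i}(x))$, $\omega_n:=(\eta_n;q_{n,-1},q_{n,0},q_{n,+1};D_{n,-1},D_{n,0},D_{n,+1})$; the inequality $\eta_n\leq e^{-\e/3}q_{n,0}\leq q_{n,0}$ makes $\omega_n$ admissible and $f^n(x)\in\Lambda(\omega_n)$. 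Choose $x_n\in\mathfs A_{\omega_n}$ with $\|\Phi_{\omega_n}(x_n)-\Phi_{\omega_n}(f^n(x))\|<\d(\omega_n)$. Condition (1a) follows because $Q_\e(x_n)=q_{n,0}=Q_\e(f^n(x))$ and the $d+\|\Theta_{D_{n,0}}C_\chi\|$--gap is bounded by $\d(\omega_n)<\eta_n^8$. For (1b), the key observation is $D_{n,+1}=D(f^{n+1}(x))=D_{n+1,0}$, so $f(x_n)$, $f^{n+1}(x)$, $x_{n+1}$ all lie in the cell $D':=D_{n,+1}$, and two triangle inequalities through $f^{n+1}(x)$ yield
\begin{equation*}
d(f(x_n),x_{n+1})+\|\Theta_{D'}(C_\chi(f(x_n))-C_\chi(x_{n+1}))\|\leq 2(\d(\omega_n)+\d(\omega_{n+1}))<\eta_{n+1}^8,
\end{equation*}
using $\eta_n\leq e^\e\eta_{n+1}$ and $\e$ small enough. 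Condition (1c) is symmetric.

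The main obstacle is precisely the simultaneous enforcement of (1a)--(1c) by a coarse graining acting one position at a time: condition (1b) constrains $C_\chi(f(x_n))$, not $C_\chi(x_n)$, so a net built from center data alone would give no purchase on it. Bundling the forward and backward neighbor data into $\Phi_\omega$ circumvents this, and the self--consistencies $q_{n,+1}=q_{n+1,0}$ and $D_{n,+1}=D_{n+1,0}$ (which match the ``right neighbor'' datum in $\omega_n$ with the central datum in $\omega_{n+1}$) ensure that the independent per--position choices couple correctly without requiring a joint net over triples of positions.
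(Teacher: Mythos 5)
Your proposal is correct and takes essentially the same route as the paper: both coarse-grain the bundled data at the orbit triple $\bigl(f^{-1}(x),x,f(x)\bigr)$ — positions, the values $Q_\e(f^{\pm1}(x)),Q_\e(x)$, and the maps $\Theta_D\circ C_\chi$ at the three points — by finite nets at a scale polynomial in $\eta$, including the neighbor data precisely so that (1b)--(1c) survive the independent per-position choices, and then attach to each net center the admissible range of chart sizes to get (1d). Your only deviations (stratifying by the exact $I_\e$-values of $Q_\e\circ f^i$ and the exact covering cells of $\mathfs D$ instead of approximating them, which turns the ratio bound in (1a) into an equality and removes the Lebesgue-number step, and using net scale $\eta^{10}$ in place of the paper's $e^{-8(m+2)}$) are cosmetic.
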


\begin{proof}
The proposition would have been easy had $C_\chi(x)$ been a continuous function of $x$. In general it is not, and as a result there is no clear connection between conditions (a), (b), and (c). We must treat the three conditions  separately, and simultaneously.

The following construction will help us to do this. Let
$$
X:=\bigcup_{D_0, D_1, D_{-1}\in\mathfs D} D_0\x D_1\x D_{-1}\x \R^3\x\mathrm{GL}(2,\R)^3.
$$
Here $\mathfs D$ is the finite open cover of $M$ which we constructed in   \S\ref{SectionOC}. $X$ is a subset of $M^3\x\R^3\x\mathrm{GL}(2,\R)^3$. We equip it with the relative product topology.

Let $Y\subset X$ denote the collection of all $(\un{x},\un{Q},\un{C})\in X$ where
\begin{itemize}
\item $\un{x}=(x,f(x),f^{-1}(x))$, $x\in\NUH_\chi^\ast(f)$;
\item $\un{Q}=(Q_\e(x), Q_\e(f(x)), Q_\e(f^{-1}(x)))$ (cf. (\ref{Qdef}));
\item $\un{C}=(\Theta_{D_0}\circ C_\chi(x), \Theta_{D_1}\circ C_\chi(f(x)), \Theta_{D_{-1}}\circ C_\chi(f^{-1}(x)))$, where
$D_0, D_1, D_{-1}\in\mathfs D$ satisfy
$(x,f(x),f^{-1}(x))\in D_0\x D_1\x D_{-1}$.
\end{itemize}

Let $Y_k:=\{(\un{x},\un{Q},\un{C})\in Y:x\in \NUH_\chi^\ast(f), e^{-(k+1)}\leq Q_\e(x)\leq e^{-(k-1)}\}$ $(k\in\N)$. $Y_k$ is a pre-compact subset of $X$. To see this, pick some $(\un{x},\un{Q},\un{C})\in Y_k$. The vector $\un{x}$ belongs to the compact set $M^3$. $\un{Q}$ belongs to a compact subset of $\R^3$ because by Lemma \ref{Lemma_Q_tempered} for each $i=-1,0,1$,
$$
F^{-1}e^{-(k+1)}\leq Q_\e(f^i(x))<Fe^{-(k-1)}.
$$
$\un{C}$ belongs to a compact subset of $\GL(2,\R)$, because (a) $\Theta_{D_i}$ are isometries; (b) $\|C_\chi(f^i(x))\|<1$ (Lemma \ref{Lemma_C_contracts});  and  (c)
$
\bigr\|C_\chi(f^{i}(x))^{-1}\bigl\|\leq \left(\e^{3/\b}F e^{k+1}\right)^{\b/12}
$
by  (\ref{Qdef}).\footnote{Here we use the obvious observation that  $\{A\in\GL(2,\R):\|A\|,\|A^{-1}\|\leq C\}$ is a compact subset of $\GL(2,\R)$ for every $C>0$.}
It follows that $Y_k$ is a subset of a compact subset of $M^3\x\R^3\x\mathrm{GL}(2,\R)^3$.

\medskip
Since $Y_k$ is pre-compact, it contains a finite set $Y_{k,m}$ s.t. for  every $(\un{x},\un{Q},\un{C})\in Y_k$   there exists some $(\un{y},\un{Q}',\un{C}')\in Y_{k,m}$ such that for every $|i|\leq 1$,
\begin{enumerate}
\item $d(f^i(x),f^i(y))<\frac{1}{2}\e(\mathfs D)$ where $\e(\mathfs D)$ is a Lebesgue number of $\mathfs D$.
\item $d(f^i(x), f^i(y))+\|\Theta_D\circ C_\chi(f^i(x))-\Theta_D\circ C_\chi(f^i(y))\|<e^{-8(m+2)}$ for every $D\in\mathfs D$ which contains  $f^i(x)$ and $f^i(y)$.
\item $e^{-\e/3}<Q_\e(f^i(x))/Q_\e(f^i(y))<e^{\e/3}$.
\end{enumerate}

\medskip
Define $\mathfs A$ to be the collection of all Pesin charts $\Psi_x^\eta$ such that for some $k,m\in\N$, $x$ is the first coordinate of some element $(\un{x},\un{Q},\un{C})\in Y_{k,m}$, and
$$
0<\eta\leq Q_\e(x),
 e^{-(m+2)}\leq \eta<e^{-(m-2)}, \textrm{ and }\eta\in I_\e=\{e^{-\ell\e/3}:\ell=0,1,2,\ldots\}.
 $$

\medskip
\noindent
{\em Part 1.\/} Discreteness.

\medskip
\noindent
{\em Proof.\/}
Suppose $\Psi_x^\eta\in\mathfs A$. Choose $k,m\in\N$ s.t.  $x$ is the first coordinate of some $(\un{x},\un{Q},\un{C})\in Y_{k,m}$, $0<\eta\leq Q_\e(x)$, and  $\eta\in [e^{-m-2}, e^{-m+2}]$.
Since $Y_{k,m}\subset Y_k$, $Q_\e(x)\leq e^{-k+1}$, so $k\leq |\log Q_\e(x)|+1$. It follows that
$k,m\leq |\log\eta|+2$, and so
$$
|\{\Psi_x^\eta\in\mathfs A: \eta>t\}|\leq \sum_{k,m<|\log t|+2}|Y_{k,m}|\x|\{\eta\in I_\e: \eta>t\}|.
$$
The last quantity is finite, because $Y_{k,m}$ are finite.

\medskip
\noindent
{\em Part 2.\/} Sufficiency.

\medskip
\noindent
{\em Proof.\/}
Suppose $x\in \NUH_\chi^\ast(f)$, and  $\eta_n\in I_\e$ satisfy $0<\eta_n\leq e^{-\e/3}Q_\e(f^n(x))$ and $e^{-\e}\leq \eta_n/\eta_{n+1}\leq e^\e$ for all $n\in\Z$.

Choose $ m_n,k_n\in\N$ s.t. $\eta_n\in [e^{-m_n-1}, e^{-m_n+1}]$ and $Q_\e(f^n(x))\in [e^{-k_n-1}, e^{-k_n+1}]$. Find some element of $Y_{k_n}$ whose first coordinate is $f^n(x)$,  and
approximate it  by some element of $Y_{k_n, m_n}$ with first coordinate $x_n$ so that for $i=-1,0,1$,
\begin{enumerate}
\item[(${\mathrm A}_n$)] $d(f^i(f^n(x)),f^i(x_n))<\frac{1}{2}\e(\mathfs D)$;
\item[(${\mathrm B}_n$)] $d(f^i(f^n(x)), f^i(x_n))+\|\Theta_{D}\circ C_\chi(f^i(f^n(x)))-\Theta_{D}\circ C_\chi(f^i(x_n))\|<e^{-8(m_n+2)}$
for every $D\in\mathfs D$ which contains $f^i(f^n(x)), f^i(x_n)$;
\item[(${\mathrm C}_n$)] $e^{-\e/3}<Q_\e(f^i(f^n(x)))/Q_\e(f^i(x_n))<e^{\e/3}$.
\end{enumerate}

\medskip
\noindent
{\em Claim 1.\/} $\Psi_{x_n}^{\eta_n}\in\mathfs A$ and $\Psi_{x_n}^{\eta_n'}\in\mathfs A$ for all $\eta_n'\in I_\e$ s.t. $\eta_n\leq \eta_n'\leq \min\{e^\e \eta_n, Q_\e(x_n)\}$.

\medskip
\noindent
{\em Proof.\/} By construction $x_n$ is the first coordinate of an element of $Y_{k_n,m_n}$, and   $\eta_n\in [e^{-m_n-1}, e^{m_n+1}]$. Since $\eta_n\leq \eta_n'\leq e^{\e}\eta_n$,  $ \eta_n'\in [e^{-m_n-2}, e^{m_n+2}]$. It remains to check that $\eta_n, \eta_n'\leq Q_\e(x_n)$. In case of $\eta_n'$ there is nothing to check. In case of $\eta_n$, (${\mathrm C}_n$) with  $i=0$ says that
$
Q_\e(x_n)> e^{-\e/3}Q_\e(f^n(x))\geq {\eta_n}.
$

\medskip
\noindent
{\em Claim 2.\/} $\Psi_{x_n}^{\eta_n}$ and $\Psi_{f^n(x)}^{\eta_n}$ $\e$--overlap.

\medskip
\noindent
{\em Proof.\/} $(\mathrm A_n)$ with $i=0$ says that  $d(f^n(x), x_n)$  is smaller than the Lebesgue number of $\mathfs D$, so there exists  $D\in\mathfs D$ s.t.  $f^n(x),x_n\in D$. (${\mathrm B}_n$) with $i=0$ says that
$$
d(f^n(x), x_n)+\|\Theta_D\circ C_\chi(f^n(x))-\Theta_D\circ C_\chi(x_n)\|<e^{-8(m_n+2)}.
$$
Since  $\eta_n\in [e^{-(m_n+1)}, e^{-(m_n-1)}]$,
$e^{-8(m_n+2)}<\eta_n^4 \eta_{n+1}^4$. Since  $e^{-\e}\leq \eta_{n+1}/\eta_n\leq e^\e$,
 $\Psi_{x_n}^{\eta_n}, \Psi_{f^n(x)}^{\eta_n}$ $\e$--overlap.

\medskip
\noindent
{\em Claim 3.\/} $\Psi_{f^i(x_n)}^{\eta_n}$ $\e$--overlaps $\Psi_{x_{n+i}}^{\eta_{n+i}}$ for $i=\pm 1$.

\medskip
\noindent
{\em Proof.\/} We do the case $i=1$ and leave the case $i=-1$ to the reader.

Setting $i=1$ in  (${\mathrm A}_n$), we see that
 $d(f(x_n), f(f^n(x)))<\frac{1}{2}\e(\mathfs D)$. Setting $i=0$ in (${\mathrm A}_{n+1}$), we see that  $d(f^{n+1}(x), x_{n+1})<\frac{1}{2}\e(\mathfs D)$. It follows that  there exists some $D\in\mathfs D$ s.t.
$
f(x_n), x_{n+1}, f^{n+1}(x)\in D.
$

By   (${\mathrm B}_n$) with $i=1$ and (${\mathrm B}_{n+1}$) with $i=0$,
$$
\hspace{-3cm}d(f(x_n), x_{n+1})+\|\Theta_D\circ C_\chi(f(x_n))-\Theta_D\circ C_\chi(x_{n+1})\|\leq
$$
\begin{align*}
\hspace{1cm}&\leq \bigl(d(f(x_n), f(f^n(x)))+\|\Theta_D\circ C_\chi(f(x_n))-\Theta_D\circ C_\chi(f(f^n(x)))\|\bigr)+\\
&\hspace{1cm}+\bigl(d(f^{n+1}(x), x_{n+1})+\|\Theta_D\circ C_\chi(f^{n+1}(x))-\Theta_D\circ C_\chi(x_{n+1})\|\bigr)\\
&\leq e^{-8(m_n+2)}+e^{-8(m_{n+1}+2)}\\
&<e^{-8}(\eta_n^8+\eta_{n+1}^8)<2e^{-8}(1+e^{8\e})\eta_{n+1}^4\eta_{n+1}^4<\eta_{n+1}^4\eta_{n+1}^4.
\end{align*}
It follows that  $\Psi_{f(x_n)}^{\eta_{n+1}}$ $\e$--overlaps $\Psi_{x_{n+1}}^{\eta_{n+1}}$.
\end{proof}

\section{$\e$--chains and an infinite-to-one Markov extension of $f$}\label{SectionGraph}
\subsection{Double charts and $\e$--chains} Recall that $\Psi_x^\eta$  $(0<\eta\leq Q_\e(x))$ stands for the Pesin chart $\Psi_x:R_\eta(\un{0})\to M$. An {\em $\e$--double Pesin chart} (or just {\em ``double chart"}) is a pair $\Psi_x^{p^u,p^s}:=(\Psi_x^{p^s},\Psi_x^{p^u})$, where $0<p^u,p^s\leq Q_\e(x)$.

\begin{defi}
$\Psi_x^{p^u,p^s}\to \Psi_y^{q^u,q^s}$ means
\begin{itemize}
\item $\Psi_y^{q^u\wedge q^s}$ and $\Psi_{f(x)}^{q^u\wedge q^s}$ $\e$--overlap (recall that $a\wedge b:=\min\{a,b\}$);
\item  $\Psi_x^{p^u\wedge p^s}$ and $\Psi_{f^{-1}(y)}^{p^u\wedge p^s}$ $\e$--overlap;
\item
 $q^u=\min\{e^\e p^u, Q_\e(y)\}$ and $p^s=\min\{e^{\e}q^s, Q_\e(x)\}$.
\end{itemize}
\end{defi}
\begin{defi}
 $\{\Psi_{x_i}^{p^u_i,p^s_i}\}_{i\in\Z}$ (resp. $\{\Psi_{x_i}^{p^u_i,p^s_i}\}_{i\geq 0}$, $\{\Psi_{x_i}^{p^u_i,p^s_i}\}_{i\leq 0}$) is called an {\em $\e$--chain} (resp. {\em positive $\e$--chain, negative $\e$--chain}), if $\Psi_{x_i}^{p^u_i, p^s_i}\to \Psi_{x_{i+1}}^{p^u_{i+1},p^s_{i+1}}$ for all $i$. We abuse terminology and drop the $\e$ in ``$\e$--chains".
\end{defi}

\medskip
Let $\mathfs A$ denote the countable set of Pesin charts which we have constructed in  \S\ref{SectionA}, and recall that  $I_\e=\{e^{-k\e/3}:k\in\N\}$.
\begin{defi} $\mathfs G$ is the directed graph with vertices $\mathfs V$ and edges $\mathfs E$ where
\begin{itemize}
\item $\mathfs V:=\{\Psi_x^{p^u, p^s}:\Psi_x^{p^u\wedge p^s}\in\mathfs A, p^u, p^s\in I_\e, p^u, p^s\leq Q_\e(x)\}$;
\item $\mathfs E:=\{(\Psi_x^{p^u, p^s},\Psi_y^{q^u, q^s})\in\mathfs V\x\mathfs V: \Psi_x^{p^u, p^s}\to\Psi_y^{q^u, q^s}\}$.
\end{itemize}
\end{defi}
\noindent
This is a countable directed graph. Every vertex has finite degree, because of the following lemma, and Proposition \ref{Prop_A}(2):
\begin{lem}\label{Lemma_Subordinated_Tempered}
If $\Psi_x^{p^u,p^s}\to\Psi_y^{q^u,q^s}$, then $e^{-\e}\leq (q^u\wedge q^s)/(p^u\wedge p^s)\leq e^\e$. Therefore for every $\Psi_x^{p^u,p^s}\in\mathfs V$ there are only finitely many $\Psi_y^{q^u,q^s}\in\mathfs V$ s.t. $\Psi_x^{p^u,p^s}\to\Psi_y^{q^u,q^s}$ or $\Psi_y^{q^u,q^s}\to\Psi_x^{p^u,p^s}$.
\end{lem}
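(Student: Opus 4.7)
Write $p := p^u \wedge p^s$ and $q := q^u \wedge q^s$. The lemma has two claims: the ratio bound $e^{-\e}\leq q/p \leq e^{\e}$, and local finiteness of $\mathfs G$. I would prove the ratio bound first; local finiteness then follows from it via Proposition \ref{Prop_A}(2).

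For the ratio bound I would do a four-case analysis, splitting on which argument attains the minimum in each of the defining identities
\begin{equation*}
q^u = \min\{e^\e p^u, Q_\e(y)\} \quad\text{and}\quad p^s = \min\{e^\e q^s, Q_\e(x)\}.
\end{equation*}
Two inequalities always hold: $q^u \leq e^\e p^u$, and (rearranging the second) $q^s \geq e^{-\e} p^s$. The structural observation that organizes the case work is that each ``capped'' option collapses the corresponding minimum: if $q^u = Q_\e(y) < e^\e p^u$ then $q^s \leq Q_\e(y) = q^u$ forces $q = q^s$, and symmetrically if $p^s = Q_\e(x) < e^\e q^s$ then $p^u \leq Q_\e(x) = p^s$ forces $p = p^u$. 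In the three cases where at most one minimum is capped, the bounds $e^{-\e} p \leq q \leq e^\e p$ reduce to direct manipulation of the defining formulas together with $p \leq p^u, p^s$ and $q \leq q^u, q^s$. The one subtle case is when both minima are capped; there $q = q^s$ and $p = p^u$, and the chains
\begin{equation*}
q = q^s \leq Q_\e(y) < e^\e p^u = e^\e p, \qquad q = q^s > e^{-\e} Q_\e(x) = e^{-\e} p^s \geq e^{-\e} p^u = e^{-\e} p
\end{equation*}
deliver both sides simultaneously. I expect the main obstacle here to be bookkeeping: the four parameters interact asymmetrically through two $\min$'s, and it is easy to confuse which quantity controls which in each subcase.

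For finite degree, fix a vertex $\Psi_x^{p^u,p^s}$. If $\Psi_x^{p^u,p^s} \to \Psi_y^{q^u,q^s}$ then the ratio bound gives $q \geq e^{-\e} p$, and the definition of $\mathfs V$ forces $\Psi_y^{q} \in \mathfs A$; hence Proposition \ref{Prop_A}(2) admits only finitely many pairs $(y,q)$. For each such pair exactly one of $q^u, q^s$ equals $q$ while the other lies in $I_\e \cap [q, Q_\e(y)]$, a finite set because $I_\e$ is a geometric progression with ratio $e^{-\e/3}$ and $Q_\e(y) \leq \e^{3/\b}$ by Lemma \ref{Lemma_Q_tempered}(1). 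So there are only finitely many successors. Predecessors are counted identically after applying the ratio bound to $\Psi_y^{q^u,q^s} \to \Psi_x^{p^u,p^s}$, which again yields $q \in [e^{-\e} p, e^\e p]$.
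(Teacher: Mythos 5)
Your proof is correct and follows essentially the same route as the paper's: an elementary four-case manipulation of the relations $q^u=\min\{e^\e p^u,Q_\e(y)\}$, $p^s=\min\{e^\e q^s,Q_\e(x)\}$, $p^u\leq Q_\e(x)$, $q^s\leq Q_\e(y)$ — you split on which branch of each minimum is active, whereas the paper splits on which of $p^u,p^s$ (resp. $q^u,q^s$) realizes $p$ (resp. $q$), but the content is the same, your three "uncapped/one-capped" cases do indeed reduce to direct manipulation, and your "both capped" case correctly handles what is the paper's delicate Case 4. Your finite-degree deduction via the ratio bound, Proposition \ref{Prop_A}(2), and the discreteness of $I_\e$ below $Q_\e(y)$ is exactly the intended argument, which the paper leaves implicit.
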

\begin{proof}
The proof is a manipulation of the following relations:
\begin{equation}
\begin{aligned}
q^u=\min\{e^\e p^u, Q_\e(y)\} & , & p^u\leq Q_\e(x);\\
p^s=\min\{e^\e q^s, Q_\e(x)\} & , & q^s\leq Q_\e(y).\\
\end{aligned}
\end{equation}
Let $p:=p^u\wedge p^s$ and $q:=q^u\wedge q^s$. We show that $e^{-\e}\leq p/q\leq e^\e$ by considering each of the following cases separately:
\begin{enumerate}
\item $p=p^u$, $q=q^u$,
\item $p=p^s$, $q=q^s$,
\item $p=p^u$, $q=q^s$,
\item $p=p^s$, $q=q^u$.
\end{enumerate}

\noindent
{\em Case 1.\/}
If $e^\e p^u\leq Q_\e(y)$, then
$q^u=\min\{e^\e p^u, Q_\e(y)\}=e^\e p^u$, and $\frac{q}{p}=\frac{q^u}{p^u}=e^\e$.
If $e^\e p^u>Q_\e(y)$, then $p\leq p^s\leq e^\e q^s\leq e^\e Q_\e(y)=e^\e\min\{e^\e p^u, Q_\e(y)\}=e^\e q^u=e^\e q$, so $\frac{p}{q}\leq e^\e$. Also, $q=q^u=\min\{Q_\e(y), e^\e p^u\}\leq e^\e p^u=e^\e p$, so $\frac{q}{p}\leq e^\e$.

\medskip
\noindent
{\em Case 2.\/} This is the same as case 1.

\medskip
\noindent
{\em Case 3.\/} In this case $p^u\leq p^s$, so  $p=p^u\leq p^s\leq e^\e q^s=e^\e q$, whence $p/q\leq e^\e$. Also, $q^s\leq q^u$, so $q=q^s\leq q^u\leq e^\e p^u=e^\e p$, whence $q/p\leq e^\e$.

\medskip
\noindent
{\em Case 4.\/} In this case $p^s\leq p^u$ and $q^u\leq q^s$. Since $p^s=\min\{e^\e q^s, Q_\e(x)\}$, either $p^s=e^\e q^s$, or $p^s=Q_\e(x)$.

Suppose $p^s=e^\e q^s$, then $q^u\leq q^s=e^{-\e}p^s\leq e^{-\e}p^u<e^\e p^u$. The inequality $q^u<e^\e p^u$ and the identity $q^u=\min\{e^\e p^u, Q_\e(y)\}$ force $q^u=Q_\e(y)$. But $q^u\leq q^s\leq Q_\e(y)$, so $q^u=q^s=Q_\e(y)$. It follows that
$p=p^s=e^\e q^s=e^\e q^u=e^\e q$, and we are done.

Next suppose that $p^s=Q_\e(x)$. Since $p^s\leq p^u\leq Q_\e(x)$, we must have $$p=p^s=p^u=Q_\e(x).$$
At the same time, $q^u=\min\{e^\e p^u, Q_\e(y)\}\leq e^\e p^u= e^\e p$. If there is an equality, then we are done. Otherwise $q^u=Q_\e(y)$, and since $q^u\leq q^s\leq Q_\e(y)$,
$$
q=q^s=q^u=Q_\e(y).
$$
Since $\min\{e^\e p^u, Q_\e(y)\}=q^u=Q_\e(y)$, $e^\e p^u\geq Q_\e(y)$. Thus $e^\e Q_\e(x)\geq Q_\e(y)$. Similarly,  $\min\{e^\e q^s, Q_\e(x)\}=p^s=Q_\e(x)$ implies that  $e^\e q^s\geq Q_\e(x)$, whence $e^\e Q_\e(y)\geq Q_\e(x)$. It follows that $p/q=Q_\e(x)/Q_\e(y)\in [e^{-\e},e^\e]$.
\end{proof}

We claim that the collection of infinite admissible paths on $\mathfs G$ is as rich as the set of orbits of $f$ in  $\NUH_\chi^\#(f)$.
Recall that $\NUH_\chi^\#(f)$ has full measure w.r.t. every $f$--ergodic invariant probability measure with  entropy greater than $\chi$.

\begin{prop}\label{Prop_Chains_Exist}
For every $x\in \NUH_\chi^\#(f)$, there is a chain $\{\Psi_{x_k}^{p^u_k, p^s_k}\}_{k\in\Z}\subset\Sigma(\mathfs G)$ s.t. $\Psi_{x_k}^{p^u_k\wedge p^s_k}$ $\e$--overlaps $\Psi_{f^k(x)}^{p^u_k\wedge p^s_k}$ for all $k\in\Z$.
\end{prop}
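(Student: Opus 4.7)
My strategy is first to use Proposition \ref{Prop_A} to choose the chart centres $x_k$ and a tempered base radius $\eta_k$ along the orbit of $x$, and then to define the stable/unstable radii $(p^u_k,p^s_k)$ via a running-minimum formula whose shape encodes the chain recursion in the definition of $\to$. For the base radii, I let $\eta_k$ be the largest element of $I_\e$ that is at most $q_\e(f^k(x))$, where $q_\e$ comes from Lemma \ref{Lemma_Q_tempered}(6). Then $\eta_k<\e Q_\e(f^k(x))\le e^{-\e/3}Q_\e(f^k(x))$ for $\e$ small, and $\eta_k/\eta_{k+1}\in[e^{-\e},e^\e]$, so Proposition \ref{Prop_A}(1) supplies $\Psi_{x_k}^{\eta_k}\in\mathfs A$ with the overlap conclusions (a)--(c) and $Q_\e(x_k)=e^{\pm\e/3}Q_\e(f^k(x))$.

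Next set
\[
p^u_k := \min_{n\le k} e^{(k-n)\e}Q_\e(x_n),\qquad p^s_k := \min_{n\ge k} e^{(n-k)\e}Q_\e(x_n).
\]
Both infima are attained and strictly positive, since $Q_\e\circ f^n$ decays only subexponentially by Lemma \ref{Lemma_Q_tempered}(4) and $Q_\e(x_n)\asymp Q_\e(f^n(x))$, while $e^{\pm(k-n)\e}$ grows exponentially in $|k-n|$. Each admissible summand lies in $I_\e$ for small $\e$, so $p^u_k,p^s_k\in I_\e$, and $p^u_k,p^s_k\le Q_\e(x_k)$ (take $n=k$); hence $\Psi_{x_k}^{p^u_k,p^s_k}\in\mathfs V$. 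Splitting the minimum at $n=k+1$ yields immediately the recursions $p^u_{k+1}=\min\{Q_\e(x_{k+1}),e^\e p^u_k\}$ and $p^s_k=\min\{Q_\e(x_k),e^\e p^s_{k+1}\}$, which is precisely the third bullet in the definition of $\to$.

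The remaining overlap clauses---the first two bullets in the definition of $\to$ and the overlap of $\Psi_{x_k}^{p^u_k\wedge p^s_k}$ with $\Psi_{f^k(x)}^{p^u_k\wedge p^s_k}$ demanded by the proposition---all reduce, by monotonicity of the $\e$-overlap relation and Proposition \ref{Prop_A}(a)--(c), to the two-sided estimate
\[
\eta_k\le p^u_k\wedge p^s_k\le\min\{Q_\e(x_k),\,Q_\e(f^k(x))\}.
\]
For the lower bound, $\eta_n<\e Q_\e(f^n(x))$ gives $Q_\e(x_n)\ge\e^{-1}e^{-\e/3}\eta_n$, and combined with the tempered bound $\eta_n\ge e^{-|n-k|\e/3}\eta_k$ a short calculation produces $e^{(k-n)\e}Q_\e(x_n)\ge\eta_k$ for every $n$. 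The upper bound by $Q_\e(x_k)$ is immediate; the upper bound by $Q_\e(f^k(x))$ is the \emph{main obstacle}, since Proposition \ref{Prop_A} only gives $Q_\e(x_k)/Q_\e(f^k(x))\in\{e^{-\e/3},1,e^{\e/3}\}$ and in the worst case $p^u_k\wedge p^s_k=Q_\e(x_k)$ overshoots $Q_\e(f^k(x))$. I would resolve this by strengthening the coarse-graining in the proof of Proposition \ref{Prop_A}: because $Q_\e$ is $I_\e$-valued, each precompact piece $Y_k$ used there carries only finitely many $Q_\e$-values, and the finite approximating subset $Y_{k,m}$ can be enlarged so as to include an approximant with $Q_\e(f^i(x_k))=Q_\e(f^{k+i}(x))$ \emph{exactly} for $i=-1,0,1$. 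This refinement also supplies the equalities $Q_\e(f(x_k))=Q_\e(f^{k+1}(x))$ and $Q_\e(f^{-1}(x_k))=Q_\e(f^{k-1}(x))$ that are required to push through the two ``consistency'' overlaps at sizes $p^u_{k+1}\wedge p^s_{k+1}$ and $p^u_k\wedge p^s_k$ in the first two bullets of $\to$.
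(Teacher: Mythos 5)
There is a genuine gap, and it sits exactly at the point your proposal treats as automatic: membership of the double charts in the vertex set. By definition, $\Psi_{x_k}^{p^u_k,p^s_k}\in\mathfs V$ requires not only $p^u_k,p^s_k\in I_\e$ and $p^u_k,p^s_k\le Q_\e(x_k)$, but also $\Psi_{x_k}^{p^u_k\wedge p^s_k}\in\mathfs A$ — and $\mathfs A$ is a discrete countable set which, for the centre $x_k$ produced by Proposition \ref{Prop_A}, is only guaranteed (via clause (d)) to contain charts of sizes $\eta'\in I_\e$ with $\eta_k\le \eta'\le\min\{Q_\e(x_k),e^\e\eta_k\}$. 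Your window sizes are subordinated to the full sequence $\{Q_\e(x_n)\}$, so typically $p^u_k\wedge p^s_k$ is of the order of $Q_\e(x_k)$, whereas your $\eta_k$ is below $q_\e(f^k(x))<\e\,Q_\e(f^k(x))$; hence $p^u_k\wedge p^s_k$ will in general exceed $e^\e\eta_k$ by a large factor and no available statement puts $\Psi_{x_k}^{p^u_k\wedge p^s_k}$ in $\mathfs A$. Without that, the sequence you build is not a path in $\mathfs G$, so it is not an element of $\Sigma(\mathfs G)$. A telling symptom is that your argument never uses the hypothesis $x\in\NUH^\#_\chi(f)$: the paper needs precisely this recurrence (through Lemma \ref{Lemma_Subordinated_Sharp} and an induction along the chain) to prove the two-sided bound $\eta_k\le p^u_k\wedge p^s_k\le e^\e\eta_k$, which is what feeds clause (d). The paper's route is to take $\eta_n:=q^u_n\wedge q^s_n$ where $(q^u_n,q^s_n)$ is itself $\e$--subordinated to $\{e^{-\e/3}Q_\e(f^n(x))\}$ (and bounded below by $q_\e$), feed \emph{those} sizes into Proposition \ref{Prop_A}, and only then compare with the sequence subordinated to $\{Q_\e(x_n)\}$; your choice $\eta_k\approx q_\e(f^k(x))$ destroys this comparison.

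The issue you single out as the ``main obstacle'' — forcing $p^u_k\wedge p^s_k\le Q_\e(f^k(x))$ — is secondary, and your proposed remedy (re-doing the coarse graining so that $Q_\e(f^i(x_k))=Q_\e(f^{k+i}(x))$ exactly) amounts to modifying Proposition \ref{Prop_A} rather than using it; even granting that such a refinement of the sets $Y_{k,m}$ is harmless, it does not touch the $\mathfs A$--membership problem above. In the paper this size issue is absorbed by the extra damping factor $e^{-\e/3}$ built into the subordination of the auxiliary sequence (together with clause (a) of Proposition \ref{Prop_A} and the monotonicity of the overlap relation), so no strengthening of the coarse graining is needed. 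To repair your proof you would have to replace your $\eta_k$ by the maximal $\e$--subordinated sequence over $\{e^{-\e/3}Q_\e(f^n(x))\}$ and then establish $p^u_k\wedge p^s_k\le e^\e\eta_k$, which is exactly where the $\NUH^\#_\chi$ assumption and Lemma \ref{Lemma_Subordinated_Sharp} must enter.
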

\noindent
The proof relies on two simple properties of  chains, which we now describe.

Some terminology: Let $(Q_k)_{k\in\Z}$ be a sequence  in $I_\e=\{e^{-\ell\e/3}:\ell\in\N\}$. A sequence of pairs $\{(p_k^u, p_k^s)\}_{k\in\Z}$ is called {\em $\e$--subordinated} to $(Q_k)_{k\in\Z}$ if for every $k\in\Z$,  $0<p_k^u, p_k^s\leq Q_k$, $p_k^u, p_k^s\in I_\e$, and
$$
p_{k+1}^u=\min\{e^\e p_k^u, Q_{k+1}\}\textrm{ and }p_{k-1}^s=\min\{e^\e p_k^s, Q_{k-1}\}.
$$
For example, if $\{\Psi_{x_k}^{p^u_k, p^s_k}\}_{k\in\Z}$ is a chain, then $\{(p^u_k,p^s_k)\}_{k\in\Z}$ is $\e$--subordinated to $\{Q_\e(x_k)\}_{k\in\Z}$.

\begin{lem}\label{Lemma_Subordinated_Exist}
Let $(Q_k)_{k\in\Z}$ be a sequence in $I_\e$, and suppose  $q_k\in I_\e$ satisfy $0<q_k\leq Q_k$ and $e^{-\e}\leq q_k/q_{k+1}\leq e^\e$ for all $k\in\Z$. There exists a sequence $\{(p^u_k,p^s_k)\}_{k\in\Z}$ which is  $\e$--subordinated to $\{Q_k\}_{k\in\Z}$, and so that  $p^u_k\wedge p^s_k\geq q_k$ for all $k$.
\end{lem}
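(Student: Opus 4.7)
The plan is to write down explicit closed-form expressions for $p^u_k$ and $p^s_k$ in terms of the $Q_j$'s, and then verify the three required properties (subordination recurrence, $p^{u/s}_k\le Q_k$ and membership in $I_\e$, and the lower bound $p^u_k\wedge p^s_k\ge q_k$) directly. The key observation is that the recurrence $p^u_{k+1}=\min\{e^\e p^u_k,Q_{k+1}\}$ can be ``solved'' by taking the infimum over past caps, and similarly for $p^s$.

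Concretely, I would set
\[
p^u_k:=\inf_{j\le k} e^{(k-j)\e}Q_j,\qquad p^s_k:=\inf_{j\ge k}e^{(j-k)\e}Q_j.
\]
Since every $Q_j$ belongs to $I_\e=\{e^{-\ell\e/3}:\ell\in\N\}$ and $e^\e=e^{3\cdot\e/3}$, each term $e^{(k-j)\e}Q_j$ again lies in $I_\e$; the infimum will be an honest minimum (and hence lie in $I_\e$) once we show it is bounded below by a positive constant, which will follow from the lower bound $\ge q_k>0$ established below. Both expressions are finite because they are $\le Q_k$ (take $j=k$).

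Next I would verify the subordination recurrences. Splitting the index range gives
\[
p^u_{k+1}=\min\Bigl\{Q_{k+1},\inf_{j\le k}e^{(k+1-j)\e}Q_j\Bigr\}=\min\{Q_{k+1},e^{\e}p^u_k\},
\]
and symmetrically $p^s_{k-1}=\min\{Q_{k-1},e^{\e}p^s_k\}$. This is exactly the subordination condition.

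Finally, for the lower bound $p^u_k\wedge p^s_k\ge q_k$, I use the hypothesis $e^{-\e}\le q_k/q_{k+1}\le e^\e$. Iterating, for any $j$ we have $q_j\ge e^{-|k-j|\e}q_k$, i.e.\ $e^{|k-j|\e}q_j\ge q_k$. Combined with $Q_j\ge q_j$, this gives $e^{(k-j)\e}Q_j\ge q_k$ for $j\le k$ and $e^{(j-k)\e}Q_j\ge q_k$ for $j\ge k$, whence $p^u_k,p^s_k\ge q_k$. This in turn justifies after the fact that the infima in the definitions of $p^u_k,p^s_k$ are minima in $I_\e$, since $I_\e\cap[q_k,\infty)$ is finite.

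There is no real obstacle here; the construction is completely explicit and the verifications are direct manipulations of the definitions. The only point requiring a little care is to close the loop between ``the infimum is in $I_\e$'' and ``the infimum is at least $q_k>0$,'' which is handled by proving the lower bound first in the larger set of real numbers and then invoking finiteness of $I_\e\cap[q_k,\infty)$.
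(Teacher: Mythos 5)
Your construction is essentially the paper's: the paper defines $p^u_k$ as the largest $t\in I_\e$ satisfying $e^{-n\e}t\leq Q_{k-n}$ for all $n\geq 0$ (and symmetrically for $p^s_k$), which coincides with your $\inf_{j\leq k}e^{(k-j)\e}Q_j$, and your verifications of the recurrence and of $p^u_k\wedge p^s_k\geq q_k$ are exactly the intended ones. One small correction: a term $e^{(k-j)\e}Q_j$ with $j<k$ need not lie in $I_\e$ (it may exceed $1$); what is true, and all you need, is that the infimum is attained, is $\leq Q_k\leq 1$, and is an integer power of $e^{-\e/3}$, hence does lie in $I_\e$.
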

\begin{proof} The following short proof was shown to me by F. Ledrappier. By the assumptions on $q_k$,  $Q_\e(x_{k-n}), Q_\e(x_{k+n})\geq e^{-\e n}q_k$ for all $n\geq 0$, therefore the following definitions make sense:
\begin{align*}
p^u_k&:=\max\{t\in I_\e: e^{-\e n}t\leq Q_\e(x_{k-n})\textrm{ for all }n\geq 0\};\\
p^s_k&:=\max\{t\in I_\e: e^{-\e n}t\leq Q_\e(x_{k+n})\textrm{ for all }n\geq 0\}.
\end{align*}
The sequence $\{(p^u_k,p^s_k)\}_{k\in\Z}$ is $\e$--subordinated to $\{Q_\e(x_k)\}_{k\in\Z}$.
\end{proof}

\begin{lem}\label{Lemma_Subordinated_Sharp}
Suppose $\{(p_n^u, p_n^s)\}_{n\in\Z}$ is $\e$--subordinated to a sequence $\{Q_n\}_{n\in\Z}\subset I_\e$. If $\limsup\limits_{n\to\pm\infty}(p^u_n\wedge p^s_n)>0$, then $p_n^{u}$ (resp. $p_n^s$) is equal to $Q_n$ for infinitely many  $n>0$, and for infinitely many $n<0$.
\end{lem}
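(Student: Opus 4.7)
My plan is to exploit the rigidity of the subordination recursions
\[
p^u_{n+1}=\min\{e^\e p^u_n,Q_{n+1}\}\quad\text{and}\quad p^s_{n-1}=\min\{e^\e p^s_n,Q_{n-1}\}.
\]
Each of these produces a sharp dichotomy. For the unstable sequence: either $p^u_{n+1}=Q_{n+1}$ (the cap is attained), or else $p^u_{n+1}=e^\e p^u_n$, which, read backwards, gives $p^u_n=e^{-\e}p^u_{n+1}$. So whenever the cap is not attained, $p^u$ multiplies by $e^\e$ moving forward and by $e^{-\e}$ moving backward. The analogous dichotomy for $p^s$ is the time-reversal of this one.

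The argument for $p^u$ then splits into two independent one-sided contradictions. Suppose, for contradiction, that the cap is hit only finitely often on the positive side, so that there is $N$ with $p^u_n<Q_n$ for all $n>N$. Iterating the dichotomy forward yields $p^u_{N+k}=e^{\e k}p^u_N$, which diverges, contradicting the uniform bound $p^u_n\leq Q_n\leq 1$ (here one uses $Q_n\in I_\e\subset (0,1]$). Note that this half of the claim does not require the limsup hypothesis. Suppose instead that the cap is hit only finitely often on the negative side, so there is $N$ with $p^u_n<Q_n$ for all $n<-N$. Iterating backwards now gives $p^u_{-N-k}=e^{-\e k}p^u_{-N}\to 0$, so $\limsup_{n\to-\infty}p^u_n=0$ and therefore $\limsup_{n\to-\infty}(p^u_n\wedge p^s_n)=0$, contradicting the hypothesis.

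The case of $p^s$ is the mirror image: its recursion is obtained from the $p^u$ recursion by reversing time, so ``infinitely often for $n<0$'' follows from the upper bound $Q_n\leq 1$ with no hypothesis needed, while ``infinitely often for $n>0$'' uses $\limsup_{n\to+\infty}(p^u_n\wedge p^s_n)>0$ in exactly the same way. I do not expect any serious obstacle; the only step that might be easy to miss is the appeal to $Q_n\leq 1$, which is not spelled out in the statement of the lemma but is built into the definition $I_\e=\{e^{-k\e/3}:k\in\N\}$.
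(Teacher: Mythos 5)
Your proof is correct, and it rests on the same single mechanism as the paper's: whenever the cap $Q_n$ is not attained, $\e$--subordination forces multiplication by $e^{\e}$ in one direction (equivalently $e^{-\e}$ in the other), so a long cap-free stretch produces geometric growth or decay. The organization differs slightly. The paper fixes $m$ as half the smaller of the two limsups and $N=\lceil\e^{-1}\log(M/m)\rceil$ with $M=\sup Q_n$, and shows that from any time $n$ (positive or negative) with $p^u_n\wedge p^s_n>m$ the cap must be hit somewhere in $[n,n+N]$; since such times occur arbitrarily far to the right and to the left, both halves of the conclusion for $p^u$ follow from one forward-growth estimate. You instead argue by two one-sided tail contradictions: forward growth against the bound $Q_n\le 1$ at one end, and backward decay to $0$ against the limsup hypothesis at the other. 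A byproduct of your split, which the paper does not record, is that $p^u_n=Q_n$ infinitely often as $n\to+\infty$ (and $p^s_n=Q_n$ infinitely often as $n\to-\infty$) requires no limsup hypothesis at all, only $Q_n\in I_\e\subset(0,1]$. One cosmetic point: with the hypothesis $p^u_n<Q_n$ for all $n<-N$, the backward iteration should be anchored at $-N-1$, i.e. $p^u_{-N-1-k}=e^{-\e k}p^u_{-N-1}$, since the dichotomy is not assumed at $n=-N$ itself; this does not affect the conclusion that $p^u_n\to 0$ as $n\to-\infty$.
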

\begin{proof}
We prove the statement for $p^u_n$, and leave the statement for $p^s_n$ to the reader.

$M:=\sup Q_n$ is finite, because $Q_n\in I_\e$ for all $n$.
Let $p_n:=p^u_n\wedge p^s_n$, and define  $m:=\frac{1}{2}\min\{\limsup\limits_{n\to \infty}p_{-n}, \limsup\limits_{n\to \infty}p_{n}\}$ and
$
N:=\lceil \e^{-1}\log(M/m)\rceil$.

There exists infinitely many positive (resp. negative) $n$ s.t. $p_n>m$. We claim that for every such $n$, there must exists some $k\in [n, n+N]$ s.t. $p^u_k=Q_k$. Otherwise, by $\e$--subordination,
$$
p^u_{n+N}=\min\{Q_{n+N}, e^{\e}p^u_{n+N-1}\}=e^\e p^u_{n+N-1}=\cdots =e^{N\e}p^u_n\geq e^{N\e}p_n>e^{N\e}m>M,
$$
which is false.
\end{proof}

We can now prove Proposition \ref{Prop_Chains_Exist}:
Suppose $x\in\NUH_\chi^\#(f)$, and recall the definition of $q_\e(\cdot)$ from Lemma \ref{Lemma_Q_tempered}. Choose  $q_n\in I_\e\cap[e^{-\e/3}q_\e(f^n(x)), e^{\e/3}q_\e(f^n(x))]$. The sequence $\{q_n\}_{n\in\Z}$ satisfies the assumptions of  Lemma \ref{Lemma_Subordinated_Exist}, therefore there exists a sequence $\{(q^u_n, q^s_n)\}_{n\in\Z}$ that is $\e$--subordinated to $\{e^{-\e/3}Q_\e(f^n(x))\}_{n\in\Z}$ and that satisfies $q^u_k\wedge q^s_k\geq q_k$.

Let $\eta_n:=q^u_n\wedge q^s_n$. By Lemma \ref{Lemma_Subordinated_Tempered}, $e^{-\e}\leq \eta_{n+1}/\eta_n\leq e^\e$, so we are free to use Proposition \ref{Prop_A} to construct   an  infinite sequence $\Psi_{x_n}^{\eta_n}\in\mathfs A$ such that
\begin{enumerate}
\item[(a)] $\Psi_{x_n}^{\eta_n}$ $\e$--overlaps $\Psi_{f^n(x)}^{\eta_n}$ and $e^{-\e/3}\leq Q_\e(f^n(x))/Q_\e(x_n)\leq e^{\e/3}$;
    \item[(b)] $\Psi_{f(x_n)}^{\eta_{n+1}}$ $\e$--overlaps $\Psi_{x_{n+1}}^{\eta_{n+1}}$;
    \item[(c)] $\Psi_{f^{-1}(x_n)}^{\eta_{n-1}}$ $\e$--overlaps $\Psi_{x_{n-1}}^{\eta_{n-1}}$;
        \item[(d)] $\Psi_{x_n}^{\eta_n'}\in\mathfs A$ for all $\eta_n'\in I_\e$ s.t.  $\eta_n\leq\eta_n'\leq \min\{Q_\e(x_n), e^\e\eta_n\}$.
\end{enumerate}

Construct a sequence $\{(p^u_n, p^s_n)\}_{n\in\Z}$ which is $\e$--subordinated to $\{Q_\e(x_n)\}_{n\in\Z}$ and which satisfies $p^u_n\wedge p^s_n\geq \eta_n$.

\medskip
\noindent
{\em Claim 1.\/}
$\Psi_{x_n}^{p^u_n, p^s_n}\in\mathfs V$ for all $n$.

\medskip
\noindent
{\em Proof.\/}
It is sufficient to show that
$
1\leq \frac{p^u_n\wedge p^s_n}{q^u_n\wedge q^s_n}\leq e^\e\ (n\in\Z)
$,
because  property (d)  with $\eta_n':=p^u_n\wedge p^s_n$ says that  in this case $\Psi_{x_n}^{p^u_n\wedge p^s_n}\in\mathfs A$, whence $\Psi_{x_n}^{p^u_n, p^s_n}\in\mathfs V$.

We start by showing that  there are infinitely many  $n<0$ such that $p^u_n\leq e^\e q^u_n$. Since $x\in\NUH_\chi^\#(f)$, $\limsup\limits_{n\to\infty}q_n,\limsup\limits_{n\to-\infty}q_n>0$.  Therefore  by Lemma \ref{Lemma_Subordinated_Sharp}, there are infinitely many $n<0$ for which
$
q^u_n=e^{-\e/3}Q_\e(f^n(x))$. Property (a) guarantees that for such $n$,  $q^u_n>e^{-\e}Q_\e(x_n)\geq e^{-\e}p^u_n$, whence $p^u_n<e^\e q^u_n$.

If $p^u_n\leq e^\e q^u_n$, then $p^u_{n+1}\leq e^\e q^u_{n+1}$, because
\begin{align*}
p^u_{n+1}&=\min\{e^\e p^u_n, Q_\e(x_{n+1})\}=e^{\e}\min\{p^u_n, e^{-\e}Q_\e(x_{n+1})\}\\
&\leq e^{\e}\min\{e^\e q^u_n, e^{-\e/3}Q_\e(f^{n+1}(x))\}\equiv e^\e q^u_{n+1}.
\end{align*}
It follows that $p^u_n\leq e^\e q^u_n$ for all $n\in\Z$.

Working with positive $n$, one can show in the same manner that $p^s_n\leq e^\e q^s_n$ for all $n\in\Z$. Combining the two results we see that
$p^u_n\wedge p^s_n\leq (e^\e q^u_n)\wedge (e^\e q^s_n)=e^\e(q^u_n\wedge q^s_n)$ for all $n\in\Z$. Since by construction $p^u_n\wedge p^s_n\geq \eta_n=q^u_n\wedge q^s_n$, we obtain
$1\leq \frac{q^u_n\wedge q^s_n}{p^u_n\wedge p^s_n}\leq e^\e$ as needed.

\medskip
\noindent
{\em Claim 2.\/} For every $n\in\Z$,  $\Psi_{x_n}^{p^u_n, p^s_n}\to\Psi_{x_{n+1}}^{p^u_{n+1}, p^s_{n+1}}$, and  $\Psi_{x_n}^{p_n^u\wedge p^s_n}$ $\e$--overlaps $\Psi_{f^n(x)}^{p_n^u\wedge p^s_n}$.

\medskip
\noindent
{\em Proof.\/} This follows from properties (a), (b), and (c) above, the inequality $p^u_n\wedge p^s_n\geq \eta_n$, and the  monotonicity property of the overlap condition.
\hfill$\Box$

\subsection{Admissible manifolds and the graph transform}
Suppose $x\in\NUH_\chi(f)$. A {\em $u$--manifold in $\Psi_x$} is a manifold $V^u\subset M$ of the form
$$
V^u=\Psi_x\{(F^u(t), t):|t|\leq q\},
$$
where $0<q\leq Q_\e(x)$, and $F^u$ is a $C^{1+\b/3}$--function s.t. $\|F^u\|_\infty\leq Q_\e(x)$.

An {\em $s$--manifold in $\Psi_x$} is a manifold $V^s\subset M$ of the form
$$
V^s=\Psi_x\{(t,F^s(t)):|t|\leq q\},
$$
where $0<q\leq Q_\e(x)$, and $F^s$ is a $C^{1+\b/3}$--function s.t. $\|F^s\|_\infty\leq Q_\e(x)$.

We will use the superscript ``$u/s$" in statements which apply both to the $s$ case and to the $u$ case.

The function $F=F^{u/s}$ is called the {\em representing function} of $V^{u/s}$ at $\Psi_x$.
The {\em parameters} of a $u/s$ manifold in $\Psi_x$ are
\begin{itemize}
\item {\em $\s$--parameter\/}: $\s(V^{u/s}):=\|F'\|_{\b/3}:=\|F'\|_\infty+\sup\left\{\frac{|F'(t_1)-F'(t_2)|}{|t_1-t_2|^{\b/3}}\right\}$;
\item {\em $\g$--parameter\/}: $\g(V^{u/s}):=|F'(0)|$;
\item {\em $\vf$--parameter\/}: $\vf(V^{u/s}):=|F(0)|$;
\item {\em $q$--parameter\/}: $q(V^{u/s}):=q$.
\end{itemize}
A {\em $(u/s, \s,\g,\vf,q)$--manifold in $\Psi_x$} is a $u/s$--manifold $V^{u/s}$ in $\Psi_x$ whose parameters satisfy
$\s(V^{u/s})\leq \s$, $\g(V^{u/s})\leq \g$, $\vf(V^{u/s})\leq \vf$, and $q^{u/s}(V^u)=q$.

\begin{defi}\label{Def_Admissible}
Suppose $\Psi_x^{p^u,p^s}$ is a double chart.  A {\em $u/s$--admissible manifold in $\Psi_x^{p^u,p^s}$} is a $(u/s,\s,\g,\vf,q)$--manifold in $\Psi_x$ s.t.
$$
\s\leq \frac{1}{2},\ \g\leq \frac{1}{2}(p^u\wedge p^s)^{\b/3},\ \vf\leq 10^{-3}(p^u\wedge p^s),\textrm{ and } q=\begin{cases}
p^u & \textrm{$u$--manifolds}\\
p^s & \textrm{$s$--manifolds}.
\end{cases}
$$
\end{defi}

This is similar, but stronger, than the admissibility condition in  Katok \& Mendoza \cite[Definition S.3.4]{KM} or Katok \cite{KatokIHES}. We needed to strengthen the condition to get Proposition \ref{Prop_Intersection} (4) below.

Let $F$ be the representing function of a $u/s$--admissible manifold in $\Psi_x^{p^u, p^s}$.
If $\e<1$ (as we always assume), then the conditions $\s\leq \frac{1}{2}$, $\vf<10^{-3}(p^u\wedge p^s)$ and $p^u, p^s<Q_\e(x)$  force
\begin{equation}\label{Lip(F)}
\textrm{Lip}(F)<\e,
\end{equation}
because for every $t$ in the domain of $F$, $|t|\leq p^{u/s}\leq Q_\e(x)<\e^{3/\b}$ and
\begin{equation}\label{deriv}
|F'(t)|\leq |F'(0)|+\textrm{H\"ol}(F')|t|^{\frac{\b}{3}}\leq \frac{1}{2}(p^{u}\wedge p^s)^{\frac{\b}{3}}+\frac{1}{2}(p^{u/s})^{\frac{\b}{3}}<(p^{u/s})^{\frac{\b}{3}}<\e.
\end{equation}
Another important fact is that if $\e$ is small enough then
$\|F\|_{\infty}<10^{-2}Q_\e(x)$, because
$\|F\|_\infty\leq |F(0)|+\max|F'|\cdot p^{u/s}<\vf+\e p^{u/s}\leq (10^{-3}+\e)p^{u/s}<10^{-2} p^{u/s}$.

\begin{defi}\label{Def_Dist}
Let $V_1, V_2$ be two  $u$--manifolds (resp.  $s$--manifolds) in $\Psi_x$ s.t. $q(V_1)=q(V_2)$, then $\dist(V_1,V_2):=\max|F_1-F_2|$
 where $F_1$ and $F_2$ are the representing functions of $V_1$ and $V_2$ in $\Psi_x$.
\end{defi}
\noindent
Occasionally we will also need the $C^1$--distance defined by
$$
\dist_{C^{1}}(V_1,V_2):=\max|F_1-F_2|+\max|F_1'-F_2'|.
$$

Notice that $\dist$ and $\dist_{C^1}$ are defined using the Pesin charts, not its ``natural" charts. Distances using natural charts are bounded by a constant times distances w.r.t. Pesin charts, because
Pesin charts take the form $\Psi_x=\exp_x\circ C_\chi(x)$ where $C_\chi(x): \R^2\to M$ is a contraction.

\begin{defi}
Let $V^s,V^u$ be a $u$--manifold and an $s$--manifold in $\Psi_x$, with representing functions $F_s, F_u$. Suppose $V^s,V^u$ intersect at a unique point $P=\Psi_x(u,v)$, then  $\measuredangle(V^s,V^u):=\measuredangle((d\Psi_x)_{(u,v)}{1\choose F_s'(u)},(d\Psi_x)_{(u,v)}{F_u'(v)\choose 1})$.
\end{defi}
\noindent
{\em Remark\/:} Pesin charts preserve orientation, therefore there are only two possible choices to the pair of directions of $V^s,V^u$ at $P$. Both lead to the same angle, and this angle is in $(0,\pi)$. Thus the angle of intersection is independent of the chart.

\begin{prop}\label{Prop_Intersection}
The following holds for all $\e$ small enough.  Let $V^u$ be a $u$--admissible manifold in $\Psi_x^{p^u, p^s}$, and $V^s$ be an $s$--admissible manifold in $\Psi_x^{p^u, p^s}$, then
\begin{enumerate}
\item $V^u$ intersects $V^s$ at a unique point $P$;
\item $P=\Psi_x(v,w)$ with $|v|,|w|\leq 10^{-2}(p^u\wedge p^s)$;
\item $P$ is a Lipschitz function of $(V^u, V^s)$, with Lipschitz constant less than $3$;
\item Suppose $\eta:=p^u\wedge p^s$, then the angle of intersection at $P$ satisfies
\begin{eqnarray*}
&e^{-\eta^{\b/4}}\leq \frac{\sin\measuredangle(V^u, V^s)}{\sin\measuredangle(E^s(x), E^u(x))}\leq e^{\eta^{\b/4}}&\\
&|\cos\measuredangle(V^u, V^s)-\cos\measuredangle(E^s(x), E^u(x))|<2\eta^{\b/4}.&
\end{eqnarray*}
\end{enumerate}
\end{prop}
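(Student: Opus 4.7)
\medskip
\noindent
\textbf{Proof plan.}

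\emph{Setup and parts (1), (2).} Work in the chart $\Psi_x$ with representing functions $F^u,F^s$, and set $\eta:=p^u\wedge p^s$. Points in $V^u\cap V^s$ correspond exactly to points $\Psi_x(v,w)$ with $w=F^u(v)$ and $v=F^s(w)$, i.e.\ with $v$ a fixed point of $T:=F^s\circ F^u$. By symmetry one may assume $p^u\le p^s$. Then by \eqref{Lip(F)} both $F^u,F^s$ are $\e$--Lipschitz, so $T$ is $\e^2$--Lipschitz; the bound $\|F^u\|_\infty<10^{-2}p^u$ established after \eqref{deriv} ensures $F^u$ maps $[-\eta,\eta]$ into the domain of $F^s$, and $|T(v)|\le|F^s(0)|+\e\|F^u\|_\infty<10^{-3}\eta+10^{-2}\e\eta<10^{-2}\eta$, so $T$ maps $[-\eta,\eta]$ into itself. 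Banach's fixed point theorem now gives (1); iterating from $0$ yields $|v|<10^{-2}\eta$ and $|w|=|F^u(v)|<10^{-2}\eta$, giving (2).

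\emph{Part (3).} For two pairs $(V_i^u,V_i^s)$, $i=1,2$, with intersection points $P_i=\Psi_x(v_i,w_i)$, inserting $F_2^s\circ F_1^u$ and $F_2^s\circ F_2^u$ in $v_1-v_2=F_1^s(F_1^u(v_1))-F_2^s(F_2^u(v_2))$ and applying the $\e$--Lipschitz bounds gives
$$|v_1-v_2|\le\dist(V_1^s,V_2^s)+\e\,\dist(V_1^u,V_2^u)+\e^2|v_1-v_2|,$$
hence $|v_1-v_2|\le\tfrac{1+\e}{1-\e^2}\max\bigl(\dist(V_1^u,V_2^u),\dist(V_1^s,V_2^s)\bigr)$, and the same for $|w_1-w_2|$. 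Theorem \ref{Theorem_OP_charts} bounds $\|d\Psi_x\|\le 2$ on $R_{10Q_\e(x)}(\un 0)$, so $d(P_1,P_2)<3\max(\dist(V_1^u,V_2^u),\dist(V_1^s,V_2^s))$ for $\e$ small.

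\emph{Part (4): setup.} The tangents to $V^u,V^s$ at $P$ lift to $\R^2$ as $\un t^u=((F^u)'(w),1)$ and $\un t^s=(1,(F^s)'(v))$. Since $|v|,|w|\le 10^{-2}\eta$ by (2), the admissibility conditions $\g\le\tfrac12\eta^{\b/3}$ and $\s\le\tfrac12$ yield $|(F^u)'(w)|\le\tfrac12\eta^{\b/3}+\tfrac12(10^{-2}\eta)^{\b/3}\le\eta^{\b/3}$, and likewise for $(F^s)'(v)$. Writing $\Psi_x=\exp_x\circ C_\chi(x)$ and using $(d\exp_x)_{\un 0}=\mathrm{Id}$, one has $(d\Psi_x)_{(v,w)}=(\mathrm{Id}+E)\circ C_\chi(x)$ with $\|E\|=O(\|C_\chi(x)(v,w)\|)=O(\eta)$, by Lemma \ref{Lemma_C_contracts} and the smoothness of $\exp$ on the compact $M$; in particular $\det(\mathrm{Id}+E)=1+O(\eta)$. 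The wedge-quotient formula then expresses $\sin\measuredangle(V^u,V^s)/\sin\measuredangle(E^s(x),E^u(x))$ as a product of (a) the determinant ratio $1+O(\eta)$, (b) the factor $|1-(F^u)'(w)(F^s)'(v)|=1+O(\eta^{2\b/3})$, and (c) norm ratios of the form $\|C_\chi(x)\un e_2\|/\|(\mathrm{Id}+E)C_\chi(x)\un t^u\|$ and the symmetric one for $\un t^s$.

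\emph{Main obstacle.} The delicate piece is (c): $C_\chi(x)\un t^u=C_\chi(x)\un e_2+(F^u)'(w)C_\chi(x)\un e_1$ mixes a multiple of $\un e^u(x)/u_\chi(x)$ with one of $\un e^s(x)/s_\chi(x)$, and the ratio $u_\chi(x)/s_\chi(x)$ is unbounded on $\NUH_\chi(f)$. This is precisely why the definition \eqref{Qdef} of $Q_\e$ carries the exponent $12/\b$: combining it with Lemma \ref{Lemma_C_norm} and the inequality $\|C_\chi(x)^{-1}\|\ge\max(s_\chi(x),u_\chi(x))$ gives
$$\eta^{\b/3}\cdot\frac{u_\chi(x)}{s_\chi(x)}=\eta^{\b/4}\cdot\eta^{\b/12}\cdot\frac{u_\chi(x)}{s_\chi(x)}\le\eta^{\b/4}\cdot O(\e^{1/4}),$$
so each mixing term contributes $O(\eta^{\b/4})$ relative to the dominant direction, and symmetrically for the $\un t^s$ ratio. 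Combining with the $O(\eta)$ and $O(\eta^{2\b/3})$ errors from (a) and (b) bounds the total multiplicative error in the sine ratio by $\eta^{\b/4}$ for $\e$ small, giving the claimed $e^{\pm\eta^{\b/4}}$ bound. The cosine estimate is proved by the same decomposition applied to $\cos\measuredangle(V^u,V^s)=\langle(d\Psi_x)_{(v,w)}\un t^u,(d\Psi_x)_{(v,w)}\un t^s\rangle/(\|\cdot\|\|\cdot\|)$, where only an additive error is required and the argument is simpler.
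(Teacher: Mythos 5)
Your proposal is correct and follows essentially the same route as the paper: parts (1)--(3) via the Banach fixed point theorem applied to the composition of the two representing functions (your WLOG $p^u\le p^s$ plays the role of the paper's Lipschitz-extension trick for uniqueness), and part (4) via the determinant formula for the sine of an angle under a linear map, factored into the same three pieces. Your key inequality $\eta^{\b/12}\,u_\chi(x)/s_\chi(x)=O(\e^{1/4})$, obtained from the exponent $12/\b$ in \eqref{Qdef} together with $\|C_\chi(x)^{-1}\|\ge\max(s_\chi(x),u_\chi(x))$ and $s_\chi,u_\chi\ge\sqrt2$, is exactly the mechanism the paper uses in the form $\|C_\chi(x)^{-1}\|\eta^{\b/3}<\e^{1/4}\eta^{\b/4}$.
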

\noindent
Parts (1),(2), and (3) follow from \cite[Corollary S.3.8]{KH}. Part (3) is because of the assumptions on $\g$ and $\s$, and is the reason why we require more than Katok \& Mendoza did in \cite{KM}. See the appendix for proofs.

The following result describes the action of $f$ on admissible manifolds. Results of this type (often called ``graph transform" lemmas) are used to prove Pesin's stable manifold theorem \cite[chapter 7]{BP}, \cite{Pesin}. The proof is in the appendix.

\begin{prop}[Graph Transform]\label{Prop_Graph_Transform}
The following holds for all $\e$ small enough. Suppose $\Psi_x^{p^u,p^s}\to\Psi_y^{q^u,q^s}$, and $V^u$ is a $u$--admissible manifold in $\Psi_x^{p^u,p^s}$, then
 \begin{enumerate}
 \item $f(V^u)$ contains a $u$--manifold $\wh{V}^u$ in  $\Psi_y^{q^u,q^s}$ with parameters
\begin{equation}\label{GraphTransform}
\begin{aligned}
\s(\wh{V}^u)&\leq e^{\sqrt{\e}}e^{-2\chi}[\s(V^u)+\sqrt{\e}]\\
\g(\wh{V}^u)&\leq e^{\sqrt{\e}}e^{-2\chi}[\g(V^u)+\e^{\b/3}(q^u\wedge q^s)^{\b/3}]\\
\vf(\wh{V}^u)&\leq e^{\sqrt{\e}}e^{-\chi} [\vf+\sqrt{\e}(q^u\wedge q^s)]\\
q(\wh{V}^u)&\geq \min\{e^{-\sqrt{\e}}e^\chi q(V^u),Q_\e(y)\}
\end{aligned}
\end{equation}
\item $f({V}^u)$ intersects any $s$--admissible manifold in  $\Psi_y^{q^u,q^s}$ at a unique point.
\item $\wh{V}^u$ restricts to a $u$--admissible manifold in $\Psi_y^{q^u,q^s}$. This is the unique $u$--admissible manifold in $\Psi_y^{q^u,q^s}$ inside $f(V^u)$. We call it $\mathcal F_u[V^u]$.
\item Suppose $V^u$  is represented by the function $F$. If $p:=\Psi_x(F(0),0)$, then   $f(p)\in\mathcal F_u[V^u]$.
\end{enumerate}
Similar statements hold for the $f^{-1}$--image of an $s$--admissible manifold in $\Psi_y^{q^u,q^s}$.
\end{prop}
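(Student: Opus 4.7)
The plan is to use Proposition \ref{Prop_f_xy} to write $f$ in the overlapping charts as a small perturbation of a linear hyperbolic map, push the graph of $V^u$ through this map, re-solve for a horizontal graph over the $v$--axis in $\Psi_y$, and then verify each of the four parameter bounds in (\ref{GraphTransform}) by differentiating once and applying the chain/quotient rule.

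Concretely, let $\eta := q^u \wedge q^s$ and invoke Proposition \ref{Prop_f_xy} to write $f_{xy} := \Psi_y^{-1}\circ f\circ \Psi_x$ on $R_{10Q_\e(x)}(\un{0})$ in the form $f_{xy}(u,v)=(Au+h_1(u,v),Bv+h_2(u,v))$ with $C_f^{-1}<|A|<e^{-\chi}$, $e^\chi<|B|<C_f$, $|h_i(\un 0)|<\e\eta$, $\|\nabla h_i(\un 0)\|<\e\eta^{\b/3}$, and $\b/3$--H\"older semi-norm of $\nabla h_i$ bounded by $\e$. Parametrise $V^u$ by $t\mapsto(F(t),t)$ with $|t|\leq p^u$ and push it through $f_{xy}$ to obtain the curve $\gamma(t)=(G(t),H(t))$ where $G(t)=AF(t)+h_1(F(t),t)$ and $H(t)=Bt+h_2(F(t),t)$. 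By (\ref{Lip(F)}) $\Lip(F)<\e$, hence $|H'(t)-B|<3\e$, so $H$ is a $C^{1+\b/3}$ diffeomorphism from $[-p^u,p^u]$ onto an interval of length at least $2e^\chi e^{-\sqrt{\e}}p^u\geq 2q^u$ (this uses $q^u=\min\{e^\e p^u,Q_\e(y)\}$ and gives item (4) of (\ref{GraphTransform})). Restrict to the sub-interval $\{t:|H(t)|\leq q^u\}$ and define $\wh F := G\circ H^{-1}$, producing the desired $u$--manifold $\wh V^u$ in $\Psi_y^{q^u,q^s}$.

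Next, derive the $\g, \vf, \s$--bounds by differentiating. By the implicit formula,
\[
\wh F'(H(t))=\frac{AF'(t)+\partial_u h_1\,F'(t)+\partial_v h_1}{B+\partial_u h_2\,F'(t)+\partial_v h_2},
\]
so the denominator is $B(1+O(\e))$ while the numerator is $AF'(t)+O(\e\eta^{\b/3})$ uniformly on $R_{10Q_\e(x)}(\un 0)$, which gives $|\wh F'|\leq e^{-2\chi}e^{\sqrt\e}(|F'|+O(\e^{1/2}\eta^{\b/3}))$; evaluated at $t=H^{-1}(0)$ this yields item (2) of (\ref{GraphTransform}). The $\s$--bound follows from a direct $\b/3$--H\"older estimate on this quotient: H\"older norms of $F'$ and $\nabla h_i$ compose through $H^{-1}$ (whose Lipschitz constant is $(1+O(\e))/|B|$) while the ratio $A/B$ extracts the $e^{-2\chi}$ factor, and the leftover error from the $h_i$ terms is swallowed by the $e^{\sqrt\e}$ slack. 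For the $\vf$--bound, set $t_0:=H^{-1}(0)$; since $|H(0)|=|h_2(F(0),0)|\leq \e\eta+\e\eta^{\b/3}|F(0)|$ and $|H'|\geq e^\chi(1-O(\e))$, one gets $|t_0|\leq e^{-\chi}(\e\eta+O(\e)\vf(V^u))$, and then $|\wh F(0)|=|AF(t_0)+h_1(F(t_0),t_0)|\leq e^{-\chi}\vf(V^u)+O(\e\eta)$, giving item (3) after absorbing constants into $e^{\sqrt\e}$.

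Finally, items (2)--(4) of the proposition: part (2) follows immediately by comparing $f(V^u)$ (an almost-horizontal $C^{1+\b/3}$ graph) with the almost-vertical graph of an $s$--admissible manifold via Proposition \ref{Prop_Intersection}. Part (3): the bounds we just derived lie below $\tfrac12$, $\tfrac12(q^u\wedge q^s)^{\b/3}$, and $10^{-3}(q^u\wedge q^s)$ respectively once $\e$ is small, because the factor $e^{-\chi}$ beats both the additive $\sqrt\e\eta$ error and the original thresholds from Definition \ref{Def_Admissible}; uniqueness of the admissible restriction holds because two $u$--admissible manifolds in $\Psi_y^{q^u,q^s}$ sharing the same horizontal graph must coincide. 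Part (4) is a direct verification: $f(p)=\Psi_y(f_{xy}(F(0),0))=\Psi_y(G(0),H(0))=\gamma(0)$, and $|H(0)|\leq q^u$ by the $\vf$--bound, so $f(p)\in\wh V^u$. The only delicate point, and the one where the $e^{\sqrt{\e}}$ slack in (\ref{GraphTransform}) is essential, is the H\"older computation for $\s(\wh V^u)$; the rest is book-keeping once $f_{xy}$ has been written in the standard perturbed-hyperbolic form.
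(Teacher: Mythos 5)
Your construction follows the paper's own route: write $f_{xy}$ in the perturbed hyperbolic form of Proposition \ref{Prop_f_xy}, push the graph of $F$ through it, invert the second coordinate $H(t)=Bt+h_2(F(t),t)$ (expanding, hence injective, with image containing $[-q^u,q^u]$ since $q^u=\min\{e^\e p^u,Q_\e(y)\}\le e^{\chi-\sqrt{\e}}p^u$ and $|H(0)|$ is small), and estimate $|\wh F(0)|$, $|\wh F'(0)|$, $\|\wh F'\|_{\b/3}$ for $\wh F=G\circ H^{-1}$; parts (3) and (4) are then the same book-keeping as in the appendix. Two slips are harmless but should be fixed: the bound $\|\nabla h_1\|=O(\e\eta^{\b/3})$ is \emph{not} uniform on $R_{10Q_\e(x)}(\un{0})$ (there one only gets $O(\e Q_\e(x)^{\b/3})$); it holds at the actual evaluation point $(F(t_0),t_0)$ because $|t_0|\le 2\e\eta$ and $|F(t_0)|\lesssim\eta$, and that is what the $\g$-- and $\vf$--estimates use (likewise the H\"older correction $|F'(t_0)|\le\g+\tfrac12|t_0|^{\b/3}$ should appear explicitly in the $\g$--bound). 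Also, for $\wh V^u$ to be a $u$--manifold in $\Psi_y$ you must check $\|\wh F\|_\infty\le Q_\e(y)$, which follows from $|\wh F(0)|\le 10^{-3}\eta$ and $\Lip(\wh F)<\sqrt{\e}$.

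The one genuine gap is part (2). You assert that uniqueness of the intersection of $f(V^u)$ with any $s$--admissible manifold is immediate from Proposition \ref{Prop_Intersection}, but that proposition concerns a $u$--admissible and an $s$--admissible manifold of the \emph{same} double chart, and $f(V^u)$ is not admissible in $\Psi_y^{q^u,q^s}$: it is a graph over the full image interval $[\a,\b]=H[-p^u,p^u]$, which strictly contains $[-q^u,q^u]$ and may have length of order $|B|p^u\gg q^u$. Applying Proposition \ref{Prop_Intersection} to the admissible restriction $\mathcal F_u[V^u]$ yields \emph{existence} of an intersection point, but not uniqueness within the whole of $f(V^u)$ --- and whole-image uniqueness is exactly what part (2) claims and what is used later (e.g.\ in the proof of Lemma \ref{Lemma_SB}). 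To close the gap, argue as the paper does in its Claim 7: since $f(V^u)=\Psi_y\{(\wh F(\tau),\tau):\tau\in[\a,\b]\}$ with $\Lip(\wh F)\le\e$ on all of $[\a,\b]$, extend the representing function of the $s$--admissible manifold to an $\e$--Lipschitz function on $[\a,\b]$ (or on $[-a,a]$ with $a=\max\{|\a|,|\b|,q^s\}$) and rerun the contraction fixed--point argument on that larger interval; the fixed point there is unique, so the intersection point in $f(V^u)$ is unique.
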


\begin{defi}
Suppose $\Psi_x^{p^u,p^s}\to\Psi_y^{q^u,q^s}$. The {\em graph transforms} are the maps
\begin{itemize}
\item $\mathcal F_u$ which maps a $u$--admissible manifold $V^u$ in $\Psi_x^{p^u,p^s}$ to the unique $u$--admissible manifold  in $\Psi_y^{q^u,q^s}$ contained in $f(V^u)$;
\item $\mathcal F_s$ which maps an $s$--admissible manifold $V^s$ in $\Psi_y^{q^u,q^s}$ to the unique $s$--admissible manifold  in $\Psi_x^{p^u,p^s}$ contained in $f^{-1}(V^s)$.
\end{itemize}
\end{defi}
\noindent
(The operators  $\mathcal F_s, \mathcal F_u$ depend on the edge $\Psi_x^{p^u,p^s}\to\Psi_y^{q^u,q^s}$.)

\begin{prop}\label{Prop_Graph_Contracts}
If  $\e$ is small enough then the following holds. Let $t=s,u$,  then for any $t$--admissible manifolds $V_1^{t}, V_2^{t}$ in $\Psi_x^{p^u,p^s}$,
\begin{align}
\dist(\mathcal F_{t}(V_1^{t}),\mathcal F_{t}(V_2^{t}))&\leq e^{-\chi/2}\dist(V_1^{t},V_2^{t}\bigr);\label{magnifico}\\
\dist_{C^1}(\mathcal F_{t}(V_1^{t}),\mathcal F_{t}(V_2^{t}))&\leq e^{-\chi/2}\bigl[\dist_{C^1}(V_1^{t},V_2^{t})+\bigl(\dist(V_1^{t},V_2^{t})\bigr)^{\b/3}
\bigr]. \label{dandini}
\end{align}
\end{prop}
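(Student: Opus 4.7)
I will handle the unstable case $t=u$; the stable case is symmetric, applied to $f^{-1}$ in overlapping charts. Using Proposition \ref{Prop_f_xy}, write $f_{xy}=\Psi_y^{-1}\circ f\circ\Psi_x$ in the form $f_{xy}(u,v)=(Au+h_1(u,v),Bv+h_2(u,v))$ with $|A|<e^{-\chi}$, $|B|>e^\chi$, $|h_i(\un{0})|<\e\eta$, $\|\nabla h_i(\un{0})\|<\e\eta^{\b/3}$ and $\nabla h_i$ H\"older of exponent $\b/3$ with constant $\e$, where $\eta:=p^u\wedge p^s$. For a $u$--admissible manifold $V_i^u$ in $\Psi_x^{p^u,p^s}$ with representing function $F_i$, the curve $t\mapsto(AF_i(t)+h_1(F_i(t),t),\,Bt+h_2(F_i(t),t))$ parametrizes $f_{xy}(V_i^u)$. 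Because $\|\nabla h_2\|_{\infty}=O(\e)$ on the domain and $|F_i'|<\e$ by (\ref{Lip(F)}), the map $t\mapsto Bt+h_2(F_i(t),t)$ is strictly monotone with derivative $e^{\pm O(\e)}B$, so it admits a smooth inverse $t_i(s)$. The representing function of $\mathcal F_u[V_i^u]$ is $\wh{F}_i(s):=AF_i(t_i(s))+h_1(F_i(t_i(s)),t_i(s))$ restricted to $|s|\le q^u$.

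\textbf{$C^0$ contraction.} To compare $\wh{F}_1(s)-\wh{F}_2(s)$, first control $|t_1(s)-t_2(s)|$ via the identity $B(t_1-t_2)=h_2(F_2(t_2),t_2)-h_2(F_1(t_1),t_1)$: rewriting the right hand side as $[h_2(F_2(t_2),t_2)-h_2(F_1(t_2),t_2)]+[h_2(F_1(t_2),t_2)-h_2(F_1(t_1),t_1)]$ and applying the mean value theorem with the bound $\|\nabla h_2\|_{\infty}\le 2\e\eta^{\b/3}$ yields $|t_1(s)-t_2(s)|\le C\e|B|^{-1}\dist(V_1^u,V_2^u)$, the constant being absorbed into the slack below. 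Splitting $F_1(t_1)-F_2(t_2)=(F_1(t_1)-F_1(t_2))+(F_1(t_2)-F_2(t_2))$ in the identity
\begin{equation*}
\wh{F}_1(s)-\wh{F}_2(s)=A[F_1(t_1)-F_2(t_2)]+[h_1(F_1(t_1),t_1)-h_1(F_2(t_2),t_2)],
\end{equation*}
and using $|A|<e^{-\chi}$, $|F_1'|<\e$, $\|\nabla h_1\|_{\infty}<2\e\eta^{\b/3}$, one obtains $\|\wh{F}_1-\wh{F}_2\|_{\infty}\le(e^{-\chi}+O(\e))\dist(V_1^u,V_2^u)\le e^{-\chi/2}\dist(V_1^u,V_2^u)$ once $\e$ is small enough.

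\textbf{$C^1$ contraction.} Differentiating gives
\begin{equation*}
\wh{F}_i'(s)=\frac{AF_i'(t_i)+\partial_u h_1(F_i(t_i),t_i)F_i'(t_i)+\partial_v h_1(F_i(t_i),t_i)}{B+\partial_u h_2(F_i(t_i),t_i)F_i'(t_i)+\partial_v h_2(F_i(t_i),t_i)}.
\end{equation*}
Subtracting the two expressions at a common $s$ and expanding produces three groups of terms: (i) $A[F_1'(t_1)-F_2'(t_2)]/B$, controlled by $e^{-2\chi}\dist_{C^1}(V_1^u,V_2^u)+e^{-2\chi}\|F_1'\|_{\b/3}|t_1-t_2|^{\b/3}$, which via the admissibility bound $\|F_i'\|_{\b/3}\le 1/2$ and the previous step produces the $\dist(V_1^u,V_2^u)^{\b/3}$ contribution; (ii) differences of the form $\nabla h_k(F_1(t_1),t_1)-\nabla h_k(F_2(t_2),t_2)$, controlled by $\e$ times $(|F_1(t_1)-F_2(t_2)|+|t_1-t_2|)^{\b/3}=O(\e\,\dist(V_1^u,V_2^u)^{\b/3})$ using the $(\b/3)$--H\"older bound on $\nabla h_k$; (iii) the discrepancy between the two denominators, which is $O(\e)$ times combinations of the terms in (i) and (ii) divided by $|B|^2$. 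The leading constant is $|A||B|^{-1}\le e^{-2\chi}$, while every other contribution carries a factor of $\e$; after collection this gives (\ref{dandini}) with constant $e^{-\chi/2}$ for $\e$ small enough.

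\textbf{Main obstacle.} The delicate step is the $C^1$ comparison, where one must simultaneously track the displacement $|t_1(s)-t_2(s)|=O(\dist(V_1^u,V_2^u))$, the H\"older moduli of $\nabla h_k$ and of $F_i'$ evaluated at different arguments, and the $1/B$ factor from inverting the denominator, making sure each error piece is absorbed into either the $e^{-\chi/2}\dist_{C^1}$ term or the $e^{-\chi/2}\dist^{\b/3}$ term with the correct exponent $\b/3$ (which matches exactly the H\"older regularity afforded by admissibility and by Proposition \ref{Prop_f_xy}). The gap between the ``genuine'' rates $e^{-\chi}$, $e^{-2\chi}$ and the stated rate $e^{-\chi/2}$ is precisely the slack used to absorb all the $O(\e)$ corrections.
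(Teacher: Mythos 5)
Your proposal is correct and follows essentially the same route as the paper's appendix proof: expand $f_{xy}$ via Proposition \ref{Prop_f_xy}, invert the second coordinate to get $t_i(s)$, bound $\|t_1-t_2\|_\infty$ by $O(\e)\dist(V_1^u,V_2^u)$, and then run the same term-by-term comparison of $\wh F_i$ and $\wh F_i'$, absorbing all $O(\e)$ corrections into the slack between $e^{-\chi}$ (resp. $e^{-2\chi}$) and $e^{-\chi/2}$. The only minor imprecision is your gradient bound $\|\nabla h_i\|_\infty\le 2\e\eta^{\b/3}$ on the whole graph (the correct uniform bound is of the form $3\e Q_\e(x)^{\b/3}<3\e^2$, since $|t|$ ranges up to $p^u$, not $\eta$), but this does not affect the argument since only an $O(\e)$ bound is needed.
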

\noindent
See \cite[chapter 7]{BP}, \cite{KM}, and the appendix.

\subsection{A Markov extension} Let $\Sigma:=\Sigma(\mathfs G)$ denote the topological Markov shift of two sided infinite paths on the graph $G(\mathfs V, \mathfs E)$:
$$
\Sigma:=\{(v_i)_{i\in\Z}:v_i\in\mathfs V, v_i\to v_{i+1}\textrm{ for all }i\}.
$$
We equip $\Sigma$ with the metric  $d(\un{v},\un{w})=\exp[-\min\{k: v_k\neq w_k\}]$, and the action of the left shift map
 $\s:\Sigma\to\Sigma$, $\s:(v_i)_{i\in\Z}\mapsto (v_{i+1})_{i\in\Z}$.

\medskip
Our aim is to construct a map $\pi:\Sigma\to M$ with a $\chi$--large image s.t. $\pi\circ\s=f\circ\pi$. In fact, the map we construct will be well-defined for all chains.

\medskip
We begin with some comments on general chains of double charts. Suppose $(v_i)_{i\in\Z}$, $v_i=\Psi_{x_i}^{p^u_i,p^s_i}$ is  a chain, and let  $V^u_{-n}$ be a $u$--admissible manifold in $v_{-n}$. The graph transform relative to $v_{-n}\to v_{-n+1}$ maps $V^u_{-n}$ to a $u$--admissible manifold in $v_{-n+1}$,  $\mathcal F_u[V_{-n}]$. Another application of the graph transform, this time relative to $v_{-n+1}\to v_{-n+2}$, maps $\mathcal F_u[V_{-n}]$ to a $u$--admissible manifold in $v_{-n+2}$, which we denote by $\mathcal F_u^2[V^u_{-n}]$. Continuing this way, we eventually reach a $u$--admissible manifold in $v_0$ which we denote by $\mathcal F_u^n[V^u_{-n}]$.
Similarly, any $s$--admissible manifold in $v_n$ is mapped by $n$ applications of $\mathcal F_s$ to an $s$--admissible manifold in $v_0$. The manifolds $\mathcal F_u^n[V^u_{-n}]$ and $\mathcal F_s^n[V^u_n]$ depend on $(v_{-n},\ldots,v_n)$.

Let $V_n$ denote a sequence of $u/s$--manifolds in a chart $\Psi_x$. We say that $V_n$ converges to a $u/s$--manifold $V$, if the representing functions of $V_n$ converge uniformly to the representing function of $V$. Compare with definition \ref{Def_Dist}.
\begin{prop}\label{Prop_V}
Suppose $(v_i)_{i\in\Z}$ is a chain of double charts, and  choose arbitrary $u$--admissible manifolds $V^u_{-n}$ in $v_{-n}$, and $s$--admissible manifolds $V^s_n$ in $v_n$.
\begin{enumerate}
\item The limits
$
V^u[(v_i)_{i\leq 0}]:=\lim\limits_{n\to\infty}\mathcal F_u^n[V^u_{-n}],\text{ and }
V^s[(v_i)_{i\geq 0}]:=\lim\limits_{n\to\infty}\mathcal F_s^n[V^s_{n}]
$
exist, and are independent of the choice of $V^u_{-n}$ and $V^s_n$.
\item $V^u[(v_i)_{i\leq 0}]$ is a $u$--admissible manifold in $v_0$, and $V^s[(v_i)_{i\geq 0}]$ is an $s$--admissible manifold in $v_0$.
\item $f(V^s[(v_i)_{i\geq 0}])\subset V^s[(v_{i+1})_{i\geq 0}]$ and $f^{-1}(V^u[(v_i)_{i\leq 0}])\subset V^u[(v_{i-1})_{i\leq 0}]$;
\item Write $v_i=\Psi_{x_i}^{p^u_i, p^s_i}$, then
\begin{align*}
\hspace{1.5cm}V^s[(v_i)_{i\geq 0}]&=\{p\in \Psi_{x_0}[R_{p^s_0}(\un{0})]:\forall k\geq 0,\ f^{k}(p)\in \Psi_{x_{k}}[R_{10 Q_\e(x_{k})}(\un{0})]\},\\
V^u[(v_i)_{i\leq 0}]&=\{p\in \Psi_{x_0}[R_{p^u_0}(\un{0})]:\forall k\geq 0,\ f^{-k}(p)\in \Psi_{x_{-k}}[R_{10 Q_\e(x_{-k})}(\un{0})]\}.
\end{align*}
\item The maps $(u_i)_{i\in\Z}\mapsto V^{u}[(u_i)_{i\leq 0}], V^s[(u_i)_{i\geq 0}]$ are H\"older continuous: there exist  constants $K>0$ and $0<\theta<1$ s.t. for every $n\geq 0$ and  any two chains $\un{u}, \un{v}$,  if $u_i=v_i$ for all $|i|\leq n$, then
\begin{align*}
\dist_{C^1}(V^{u}[(u_i)_{i\leq 0}],V^{u}[(v_i)_{i\leq 0}])&<K\theta^n;\\
\dist_{C^1}(V^{s}[(u_i)_{i\geq 0}],V^{s}[(v_i)_{i\geq 0}])&<K\theta^n.
\end{align*}
\end{enumerate}
\end{prop}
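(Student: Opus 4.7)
My plan is to derive all five parts from the contraction of the graph transforms $\mathcal F_u,\mathcal F_s$ (Proposition \ref{Prop_Graph_Contracts}) together with the normal form of $f$ in overlapping charts (Proposition \ref{Prop_f_xy}); I will treat the $V^s$ statements, since $V^u$ is symmetric under time reversal. For \textbf{(1)} and \textbf{(2)}, let $\{V^s_n\}$ and $\{\tilde V^s_n\}$ be any two sequences with $V^s_n,\tilde V^s_n$ both $s$-admissible in $v_n$; the admissibility bounds ($\|F\|_\infty\leq 10^{-2}Q_\e(x_n)$, $\|F'\|_\infty<\e$, cf.\ (\ref{Lip(F)})--(\ref{deriv})) give a uniform bound $D_0$ on $\dist_{C^1}(V^s_n,\tilde V^s_n)$. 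Iterating (\ref{magnifico}) $n$ times yields $\dist(\mathcal F_s^n[V^s_n],\mathcal F_s^n[\tilde V^s_n])\leq e^{-n\chi/2}D_0\to 0$, proving independence of the choice. Cauchyness of the single sequence follows from the identity $\mathcal F_s^m[V^s_m]=\mathcal F_s^n[\mathcal F_s^{m-n}[V^s_m]]$ (in which the inner manifold is again $s$-admissible in $v_n$) together with the same contraction. Iterating (\ref{dandini}) gives Cauchyness in $\dist_{C^1}$ at rate $e^{-n\chi\b/6}$; with the uniform bound $\s(F_n)\leq 1/2$, Arzela-Ascoli then shows the limit is $C^{1+\b/3}$ and inherits the admissibility parameters of its approximants.

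For \textbf{(3)}, to show $f(V^s[(v_i)_{i\geq 0}])\subset V^s[(v_{i+1})_{i\geq 0}]$, fix any $s$-admissible $W^s_n$ in $v_{n+1}$ and define $V^s_n:=\mathcal F_s[W^s_n]$ via the edge $v_n\to v_{n+1}$, so $f(V^s_n)\subset W^s_n$. The identity $\mathcal F_s^n[V^s_n]=\mathcal F_s^{n+1}[W^s_n]$, combined with the inclusion $f\circ\mathcal F_s\subset\id$ iterated $n$ times, yields $f(\mathcal F_s^n[V^s_n])\subset\mathcal F_s^n[W^s_n]$; take $n\to\infty$ using continuity of $f$ and $C^0$-convergence on both sides. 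The containment $V^s[(v_i)_{i\geq 0}]\subset\Psi_{x_0}[R_{p^s_0}(\un 0)]$ portion of \textbf{(4)} is inherited from admissibility (because $\|F\|_\infty<p^s_0$ by the computation following (\ref{Lip(F)})--(\ref{deriv})), and the inclusion of $V^s[(v_i)_{i\geq 0}]$ in the right-hand side of (4) follows by iterating (3).

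The main obstacle is the reverse inclusion in \textbf{(4)}. Given $p$ satisfying the right-hand side, write $(\xi_k,\eta_k):=\Psi_{x_k}^{-1}(f^k(p))$ (so $|\xi_k|,|\eta_k|\leq 10Q_\e(x_k)$), let $F_k$ represent $V^s[(v_i)_{i\geq k}]$, and set $e_k:=\eta_k-F_k(\xi_k)$; we want to prove $e_0=0$. Compare the orbit of $p$ with that of $p^*_k:=\Psi_{x_k}(\xi_k,F_k(\xi_k))\in V^s[(v_i)_{i\geq k}]$, which by (3) satisfies $f(p^*_k)\in V^s[(v_i)_{i\geq k+1}]$. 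Writing the coordinate map as $f_{x_k,x_{k+1}}(u,v)=(Au+h_1,Bv+h_2)$ with $|B|>e^\chi$ and $\|\nabla h_i\|<\e$ (Proposition \ref{Prop_f_xy}), and using $\mathrm{Lip}(F_{k+1})<\e$, a direct estimate yields
\begin{equation*}
|e_{k+1}|\geq(|B|-\e)|e_k|-\mathrm{Lip}(F_{k+1})\cdot\e|e_k|\geq e^{\chi/2}|e_k|
\end{equation*}
for $\e$ small; the delicate point is that the first-coordinate perturbation $\xi_{k+1}-\xi_{k+1}^*$ (of size $O(\e|e_k|)$) contributes only $O(\e^2|e_k|)$ to $|F_{k+1}(\xi_{k+1})-F_{k+1}(\xi_{k+1}^*)|$ and hence cannot destroy the expansion by $|B|>e^\chi$. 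Iterating gives $|e_0|\leq e^{-n\chi/2}|e_n|\leq e^{-n\chi/2}\cdot O(Q_\e(x_n))\leq e^{-n\chi/2}\cdot O(\e^{3/\b})\to 0$, hence $e_0=0$ and $p\in V^s[(v_i)_{i\geq 0}]$.

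For \textbf{(5)}, if $u_i=v_i$ for $|i|\leq n$ then the graph transforms along the edges $u_k\to u_{k+1}$ with $0\leq k<n$ coincide with those for $\un v$, so $V^s[(u_i)_{i\geq 0}]=\mathcal F_s^n[X]$ and $V^s[(v_i)_{i\geq 0}]=\mathcal F_s^n[Y]$ for some $s$-admissible $X,Y$ in the common vertex $u_n=v_n$. The uniform admissibility bound on $\dist_{C^1}(X,Y)$, combined with $n$ applications of (\ref{magnifico}) and (\ref{dandini}), yields $\dist_{C^1}(V^s[\un u],V^s[\un v])\leq K\theta^n$ with $\theta=e^{-\chi\b/6}\in(0,1)$. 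The symmetric argument using the backward edges handles $V^u$.
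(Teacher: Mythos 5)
Your treatment of parts (1), (2), (3) and (5) is essentially the paper's argument: independence and Cauchyness from (\ref{magnifico}) via the identity $\mathcal F_s^m[V^s_m]=\mathcal F_s^n[\mathcal F_s^{m-n}[V^s_m]]$, admissibility of the limit via Arzela--Ascoli, invariance by passing the inclusion $f(\mathcal F_s[V])\subset V$ to the limit, and H\"older continuity by iterating (\ref{dandini}) from the common vertex $u_n=v_n$ (note only that the iteration produces a factor of order $n\,e^{-n\chi\b/6}$, so you must enlarge $\theta$ slightly and adjust $K$, as the paper does). These are fine.

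The genuine gap is in the reverse inclusion of part (4). Your central quantity $e_k:=\eta_k-F_k(\xi_k)$ need not be defined: $F_k$ represents the $s$--admissible manifold $V^s[(v_i)_{i\geq k}]$ and is only defined on $[-p^s_k,p^s_k]$, whereas the hypothesis on $p$ only gives $|\xi_k|\leq 10\,Q_\e(x_k)$, and $p^s_k=\min\{e^\e p^s_{k+1},Q_\e(x_k)\}$ is constrained by the entire future of the chain, so it can be far smaller than $10\,Q_\e(x_k)$. Knowing $|\xi_k|\leq p^s_k$ for all $k\geq 1$ is essentially the conclusion you are trying to prove, so it cannot be assumed; nor can it be recovered by the forward recursion for $\xi_k$, which only yields bounds of order $Q_\e(x_k)$. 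The same problem infects the key identity $\eta^*_{k+1}=F_{k+1}(\xi^*_{k+1})$: it uses part (3) applied to the projected point $p^*_k=\Psi_{x_k}(\xi_k,F_k(\xi_k))$, which is legitimate only if $p^*_k$ actually lies on $V^s[(v_i)_{i\geq k}]$, i.e.\ again only if $|\xi_k|\leq p^s_k$; an arbitrary Lipschitz extension of $F_k$ does not help, since the extended graph is not invariant. Consequently the chain of inequalities $|e_{k+1}|\geq e^{\chi/2}|e_k|$ cannot be run for all $k$, and the argument $|e_0|\leq e^{-n\chi/2}|e_n|\to 0$ collapses.

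The repair is to project only once, at time $k=0$, where the hypothesis $p\in\Psi_{x_0}[R_{p^s_0}(\un{0})]$ does guarantee $|\xi_0|\leq p^s_0$: set $\ov p:=\Psi_{x_0}(\xi_0,F_0(\xi_0))$ and compare the two orbits $f^k(p)$ and $f^k(\ov p)$. Both stay in $\Psi_{x_k}[R_{10Q_\e(x_k)}(\un{0})]$ --- the first by hypothesis, the second because $f^k(\ov p)\in V^s[(v_i)_{i\geq k}]$ by part (3) --- so the normal form of $f_{x_k x_{k+1}}$ from Proposition \ref{Prop_f_xy} applies to both. Writing $a_k,b_k$ for the unstable- and stable-coordinate differences (so $a_0=0$ in the $s$--case), one gets the coupled recursion $b_{k+1}\geq e^{\chi/3}b_k-\e a_k$, $a_{k+1}\leq e^{-\chi/3}a_k+\e b_k$, and an induction shows $a_k\leq b_k$ and $b_k$ grows geometrically unless $b_0=0$; since $b_k=O(Q_\e(x_k))=O(\e^{3/\b})$ is bounded, $b_0=0$ and $p\in V^s[(v_i)_{i\geq 0}]$. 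This is exactly the paper's proof of part (4); your one-variable ``deviation from the invariant graph'' idea captures the right hyperbolic mechanism but cannot be implemented with per-step projections under the stated hypotheses.
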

\noindent
Parts (1)--(4) should be compared with Pesin's Stable Manifold Theorem \cite{Pesin}. Part (5) should be compared to Brin's Theorem on the H\"older continuity of the Oseledets distribution on Pesin sets \cite{Brin}.
\begin{proof}
We give the proof in the case of $u$--manifolds. The case of $s$--manifolds is symmetric.
Before we begin, we mention the following obvious fact: for any double chart $\Psi_x^{p^u,p^s}$ and any two $u$--manifolds $V^u_1,V^u_2$ in $\Psi_x^{p^u,p^s}$,
$$
\dist(V^u_1,V^u_2)\leq 2Q_\e(x)<1.
$$

\medskip
\noindent
{\em Part 1.\/} Existence of the limit.

\medskip
By Proposition \ref{Prop_Graph_Transform},  $\mathcal F_u^n[V^u_{-n}]$ is a $u$--admissible manifold in $v_0$. By Proposition \ref{Prop_Graph_Contracts}, for any other choice $u$--admissible manifolds $W^u_{-n}$ in $v_{-n}$,
$$
\dist(\mathcal F_u^n[V^u_{-n}], \mathcal F_u^n[W^u_{-n}])<\exp[-\tfrac{1}{2}\chi n]\dist(V^u_{-n},W^u_{-n})<\exp[-\tfrac{1}{2}\chi n].
$$
Thus, if the limit exists then it is independent of $V^u_{-n}$.

For every $m>n$,  $W^u_{-n}:=\mathcal F_u^{m-n}[V^u_{-m}]$ is a $u$--admissible manifold in $v_{-n}$. It follows that for every $m>n$,
$
\dist(\mathcal F_u^n[V^u_{-n}], \mathcal F_u^m[V^u_{-m}])<\exp[-\tfrac{1}{2}\chi n].
$
It follows that $\lim \mathcal F_u^n[V^u_{-n}]$ exists.

\medskip
\noindent
{\em Part 2.\/} Admissibility of the limit.

\medskip
Write $v_0=\Psi_x^{p^u,p^s}$, and let $F_{n}$ denote the functions which represent $\mathcal F_u^n[V^u_{-n}]$ in $v_0$. Since $\mathcal F_u^n[V^u_{-n}]$ are $u$--admissible in $v_0$,  for every $n$,
\begin{itemize}
\item $\|F_n'\|_{\b/3}\leq \frac{1}{2}$;
\item $\|F_n'(0)\|\leq \frac{1}{2}(p^u\wedge p^s)^{\b/3}$;
\item $|F_n(0)|\leq 10^{-3}(p^u\wedge p^s)$.
\end{itemize}
Since $\mathcal F_u^n[V^u_{-n}]\xrightarrow[n\to\infty]{}V^u[(v_i)_{i\leq 0}]$, $F_n\xrightarrow[n\to\infty]{} F$ uniformly on $[-p^u,p^u]$, where $F$ represents $V^u[(v_i)_{i\leq 0}]$.

By the Arzela--Ascoli Theorem,  $\exists n_k\uparrow\infty$ s.t. $F_{n_k}'\xrightarrow[k\to\infty]{}G$ uniformly,  where $\|G\|_{\b/3}\leq \frac{1}{2}$. Thus
$
F_{n_k}(t)=F_{n_k}(-p^u)+\int_{-p^u}^t F_{n_k}'(t)dt\xrightarrow[k\to\infty]{}F(-p^u)+\int_{-p^u}^t G(t)dt
$, whence $F$ is differentiable, and  $F'=G$. We also see that $\{F_{n}'\}$ can only have one limit point. Consequently,
$
F_n'\xrightarrow[n\to\infty]{}F'\textrm{ uniformly}$.

It  follows that $\|F'\|_{\b/3}\leq \frac{1}{2}$, $|F'(0)|\leq \frac{1}{2}(p^u\wedge p^s)^{\b/3}$, and $|F(0)|\leq 10^{-3}(p^u\wedge p^s)$, whence the $u$--admissibility of $V^u[(v_i)_{i\in\Z}]$.

\medskip
\noindent
{\em Part 3.\/} Invariance properties of the limit.

\medskip
 Let $V^u:=V^u[(v_i)_{i\leq 0}]=\lim\mathcal F_u^n[V^u_{-n}]$, and $W^u:=V^u[(v_{i-1})_{i\leq 0}]=\lim\mathcal F_u^n[V^u_{-n-1}]$.

  \medskip
  \noindent
  $ \dist(V^u, \mathcal F_u(W^u))\leq \dist(V^u,\mathcal F_u^n(V^u_{-n}))+\dist(\mathcal F_u^n(V^u_{-n}), \mathcal F_u^{n+1}(V^u_{-n-1}))$
  \begin{align*}
 &\hspace{7cm} +\dist(\mathcal F_u^{n+1}(V^u_{-n-1}), \mathcal F_u(W^u))\\
 &\leq \dist(V^u,\mathcal F_u^n(V^u_{-n}))+e^{-\frac{1}{2}n\chi}\dist(V^u_{-n},\mathcal F_u(V^u_{-n-1}))
 +e^{-\frac{1}{2}\chi}\dist(\mathcal F_u^n(V^u_{-n-1}),W^u).
 \end{align*}
 The first and third  summands tend to zero, by the definition of $V^u$ and $W^u$. The second summand tends to zero, because $\dist(V^u_{-n},\mathcal F_u(V^u_{n-1}))<2Q_\e(x)<1$.
 It follows that $V^u=\mathcal F_u(W^u)\subset f(W^u)$.

\medskip
\noindent
{\em Part 4.\/} Suppose $v_i=\Psi_{x_i}^{p^u_i,p^s_i}$, then
$$
V^u=\{p\in \Psi_{x_0}[R_{p^u_0}(\un{0})]: \forall k\geq 0,\ f^{-k}(p)\in\Psi_{x_{-k}}[R_{10Q_\e(x_{-k})}(\un{0})]\}. $$

\medskip
The inclusion $\subseteq$ is simple: Every $u$--admissible manifold $W^u_i$ in  $\Psi_{x_i}^{p^u_i,p^s_i}$ is contained in $\Psi_{x_i}[R_{p^u_i}(\un{0})]$, because if $W^u_i$ is represented by the function $F$ then any $p=\Psi_{x_i}(v,w)$ in $W^u_i$  satisfies $|w|\leq p^u_i$, and
$$
|v|=|F(w)|\leq |F(0)|+\max|F'|\cdot |w|\leq \vf+\e|w|\leq (10^{-3}+\e)p^u_i<p^u_i.
$$
Applying this to $V^u:=V^u[(v_i)_{i\leq 0}]$, we see that  $\forall p\in V^u$,
$p\in V^u\subset\Psi_{x_0}[R_{p^u_0}(\un{0})]$, and by part 3 for every $k\geq 0$
$$f^{-k}(p)\in f^{-k}(V^u)\subseteq V^u[(v_{i-k})_{i\leq 0}]\subset\Psi_{x_{-k}}[R_{p^u_{-k}}(\un{0})]\subset \Psi_{x_{-k}}[R_{10Q_\e(x_{-k})}(\un{0})].$$
We have $\subseteq$.

\medskip
We prove $\supseteq$. Suppose $z\in \Psi_{x_0}[R_{p^u_0}(\un{0})]$ and $f^{-k}(z)\in\Psi_{x_{-k}}[R_{10Q_\e(x_{-k})}(\un{0})]$ for all $k\geq 0$. Write $z=\Psi_{x_0}(v_0,w_0)$. We show that $z\in V^u$ by proving that $v_0=F(w_0)$, where $F$ is the function which represents $V^u$.

Introduce for this purpose the point $\ov{z}=\Psi_{x_0}(\ov{v}_0,\ov{w}_0)$, where $\ov{w}_0=w_0$ and $\ov{v}_0=F(\ov{w}_0)$. For every $k\geq 0$, $f^{-k}(z),f^{-k}(\ov{z})\in \Psi_{x_{-k}}[R_{10 Q_\e(x_{-k})}(\un{0})]$, the first point by assumption, and the second point because  $f^{-k}(\ov{z})\in f^{-k}(V^u)\subset V^u[(v_{i-k})_{i\leq 0}]$. It is therefore possible to write
$$
f^{-k}(z)=\Psi_{x_{-k}}(v_{-k},w_{-k})\textrm{ and }f^{-k}(\ov{z})=\Psi_{x_{-k}}(\ov{v}_{-k},\ov{w}_{-k})\ \ (k\geq 0),
$$
where $|v_{-k}|, |w_{-k}|, |\ov{v}_{-k}|, |\ov{w}_{-k}|\leq 10 Q_\e(x_{-k})$ for all $k\geq 0$.

Proposition \ref{Prop_f_xy}, in its version for $f^{-1}$, says that for every $k\geq 0$, $f_{x_{-k-1} x_{-k}}^{-1}=\Psi_{x_{-k-1}}^{-1}\circ f^{-1}\circ \Psi_{x_{-k}}$ can be put in the form
$$
f_{x_{-k-1} x_{-k}}^{-1}(v,w)=(A_k^{-1}v+g_1^{(k)}(v,w), B_k^{-1}w+g_2^{(k)}(v,w)),
$$
where $|A_k|<e^{-\chi/2}$, $|B_k|>e^{\chi/2}$, and $\max_{R_{10 Q_\e(x_{-k})}}\|\nabla g_i^{(k)}\|<\e$ (provided $\e$ is small enough).

Let $\Delta v_{-k}:=v_{-k}-\ov{v}_{-k}$ and $\Delta w_{-k}:=w_{-k}-\ov{w}_{-k}$. Since  for every $k\leq 0$,
$(v_{-k-1}, w_{-k-1})=f_{x_{-k-1} x_{-k}}^{-1}(v_{-k},w_{-k})$ and
$(\ov{v}_{-k-1}, \ov{w}_{-k-1})=f_{x_{-k-1} x_{-k}}^{-1}(\ov{v}_{-k},\ov{w}_{-k})$,
\begin{align*}
|\Delta v_{-k-1}|&\geq |A_{k}^{-1}|\cdot |\Delta v_{-k}|-\max\|\nabla g_1^{(k)}\|\cdot\bigl(|\Delta v_{-k}|+|\Delta w_{-k}|\bigr)\\
&\geq (e^{\chi/2}-\e)|\Delta v_{-k}|-\e|\Delta w_{-k}|.\\
|\Delta w_{-k-1}|&\leq |B_k^{-1}|\cdot|\Delta w_{-k}|+\max\|\nabla g_2^{(k)}\|\cdot\bigl(|\Delta v_{-k}|+|\Delta w_{-k}|\bigr)\\
&\leq (e^{-\chi/2}+\e)|\Delta w_{-k}|+\e|\Delta v_{-k}|.
\end{align*}
Write for short $a_k:=|\Delta v_{-k}|$ and $b_k:=|\Delta w_{-k}|$. If we assume, as we may, that  $\e$ is so small that $e^{-\chi/2}+\e<e^{-\chi/3}$ and $e^{\chi/2}-\e\geq e^{\chi/3}$, then we obtain
\begin{align*}
a_{k+1}&\geq e^{\chi/3}a_k-\e b_k,\\
b_{k+1}&\leq e^{-\chi/3}b_k+\e a_k.
\end{align*}
By definition, $b_0=0$.

Suppose $\e$ is so small that $e^{-\chi/3}+\e<1$ and $e^{\chi/3}-\e>1$.
We claim that $a_k\leq a_{k+1}$ and $b_k\leq a_{k}$ for all $k$. For $k=0$, this is because $b_0=0$. Assume by induction that $a_k\leq a_{k+1}$ and $b_k\leq a_{k}$, then
\begin{align*}
b_{k+1}&\leq e^{-\chi/3}b_k+\e a_k\leq (e^{-\chi/3}+\e)a_k<a_k\leq a_{k+1}\\
a_{k+2}&\geq e^{\chi/3}a_{k+1}-\e b_{k+1}\geq (e^{\chi/3}-\e)a_{k+1}>a_{k+1}.
\end{align*}

We see that $a_{k+1}\geq (e^{\chi/3}-\e)a_k$ for all $k$, whence
$
a_k\geq (e^{\chi/3}-\e)^k a_0.
$
Either $a_0=0$ or $a_k\xrightarrow[k\to\infty]{}\infty$. But $a_k=|v_{-k}-\ov{v}_{-k}|\leq 20|Q_\e(x_{-k})|<20\e$, so   $a_0=0$.
 Since $a_0=0$, $v_0=\ov{v}_0$, and therefore $F(\ov{w}_0)=F(w_0)$. Thus $z=\Psi_x(F(w_0),w_0)\in V^u$.

 \medskip
 \noindent
 {\em Part 5.\/} H\"older continuity of $\un{u}\mapsto V^u[(u_i)_{i\in\Z}]$.

 \medskip
Suppose two chains $\un{v}=(v_i)_{i\in\Z}, \un{w}=(w_i)_{i\in\Z}$ satisfy $v_i=w_i$ for $i=-N, \ldots,N$. Given $n>N$, let $V^{u}_{-n}$ be a $u$--admissible manifold in $v_{-n}$, and let $W^{u}_{-n}$ be a $u$--admissible manifold in $w_{-n}$.

Let $\mathcal F_u^{\ell}(V^u_{-n})$ (resp.  $\mathcal F_u^{\ell}(W^u_{-n})$) denote the result of applying $\mathcal F_u$ $\ell$ times to $V^u_{-n}$ using the path $u_{-n}\to\cdots\to u_{-n+\ell}$ (resp. using $w_{-n}\to\cdots\to w_{-n+\ell}$).

$\mathcal F_u^{n-N}(V^u_{-n})$ and $\mathcal F_u^{n-N}(W^u_{-n})$ are $u$--admissible manifolds in $v_{-N}(=w_{-N})$. Let $F_{N}, G_{N}$ be their representing functions. Admissibility implies that
\begin{align*}
\|F_{N}-G_{N}\|_\infty &\leq \|F_{N}\|_\infty+\|G_{N}\|_\infty<2Q_\e<1\\
\|F_N'-G_N'\|_\infty &\leq \|F_{N}'\|_\infty+\|G_{N}'\|_\infty<2\e<1.
\end{align*}
Represent $\mathcal F_u^{n-k}[V^u_{-n}]$ and $\mathcal F_u^{n-k}[W^u_{-n}]$ by functions  $F_{k}$ and $G_{k}$. By(\ref{dandini}),
\begin{align}
\|F_{k-1}-G_{k-1}\|_\infty &\leq e^{-\chi/2}\|F_{k}-G_{k}\|_\infty\label{lopi}\\
\|F_{k-1}'-G_{k-1}'\|_\infty &\leq e^{-\chi/2}(\|F_{k}'-G_{k}'\|_\infty+2\|F_{k}-G_{k}\|_\infty^{\b/3}).\label{tal}
\end{align}
Iterating (\ref{lopi}) starting at $k=N$ and going down, we get
$
\|F_k-G_k\|_\infty\leq e^{-\frac{1}{2}\chi(N-k)}$, whence
$
\dist(\mathcal F_u^{n}[V^u_{-n}], \mathcal F_u^{n}[W^u_{-n}])\leq e^{-\frac{1}{2}\chi N}.
$
Passing to the limit $n\to\infty$, we get
$$
\dist(V^u[(v_i)_{i\leq 0}],V^u[(w_i)_{i\leq 0}])\leq e^{-\frac{1}{2}N\chi}.
$$

Now substitute $\|F_k-G_k\|_\infty\leq e^{-\frac{1}{2}\chi(N-k)}$ in (\ref{tal}), and set $c_k:=\|F_{k}'-G_{k}'\|_\infty$, $\theta_1:=e^{-\chi/2}$, and $\theta_2:=e^{-\frac{1}{6}\b\chi}$, then
$
c_{k-1}\leq \theta_1(c_k+2\theta_2^{N-k})
$. It is easy to see by induction that for every $0\leq k\leq N$,
$$
c_0\leq \theta_1^k c_k+2(\theta_1^k \theta_2^{N-k}+\theta_1^{k-1}\theta_2^{N-k+1}+\cdots+\theta_1\theta_2^{N-1}).
$$
We now take $k=N$, paying attention to the inequalities  $\theta_1<\theta_2$ and $c_N\leq 1$:
$
c_0\leq \theta_1^N+2N\theta_2^N<(2N+1)\theta_2^N.
$

It follows that
$
\dist_{C^1}(\mathcal F_u^{n}[V^u_{-n}], \mathcal F_u^{n}[W^u_{-n}])\leq 2(N+1) \theta_2^N.
$
In part 2, we saw that $\mathcal F_u^{n}[V^u_{-n}]$ and $\mathcal F_u^{n}[W^u_{-n}]$ converge to $V^u[(w_i)_{i\leq 0}]$ in $C^1$. Therefore if we
pass to the limit as $n\to\infty$, we get
$
\dist_{C^1}(V^u[(v_i)_{i\leq 0}],V^u[(w_i)_{i\leq 0}])\leq 2(N+1)\theta_2^N.
$
Now pick two constants $\theta\in (\theta_2,1)$ and $K>0$ s.t. $2(N+1)\theta_2^N\leq K\theta^N$ for all $N\geq 0$.
\end{proof}

\begin{thm}\label{Theorem_Markov_Extension} Given a chain of double charts $(v_i)_{i\in\Z}$, let  $\pi(\un{v})\!:=$unique intersection point of $V^u[(v_i)_{i\leq 0}]$ and $V^s[(v_i)_{i\geq 0}]$.
\begin{enumerate}
\item $\pi$ is a well--defined and $\pi\circ\s=f\circ \pi$;
\item $\pi:\Sigma\to M$ is  H\"older continuous map;
\item $\pi(\Sigma)\supset \pi(\Sigma^\#)\supset\NUH_\chi^\#(f)$, therefore $\pi(\Sigma)$ and $\pi(\Sigma^\#)$ have full probability w.r.t. any ergodic invariant probability measure with entropy larger than $\chi$.
\end{enumerate}
\end{thm}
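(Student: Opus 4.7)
The plan is to verify the three statements in order, drawing on Propositions \ref{Prop_V}, \ref{Prop_Intersection}, \ref{Prop_Graph_Transform}, \ref{Prop_f_xy}, \ref{Prop_Overlap_Meaning}, and \ref{Prop_Chains_Exist}. For well-definedness in (1), Proposition \ref{Prop_V}(2) tells us that $V^u[(v_i)_{i\leq 0}]$ is a $u$-admissible manifold and $V^s[(v_i)_{i\geq 0}]$ is an $s$-admissible manifold in the chart $v_0$, so Proposition \ref{Prop_Intersection}(1) supplies a unique intersection point. To prove the equivariance $\pi\circ\s=f\circ\pi$, I would show that $f(\pi(\un{v}))$ lies in both $V^u[(\s\un{v})_{i\leq 0}]$ and $V^s[(\s\un{v})_{i\geq 0}]$ and then conclude by uniqueness. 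Membership in $V^s[(\s\un{v})_{i\geq 0}]$ is immediate from $f(V^s[(v_i)_{i\geq 0}])\subset V^s[(v_{i+1})_{i\geq 0}]$ in Proposition \ref{Prop_V}(3). For membership in $V^u[(\s\un{v})_{i\leq 0}]$ I would use the orbit characterization of Proposition \ref{Prop_V}(4): the backward-orbit conditions on $f(\pi(\un{v}))$ follow from those on $\pi(\un{v})$, and the remaining point is to check that $f(\pi(\un{v}))\in\Psi_{x_1}[R_{p_1^u}(\un{0})]$, which I would obtain by combining the coordinate bound $|v|,|w|\leq 10^{-2}(p_0^u\wedge p_0^s)$ for $\pi(\un{v})$ coming from Proposition \ref{Prop_Intersection}(2) with the form of the transition map in Proposition \ref{Prop_f_xy}, using the contraction $|A|<e^{-\chi}$ on the unstable coordinate.

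Statement (2) follows by composition: Proposition \ref{Prop_V}(5) gives H\"older continuity of $\un{v}\mapsto V^u[(v_i)_{i\leq 0}]$ and $\un{v}\mapsto V^s[(v_i)_{i\geq 0}]$ in the $C^1$-distance on admissible manifolds, and Proposition \ref{Prop_Intersection}(3) says the intersection point is $3$-Lipschitz in the pair. Thus $\pi:\Sigma\to M$ is H\"older continuous.

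For (3), given $x\in\NUH_\chi^\#(f)$, Proposition \ref{Prop_Chains_Exist} produces a chain $\un{v}=\{\Psi_{x_k}^{p^u_k,p^s_k}\}_{k\in\Z}$ with $\Psi_{x_k}^{p^u_k\wedge p^s_k}$ $\e$-overlapping $\Psi_{f^k(x)}^{p^u_k\wedge p^s_k}$ for every $k$. Proposition \ref{Prop_Overlap_Meaning}(1) then yields $f^k(x)\in\Psi_{x_k}[R_{p^u_k\wedge p^s_k}(\un{0})]$ for all $k\in\Z$, which by the characterization in Proposition \ref{Prop_V}(4) places $x$ simultaneously in $V^u[(v_i)_{i\leq 0}]$ and $V^s[(v_i)_{i\geq 0}]$; hence $x=\pi(\un{v})$. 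To see that $\un{v}\in\Sigma^\#$, the hypothesis $x\in\NUH_\chi^\#(f)$ yields $\limsup_{n\to\pm\infty}q_\e(f^n(x))>0$, which by the construction of the chain forces $\limsup_{n\to\pm\infty}(p^u_n\wedge p^s_n)>0$. Since $Q_\e$ is bounded above by $\e^{3/\b}$ (Lemma \ref{Lemma_Q_tempered}), only finitely many pairs $(p^u_n,p^s_n)$ in $I_\e\times I_\e$ with $p^u_n\wedge p^s_n>t$ can occur at any vertex; combined with the discreteness statement of Proposition \ref{Prop_A}(2), a pigeonhole argument produces vertices of $\mathfs G$ that repeat infinitely often in both directions, yielding $\un{v}\in\Sigma^\#$.

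The main obstacle I anticipate is the $V^u$-inclusion in Part (1). The asymmetric treatment of $p^u$ and $p^s$ in admissibility means that a naive application of Proposition \ref{Prop_V}(3) only places $f(\pi(\un{v}))$ in $f(V^u[(v_i)_{i\leq 0}])$, which properly contains $V^u[(\s\un{v})_{i\leq 0}]=\mathcal F_u(V^u[(v_i)_{i\leq 0}])$. Pinning $f(\pi(\un{v}))$ into this admissible sub-arc requires the quantitative control of Proposition \ref{Prop_f_xy} on the nonlinear error terms of $f_{x_0 x_1}$, together with the contraction factor $|A|<e^{-\chi}$ and the tempered comparison $p^u_0\wedge p^s_0\leq e^\e(p^u_1\wedge p^s_1)$ from Lemma \ref{Lemma_Subordinated_Tempered}; this is the point at which the smallness of $\e$ really enters the argument.
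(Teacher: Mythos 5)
Your parts (2) and (3) are essentially the paper's argument and are fine (in (3) your direct appeal to Proposition \ref{Prop_V}(4) is legitimate because Proposition \ref{Prop_Overlap_Meaning}(1) puts $f^k(x)$ in the windows $R_{p^u_k\wedge p^s_k}(\un{0})$, which are small enough). The genuine gap is in the key step of part (1): your plan to prove $f(\pi(\un{v}))\in\Psi_{x_1}[R_{p^u_1}(\un{0})]$ from Proposition \ref{Prop_f_xy} ``using the contraction $|A|<e^{-\chi}$ on the unstable coordinate'' cannot work. In these charts $A$ acts on the \emph{first} (stable) coordinate; the second (unstable) coordinate, which is exactly the one that membership in $R_{p^u_1}(\un{0})$ constrains when you invoke Proposition \ref{Prop_V}(4), is multiplied by $B$ with $|B|$ bounded only by $C_f$, a constant of $f$ fixed \emph{before} $\e$ is chosen. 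From $|w|\leq 10^{-2}(p^u_0\wedge p^s_0)$ and $p^u_0\wedge p^s_0\leq e^\e(p^u_1\wedge p^s_1)$ you only get a second coordinate of size up to about $10^{-2}C_f e^\e\, p^u_1$ plus the $h_2$-errors, which exceeds $p^u_1$ as soon as $C_f>100$; no smallness of $\e$ repairs this, and the bound coming from $f(\pi(\un{v}))\in V^s[(v_{i+1})_{i\geq 0}]$ (second coordinate $\lesssim 10^{-2}p^s_1$) does not help either when $p^u_1<p^s_1$. This is precisely why Proposition \ref{Prop_Graph_Transform}(4) is only stated for the point with unstable parameter $0$.

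The paper sidesteps the whole issue by first proving that $z=\pi(\un{v})$ is \emph{characterized} by the shift-covariant condition $f^k(z)\in\Psi_{x_k}[R_{Q_\e(x_k)}(\un{0})]$ for all $k\in\Z$ (equation (\ref{ShadowPi})): the inclusion holds for $\pi(\un{v})$ because $f^k(\pi(\un{v}))$ lies on admissible manifolds in $v_k$ (Proposition \ref{Prop_V}(3)), which are contained in $\Psi_{x_k}[R_{Q_\e(x_k)}(\un{0})]$, and any point satisfying the condition is pinned down by Proposition \ref{Prop_V}(4). Since the condition is manifestly covariant under shifting indices, $\pi\circ\s=f\circ\pi$ is immediate, with no estimate on the transition maps needed. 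An alternative repair of your route, still within the paper's toolkit: both $f(\pi(\un{v}))$ and $\pi(\s\un{v})$ lie in $f\bigl(V^u[(v_i)_{i\leq 0}]\bigr)\cap V^s[(v_{i+1})_{i\geq 0}]$ (for the second point use $V^u[(v_{i+1})_{i\leq 0}]=\mathcal F_u\bigl(V^u[(v_i)_{i\leq 0}]\bigr)\subset f\bigl(V^u[(v_i)_{i\leq 0}]\bigr)$ from Proposition \ref{Prop_V}(3)), and by Proposition \ref{Prop_Graph_Transform}(2) this intersection is a single point; hence they coincide. Either of these replaces the coordinate estimate you proposed.
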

\begin{proof} Proposition \ref{Prop_Intersection} guarantees that $\pi$ is well defined for every chain.

\medskip
\noindent
{\em Part 1.\/} $\pi\circ \s=f\circ\pi$.

\medskip
Suppose $\un{v}$ is a chain, and write $v_i=\Psi_{x_i}^{p^u_i, p^s_i}$ and $z=\pi(\un{v})$. We claim that
\begin{equation}\label{ShadowPi}
f^k(z)\in\Psi_{x_k}[R_{Q_\e(x_k)}(\un{0})]\ \ \ \ (k\in\Z).
\end{equation}
For $k=0$, this is because $z\in V^s[(v_i)_{i\geq 0}]$ and $V^s[(v_i)_{i\geq 0}]$ is $s$--admissible in $\Psi_{x_0}^{p^u_0,p^s_0}$. For $k>0$, we use Proposition \ref{Prop_V} part (3) to see that
$$
f^k(z)\in f^k(V^s[(v_i)_{i\geq 0}])\subset V^s[(v_{i+k})_{i\geq 0}].
$$
Since $V^s[(v_{i+k})_{i\geq 0}]$ is an $s$--admissible manifold in $\Psi_{x_k}^{p^u_k,p^s_k}$, $f^k(z)\in\Psi_{x_k}[R_{Q_\e(x_k)}(\un{0})]$. The case $k<0$ can be handled in the same way, using $V^u[(v_i)_{i\leq 0}]$.
Thus $z=\pi(\un{v})$ satisfies (\ref{ShadowPi}).

Any point which satisfies (\ref{ShadowPi}) must equal $z$, because by  Proposition \ref{Prop_V} part (4), it must lie on
$V^u[(v_i)_{i\leq 0}]\cap V^s[(v_i)_{i\geq 0}]$.
So (\ref{ShadowPi}) characterizes $\pi(\un{v})$.

It is now a simple matter to deduce that $\pi(\s(\un{v}))=f(\pi(\un{v}))$: $f^k[f(\pi(\un{v}))]=f^{k+1}[\pi(\un{v})]$ belongs to $\Psi_{x_{k+1}}[R_{Q_\e(x_{k+1})}(\un{0})]$ for all $k$, and this is the condition which characterizes $\pi(\s\un{v})$.

\medskip
\noindent
{\em Part 2.\/} $\pi$ is   H\"older continuous.

\medskip
We saw that $\un{u}\mapsto V^u[(u_i)_{i\leq 0}]$ and $\un{u}\mapsto  V^s[(u_i)_{i\geq 0}]$ are H\"older continuous (Proposition \ref{Prop_V}).
Since the the intersection point of an $s$--admissible manifold and a $u$ admissible manifold is a Lipschitz function of these manifolds  (Proposition \ref{Prop_Intersection} (3)), $\pi$ is also H\"older continuous.

\medskip
\noindent
{\em Part 3.\/} $\pi(\Sigma)$ has full probability with respect to any ergodic invariant probability measure with entropy larger than $\chi$.

\medskip
We prove that $\pi(\Sigma)\supset\NUH^\#_\chi(f)$.
Suppose $x\in\NUH^\#_\chi(f)$. By Proposition \ref{Prop_Chains_Exist}, there exist $\Psi_{x_k}^{p^u_k, p^s_k}\in\mathfs V$ s.t. $\Psi_{x_k}^{p^u_k, p^s_k}\to \Psi_{x_{k+1}}^{p^u_{k+1}, p^s_{k+1}}$ for all $k$, and s.t.
$\Psi_{x_k}^{p^u_k,p^s_k}$ $\e$--overlaps $\Psi_{f^k(x)}^{p^u_k\wedge p^s_k}$ for all $k\in\Z$.
By Proposition \ref{Prop_Overlap_Meaning}(1),  this implies that
$$
f^k(x)=\Psi_{f^k(x)}(\un{0})\in\Psi_{x_k}[R_{p^u_k\wedge p^s_k}(\un{0})]\subset\Psi_{x_k}[R_{Q_\e(x_k)}(\un{0})]\textrm{ for all $k\in\Z$}.
$$
Thus $x$ satisfies (\ref{ShadowPi}) with $\un{v}=(\Psi_{x_i}^{p^u_i,p^s_i})_{i\in\Z}$. It follows that $z=\pi(\un{v})$.

In fact this argument proves something stronger, that will be of use to us later. Looking closely into the proof of Proposition \ref{Prop_Chains_Exist}, we see that the chain we constructed above satisfies the property $p^u_i\wedge p^s_i\geq q_\e(f^i(x))$. By the definition of $\NUH^\#_\chi(f)$, there exist sequences $i_k,j_k\uparrow\infty$ for which $p^u_{i_k}\wedge p^s_{i_k}$ and $p^u_{-j_k}\wedge p^s_{-j_k}$ are bounded away from zero. By the discreteness property of $\mathfs A$ (Proposition \ref{Prop_A}), $\Psi_{x_i}^{p^u_i,p^s_i}$ must repeat some symbol infinitely often in the past, and (possibly a different symbol) in the future. Thus the above actually proves that
\begin{equation}\label{Sigma_Sharp_is_Large}
\pi(\Sigma^\#)\supset\NUH^\#_{\chi}(f),
\end{equation}
where  $\Sigma^\#:=\{\un{v}\in\Sigma: \exists v,w\in\mathfs V, \exists n_k,m_k\uparrow\infty\textrm{ s.t. }v_{n_k}=v\textrm{, and }v_{-m_k}=w\}$.
\end{proof}

\subsection{The relevant part of the extension}\label{SectionRelevant}
We cannot rule out the possibility that some of the vertices in $\mathfs V$ do not appear in the  coding of any point in $\NUH_\chi(f)$. Such vertices are called {\em irrelevant}. More precisely,

\begin{defi}
A double chart $v=\Psi_{x}^{p^u,p^s}$ is called {\em relevant} if there exists a chain $(v_i)_{i\in\mathbb Z}$ s.t. $v_0=v$ and $\pi(\un{v})\in\NUH_\chi(f)$. A double chart which is not relevant, is called {\em irrelevant}.
\end{defi}
\begin{defi}
 The {\em relevant part} of $\Sigma$ is
$
\Sigma_{rel}:=\{\un{v}\in\Sigma: v_i\textrm{ is relevant for all }i\}.
$
\end{defi}
\noindent
$\Sigma_{rel}$ is the topological Markov shift corresponding to the restriction of the graph $G(\mathfs V,\mathfs E)$ to the relevant vertices.

\begin{prop}
Theorem \ref{Theorem_Markov_Extension} holds with $\Sigma_{rel}$ replacing $\Sigma$.
\end{prop}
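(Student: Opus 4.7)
The plan is to verify that each of the three clauses of Theorem \ref{Theorem_Markov_Extension} passes to the restriction $\pi|_{\Sigma_{rel}}$. Parts (1) and (2) are essentially automatic: $\Sigma_{rel}$ is a shift--invariant subset of $\Sigma$ (if every $v_i$ is relevant, then for each $k$ the shifted sequence $\s^k\un{v}$ is still a chain whose $j$--th entry is $v_{j+k}$, still relevant), so the identity $\pi\circ\s=f\circ\pi$ restricts, and the H\"older continuity of $\pi$ on $\Sigma$ restricts to H\"older continuity on any subset, in particular on $\Sigma_{rel}$ endowed with the induced metric.

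The substance is part (3), the inclusion $\pi(\Sigma_{rel}^\#)\supset \NUH_\chi^\#(f)$, where $\Sigma_{rel}^\#:=\Sigma^\#\cap\Sigma_{rel}$. I would argue as follows. Fix $x\in\NUH_\chi^\#(f)$. By equation (\ref{Sigma_Sharp_is_Large}) in the proof of Theorem \ref{Theorem_Markov_Extension}, there is a chain $\un{v}\in\Sigma^\#$ with $\pi(\un{v})=x$. I claim that in fact $\un{v}\in\Sigma_{rel}$, i.e.\ that every $v_i$ is a relevant double chart.

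Indeed, fix $i\in\Z$ and consider the shifted chain $\s^i\un{v}\in\Sigma$, whose $0$--th entry is $v_i$. By part (1), $\pi(\s^i\un{v})=f^i(\pi(\un{v}))=f^i(x)$. Since $\NUH_\chi(f)$ is $f$--invariant and $\NUH_\chi^\#(f)\subset\NUH_\chi(f)$, we have $f^i(x)\in\NUH_\chi(f)$. Thus $\s^i\un{v}$ is a chain whose $0$--th entry is $v_i$ and whose image under $\pi$ lies in $\NUH_\chi(f)$, which is precisely the definition of $v_i$ being relevant. Hence $v_i$ is relevant for every $i$, so $\un{v}\in\Sigma_{rel}$; combined with $\un{v}\in\Sigma^\#$ this gives $\un{v}\in\Sigma_{rel}^\#$ and $x=\pi(\un{v})\in\pi(\Sigma_{rel}^\#)$.

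The final probabilistic statement ($\pi(\Sigma_{rel})$ and $\pi(\Sigma_{rel}^\#)$ have full measure for every ergodic invariant measure of entropy greater than $\chi$) then follows exactly as in Theorem \ref{Theorem_Markov_Extension}(3): every such measure charges $\NUH_\chi^\#(f)$ fully (by the Oseledets--Ruelle theorem of \S\ref{Section_NUH} combined with Poincar\'e recurrence, cf.\ (\ref{NUH_sharp})), and $\NUH_\chi^\#(f)\subset\pi(\Sigma_{rel}^\#)\subset \pi(\Sigma_{rel})$. There is no real obstacle here; the only point to watch is the circular appearance of ``relevance'' in the definition (it requires the existence of a chain through $v_i$ landing in $\NUH_\chi(f)$), which is resolved cleanly by the $f$--invariance of $\NUH_\chi(f)$ together with the intertwining relation $\pi\circ\s=f\circ\pi$.
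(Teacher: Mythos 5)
Your proof is correct and follows essentially the same route as the paper: the paper likewise dismisses the equivariance and H\"older continuity as immediate, and establishes $\pi(\Sigma^\#_{rel})\supset\NUH^\#_\chi(f)$ by taking the chain $\un{v}\in\Sigma^\#$ with $\pi(\un{v})=x$ produced in Theorem \ref{Theorem_Markov_Extension} and using $\pi\circ\s=f\circ\pi$ together with the invariance of the non-uniformly hyperbolic set to conclude that each $v_i$ is relevant. The only cosmetic difference is that the paper invokes the $f$--invariance of $\NUH^\#_\chi(f)$ while you use that of $\NUH_\chi(f)$; both suffice since $\NUH^\#_\chi(f)\subset\NUH_\chi(f)$.
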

\begin{proof}
All the properties of $\pi:\Sigma_{rel}\to M$ are obvious, except for the statement that
$\pi(\Sigma_{rel}^\#)\supset\NUH_\chi^\#(f)$,  where
$
\Sigma^\#_{rel}:=\Sigma^\#\cap\Sigma_{rel}.
$

 Suppose $p\in\NUH^\#_\chi(f)$, then the proof of Theorem \ref{Theorem_Markov_Extension} shows that $\exists\un{v}\in\Sigma^\#$ s.t. $\pi(\un{v})=p$.
Since $\NUH_\chi^\#(f)$ is $f$--invariant and $f\circ\pi=\pi\circ\s$, $\pi(\s^i(\un{v}))=f^i(p)\in\NUH_\chi^\#(f)$, so $v_i$ is relevant for all $i\in\Z$. It follows that $\un{v}\in\Sigma^\#_{rel}$.
\end{proof}

{\em Henceforth we assume w.l.o.g. that all irrelevant vertices have been removed from $\mathfs V$, and we set $\Sigma:=\Sigma_{rel}$.}

\part{Regular chains which shadow the same orbit are close}

\section{The inverse problem for regular chains} \label{Section_Strategy}
\addtocounter{subsection}{1}

In the previous section we constructed a map $\pi$ from the space of chains to $M$, and showed that every $x\in\NUH^\#_\chi(f)$ takes the form $x=\pi(\un{v})$ for some  chain $\un{v}\in\Sigma^\#$.
In principle, there could be infinitely many chains $\un{v}$ s.t. $\pi(\un{v})=x$.  We ask what one can say about the solutions  $\un{v}$ to the equation  $\pi(\un{v})=x$.

Under the additional assumption that  one of the pre-images of $x$ is is regular (see below),  we shall see that the coordinates $v_i$ of $\un{v}$ are determined ``up to bounded error". Here is the precise statement:
\begin{defi}
A chain $(v_i)_{i\in\mathbb Z}$ is called {\em regular} if every $v_i$ is relevant (see \S\ref{SectionRelevant}), and if there are  $v,u$ s.t. for some $n_k,m_k\uparrow\infty$ $v_{-m_k}=u$, $v_{n_k}=v$ for all $k$.
\end{defi}
\noindent
Every element of $\Sigma^\#$ is regular, because of the convention stated in \S\ref{SectionRelevant} .

\begin{thm}\label{Theorem_Pi_Almost_1-1}
The following holds for all $\e$ small enough. Suppose  $(\Psi_{x_i}^{p^u_i,p^s_i})_{i\in\Z}$, $(\Psi_{y_i}^{q^u_i,q^s_i})_{i\in\Z}$ are regular chains s.t. $\pi[(\Psi_{x_i}^{p^u_i,p^s_i})_{i\in\Z}]=\pi[(\Psi_{y_i}^{q^u_i,q^s_i})_{i\in\Z}]$, then  for all $i$,
\begin{enumerate}
\item $d(x_i,y_i)<\e$;
\item $(\Psi_{y_i}^{-1}\circ \Psi_{x_i})(\un{u})=(-1)^{\s_i}\un{u}+\un{c}_i+\Delta_i(\un{u})$ for all $\un{u}\in R_\e(\un{0})$, where $\s_i\in \{0,1\}$, $\un{c}_i$ is a constant vector s.t. $\|\un{c}_i\|<10^{-1}(q^u_i\wedge q^s_i)$, and $\Delta_i$ is a vector field s.t. $\Delta_i(\un{0})=\un{0}$ and $\|(d\Delta_i)_{\un{v}}\|<\sqrt[3]{\e}$ on $R_\e(\un{0})$;
\item $p^u_i/q^u_i , p^s_i/q^s_i\in [e^{-\sqrt[3]{\e}},e^{\sqrt[3]{\e}}]$.
\end{enumerate}
\end{thm}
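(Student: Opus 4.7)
The proof reduces to index $i=0$: applying the theorem to the shifted chains $\s^i\un v,\s^i\un w$ gives the conclusions at every $i$, using $\pi\circ\s=f\circ\pi$. Let $p:=\pi(\un v)=\pi(\un w)$ and $v_0=\Psi_{x_0}^{p_0^u,p_0^s}$, $w_0=\Psi_{y_0}^{q_0^u,q_0^s}$. By Proposition \ref{Prop_V}(4) applied to both chains at every index,
\[
 f^i(p)\in \Psi_{x_i}[R_{Q_\e(x_i)}(\un 0)]\cap \Psi_{y_i}[R_{Q_\e(y_i)}(\un 0)]\qquad (i\in\Z).
\]
Since $\Psi_x(\un 0)=x$, $\|d\Psi_x\|\le 2$ on $R_{Q_\e(x)}(\un 0)$ (Theorem \ref{Theorem_OP_charts}), and $Q_\e<\e^{3/\b}$ (Lemma \ref{Lemma_Q_tempered}(1)), I get $d(x_i,f^i(p)),d(y_i,f^i(p))<2\e^{3/\b}$, yielding conclusion (1). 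In particular $x_0$ and $y_0$ share a common $D\in\mathfs D$, and $\Psi_{y_0}^{-1}\circ\Psi_{x_0}$ is defined near $\un 0$.

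For (2) I factor
\[
 \Psi_{y_0}^{-1}\circ\Psi_{x_0}=C_\chi(y_0)^{-1}\circ(\vartheta_{y_0}^{-1}\circ\exp_{y_0}^{-1}\circ\exp_{x_0}\circ\vartheta_{x_0})\circ C_\chi(x_0),
\]
and, using smoothness of $\exp$ and of the atlas $\mathfs D$, recognize the middle factor as the identity plus an $O(d(x_0,y_0))$ perturbation in $C^{1+\b/2}$. Writing the resulting affine part as $A_0\un u+\un c_0$ with $A_0:=d(\Psi_{y_0}^{-1}\Psi_{x_0})_{\un 0}$ and $\un c_0:=\Psi_{y_0}^{-1}(x_0)$, it remains to show $A_0\approx\pm\mathrm{Id}$ and $\|\un c_0\|<10^{-1}(q_0^u\wedge q_0^s)$, with $\Delta_0$ the nonlinear remainder controlled by the $C^{1+\b/2}$-bound above. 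The direction matching is the heart of (2): both $V^s[(v_i)_{i\ge 0}]$ and $V^s[(w_i)_{i\ge 0}]$ pass through $p$ and are $s$-admissible (hence nearly horizontal) in their respective Pesin charts, so by the characterization of Pesin stable manifolds as tangent to $E^s(\cdot)$, both share the same tangent line at $p$. Reading this in each chart, $C_\chi(x_0)\un e_1$ and $C_\chi(y_0)\un e_1$ represent the same direction in $T_pM$ up to a sign; the same argument via $V^u$ matches $\un e_2$; the orientation-preservation of Pesin charts forces a single global sign $(-1)^{\s_0}$. Hence
\[
 A_0 \;\approx\; (-1)^{\s_0}\,\mathrm{diag}\bigl(s_\chi(y_0)/s_\chi(x_0),\ u_\chi(y_0)/u_\chi(x_0)\bigr).
\]

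It therefore remains to establish $s_\chi(x_0)/s_\chi(y_0),\,u_\chi(x_0)/u_\chi(y_0)\in[e^{-\sqrt[3]\e},e^{\sqrt[3]\e}]$. This will also give $Q_\e(x_0)/Q_\e(y_0)\in[e^{-\sqrt[3]\e},e^{\sqrt[3]\e}]$ via the definition \eqref{Qdef}, and then conclusion (3) follows from the subordination recursions $p_{i+1}^u=\min\{e^\e p_i^u,Q_\e(x_{i+1})\}$ and $p_{i-1}^s=\min\{e^\e p_i^s,Q_\e(x_{i-1})\}$. Regularity enters crucially here: it produces symbols recurring in the past and future of each chain, so by the discreteness clause of Proposition \ref{Prop_A} the parameters $p^{u/s}_i,q^{u/s}_j$ are bounded below along those subsequences, which by Lemma \ref{Lemma_Q_tempered}(2) keeps $\|C_\chi^{-1}\|$ bounded there. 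Propagating this back to $0$ via Theorem \ref{TheoremOPR} and the definitions of $s_\chi,u_\chi$, together with conclusion (1) applied at every index (so $x_i,y_i$ remain $\e$-close throughout), yields the required ratios. This propagation is the main obstacle: $C_\chi$ is only measurable on $\NUH_\chi^\ast(f)$ and no continuity argument is available; regularity is precisely the substitute that makes the matching of Oseledets data along the common orbit quantitative.
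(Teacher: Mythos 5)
Your skeleton is the right one (compare position, axes, scaling and window parameters; match the tangent directions at the coded point $p$ via the dynamical characterization coming from Proposition \ref{Prop_Staying_In_Windows}), and your reduction of part (2) to the statement $A_0\approx(-1)^{\s_0}\mathrm{diag}\bigl(s_\chi(y_0)/s_\chi(x_0),u_\chi(y_0)/u_\chi(x_0)\bigr)$ is essentially the paper's decomposition. But the step you dispose of in the last paragraph is the actual heart of the theorem, and your proposed mechanism for it does not work. Boundedness of $\|C_\chi^{-1}\|$ at the recurrence times only bounds $s_\chi(x_{n_k})$ and $s_\chi(y_{n_k})$ separately by a constant depending on the recurring symbols; it says nothing about their ratio being within $e^{\sqrt[3]{\e}}$. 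And there is no way to "propagate back to $0$ via Theorem \ref{TheoremOPR}": that theorem compares $s_\chi$ at $x$ and at $f(x)$ along a genuine orbit, whereas the chart centers $x_i$, $y_i$ are \emph{not} orbits of $f$ (only $f(x_i)$ overlaps $x_{i+1}$), and $\e$-closeness of $x_i$ to $y_i$ gives no control on $s_\chi(x_i)/s_\chi(y_i)$ because $C_\chi$ is merely measurable -- the very difficulty the construction is designed to circumvent. The paper's substitute is quantitative and quite different: one extends $s_\chi,u_\chi$ to admissible manifolds staying in windows, shows they vary by at most $e^{\sqrt{\e}}$ along such a manifold (Proposition \ref{Prop_Staying_In_Windows}(3)), and proves the key improvement lemma (Lemma \ref{Main_Lemma_One}) that the ratio $s_\chi(V^s)/s_\chi(x)$ strictly improves by $e^{Q_\e(x)^{\b/4}}$ under each graph transform until it falls below $e^{\sqrt{\e}}$, after which it cannot deteriorate; regularity supplies both a finite starting bound at the recurrence times (via relevance of the recurring symbol) and a \emph{fixed} improvement factor there, so finitely many returns force the ratio below the threshold (Proposition \ref{Prop_Comparison_U/S}). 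Nothing in your sketch replaces this mechanism, and without it conclusions (3) and the bound $\|C_{y_0}^{-1}C_{x_0}-(-1)^{\s_0}\id\|<7\sqrt{\e}$ are unproved.

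Two further quantitative points would also block your write-up of (2)--(3) even granting the scaling comparison. First, your position estimate $d(x_i,y_i)=O(\e^{3/\b})$ is too weak downstream: the paper needs $d(x_i,y_i)<25^{-1}(p_i+q_i)$ (Proposition \ref{Prop_x_Parameter}), since $q_i$ can be far smaller than $\e^{3/\b}$, and this finer bound is what makes $\|C_\chi(y_i)^{-1}\|\,d(x_i,y_i)$ small and yields both the well-definedness of $\Psi_{y_i}^{-1}\circ\Psi_{x_i}$ on $R_\e(\un{0})$ and $\|\un{c}_i\|<10^{-1}q_i$ (the latter also uses conclusion (3), i.e. $p_i\le e^{\sqrt[3]{\e}}q_i$, via the coordinates of $f^i(p)$ in both charts). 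Second, in the matrix comparison the angle and rotation errors get multiplied by $\|C_\chi(y_i)^{-1}\|_{Fr}$, which is unbounded over the alphabet; a crude $O(\sqrt{\e})$ error is useless there, and one needs the sharper bounds of order $p_i^{\b/4}+q_i^{\b/4}$ (Lemma \ref{Lemma_Strong_Alpha_Comp}, Proposition \ref{Prop_R_Comparison}), which are small precisely because $q_i\le Q_\e(y_i)\lesssim\|C_\chi(y_i)^{-1}\|^{-12/\b}$. Finally, for (3) you should invoke Lemma \ref{Lemma_Subordinated_Sharp} (regularity forces $p^u_n=Q_\e(x_n)$ for some $n<0$, and symmetrically for $q^u$) and run the forward induction of Proposition \ref{Prop_Maximality}; the recursion alone, with no index at which $p^u$ or $q^u$ saturates $Q_\e$, does not start.
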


The proof of Theorem \ref{Theorem_Pi_Almost_1-1} is long, so we broke it into several sections (\S\ref{SectionCompR},\ref{SectionCompS/U},\ref{SectionCompP/Q}). Here is an overview.
Suppose $(\Psi_{x_i}^{p^u_i,p^s_i})_{i\in\Z}, (\Psi_{y_i}^{q^u_i,q^s_i})_{i\in\Z}$ are two chains in  $\Sigma^\#$ s.t.
\begin{equation}\label{Preimage_Equation}
\pi[(\Psi_{x_i}^{p^u_i,p^s_i})_{i\in\Z}]=\pi[(\Psi_{y_i}^{q^u_i,q^s_i})_{i\in\Z}]=x
\end{equation}
We want to show that    $\Psi_{x_i}$ is close to $\Psi_{y_i}$ for all $i$.

Equation (\ref{Preimage_Equation}) implies that  $f^i(x)$ is the intersection of a $u$--admissible and an $s$--admissible manifold in $\Psi_{x_i}^{p^u_i,p^s_i}$, therefore (Proposition \ref{Prop_Intersection}), $f^i(x)=\Psi_{x_i}(v_i,w_i)$ where $|v_i|,|w_i|\leq 10^{-2}(p^u_i\wedge p^s_i)$. By construction, Pesin charts are  $2$--Lipschitz, therefore $d(f^i(x),x_i)<50^{-1}(p^u_i\wedge p^s_i)$. Similarly $d(f^i(x),y_i)<50^{-1}(q^u_i\wedge q^u_i)$. It follows that $d(x_i,y_i)<25^{-1}\max\{p^u_i\wedge p^s_i,q^u_i\wedge q^s_i\}<\e$ for all $i\in\Z$.

Assume without loss of generality that $\e$ is smaller than the Lebesgue number of the cover $\mathfs D$ which we had constructed in \S\ref{SectionOC}, then   $x_i,y_i$ belong to the same element $D_i$ of $\mathfs D$.  This allows us to  write
\begin{align*}
\Psi_{x_i}&=\exp_{x_i}\circ\vartheta_{x_i}\circ C_{x_i}\\
\Psi_{y_i}&=\exp_{y_i}\circ\vartheta_{y_i}\circ C_{y_i}
\end{align*}
where $\vartheta_{z_i}:\R^2\to T_{z_i} M$ $(z_i=x_i,y_i)$  are the isometries we constructed in \S\ref{SectionOC}, and  $C_{x_i}, C_{y_i}\in\mathrm{GL}(2,\R)$ are given by  $C_\chi(x_i)=\vartheta_{x_i}\circ C_{x_i}$ and $C_\chi(y_i)=\vartheta_{y_i}\circ C_{y_i}$.

Let $z_i=x_i,y_i$, then   $C_\chi(z_i)$ is the unique linear operator which maps
$\un{e}^1={1\choose 0}$ to $s_\chi(z_i)^{-1}\un{e}^s(z_i)$, and $\un{e}^2={0\choose 1}$ to $u_\chi(z_i)^{-1}\un{e}^u(z_i)$. Writing as usual $\a(z_i):=\measuredangle(\un{e}^s(z_i),\un{e}^u(z_i))$, we see that
\begin{align}\label{C_z}
C_{z_i}&=R_{z_i}\left(\begin{array}{cc}
s_\chi(z_i)^{-1} & u_\chi(z_i)^{-1}\cos\a(z_i)\\
0 & u_\chi(z_i)^{-1}\sin\a(z_i)
\end{array}
\right),
\end{align}
where $R_{z_i}$ is the unique orientation preserving orthogonal matrix which rotates $\un{e}^1$ to the direction of $\vartheta_{z_i}^{-1}(\un{e}^s(z_i))$  ($z_i=x_i,y_i$).
Some terminology:
\begin{itemize}
\item $z_i$ are called {\em position parameters},
\item $R_{z_i}$ and $\a(z_i)$ are called {\em axes parameters},
\item $s_\chi(z_i), u_\chi(z_i)$ are called {\em scaling parameters},
\item $(p^u_i,p^s_i)$ are called {\em window parameters}.
\end{itemize}
The proof is done by comparing the parameters of $\Psi_{x_i}^{p^u_i,p^s_i}$ to those of $\Psi_{y_i}^{q^u_i,q^s_i}$.

The comparison of the position parameters had already been done above. We record the conclusion for future reference:
\begin{prop}\label{Prop_x_Parameter}
Let $(\Psi_{x_i}^{p^u_i,p^s_i})_{i\in\Z}, (\Psi_{y_i}^{q^u_i,q^s_i})_{i\in\Z}$ be two chains s.t. $\pi[(\Psi_{x_i}^{p^u_i,p^s_i})_{i\in\Z}]=\pi[(\Psi_{y_i}^{q^u_i,q^s_i})_{i\in\Z}]$, then $d(x_i,y_i)<25^{-1}\max\{p^u_i\wedge p^s_i, q^u_i\wedge q^s_i\}$ ($i\in\Z$).
\end{prop}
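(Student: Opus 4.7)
The plan is to bound $d(x_i,y_i)$ by inserting the common point $x:=\pi[(\Psi_{x_i}^{p^u_i,p^s_i})_{i\in\Z}]=\pi[(\Psi_{y_i}^{q^u_i,q^s_i})_{i\in\Z}]$, or rather its forward iterate $f^i(x)$, as an intermediate anchor. First I would use the commutation relation $\pi\circ\s=f\circ\pi$ from Theorem \ref{Theorem_Markov_Extension}(1), together with the invariance statement of Proposition \ref{Prop_V}(3), to promote the characterization of $\pi(\un{v})$ at index $0$ to an analogous characterization at each index $i$. This gives, for every $i\in\Z$, that $f^i(x)$ lies in the intersection
\[
V^u[(v_{j+i})_{j\le 0}]\cap V^s[(v_{j+i})_{j\ge 0}],
\]
and similarly for the $y$-chain, where the first manifold is $u$-admissible and the second is $s$-admissible in the corresponding double chart.

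Next I would feed these intersections into Proposition \ref{Prop_Intersection}(2), which tells me that any such intersection point of a $u$-admissible and an $s$-admissible manifold inside $\Psi_{x_i}^{p^u_i,p^s_i}$ has the form $\Psi_{x_i}(v_i,w_i)$ with $|v_i|,|w_i|\le 10^{-2}(p^u_i\wedge p^s_i)$, and likewise with $y_i$ and $q^u_i,q^s_i$. Combining this coordinate bound with the uniform Lipschitz control $\|(d\Psi_{x_i})_{\un{u}}\|\le 2$ from Theorem \ref{Theorem_OP_charts}(1) gives
\[
d(f^i(x),x_i)<50^{-1}(p^u_i\wedge p^s_i),\qquad d(f^i(x),y_i)<50^{-1}(q^u_i\wedge q^s_i).
\]

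Finally a single application of the triangle inequality
\[
d(x_i,y_i)\le d(x_i,f^i(x))+d(f^i(x),y_i)<50^{-1}(p^u_i\wedge p^s_i)+50^{-1}(q^u_i\wedge q^s_i)
\]
yields the desired bound $d(x_i,y_i)<25^{-1}\max\{p^u_i\wedge p^s_i,\,q^u_i\wedge q^s_i\}$.

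There is no real obstacle in this proposition: it is a pure bookkeeping step, packaging together three facts that are already in place, namely (i) the description of $\pi(\un{v})$ via intersections of the canonical admissible manifolds, (ii) the tight interior bound on the Pesin coordinates of that intersection from Proposition \ref{Prop_Intersection}(2), and (iii) the uniform Lipschitz estimate for Pesin charts. The substantive work in the comparison of two chains that shadow the same orbit — matching axes, scaling, and window parameters — is deferred to the subsequent subsections \S\ref{SectionCompR}, \S\ref{SectionCompS/U}, \S\ref{SectionCompP/Q}, for which this position bound serves merely as the starting point.
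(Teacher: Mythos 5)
Your proposal is correct and is essentially the paper's own argument: the paper likewise notes that $f^i(x)$ is the intersection of a $u$--admissible and an $s$--admissible manifold in $\Psi_{x_i}^{p^u_i,p^s_i}$, invokes Proposition \ref{Prop_Intersection}(2) to get $f^i(x)=\Psi_{x_i}(v_i,w_i)$ with $|v_i|,|w_i|\leq 10^{-2}(p^u_i\wedge p^s_i)$, uses the $2$--Lipschitz bound on Pesin charts to get $d(f^i(x),x_i)<50^{-1}(p^u_i\wedge p^s_i)$ (and similarly for $y_i$), and concludes by the triangle inequality. No gap; the only difference is that you spell out the shift-commutation/Proposition \ref{Prop_V}(3) bookkeeping that the paper leaves implicit.
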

\noindent
Regularity is not needed here.  We shall make use of it when we analyze  the scaling parameters and the window parameters.

\section{Axes parameters}\label{SectionCompR}
Let $(\Psi_{x_i}^{p^u_i,p^s_i})_{i\in\Z}, (\Psi_{y_i}^{q^u_i,q^s_i})_{i\in\Z}$ be two  chains  s.t. $\pi[(\Psi_{x_i}^{p^u_i,p^s_i})_{i\in\Z}]=\pi[(\Psi_{y_i}^{q^u_i,q^s_i})_{i\in\Z}]$. We compare $R_{x_i}$ to $R_{y_i}$ and $\a(x_i)$ to $\a(y_i)$. The analysis relies on a special property of $V^{u}[(z_k)_{k\leq i}]$ and $V^s[(z_k)_{k\geq i}]$ ($z_k=x_k,y_k$), which we call {\em ``staying in windows"}. We begin by discussing this property.

\subsection{Staying in windows}
\begin{defi}
Suppose $V^u$ is a $u$--admissible manifold in $\Psi_{x}^{p^u,p^s}$. We say that $V^u$ {\em stays in windows} if there is a negative chain $(\Psi_{x_i}^{p^u_i,p^s_i})_{i\leq 0}$ with $\Psi_{x_0}^{p^u_0,p^s_0}=\Psi_{x}^{p^u,p^s}$ and $u$--admissible manifolds $W^u_i$ in $\Psi_{x_i}^{p^u_i,p^s_i}$ s.t. $f^{-|i|}(V^u_i)\subseteq W^u_i$ for all $i\leq 0$.
\end{defi}
\begin{defi}
Suppose $V^s$ is an $s$--admissible manifold in $\Psi_{x}^{p^u,p^s}$. We say that $V^s$ {\em stays in windows} if there is a positive chain $(\Psi_{x_i}^{p^u_i,p^s_i})_{i\geq 0}$ with $\Psi_{x_0}^{p^u_0,p^s_0}=\Psi_{x}^{p^u,p^s}$ and $s$--admissible manifolds $W^s_i$ in $\Psi_{x_i}^{p^u_i,p^s_i}$ s.t. $f^{i}(V^s_i)\subseteq W^s_i$ for all $i\geq 0$.
\end{defi}

If $\un{v}$ is a chain, then $V^u_i:=V^u[(v_k)_{k\leq i}]$ and  $V^s_i:=V^s[(v_k)_{k\geq i}]$ stay in windows, because $f^{-k}(V^u_i)\subset V^u_{i-k}$ and $f^k(V^s_i)\subset V^s_{i+k}$ for all $k\geq 0$ (Proposition \ref{Prop_V}).

The following proposition says that $s/u$--admissible manifolds which stay in windows are local stable/unstable manifolds in the sense of Pesin \cite{Pesin}:
\begin{prop}\label{Prop_Staying_In_Windows}
The following holds for all $\e$ small enough. Let  $V^{s}$ be an  admissible $s$--manifold in $\Psi_x^{p^u,p^s}$, and suppose $V^s$ stays in windows.
\begin{enumerate}
\item For every $y,z\in V^s$, $d(f^k(y),f^k(z))<e^{-\frac{1}{2}k\chi}$ for all $k\geq 0$.
\item For every $y\in V^s$, let $\un{e}^s(y)$ denote the positively oriented unit tangent vector to $V^s$ at $y$, then
$\|df^k_y\un{e}^s(y)\|_{f^k(y)}\leq 6\|C_\chi(x)^{-1}\|e^{-\frac{1}{2}k\chi}$ for all $k\geq 0$.
\item $\left|\log\|df^k_y \un{e}^s(y)\|_{f^k(y)}-\log\|df^k_z \un{e}^s(z)\|_{f^k(z)}\right|\!<\!Q_\e(x)^{\b/4}$  $(y,z\in V^s, k\geq 0)$.
\end{enumerate}
The symmetric statement holds for $u$--admissible manifolds which stay in windows: replace the $s$--tags by $u$--tags, and $f$ by $f^{-1}$.
\end{prop}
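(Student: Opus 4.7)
The plan is to work in the Pesin charts supplied by the ``stays in windows'' hypothesis. Fix a positive chain $(\Psi_{x_i}^{p^u_i,p^s_i})_{i\ge 0}$ with $\Psi_{x_0}^{p^u_0,p^s_0}=\Psi_x^{p^u,p^s}$ and $s$--admissible manifolds $W^s_i$ in $\Psi_{x_i}^{p^u_i,p^s_i}$ represented by functions $F_i$, with $f^i(V^s)\subseteq W^s_i$. Set $\gamma_j(t):=\Psi_{x_j}(t,F_j(t))$; then for every $y\in V^s$ the first-coordinate chain $f^j(y)=\gamma_j(t_j)$ satisfies the recursion $t_{j+1}=T_j(t_j)$ with $T_j(s)=A_js+h_1^{(j)}(s,F_j(s))$, by Proposition~\ref{Prop_f_xy}. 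Here $|A_j|<e^{-\chi}$ and $\|\nabla h_1^{(j)}\|$ is small on the relevant rectangle. All three conclusions reduce to one-dimensional estimates on $t_k(a):=T_{k-1}\circ\cdots\circ T_0(a)$ combined with standard bounds on $\Psi_{x_j}$ and $C_\chi(x_j)$.

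For Part~(1), write $y=\gamma_0(a)$ and $z=\gamma_0(b)$. Admissibility of $W^s_j$ yields $\|F_j'\|_\infty<\e$ via~(\ref{Lip(F)}), so
\[
|T_j(s)-T_j(s')|\le(|A_j|+\|\nabla h_1^{(j)}\|(1+\e))|s-s'|\le e^{-\chi/2}|s-s'|
\]
for $\e$ small. Iterating gives $|t_k(a)-t_k(b)|\le 2p^s\cdot e^{-k\chi/2}$, and since $\Psi_{x_k}$ is $2$--Lipschitz (Theorem~\ref{Theorem_OP_charts}) and $Q_\e<\e^{3/\b}$, part~(1) follows. For Part~(2), differentiating the same recursion yields $|t_k'(a)|\le e^{-k\chi/2}$, and the chain rule $f^k\circ\gamma_0=\gamma_k\circ t_k$ along $V^s$ gives
\[
\|df^k_y\,\un{e}^s(y)\|_{f^k(y)}=\frac{|t_k'(a)|\,\|\gamma_k'(t_k(a))\|}{\|\gamma_0'(a)\|}.
\]
The numerator factor is $\le 3$ by Lipschitzness of $\Psi_{x_k}$, while $(d\Psi_{x_0})^{-1}=C_\chi(x_0)^{-1}\circ(d\exp_{x_0})^{-1}$ with $\|(d\exp_{x_0})^{-1}\|\le 2$ on $B_{\rho(M)}(x_0)$ gives $\|\gamma_0'(a)\|\ge(2\|C_\chi(x)^{-1}\|)^{-1}$, yielding the constant $6\|C_\chi(x)^{-1}\|$.

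For Part~(3), the same chain rule makes $\|\gamma_j'\|$ telescope in the $\log$:
\[
\log\frac{\|df^k_y\,\un{e}^s(y)\|}{\|df^k_z\,\un{e}^s(z)\|}=\sum_{j=0}^{k-1}\log\frac{|T_j'(t_j(a))|}{|T_j'(t_j(b))|}+\log\frac{\|\gamma_k'(t_k(a))\|}{\|\gamma_k'(t_k(b))\|}+\log\frac{\|\gamma_0'(b)\|}{\|\gamma_0'(a)\|}.
\]
For the sum, the H\"older estimate $\|\nabla h_i(\un{u})-\nabla h_i(\un{v})\|\le\e\|\un{u}-\un{v}\|^{\b/3}$ from Proposition~\ref{Prop_f_xy}, admissibility $\|F_j'\|_{\b/3}\le\tfrac12$, and the lower bound $|T_j'|\ge C_f^{-1}/2$ for $\e$ small yield $|\log(T_j'(t_j(a))/T_j'(t_j(b)))|\le C\e\, e^{-j\chi\b/6}|a-b|^{\b/3}$ using Part~(1); the geometric series totals $\lesssim\e\cdot Q_\e(x)^{\b/3}$. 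For each endpoint term, $s\mapsto\|\gamma_j'(s)\|$ is $(\b/3)$--H\"older (since $\|d^2\Psi_{x_j}\|$ is uniformly bounded thanks to $\|C_\chi\|\le 1$, Lemma~\ref{Lemma_C_contracts}, and $\|F_j'\|_{\b/3}\le\tfrac12$), and the lower bound $\|\gamma_j'\|\ge(2\|C_\chi(x_j)^{-1}\|)^{-1}$ combined with the defining inequality $\|C_\chi(\cdot)^{-1}\|\le\e^{1/4}Q_\e(\cdot)^{-\b/12}$ from~(\ref{Qdef}) produces $\lesssim\e^{1/4}Q_\e(x)^{\b/4}$ at $j=0$; at $j=k$, the chain estimate $Q_\e(x_k)\ge p^s_k\ge e^{-\e k}p^s$ ensures the slow growth of $\|C_\chi(x_k)^{-1}\|$ is dominated by the $e^{-k\chi\b/6}$ contraction from Part~(1). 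Adding the three contributions gives a total below $Q_\e(x)^{\b/4}$ for $\e$ small.

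The main difficulty is the $j=k$ endpoint in Part~(3): the factor $\|C_\chi(x_k)^{-1}\|$ arising from the lower bound on $\|\gamma_k'\|$ must be controlled uniformly in $k$ against the contraction from Part~(1), which forces one to extract the non-trivial inequality $Q_\e(x_k)\ge p^s_k\ge e^{-\e k}p^s$ from the chain structure and to combine it carefully with the $\|C_\chi(y)^{-1}\|\le\e^{1/4}Q_\e(y)^{-\b/12}$ bound from~(\ref{Qdef}). The symmetric statement for $u$--admissible manifolds follows identically, replacing $f$ by $f^{-1}$, interchanging the roles of the two coordinates in Proposition~\ref{Prop_f_xy}, and using a negative chain in place of a positive one.
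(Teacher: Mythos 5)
Your argument is correct, and Parts (1)–(2) run along essentially the same lines as the paper's proof (the paper tracks the first coordinates $y_k,z_k$ and the tangent components $(a_k,b_k)$ through the maps $f_{x_kx_{k+1}}$ of Proposition \ref{Prop_f_xy}, which is the same computation as your contraction of $T_j$ and your chain rule $f^k\circ\gamma_0=\gamma_k\circ t_k$). Where you genuinely diverge is Part (3). The paper telescopes the Riemannian quantity itself, writing $\|df^n_y\un{e}^s(y)\|$ as the product of the one-step norms $\|df_{f^k(y)}\un{e}^s(f^k(y))\|$ and comparing each factor at $y$ and $z$ using the H\"older continuity of $df$ on $M$ together with a separate estimate of the distance between the unit tangent directions (the quantity $N_k$, involving the constants $H_0,E_0$ and the differentials of the charts). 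You instead telescope in chart coordinates: the one-step factors are the scalar derivatives $T_j'$, whose ratios are controlled directly by the H\"older bound on $\nabla h_1$ from Proposition \ref{Prop_f_xy}, and all chart distortion is concentrated into the two boundary terms $\|\gamma_0'\|$, $\|\gamma_k'\|$. This is arguably tidier, since the H\"older continuity of $df$ on the manifold never enters explicitly; the price is that the factor $\|C_\chi(x_k)^{-1}\|$ appears at the far endpoint, and you neutralize it exactly as the paper neutralizes it inside its $N_k$ estimate, via $\|C_\chi(x_k)^{-1}\|\leq\e^{1/4}Q_\e(x_k)^{-\b/12}\leq\e^{1/4}(p^s_k)^{-\b/12}$, the chain inequality $p^s_0\leq e^{k\e}p^s_k$, and the exponential closeness from Part (1), with the final comparison $\e<2\chi$ making the decay win. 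So both proofs rest on the same two hard inputs (the contraction of Part (1) and the $Q_\e$–versus–$\|C_\chi^{-1}\|$ inequality along the chain); yours reorganizes the bookkeeping so that only chart-level regularity is needed, while the paper's version compares intrinsic one-step norms and is the form it later reuses (e.g. for the $s_\chi$-parameters in Lemma \ref{Main_Lemma_One}).
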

The proof is modeled on the proof of Pesin's Stable Manifold Theorem \cite[chapter 7]{BP}:  $f^n: V^s\to f^n(V^s)$ is given in coordinates by
$$\Psi_{x_n}^{-1}\circ f^n\circ\Psi_{x_0}=f_{x_{n-1} x_n}\circ\cdots\circ f_{x_0 x_1} .$$ Since $V^s$ stays in windows, the orbits of points in $V^s$ remain in the ``windows" where $f_{x_i x_{i+1}}$ is close to a linear hyperbolic map. One can then prove the proposition by direct calculations. See the appendix for details.

\begin{prop}\label{Prop_Uniqueness}
The following holds for all $\e$ small enough.
Let $V^s$ (resp. $U^s$) be an $s$--admissible manifold in $\Psi_x^{p^u,p^s}$ (resp. in $\Psi_y^{q^u,q^s}$). Suppose $V^s, U^s$ stay in windows. If $x=y$ then
either $V^s, U^s$ are disjoint, or one contains the other.

\medskip
\noindent
The same statement holds for $u$--admissible manifolds.
\end{prop}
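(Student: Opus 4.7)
The plan is to prove the $s$-admissible case; the $u$-case is symmetric under time reversal. Since $x = y$, the Pesin charts coincide ($\Psi_x = \Psi_y$) and $Q_\e(x) = Q_\e(y)$, so we may assume without loss of generality $p^s \leq q^s$ and write $V^s = \Psi_x\{(F(t),t):|t|\leq p^s\}$, $U^s = \Psi_x\{(G(t),t):|t|\leq q^s\}$. The conclusion $V^s\subseteq U^s$ is then equivalent to $F\equiv G$ on $[-p^s,p^s]$.

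I would introduce $E := \{t\in[-p^s,p^s]:F(t)=G(t)\}$. Continuity of $F,G$ makes $E$ closed; any $z\in V^s\cap U^s$ furnishes some $t_0\in E$ (its $t$-coordinate in the chart), so $E$ is nonempty whenever the intersection is nonempty. Since $[-p^s,p^s]$ is connected, the proposition reduces to showing $E$ is open. Fix $t^*\in E$ and let $p := \Psi_x(F(t^*),t^*)\in V^s\cap U^s$. Proposition \ref{Prop_Staying_In_Windows}(1), applied to $V^s$ and to $U^s$ separately, says that every forward orbit of a point in $V^s\cup U^s$ exponentially shadows the orbit of $p$ at rate $e^{-k\chi/2}$, and part (2) says that the unit tangent vectors to $V^s$ and $U^s$ at $p$ are both exponentially contracted by $df^k$ at the same rate. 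These forward-hyperbolic data should force $V^s$ and $U^s$ to coincide in a neighborhood of $p$, which yields openness of $E$ at $t^*$ and completes the proof.

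The natural mechanism for the local-coincidence step is the graph transform. Write $V^s_k := V^s[(x_i)_{i\geq k}]$ and $U^s_k := V^s[(y_i)_{i\geq k}]$; by Proposition \ref{Prop_V}(3), $f^k(V^s)\subseteq V^s_k$ and $f^k(U^s)\subseteq U^s_k$, and both manifolds contain the common orbit point $f^kp$. The key observation is that $d(x_k,y_k)$ is small for every $k$ (both charts' windows contain $f^kz$, and Pesin charts are $2$-Lipschitz, so an argument parallel to Proposition \ref{Prop_x_Parameter} gives $d(x_k,y_k) = O(\e^{3/\b})$). Hence $\Psi_{x_k}$ and $\Psi_{y_k}$ $\e$-overlap, and by Proposition \ref{Prop_Overlap_Meaning} any $s$-admissible manifold in the $(x_k)$-chain can be reinterpreted (after minor restriction) as an $s$-admissible manifold in the $(y_k)$-chain. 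The $e^{-\chi/2}$-contraction of the graph transform from Proposition \ref{Prop_Graph_Contracts} then shows that, starting from the common point $f^kp$, the two iterated manifolds become arbitrarily close in $C^1$ as $k\to\infty$, and pulling this closeness back to $k=0$ via the backward graph transform forces $V^s$ and $U^s$ to agree on a neighborhood of $p$ in $\Psi_x$.

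The hard part will be this last transfer: $V^s$ and $U^s$ are manifolds for distinct chains whose Lyapunov matrices $C_\chi(x_k), C_\chi(y_k)$ need not coincide, so admissibility in one chain does not directly imply admissibility in the other. Care is needed when comparing the windows $\Psi_{x_k}[R_{10Q_\e(x_k)}(\un 0)]$ and $\Psi_{y_k}[R_{10Q_\e(y_k)}(\un 0)]$, since the operator norm $\|C_\chi(y_k)^{-1}\|$ can be large relative to $Q_\e(y_k)$; the saving grace is Lemma \ref{Lemma_Q_tempered}(2), which gives $\|C_\chi(y_k)^{-1}\|^{12}\leq \e^{2/\b}/Q_\e(y_k)$, so that for $\e$ small enough the relevant exponential $e^{-k\chi/2}$ dominates the polynomial blow-up in $\|C_\chi(y_k)^{-1}\|$ uniformly in $k$. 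With this quantitative control in hand, the uniqueness step above, and thus the proposition, follow.
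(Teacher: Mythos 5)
Your reduction to a single chart (since $x=y$) and the WLOG $p^s\leq q^s$ are fine, but the mechanism you propose for local coincidence has a genuine gap. You claim that because $d(x_k,y_k)=O(\e^{3/\b})$, the charts $\Psi_{x_k}$ and $\Psi_{y_k}$ $\e$--overlap, so that admissible manifolds can be transported between the two chains and compared via the graph transform. The overlap condition is much stronger than closeness of base points: it requires $d(x_k,y_k)+\|\Theta_D\circ C_\chi(x_k)-\Theta_D\circ C_\chi(y_k)\|<\eta_k^4{\eta'_k}^4$, i.e.\ closeness of the Lyapunov change-of-coordinates matrices to an extremely high order, and nothing in your argument controls $C_\chi(y_k)$ in terms of $C_\chi(x_k)$ at all. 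Comparing the charts of two different chains shadowing the same orbit is precisely the ``inverse problem'' solved in Part 2 (Theorem \ref{Theorem_Pi_Almost_1-1} and the propositions on axes/scaling/window parameters); it requires \emph{regular} chains, which the positive chains furnished by ``staying in windows'' need not be, and even for regular chains the conclusion ($\sqrt{\e}$-- or $\sqrt[3]{\e}$--size errors) is far weaker than the $\e$--overlap condition you invoke. Moreover, even granting a transfer of admissibility, the contraction estimate of Proposition \ref{Prop_Graph_Contracts} only yields that the two manifolds are $C^1$--close, not equal; to get equality one must identify both with the same object, which your argument never does.

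The paper's proof sidesteps the chart-comparison problem entirely. Using the shadowing estimate of Proposition \ref{Prop_Staying_In_Windows}(1) around a common point $z\in V^s\cap U^s$, together with $p^s_n\leq Q_\e(x_n)\ll\|C_\chi(x_n)^{-1}\|^{-1}$ and $p^s_0\leq e^{n\e}p^s_n$, one shows that $f^n(U^s)\subset\Psi_{x_n}[R_{Q_\e(x_n)}(\un{0})]$ for all large $n$ (after first showing $f^n(V^s)$ eventually fits in half-size windows $\Psi_{x_n}[R_{\frac12 Q_\e(x_n)}(\un{0})]$). The characterization of local stable manifolds in Proposition \ref{Prop_V}(4) then forces both $f^{n_0}(V^s)$ and $f^{n_0}(U^s)$ to lie inside the single manifold $V^s[(\Psi_{x_i}^{p^u_i,p^s_i})_{i\geq n_0}]$ of the $x$--chain, so they are two connected arcs on one curve; the containment of one arc in the other is then read off from the endpoint $s$--coordinates using $x=y$ and $p^s\leq q^s$. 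If you want to salvage your outline, the step to repair is the local-coincidence mechanism: replace the overlap/graph-transform comparison by the window argument plus Proposition \ref{Prop_V}(4); once both manifolds are arcs of one stable curve, your connectedness bookkeeping becomes unnecessary.
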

\noindent
See the appendix for a proof.

\subsection{Comparison of $\a(x_i)$ to $\a(y_i)$}

\begin{prop}\label{Prop_Alpha_Comparison}
Let $(\Psi_{x_i}^{p^u_i,p^s_i})_{i\in\Z},(\Psi_{y_i}^{q^u_i,q^s_i})_{i\in\Z}$ be chains s.t.  $\pi[(\Psi_{x_i}^{p^u_i,p^s_i})_{i\in\Z}]=\pi[(\Psi_{y_i}^{q^u_i,q^s_i})_{i\in\Z}]$, then for all $i\in\Z$
\begin{enumerate}
\item $
e^{-\sqrt{\e}}\leq \frac{\sin\a(x_i)}{\sin\a(y_i)}\leq e^{\sqrt{\e}}
$
\item $|\cos\a(x_i)-\cos\a(y_i)|<\sqrt{\e}$
\end{enumerate}
\end{prop}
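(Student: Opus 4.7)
The plan is to compare $\alpha(x_i)$ and $\alpha(y_i)$ through a common intermediate angle: the angle at $z:=f^i(x)=\pi[\underline{v}]=\pi[\underline{u}]$ between the $s$-- and $u$--admissible manifolds produced from each chain by Proposition \ref{Prop_V}. Write $\eta_i^{(x)}:=p^u_i\wedge p^s_i$ and $\eta_i^{(y)}:=q^u_i\wedge q^s_i$, both bounded by $\epsilon^{3/\beta}$ (Lemma \ref{Lemma_Q_tempered}(1)). Let $V^u_{(x),i}:=V^u[(\Psi_{x_k}^{p^u_k,p^s_k})_{k\leq i}]$ and $V^s_{(x),i}:=V^s[(\Psi_{x_k}^{p^u_k,p^s_k})_{k\geq i}]$, and define $V^{u/s}_{(y),i}$ analogously from the $y$-chain. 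All four manifolds contain $z$, and by Proposition \ref{Prop_V}(3) each stays in windows. Proposition \ref{Prop_Intersection}(4) relates the intersection angle $\theta^{(x)}:=\measuredangle(V^u_{(x),i},V^s_{(x),i})$ at $z$ to $\alpha(x_i)$ within a multiplicative factor $e^{\pm(\eta_i^{(x)})^{\beta/4}}$ and an additive error $2(\eta_i^{(x)})^{\beta/4}$, and analogously ties $\theta^{(y)}$ to $\alpha(y_i)$. If I can show that $\theta^{(x)}=\theta^{(y)}$ -- equivalently, that the tangent lines at $z$ of the two $s$--manifolds coincide and similarly for the two $u$--manifolds -- then the desired bounds on $\sin\alpha(x_i)/\sin\alpha(y_i)$ and on $|\cos\alpha(x_i)-\cos\alpha(y_i)|$ follow immediately, since both $(\eta_i^{(x)})^{\beta/4},(\eta_i^{(y)})^{\beta/4}$ are smaller than $\epsilon^{3/4}<\tfrac{1}{2}\sqrt\epsilon$ for $\epsilon$ small.

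The heart of the argument is this tangency; the unstable case is symmetric to the stable one. Let $v^s_x,v^u_x$ be unit tangents to $V^s_{(x),i},V^u_{(x),i}$ at $z$, and $v^s_y$ a unit tangent to $V^s_{(y),i}$ at $z$. Decompose $v^s_y=av^s_x+bv^u_x$; I claim $b=0$. Proposition \ref{Prop_Staying_In_Windows}(2) applied to $V^s_{(x),i}$ and to $V^s_{(y),i}$ gives the two upper bounds
\begin{equation*}
\|df^k_z v^s_x\|\leq 6\|C_\chi(x_i)^{-1}\|e^{-k\chi/2},\qquad \|df^k_z v^s_y\|\leq 6\|C_\chi(y_i)^{-1}\|e^{-k\chi/2},
\end{equation*}
while the $u$--version of Proposition \ref{Prop_Staying_In_Windows}(2), applied to $V^u_{(x),i+k}$ (which contains $f^k(V^u_{(x),i})$ and hence carries the image of $v^u_x$ as its tangent at $f^{i+k}(x)$), yields
\begin{equation*}
\|df^k_z v^u_x\|\geq \frac{e^{k\chi/2}}{6\|C_\chi(x_{i+k})^{-1}\|}.
\end{equation*}
The main obstacle is now visible: the prefactor $\|C_\chi(x_{i+k})^{-1}\|$ only grows sub-exponentially along $\NUH^\ast_\chi(f)$-orbits (Lemma \ref{Lemma_C_tempered}(1)), and this by itself is not enough to beat the $e^{-k\chi/2}$ decay in $df^k_z v^s_y=a\,df^k_z v^s_x+b\,df^k_z v^u_x$. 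This is precisely where regularity is used: by definition some vertex repeats along a sequence $n_k\uparrow\infty$ in the forward half of the $x$-chain, so $x_{i+n_k}$, $\|C_\chi(x_{i+n_k})^{-1}\|$, and $\sin\alpha(x_{i+n_k})$ are \emph{constant} along $n_k$. Along this subsequence, $\|df^{n_k}_z v^u_x\|$ grows exponentially while $\|df^{n_k}_z v^s_x\|$ decays exponentially, and since the angle between $df^{n_k}_z v^s_x$ and $df^{n_k}_z v^u_x$ -- the intersection angle at $f^{i+n_k}(x)$, close to $\alpha(x_{i+n_k})$ by Proposition \ref{Prop_Intersection}(4) -- is uniformly bounded away from $0$ and $\pi$, an elementary estimate gives $\|df^{n_k}_z v^s_y\|\gtrsim |b|\,\|df^{n_k}_z v^u_x\|\to\infty$ whenever $b\ne 0$, contradicting the upper bound. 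Hence $b=0$. Running the symmetric argument with $f^{-1}$ and a vertex of the $x$-chain repeating as $m_k\to-\infty$ gives the analogous tangency of $V^u_{(x),i}$ and $V^u_{(y),i}$ at $z$.

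With tangency established, set $\theta:=\theta^{(x)}=\theta^{(y)}$. Two applications of Proposition \ref{Prop_Intersection}(4) then yield
\begin{equation*}
\frac{\sin\alpha(x_i)}{\sin\alpha(y_i)}=\frac{\sin\alpha(x_i)/\sin\theta}{\sin\alpha(y_i)/\sin\theta}\in\bigl[e^{-(\eta_i^{(x)})^{\beta/4}-(\eta_i^{(y)})^{\beta/4}},\,e^{(\eta_i^{(x)})^{\beta/4}+(\eta_i^{(y)})^{\beta/4}}\bigr]
\end{equation*}
and $|\cos\alpha(x_i)-\cos\alpha(y_i)|\leq 2(\eta_i^{(x)})^{\beta/4}+2(\eta_i^{(y)})^{\beta/4}$. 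Using $\eta_i^{(x)},\eta_i^{(y)}<\epsilon^{3/\beta}$, both quantities are bounded by $\sqrt\epsilon$ once $\epsilon$ is taken small enough, proving (1) and (2). The only non-routine step is the tangency, and within it the only ingredient beyond Proposition \ref{Prop_Staying_In_Windows} is the regularity hypothesis, which plays its role by producing a subsequence on which the otherwise uncontrolled tempered factor $\|C_\chi(x_{i+k})^{-1}\|$ becomes a uniform constant.
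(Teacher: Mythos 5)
Your overall strategy---show that the tangent lines at $z=f^i(x)$ of the two stable (resp.\ unstable) limit manifolds coincide, then transfer the angle comparison through Proposition \ref{Prop_Intersection}(4)---is the same as the paper's, and your growth/decay contradiction for the tangency is sound as far as it goes. The genuine gap is that you invoke regularity of the chains, which is \emph{not} a hypothesis of the proposition: the statement (and its later uses, e.g.\ in Proposition \ref{Prop_R_Comparison} and Lemma \ref{Lemma_Strong_Alpha_Comp}) is for arbitrary chains with the same $\pi$--image, and the paper explicitly reserves regularity for the scaling and window parameters. As written, your argument only proves the statement for regular chains, and your justification for needing regularity---that $\|C_\chi(x_{i+k})^{-1}\|$ cannot otherwise be controlled along the chain---is where the missing idea lies. (Your aside about Lemma \ref{Lemma_C_tempered}(1) is also off target: that lemma concerns the orbit of a single point of $\NUH^\ast_\chi(f)$, whereas the $x_{i+k}$ are chart centres of a chain, so it says nothing here; but this is moot given the fix below.)

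The missing observation is that the tempered factor is controlled along \emph{any} chain, with no recurrence needed: by the definition (\ref{Qdef}) of $Q_\e$ one has $\|C_\chi(x_{i+k})^{-1}\|\le Q_\e(x_{i+k})^{-1}$, and since the window parameters of a chain are $\e$--subordinated, Lemma \ref{Lemma_Subordinated_Tempered} gives $Q_\e(x_{i+k})\ge p^u_{i+k}\wedge p^s_{i+k}\ge e^{-k\e}\,(p^u_i\wedge p^s_i)$. Hence $\|df^k_z v^u_x\|\ge \tfrac16 e^{k(\chi/2-\e)}(p^u_i\wedge p^s_i)$, which blows up exponentially for every $k$, not just along a subsequence. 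Then the plain triangle inequality, $\|df^k_z v^s_y\|\ge |b|\,\|df^k_z v^u_x\|-|a|\,\|df^k_z v^s_x\|$, already contradicts the uniform decay bound $\|df^k_z v^s_y\|\le 6\|C_\chi(y_i)^{-1}\|e^{-k\chi/2}$ unless $b=0$; no repeated vertex and no lower bound on the intersection angle are required. With this replacement your tangency step, and hence the whole proof, works for arbitrary chains and coincides with the paper's argument (which phrases the same dichotomy as a characterization of $E^{s/u}$ by the sign of the upper Lyapunov exponent).
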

\begin{proof}
Write $v_i=\Psi_{x_i}^{p^u_i,p^s_i}$, $u_i=\Psi_{y_i}^{q^u_i,q^s_i}$,  $x:=\pi[(\Psi_{x_i}^{p^u_i,p^s_i})_{i\in\Z}]=\pi[(\Psi_{y_i}^{q^u_i,q^s_i})_{i\in\Z}]$, and
\begin{align*}
V^s_{x_k}:= V^s[(v_i)_{i\geq k}] && V^u_{x_k}:=V^u[(v_i)_{i\leq k}]&&E^{s/u}_{x_k}:=T_{f^k(x)} V^{s/u}_{x_k}\\
V^s_{y_k}:= V^s[(u_i)_{i\geq k}] && V^u_{y_k}:=V^u[(u_i)_{i\leq k}]&& E^{s/u}_{y_k}:=T_{f^k(x)} V^{s/u}_{y_k}.
\end{align*}
We claim that
\begin{enumerate}
\item[(i)] $\limsup\limits_{n\to\infty}\frac{1}{n}\log\|df^n_{f^k(x)}\un{w}\|<0\textrm{ on }E^{s}_{x_k}\setminus\{\un{0}\}\textrm{ and } E^s_{y_k}\setminus\{\un{0}\}$,
    \item[(ii)]
$\limsup\limits_{n\to\infty}\frac{1}{n}\log\|df^n_{f^k(x)}\un{w}\|>0\textrm{ on }E^{u}_{x_k}\setminus\{\un{0}\}\textrm{ and } E^u_{y_k}\setminus\{\un{0}\}$.
\end{enumerate}

 We give the details for $E^{s/u}_{x_k}$. The case of $E^{s/u}_{y_k}$ is identical.

Part (i) follows from Proposition \ref{Prop_Staying_In_Windows} (2), applied to $V^s_{x_k}$ and $V^s_{y_k}$.

The proof of (ii) is slightly more complicated. Suppose $\un{w}\in E^u_{x_k}\setminus\{\un{0}\}$, then $\un{w}$ is tangent to $V^u_{x_k}$ at $f^k(x)$. For every $n$, $f^{k+n}(x)=\pi[(v_{i+k+n})_{i\in\Z}]\in V^u_{k+n}$, so
$$
f^k(x)=f^{-n}(f^{k+n}(x))\in f^{-n}[V^u_{k+n}].
$$
It follows that $df^n_{f^k(x)}\un{w}\in T_{f^{k+n}(x)}[V^u_{k+n}]\setminus\{\un{0}\}$.

We apply  Proposition \ref{Prop_Staying_In_Windows} (2) in its version for $u$--admissible manifolds to the manifold $V^u_{x_{k+n}}$ and the vector  $df^n_{f^k(x)}\un{w}$. This gives the estimate
\begin{align*}
\|\un{w}\|&=\bigl\|df^{-n}_{f^{k+n}(x)}[df^n_{f^k(x)}\un{w}]\bigr\|\leq
6 e^{-\frac{1}{2}n\chi}\|C_\chi(x_{k+n})^{-1}\|\cdot \|df^n_{f^k(x)}\un{w}\|\\
&\leq 6 e^{-\frac{1}{2}n\chi} Q_\e(x_{k+n})^{-1}\|df^n_{f^k(x)}\un{w}\|\ \ (\textrm{definition of $Q_\e$})\\
&\leq 6 e^{-\frac{1}{2}n\chi} (p^u_{k+n}\wedge p^s_{k+n})^{-1}\|df^n_{f^k(x)}\un{w}\|\\
&\leq 6 e^{-\frac{1}{2}n\chi+n\e}(p^u_{k}\wedge p^s_{k})^{-1}\|df^n_{f^k(x)}\un{w}\|\ (\textrm{Lemma \ref{Lemma_Subordinated_Tempered}}).
\end{align*}
Thus $\|df^n_{f^k(x)}\un{w}\|\geq \frac{1}{6}e^{\frac{1}{2}n\chi+n\e}(p^u_{k}\wedge p^s_{k})\|\un{w}\|$. Part (ii) follows.

\medskip
By (i) and (ii),  $E^s_{x_k}, E^s_{y_k}=\{\un{w}\in T_{f^k(x)}M:\limsup\limits_{n\to\infty}\frac{1}{n}\log\|df^n_{f^k(x)}\un{w}\|<0\}.$
For reasons of symmetry,
$E^u_{x_k}, E^u_{y_k}=\{\un{w}\in T_{f^k(x)}M:\limsup\limits_{n\to\infty}\frac{1}{n}\log\|df^{-n}_{f^k(x)}\un{w}\|<0\}.
$
It follows that $E^s_{x_k}=E^s_{y_k}$ and $E^u_{x_k}=E^u_{y_k}$.

As a result,
$
\measuredangle(V^s_{x_k},V^u_{x_k})=\measuredangle(V^s_{y_k},V^u_{y_k}).
$
By  Proposition \ref{Prop_Intersection}
$
\sin\measuredangle(V^s_{x_k},V^u_{x_k})=e^{\pm(p^u_i\wedge p^s_i)^{\b/4}}\sin\a(x_k)$ and $\sin\measuredangle(V^s_{y_k},V^u_{y_k})=e^{\pm(q^u_i\wedge q^s_i)^{\b/4}}\sin\a(y_k)$.
Since $p^u_i\wedge p^s_i\leq Q_\e(x_i)<\e^{3/\b}$ and  $q^u_i\wedge q^s_i\leq Q_\e(y_i)<\e^{3/\b}$,
$
e^{-2{\e^{3/4}}}<\sin\a(x_k)/\sin\a(y_k)<e^{2{\e^{3/4}}}.
$
Similarly one sees that
$
|\cos\a(x_k)-\cos\a(y_k)|<4\e^{3/4},
$
and the proposition follows for all $\e$ so small that $4\e^{3/4}<\sqrt{\e}$.
\end{proof}
The proof actually gives the following stronger estimates, which we now record for future reference:
\begin{lem}\label{Lemma_Strong_Alpha_Comp}
Under the assumptions of the previous proposition,
\begin{enumerate}
\item $e^{-(p^u_i\wedge p^s_i)^{\b/4}-(q^u_i\wedge q^s_i)^{\b/4}}<\frac{\sin\a(x_i)}{\sin\a(y_i)}<e^{(p^u_i\wedge p^s_i)^{\b/4}+(q^u_i\wedge q^s_i)^{\b/4}}$;
\item $|\cos\a(x_i)-\cos\a(y_i)|<4[(p^u_i\wedge p^s_i)^{\b/4}+(q^u_i\wedge q^s_i)^{\b/4}]$.
\end{enumerate}
\end{lem}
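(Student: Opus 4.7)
The plan is to extract the quantitative content that is already present, but swept under a uniform bound, in the proof of Proposition~\ref{Prop_Alpha_Comparison}. That proof consists of two steps: (a) a qualitative identification of the Oseledets-type splittings and (b) a quantitative comparison of the angles using Proposition~\ref{Prop_Intersection}(4). The uniform estimate in Proposition~\ref{Prop_Alpha_Comparison} arises only in step (b), after bounding $p^u_i\wedge p^s_i$ and $q^u_i\wedge q^s_i$ by $\e^{3/\b}$. The proof of Lemma~\ref{Lemma_Strong_Alpha_Comp} is obtained by omitting this final replacement and keeping the sharper geometric quantities.

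Concretely, first I would recall verbatim step (a) of the preceding proof: the ``staying in windows'' property of the four manifolds $V^{s/u}_{x_k},V^{s/u}_{y_k}$, together with Proposition~\ref{Prop_Staying_In_Windows}(2), characterizes the tangent spaces $E^{s/u}_{x_k}$ and $E^{s/u}_{y_k}$ at $f^k(x)$ as the subspaces of $T_{f^k(x)}M$ determined purely by the forward/backward Lyapunov behavior of $df$. Since this characterization depends only on the point $f^k(x)$ (not on which of the two chains we used), one has
\begin{equation*}
E^s_{x_k}=E^s_{y_k}\quad\text{and}\quad E^u_{x_k}=E^u_{y_k},
\end{equation*}
and in particular $\measuredangle(V^u_{x_k},V^s_{x_k})=\measuredangle(V^u_{y_k},V^s_{y_k})$ as an unsigned angle in $(0,\pi)$.

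Next I would apply Proposition~\ref{Prop_Intersection}(4) at each of the two double charts. Writing $\eta_x:=p^u_i\wedge p^s_i$ and $\eta_y:=q^u_i\wedge q^s_i$, that proposition gives
\begin{align*}
e^{-\eta_x^{\b/4}}\le\frac{\sin\measuredangle(V^u_{x_i},V^s_{x_i})}{\sin\a(x_i)}\le e^{\eta_x^{\b/4}},
\qquad
e^{-\eta_y^{\b/4}}\le\frac{\sin\measuredangle(V^u_{y_i},V^s_{y_i})}{\sin\a(y_i)}\le e^{\eta_y^{\b/4}},
\end{align*}
and the additive analogue $|\cos\measuredangle(V^u_{z_i},V^s_{z_i})-\cos\a(z_i)|<2\eta_z^{\b/4}$ for $z=x,y$. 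Dividing the two ratios and using that the numerators are equal by step (a) yields part (1). Applying the triangle inequality to the two cosine estimates and again using equality of the intersection angles yields $|\cos\a(x_i)-\cos\a(y_i)|<2\eta_x^{\b/4}+2\eta_y^{\b/4}$, which is stronger than the bound in (2).

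There is no real obstacle here: all the work was already done in Proposition~\ref{Prop_Alpha_Comparison}, and Lemma~\ref{Lemma_Strong_Alpha_Comp} is merely a bookkeeping sharpening of the final inequality, in which one refrains from weakening $\eta_x^{\b/4},\eta_y^{\b/4}$ to $\e^{3/4}$. The only point to be careful about is that the conclusion of Proposition~\ref{Prop_Intersection}(4) applies to the chart $\Psi_{x_i}$ (resp. $\Psi_{y_i}$) with the associated $\eta_x$ (resp. $\eta_y$), so the two estimates combine additively rather than symmetrically.
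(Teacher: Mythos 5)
Your proposal is correct and is essentially the paper's own argument: the paper proves this lemma precisely by re-running the proof of Proposition \ref{Prop_Alpha_Comparison} (equality of the tangent directions, hence of the intersection angles, plus Proposition \ref{Prop_Intersection}(4) in each chart) and simply not weakening $(p^u_i\wedge p^s_i)^{\b/4}, (q^u_i\wedge q^s_i)^{\b/4}$ to $\e^{3/4}$. Your cosine bound $2[(p^u_i\wedge p^s_i)^{\b/4}+(q^u_i\wedge q^s_i)^{\b/4}]$ is in fact what the computation gives, and it implies the stated bound with the factor $4$.
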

\subsection{Comparison of $R_{x_i}$ to $R_{y_i}$}
\begin{prop}\label{Prop_R_Comparison}
The following holds for all $\e$ small enough. For any two chains  $(\Psi_{x_i}^{p^u_i,p^s_i})_{i\in\Z}$ and $(\Psi_{y_i}^{q^u_i,q^s_i})_{i\in\Z}$, if $\pi[(\Psi_{x_i}^{p^u_i,p^s_i})_{i\in\Z}]=\pi[(\Psi_{y_i}^{q^u_i,q^s_i})_{i\in\Z}]$, then $$R_{y_i}^{-1}R_{x_i}=(-1)^{\s_i} \id+\left(\begin{array}{cc}
\e_{11} & \e_{12}\\ \e_{21} & \e_{22}
\end{array}
\right),$$
where $\s_i\in\{0,1\}$ and  $|\e_{jk}|<[(p^u_i\wedge p^s_i)^{\b/5}+(q^u_i\wedge q^s_i)^{\b/5}]<\sqrt{\e}$.
\end{prop}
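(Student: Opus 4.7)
Write $x:=x_i$, $y:=y_i$, $q:=f^i(\pi(\un v))$, $\eta:=p^u_i\wedge p^s_i$, $\zeta:=q^u_i\wedge q^s_i$ for a fixed $i\in\Z$. From (\ref{C_z}) and orthogonality of $R_z$, $R_z\un{e}^1 = \vartheta_z^{-1}\un{e}^s(z)$ and $R_z\un{e}^2$ is the counterclockwise $\tfrac{\pi}{2}$-rotate of $R_z\un{e}^1$ in $\R^2$. Hence the plan is to compare $\vartheta_x^{-1}\un{e}^s(x)$ with $\vartheta_y^{-1}\un{e}^s(y)$ (and the analogous pair for $\un{e}^u$) up to a sign; orientation-preservation of $R_x,R_y$ will then force the two signs to coincide, producing the single bit $\sigma_i$.

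Exactly as in the proof of Proposition \ref{Prop_Alpha_Comparison}, the tangent spaces $E^s := T_q V^s[(v_k)_{k\ge i}]=T_q V^s[(u_k)_{k\ge i}]$ and $E^u := T_q V^u[(v_k)_{k\le i}]=T_q V^u[(u_k)_{k\le i}]$ coincide as intrinsic subspaces of $T_q M$, being characterised purely by $df^n$-contraction/expansion of the $f$-orbit through $q$. Fix unit vectors $\un{n}^s\in E^s$, $\un{n}^u\in E^u$. By Proposition \ref{Prop_x_Parameter}, $d(x,y)<\e$ is below the Lebesgue number of $\mathfs{D}$, so there is some $D\in\mathfs{D}$ with $x,y,q\in D$ (up to shrinking $\e$). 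The main step is to prove
\begin{equation}\label{Eq_Rx_Chart_Plan}
\vartheta_x^{-1}\circ(d\exp_x^{-1})_q\,\un{n}^s = \pm\,R_x\un{e}^1 + O(\eta^{\beta/4}),
\end{equation}
with a symmetric statement for $y$ involving $\zeta$. Write $q=\Psi_x(t^\ast,F^s(t^\ast))$ with $|t^\ast|\le 10^{-2}\eta$, where $F^s$ represents $V^s[(v_k)_{k\ge i}]$ in $\Psi_x$. Admissibility forces $|F^{s\prime}(t^\ast)|\le|F^{s\prime}(0)|+\tfrac12|t^\ast|^{\beta/3}\le \eta^{\beta/3}$. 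Using $\Psi_x=\exp_x\circ\vartheta_x\circ C_x$ together with $(d\exp_x^{-1})_q=((d\exp_x)_{\vartheta_xC_x(t^\ast,F^s(t^\ast))})^{-1}$, the left side of (\ref{Eq_Rx_Chart_Plan}) is a positive scalar multiple of $C_x\binom{1}{F^{s\prime}(t^\ast)}$. Since $R_x\un{e}^1$ is the unit vector along $C_x\binom{1}{0}$, expanding in the orthonormal basis $(R_x\un{e}^1,R_x\un{e}^2)$ via (\ref{C_z}) bounds the angle between these two unit vectors by $(s_\chi(x)/u_\chi(x))|F^{s\prime}(t^\ast)|\le s_\chi(x)\,\eta^{\beta/3}$ (using $u_\chi(x)\ge\sqrt{2}$). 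By Lemma \ref{Lemma_C_norm}, $s_\chi(x)\le\sqrt{2}\,\|C_\chi(x)^{-1}\|$, and the definition (\ref{Qdef}) of $Q_\e$ combined with $\eta\le Q_\e(x)$ gives $\|C_\chi(x)^{-1}\|\le\e^{1/4}\eta^{-\beta/12}$; multiplying, $s_\chi(x)\eta^{\beta/3}\le 2\e^{1/4}\eta^{\beta/4}$, which establishes (\ref{Eq_Rx_Chart_Plan}).

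The identical argument for the $y$-chain gives the $y$-analogue with error $O(\zeta^{\beta/4})$. Combining with the built-in Lipschitz property of $\mathfs{D}$ (namely $z\mapsto\vartheta_z^{-1}\circ\exp_z^{-1}$ is Lipschitz from $D$ into $C^2(D,\R^2)$, contributing $O(d(x,y))=O(\eta+\zeta)$), one obtains $R_y^{-1}R_x\un{e}^1 = \epsilon_s\un{e}^1 + O(\eta^{\beta/4}+\zeta^{\beta/4})$ for some $\epsilon_s\in\{\pm 1\}$. Running the entire argument with $\un{n}^u,\un{e}^u$ in place of $\un{n}^s,\un{e}^s$ gives $R_y^{-1}R_x\un{e}^2 = \epsilon_u\un{e}^2 + O(\eta^{\beta/4}+\zeta^{\beta/4})$. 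Because $R_y^{-1}R_x$ is orientation-preserving orthogonal, only rotations close to $0$ or $\pi$ are consistent with small perturbations of $\pm\id$ on both basis vectors, forcing $\epsilon_s=\epsilon_u$; call this common sign $(-1)^{\sigma_i}$. Reading the two identities as a matrix equation in $(\un{e}^1,\un{e}^2)$ yields the claimed form. The error $\eta^{\beta/4}+\zeta^{\beta/4}$ is dominated by $\eta^{\beta/5}+\zeta^{\beta/5}$ since $\eta,\zeta<1$, and the $\sqrt{\e}$ bound follows from $\eta,\zeta\le\e^{3/\beta}$.

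The main obstacle is the angle estimate in (\ref{Eq_Rx_Chart_Plan}). The subtle point is that the scaling parameter $s_\chi(x)$ is unbounded on $\NUH_\chi(f)$; the exponent $12/\beta$ in the definition of $Q_\e$ is calibrated precisely so that $s_\chi(x)\cdot\eta^{\beta/3}$ is dominated by a fractional power of $\eta$. Orientation bookkeeping at the end is what compresses the two a-priori-independent signs $\epsilon_s,\epsilon_u$ into the single bit $\sigma_i\in\{0,1\}$ appearing in the statement.
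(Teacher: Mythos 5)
Your core argument is essentially the paper's: compare the stable tangent direction at the shadowed point as seen in the two charts, use admissibility plus the calibration $\|C_\chi(x)^{-1}\|\,\eta^{\b/3}\le\e^{1/4}\eta^{\b/4}$ built into the definition of $Q_\e$ (this is exactly the paper's Claim 1), compare the two chart derivatives at the common point via the Lipschitz property of the atlas $\mathfs D$ (the paper does this through its Claim 2, showing $[(d\exp_{x}\circ\vartheta_{x})_{\un w_1}]^{-1}(d\exp_{y}\circ\vartheta_{y})_{\un w_2}$ is near $\id$; your variant, evaluating $\vartheta_z^{-1}\circ\exp_z^{-1}$ at the same point $q$ for $z=x,y$, is an equivalent use of the same atlas property), and finally exploit that $R_{y}^{-1}R_{x}\in SO(2)$. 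The exponent bookkeeping ($\b/4\to\b/5\to\sqrt{\e}$) matches the paper.

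There is, however, one genuinely false step: the claimed ``symmetric'' statement $R_{y}^{-1}R_{x}\un{e}_2=\epsilon_u\un{e}_2+O(\eta^{\b/4}+\zeta^{\b/4})$ obtained by running the argument with $\un{n}^u,\un{e}^u$. The matrix $R_x$ is defined using only the stable direction, and by (\ref{C_z}) the vector $C_{x}\un{e}_2$ points along $\vartheta_x^{-1}\un{e}^u(x)$, which makes angle $\a(x)$ — not necessarily near $\pi/2$ — with $R_x\un{e}_1$. So the unstable comparison only shows that $R_y^{-1}R_x$ carries the direction $(\cos\a(x),\sin\a(x))$ near $\pm(\cos\a(y),\sin\a(y))$, not that it nearly fixes $\un{e}_2$; to convert this into your asserted identity you would additionally need Proposition \ref{Prop_Alpha_Comparison}/Lemma \ref{Lemma_Strong_Alpha_Comp}, and even then it is not a verbatim rerun of the stable case. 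Fortunately this step is redundant: an element of $SO(2)$ is determined by its action on $\un{e}_1$, so once $R_y^{-1}R_x\un{e}_1=\pm\un{e}_1+O(\eta^{\b/4}+\zeta^{\b/4})$ is established (with the implicit lower bound $\|(d\exp_x^{-1})_q\un{n}^s\|\ge\tfrac12$ from the $2$--bi-Lipschitz property of $\exp_x$, needed to pass from the operator estimate to an angle estimate), it follows that $(-1)^{\s_i}R_y^{-1}R_x$ is a rotation by a small angle, which gives the entrywise bound directly — this is precisely how the paper concludes. Deleting the $u$-direction paragraph and adding the normalization remark leaves a correct proof along the paper's lines.
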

\begin{proof}
In order to keep the notation as light as possible, we only do  the case $i=0$, and write $\Psi_{x_0}^{p^u_0,p^s_0}=\Psi_{x}^{p^u,p^s}$ , $\Psi_{x_0}^{p^u_0,p^s_0}=\Psi_{y}^{q^u,q^s}$, $p:=p^u\wedge p^s$, and $q:=q^u\wedge q^s$. We also set as usual $v_i=\Psi_{x_i}^{p^u_i,p^s_i}$ and $u_i=\Psi_{y_i}^{q^u_i,q^s_i}$.

Let $z=\pi[\un{v}]=\pi[\un{u}]$. The manifold $V^{s}[(v_i)_{i\geq 0}]$ inherits an orientation from the chart $\Psi_{x}$.  Let $\un{e}^{s}_x(z)$ denote the positively oriented unit tangent vector to $V^{s}[(v_i)_{i\geq 0}]$ at $z$. The manifold $V^{s}[(u_i)_{i\geq 0}]$ inherits an orientation from the chart $\Psi_{y}$.  Let $\un{e}^{s}_y(z)$ denote the positively oriented unit tangent vector to $V^{s}[(u_i)_{i\geq 0}]$ at $z$.
Since  $T_z V^s[(v_i)_{i\in\Z}]=T_z V^s[(u_i)_{i\in\Z}]$ (see the proof of Proposition \ref{Prop_Alpha_Comparison}),
$
\un{e}^s_x(z)=\pm\un{e}^s_y(z).
$

We write $z$ and  $\un{e}^s_x(z),\un{e}^s_y(z)$ in coordinates in $\Psi_x$ and $\Psi_y$:
\begin{itemize}
\item $z=\Psi_x(\un{\zeta})$ and $\un{e}^s_x(z)=\frac{[(d\Psi_x)_{\un{\zeta}}]\un{a}}{\|[(d\Psi_x)_{\un{\zeta}}]\un{a}\|}$, where $\un{\zeta}\in R_{10^{-2}p}(\un{0})$, $\un{a}={1\choose a}$, and  $|a|\leq p^{\b/3}$ (see Proposition \ref{Prop_Intersection} and (\ref{deriv})).
\item $z=\Psi_y(\un{\eta})$ and $\un{e}^s_y(z)=\frac{[(d\Psi_y)_{\un{\eta}}]\un{b}}{\|[(d\Psi_y)_{\un{\eta}}]\un{b}\|}$, where $\un{\eta}\in R_{10^{-2}q}(\un{0})$, $\un{b}={1\choose b}$, and  $|b|\leq q^{\b/3}$ (see Proposition \ref{Prop_Intersection} and (\ref{deriv})).
\end{itemize}
Since $\un{e}^s_x(z)=\pm\un{e}^s_y(z)$, there is a non-zero (signed) scalar $\l$ such that
\begin{equation}\label{C_Compare_Eqn}
C_{x}\un{a}=\l [(d\exp_{x}\circ\vartheta_{x})_{C_x\un{\zeta}}]^{-1}[(d\exp_{y}\circ\vartheta_{y})_{C_y\un{\eta}}]C_{y}\un{b},
\end{equation}
where $C_{x}, C_{y}$ are given by (\ref{C_z}).

\medskip
\noindent
{\em Claim 1.\/} $C_x\un{a}\propto R_x{1\pm p^{\b/4}\choose 0\pm p^{\b/4}}$ and  $C_y\un{b}\propto R_y{1\pm q^{\b/4}\choose 0\pm q^{\b/4}}$. Here $\vec{a}\propto\vec{b}$ means that $\vec{a}=t\vec{b}$ for some $t\neq 0$, and $a\pm c$ means a quantity in $[a-c,a+c]$.

\medskip
\noindent
\begin{align*}
\textrm{\em Proof. } C_{x}\un{a}&=R_{x}{s_\chi(x)^{-1}+u_\chi(x)^{-1}\cos\a(x) a\choose u_\chi(x)^{-1}\sin\a(x) a}\\
&\propto R_x{1\pm \|C_\chi(x)^{-1}\|\cdot |a|\choose 0\pm\|C_\chi(x)^{-1}\|\cdot |a|},\textrm{ because $u_\chi>1$ and $s_\chi=\|C_\chi(x)^{-1}\un{e}^s(x)\|$}\\
&=R_x{1\pm p^{\b/4}\choose 0\pm p^{\b/4}}, \textrm{ because $|a|<p^{\b/3}\leq Q_\chi(x)^{\b/12}p^{\b/4}<\frac{p^{\b/4}}{\|C_\chi(x)^{-1}\|}$}.
\end{align*}
Similarly, $C_y\un{b}\propto R_y{1\pm q^{\b/4}\choose 0\pm q^{\b/4}}$.

\medskip
\noindent
{\em Claim 2.\/} There exists a constant $J>1$ (which only depends on $M$) s.t. for all $D\in\mathfs D$, $x,y\in D$, and $\|\un{w}_1\|,\|\un{w}_2\|<2$,
$$
\bigl\|[(d\exp_{x}\circ\vartheta_{x})_{\un{w}_1}]^{-1}[(d\exp_{y}\circ\vartheta_{y})_{\un{w}_2}]-\id\bigr\|<J(d(x,y)+\|\un{w}_1-\un{w}_2\|).
$$

\medskip
\noindent
{\em Proof.}
Let $J_1$ denote a common Lipschitz constant for the maps $$(w,\un{w})\mapsto (d\exp_w\circ\vartheta_w)_{\un{w}}$$  on $D\x B_2(\un{0})$ for all $D\in\mathfs D$. Let $J_2$ denote the maximum over $D\in\mathfs D$ of
$
\sup\{\|(d\exp_w\circ\vartheta_w)_{\un{w}}^{-1}\|:w\in D, \|\un{w}\|<2\}.
$
The claim holds with $J:=J_1 J_2+1$.

\medskip
\noindent
{\em Claim 3.\/} $R_x{1\choose 0}+\un{\e}_1\propto R_y{1\choose 0}+\un{\e}_2$ where  $\|\un{\e}_1\|$ and $\|\un{\e}_2\|$  are less than $3J(p^{\b/4}+q^{\b/4})$.

\medskip
\noindent
{\em Proof.\/}
  $C_\chi(\cdot)$ is a contraction, so
$
\|C_x\un{\zeta}-C_y\un{\eta}\|<\|\un{\zeta}\|+\|\un{\eta}\|<10^{-2}(p+q)
$.  Also, by Proposition \ref{Prop_x_Parameter}, $d(x,y)<25^{-1}(p+q)$.
Therefore, by Claim 2,
$$
[(d\exp_{x}\circ\vartheta_{x})_{C_{\e}(x)\un{\zeta}}]^{-1}[(d\exp_{y}\circ\vartheta_{y})_{C_{\e}(y)\un{\eta}}]=\id+E
$$
where $E$ is a matrix s.t. $\|E\|<J(p+q)$. The claim follows from (\ref{C_Compare_Eqn}) by direct calculation.

\medskip
We can now prove the proposition. $R_x$ and $R_y$ are rotation matrices, therefore $R_y^{-1}R_x$ is  a rotation matrix. The problem is to estimate the angle. Claim 3 allows us to write
\begin{equation}\label{RRminus1}
R_y^{-1} R_x{1\choose 0}=c\left[{1\choose 0}+R_y^{-1}\un{\e}_2-c^{-1}R_y^{-1}\un{\e}_1\right],
\end{equation}
where $c$ is a scalar s.t.
$
|c|=\frac{1\pm\|\un{\e}_1\|}{1\pm\|\un{\e}_2\|}
$. Since $\|\un{\e}_i\|<3J(p^{\b/4}+q^{\b/4})<6J\e^{3/4}$, $|c|\in [e^{-10 J\sqrt{\e}}, e^{10 J\sqrt{\e}}]$, at least provided $\e$ is small enough.

Since $R_x$ and $R_y$ are orthogonal matrices,
the vector on the right-hand side of (\ref{RRminus1}) is a unit vector. Put it in the form $(-1)^{\s_0}(\cos\theta,\sin\theta)$ where $\s_0\in\{0,1\}$ and $\theta\in(-\frac{\pi}{2},\frac{\pi}{2})$, then
\begin{align*}
|\theta|&\leq \tan^{-1}\left(\frac{\|\un{\e}_2\|+|c|^{-1}\cdot\|\un{\e}_1\|}{1-\|\un{\e}_2\|-|c|^{-1}\|\un{\e}_1\|}\right)
<\frac{\|\un{\e}_2\|+|c|^{-1}\cdot\|\un{\e}_1\|}{1-\|\un{\e}_2\|-|c|^{-1}\|\un{\e}_1\|}\\
&<\frac{3J(1+e^{10 J\sqrt{\e}})}{1-6J(1+e^{10J\sqrt{\e}})\e^{3/4}}(p^{\b/4}+q^{\b/4}).
\end{align*}
Since $p,q<\e^{3/\b}$, if $\e$ is small enough, then this is less than $p^{\b/5}+q^{\b/5}<2\e^{3/5}<\sqrt{\e}$. It follows that $(-1)^{\s_0}R_y^{-1} R_x$ is  a rotation by angle less than $p^{\b/5}+q^{\b/5}<\sqrt{\e}$.
\end{proof}

\section{Scaling parameters}\label{SectionCompS/U}
\subsection{The $s_\chi$ and $u_\chi$ parameters of admissible manifolds}
In \S\ref{Section_NUH} we defined  $s_\chi(\cdot)$ on $\NUH_\chi(f)$. We now extend this definition  to all points lying on  $s$--admissible manifolds $V^s$ which stay in windows.

 Suppose $y\in V^s$. If $y\in\NUH_\chi(f)$ define $\un{e}^s(y)$ as in \S\ref{Section_NUH}, and note that by proposition \ref{Prop_Staying_In_Windows}(2), $\un{e}^s(y)$ is tangent to $V^s$ at $y$. Motivated by this, we define $\un{e}^s(y)$ for $y\not\in\NUH_\chi(f)$ to be one of the two unit tangent vectors to $V^s$ at $y$ (it doesn't matter which), and then we let
 $$
 s_\chi(y):=\sqrt{2}\left(\sum_{k=0}^\infty e^{2k\chi}\|df^k_y\un{e}^s(y)\|^2_{f^k(y)}\right)^{\frac{1}{2}}\in (\sqrt{2},\infty].
 $$

Similarly, for any $u$--admissible manifold $V^u$ which stays in windows, and any $y\in V^u$ we define $\un{e}^u(y)$ as in \S\ref{Section_NUH} when $y\in\NUH_\chi(f)$, and we let $\un{e}^u(y)$  be one of the two unit tangent vectors to $V^u$ at $y$ when $y\not\in\NUH_\chi(f)$. Then we let
$$
 u_\chi(y):=\sqrt{2}\left(\sum_{k=0}^\infty e^{2k\chi}\|df^{-k}_y\un{e}^u(y)\|^2_{f^{-k}(y)}\right)^{\frac{1}{2}}\in(\sqrt{2},\infty].
$$

Although these numbers depend on $y$, they are not very sensitive to its value: by Proposition \ref{Prop_Staying_In_Windows} part 3, for any pair of points $y,z$ in the same
$s$--admissible manifold, if $s_\chi(y)$ is finite then $s_\chi(z)$ is finite, and
$$
e^{-\sqrt{\e}}< s_\chi(y)/s_\chi(z)< e^{\sqrt{\e}}.
$$
A similar statement holds for $u_\chi$--parameters on $u$--admissible manifolds.

\begin{defi}
Let $V^{s}$ be an $s$--admissible manifold in $\Psi_x^{p^u,p^s}$ with representing function $F^s$.
Let $V^{u}$ be a $u$--admissible manifold in $\Psi_x^{p^u,p^s}$ with representing function $F^u$.
If  $V^s$ and $V^u$ stay in windows, then
\begin{enumerate}
\item $s_\chi(V^s)$, the {\em $s_\chi$--parameter} of $V^s$, is $s_\chi(p)$ where $p:=\Psi_x(0,F^s(0))$,
\item $u_\chi(V^u)$, the {\em $u_\chi$--parameter} of $V^u$, is $u_\chi(q)$ where $q:=\Psi_x(F^u(0),0)$.
\end{enumerate}
\end{defi}

\begin{lem}\label{Main_Lemma_One}
The following holds for all $\e$ small enough. Suppose $\Psi_x^{p^u,p^s}\to\Psi_y^{q^u,q^s}$, and let  $V^s$ be an $s$--admissible manifold in $\Psi_y^{q^u,q^s}$ which stays in windows.
If $s_\chi(V^s)<\infty$ then $s_\chi(\mathcal F_s(V^s))<\infty$, and
 for every $\rho\geq \exp(\sqrt{\e})$,
\begin{equation}\label{miracle}
\frac{s_\chi(V^s)}{s_\chi(y)}\in[\rho^{-1},\rho]\Longrightarrow \frac{s_\chi(\mathcal F_s(V^s))}{s_\chi(x)}\in
\left[\rho^{-1} e^{Q_\e(x)^{\b/4}},\rho e^{-Q_\e(x)^{\b/4}}\right].
\end{equation}
A similar statement holds for $u$--admissible manifolds in $\Psi_x^{p^u,p^s}$ and $\mathcal F_u$.
\end{lem}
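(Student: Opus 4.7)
The plan is to exploit the recursion
\[
s_\chi(W^s)^2 = 2 + e^{2\chi}\|df_{c}\un{e}^s(c)\|^2\, s_\chi(f(c))^2,
\]
valid at the center $c$ of any $s$-admissible $W^s$ staying in windows (immediate from the series definition of $s_\chi$ along $W^s$, together with the fact that $df_c\un{e}^s(c)$ is tangent to the image of $W^s$ at $f(c)$ and hence parallel to $\un{e}^s(f(c))$). Let $p:=\Psi_y(0,F(0))$ and $p':=\Psi_x(0,\tilde F(0))$ be the centers of $V^s$ and $\mathcal F_s(V^s)$, where $F$ and $\tilde F$ are their representing functions. First I verify that $\mathcal F_s(V^s)$ stays in windows: prepend $\Psi_x^{p^u,p^s}$ to the positive chain of double charts witnessing that $V^s$ stays in windows, and use $f(\mathcal F_s(V^s))\subseteq V^s$; in particular $f(p')\in V^s$. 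Applying the recursion at $p'$ and invoking Proposition~\ref{Prop_Staying_In_Windows}(3) --- which gives $s_\chi(f(p'))\le e^{Q_\e(y)^{\b/4}}s_\chi(V^s)<\infty$ since both $f(p'),p\in V^s$ --- we immediately conclude $s_\chi(\mathcal F_s(V^s))=s_\chi(p')<\infty$.

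For the quantitative bound, apply the same recursion at $x\in\NUH_\chi(f)$. By the Pesin--Oseledets reduction (Theorem~\ref{TheoremOPR}), $df_x\un{e}^s(x)=\lambda_\chi(x)(s_\chi(x)/s_\chi(f(x)))\un{e}^s(f(x))$, which collapses the recursion to the closed form $s_\chi(x)^2-2=e^{2\chi}\lambda_\chi(x)^2 s_\chi(x)^2$. Setting $T:=\un{e}^s(p')$, it then suffices to prove
\[
\frac{\|df_{p'}T\|^2\, s_\chi(f(p'))^2}{\lambda_\chi(x)^2\, s_\chi(x)^2} = \left(\frac{s_\chi(V^s)}{s_\chi(y)}\right)^{\!2}\cdot e^{\pm 2\delta}
\]
for some error $\delta$ much smaller than $Q_\e(x)^{\b/4}$. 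Two of the three hidden ratios dispatch easily: $s_\chi(f(p'))/s_\chi(V^s)=e^{\pm Q_\e(y)^{\b/4}}$ by Proposition~\ref{Prop_Staying_In_Windows}(3), and $s_\chi(f(x))/s_\chi(y)=e^{\pm Q_\e(f(x))Q_\e(y)}$ by Lemma~\ref{Lemma_QQ} applied to the $\e$-overlap $\Psi_{f(x)}^{q^u\wedge q^s}\sim\Psi_y^{q^u\wedge q^s}$ that is built into the edge $\Psi_x^{p^u,p^s}\to\Psi_y^{q^u,q^s}$.

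The main work --- and the anticipated obstacle --- is the derivative comparison $\|df_{p'}T\|/\|df_x\un{e}^s(x)\|=1+O(\eta^{\b/3})$ with $\eta:=q^u\wedge q^s$. My plan is to express both derivatives in Pesin coordinates through $f_{xy}:=\Psi_y^{-1}\circ f\circ\Psi_x$ and apply Proposition~\ref{Prop_f_xy}. Admissibility of $\mathcal F_s(V^s)$ forces $|\tilde F'(0)|\le(p^u\wedge p^s)^{\b/3}$, so in the chart $\Psi_x$ the vector $T$ is close to the horizontal direction $\un{e}_1$; meanwhile Proposition~\ref{Prop_f_xy} yields $(df_{xy})_{q'}=\mathrm{diag}(\lambda_\chi(x),\mu_\chi(x))+E$ with $\|E\|\le C\e\eta^{\b/3}$, using both the $C^0$ bound on $\nabla h_i(\un 0)$ and its H\"older continuity across the displacement $\|q'\|=|\tilde F(0)|\le 10^{-3}(p^u\wedge p^s)$. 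Careful tracking of the normalization factor $\|(d\Psi_x)_{q'}\binom{1}{\tilde F'(0)}\|$ (which differs from $\|C_\chi(x)\un{e}_1\|=s_\chi(x)^{-1}$ only by relative size $O(\e)$, since $\tilde F'(0)\cdot s_\chi(x)/u_\chi(x)$ is controlled by the strong upper bound $Q_\e(x)\le\e^{3/\b}\|C_\chi(x)^{-1}\|^{-12/\b}$) then gives $\delta=O(\e)\ll Q_\e(x)^{\b/4}$.

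Putting everything together and writing $r:=s_\chi(V^s)/s_\chi(y)\in[\rho^{-1},\rho]$, the recursion yields
\[
\bigl(s_\chi(\mathcal F_s(V^s))/s_\chi(x)\bigr)^{\!2} = 1 + \bigl(1-2/s_\chi(x)^2\bigr)\bigl(r^2 e^{\pm 2\delta}-1\bigr),
\]
which is a convex combination of $1$ and $r^2 e^{\pm 2\delta}$. The contraction weight $1-2/s_\chi(x)^2$ is bounded below by $2/\|C_\chi(x)^{-1}\|^2$ (using $\|C_\chi(x)^{-1}\|\ge s_\chi(x)$), and combined with the estimate $Q_\e(x)^{\b/4}\le\e^{3/4}\|C_\chi(x)^{-1}\|^{-3}$ this suffices, by elementary algebra, to place the convex combination in $[\rho^{-1}e^{Q_\e(x)^{\b/4}},\rho e^{-Q_\e(x)^{\b/4}}]$ whenever $\rho\ge e^{\sqrt\e}$ and $\e$ is small enough. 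The analogous statement for a $u$-admissible manifold in $\Psi_x^{p^u,p^s}$ and $\mathcal F_u$ follows symmetrically, replacing $f$ by $f^{-1}$ and exchanging the roles of the two charts.
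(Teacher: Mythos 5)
Your route is essentially the paper's: you run the one--step recursion $s_\chi(\cdot)^2=2+e^{2\chi}\|df\,\un{e}^s\|^2 s_\chi(f(\cdot))^2$, let the ``$+2$'' contract the ratio toward $1$ via a convex--combination identity, and control the remaining factors by Proposition \ref{Prop_Staying_In_Windows}(3), Lemma \ref{Lemma_QQ}, and a derivative comparison (the paper does the same, only anchored at the source points $p,f^{-1}(q),f^{-1}(y)$ in the chart $\Psi_x$ rather than at the image points in $\Psi_y$); the finiteness argument coincides up to relabeling. However, there is a genuine quantitative gap in your error bookkeeping. You assert $\delta=O(\e)\ll Q_\e(x)^{\b/4}$, and this hierarchy is backwards: by (\ref{Qdef}), $Q_\e(x)^{\b/4}\leq \e^{3/4}\|C_\chi(x)^{-1}\|^{-3}$, so $Q_\e(x)^{\b/4}$ is in general far \emph{smaller} than $\e$ (it degenerates as $\|C_\chi(x)^{-1}\|\to\infty$ with $\e$ fixed). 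This matters because the only gain your convex combination provides is the complementary weight $2/s_\chi(x)^2\geq 2/\|C_\chi(x)^{-1}\|^2$ (incidentally, you state that the weight $1-2/s_\chi(x)^2$ is ``bounded below'' by $2/\|C_\chi(x)^{-1}\|^2$, which is the useless direction -- what is needed is the lower bound on $2/s_\chi(x)^2$). Writing out the required inequality $\frac{2+K r^2e^{2\delta}}{2+K}\leq \rho^2 e^{-2Q_\e(x)^{\b/4}}$ with $K=s_\chi(x)^2-2$ and $\rho\geq e^{\sqrt\e}$, what must hold is, up to absolute constants, $\sqrt{\e}\geq \|C_\chi(x)^{-1}\|^2\bigl(\delta+Q_\e(x)^{\b/4}\bigr)$. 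The term $\|C_\chi(x)^{-1}\|^2 Q_\e(x)^{\b/4}\leq \e^{3/4}$ is harmless, but an error $\delta$ of order $\e$ violates the inequality as soon as $\|C_\chi(x)^{-1}\|^2\geq \e^{-1/2}$, which cannot be excluded on $\NUH_\chi(f)$. So the concluding ``elementary algebra'' does not close with the bound you state.

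The fix is exactly what the paper's analysis of its factor $\mathrm{II}$ supplies: every error entering $\delta$ must be bounded by a constant times a positive power of $Q_\e$ (e.g. $Q_\e(x)^{\b/4}$), not by a flat $O(\e)$. Concretely, the derivative comparison $\|df_{p'}T\|$ versus $\|df_x\un{e}^s(x)\|$ should be done directly on $M$ using the H\"older continuity of $df$, the bounds $d(p',x)\lesssim Q_\e(x)$ and $|\tilde F'(0)|\lesssim Q_\e(x)^{\b/3}$, and then converted with the $\e$--independent comparability $\|C_\chi(y)^{-1}\|\asymp\|C_\chi(f(x))^{-1}\|\asymp\|C_\chi(x)^{-1}\|$ and $Q_\e(y)\asymp Q_\e(x)$ (the Claim in the appendix proof of Lemma \ref{Lemma_Q_tempered} together with the overlap built into the edge), so that the chart--$y$ errors $Q_\e(y)^{\b/4}$ and $Q_\e(f(x))Q_\e(y)$ are also absorbed. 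Your own intermediate observations already point in this direction -- for instance $|\tilde F'(0)|\,s_\chi(x)/u_\chi(x)\leq Q_\e(x)^{\b/3}\|C_\chi(x)^{-1}\|\leq \e^{1/4}Q_\e(x)^{\b/4}$ -- but as written you discard this chart--dependent smallness in favor of ``$O(\e)$'', and with that bound the final step fails precisely at the points where the Lyapunov charts are small.
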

\noindent
Note that the ratio bound in (\ref{miracle}) improves.
\begin{proof}
Suppose $V^s$ is represented by the function $G$, and $U^s:=\mathcal F_s[V^s]$ is represented by the function $F$. Let $p:=\Psi_x(0,F(0))$ and $q:=\Psi_y(0,G(0))$.

Suppose $s_\chi(V^s)<\infty$, then $s_\chi(q)<\infty$.
By Proposition \ref{Prop_Graph_Transform}(4) (in its version for $s$--manifolds), $f^{-1}(q)\in U^s$. Since $U^s$ is one-dimensional,  $df_{f^{-1}(q)}\un{e}^s(f^{-1}(q))=\pm\|df_{f^{-1}(q)}\un{e}^s(f^{-1}(q))\|_q\cdot \un{e}^s(q)$, and so
\begin{align*}
s_\chi(f^{-1}(q))^2&\equiv 2\left(1+\sum_{k=1}^\infty e^{2k\chi}\|df^{k-1}_q df_{f^{-1}(q)}\un{e}^s(f^{-1}(q))\|^2_{f^{k-1}(q)}
\right)\\
&=2+e^{2\chi}\|df_{f^{-1}(q)}\un{e}^s(f^{-1}(q))\|_q^2\cdot s_\chi(q)^2<\infty.
\end{align*}
Since $f^{-1}(q)\in U^s$, $s_\chi(U^s)\leq e^{\sqrt{\e}} s_\chi(f^{-1}(q))<\infty$.

\medskip
Next assume that $s_\chi(V^s)$ is finite, and
$$
\frac{s_\chi(V^s)}{s_\chi(y)}\in[\rho^{-1},\rho].
$$
where $\rho\geq\exp(\sqrt{\e})$.
Since $s_\chi(U^s)=s_\chi(p)$,
\begin{equation}\label{Decomposition}
\frac{s_\chi(U^s)}{s_\chi(x)}=\frac{s_\chi(p)}{s_\chi(f^{-1}(q))}\cdot \frac{s_\chi(f^{-1}(q))}{s_\chi(f^{-1}(y))}\cdot\frac{s_\chi(f^{-1}(y))}{s_\chi(x)}.
\end{equation}
The three terms are well--defined and finite, because (proceeding from right to left):
\begin{itemize}
\item $s_\chi(x), s_\chi(f^{-1}(y))$ are well--defined and finite, because $x,y\in\NUH_\chi(f)$;
\item $s_\chi(f^{-1}(q))$ is finite by the argument at the beginning of the proof;
\item $s_\chi(p)<\infty$, because $s_\chi(p)=S_\chi(U_s)<\infty$ (see above).
\end{itemize}

The first factor in (\ref{Decomposition}) belongs to $[e^{-Q_\e(x)^{\b/4}},e^{Q_\e(x)^{\b/4}}]$ by Proposition \ref{Prop_Staying_In_Windows}(3). The third factor in (\ref{Decomposition}) takes values in $[e^{-Q_\e(x)^{\b/4}},e^{Q_\e(x)^{\b/4}}]$ because $\Psi_x^{p^u,p^s}\to\Psi_y^{q^u,q^s}$, see Lemma \ref{Lemma_QQ}. To prove the proposition, it is enough to show that
\begin{equation}\label{WhatWeNeed}
\frac{1}{\rho}\exp[3Q_\e(x)^{\b/4}]<\frac{s_\chi(f^{-1}(q))}{s_\chi(f^{-1}(y))}<\rho\exp[-3Q_\e(x)^{\b/4}].
\end{equation}

We begin with some identities. We omit the tags of the Riemannian norm, to avoid heavy notation.
Since $df_{f^{-1}(y)}\un{e}^s(f^{-1}(y))=\pm\|df_{f^{-1}(y)}\un{e}^s(f^{-1}(y))\|\cdot \un{e}^s(y)$,
\begin{align}
s_\chi(f^{-1}(y))^2&=2\left(1+\sum_{k=1}^\infty e^{2k\chi}\|df^{k-1}_{y}df_{f^{-1}(y)}\un{e}^s(f^{-1}(y))\|^2\right)\notag\\
&=2+e^{2\chi}s_\chi(y)^2\|df_{f^{-1}(y)}\un{e}^s(f^{-1}(y))\|^2.\label{fior}
\end{align}
Similarly, $df_{f^{-1}(q)}\un{e}^s(f^{-1}(q))=\pm\|df_{f^{-1}(q)}\un{e}^s(f^{-1}(q))\|\cdot\un{e}^s(q)$, so
\begin{align*}
s_\chi(f^{-1}(q))^2&=2+e^{2\chi}s_\chi(q)^2\|df_{f^{-1}(q)}\un{e}^s(f^{-1}(q))\|^2\\
&\leq 2+\rho^2 e^{2\chi}s_\chi(y)^2\|df_{f^{-1}(q)}\un{e}^s(f^{-1}(q))\|^2\ (\because \frac{s_\chi(q)}{s_\chi(y)}=\frac{s_\chi(V^s)}{s_\chi(y)}\leq \rho)\\
&\leq\biggl(2+\rho^2 e^{2\chi}s_\chi(y)^2\|df_{f^{-1}(y)}\un{e}^s(f^{-1}(y))\|^2\biggr)\x\\
&\hspace{1cm} \x \exp\biggl(2\left|\log\|df_{f^{-1}(q)}\un{e}^s(f^{-1}(q))\|
-\log\|df_{f^{-1}(y)}\un{e}^s(f^{-1}(y))\|
\right|\biggr).
\end{align*}
We obtain the estimate
\begin{equation}\label{MiniDecomp}
\hspace{-0.1cm}
\begin{aligned}
\frac{s_\chi(f^{-1}(q))^2}{s_\chi(f^{-1}(y))^2}&\leq \left(\frac{2+\rho^2 e^{2\chi}s_\chi(y)^2\|df_{f^{-1}(y)}\un{e}^s(f^{-1}(y))\|^2}{2+e^{2\chi}s_\chi(y)^2\|df_{f^{-1}(y)}\un{e}^s(f^{-1}(y))\|^2}\right)\x\\
\x & \exp\biggl(2\left|\log\|df_{f^{-1}(q)}\un{e}^s(f^{-1}(q))\|
-\log\|df_{f^{-1}(y)}\un{e}^s(f^{-1}(y))\|
\right|\biggr).
\end{aligned}
\end{equation}
Call the first factor $\mathrm I$ and the second factor $\mathrm{II}$.

\medskip
\noindent
{\em Analysis of $\mathrm I$.\/}
\begin{align*}
\mathrm{I}&=\rho^2-\frac{2(\rho^2-1)}{2+e^{2\chi}s_\chi(y)^2
\|df_{f^{-1}(y)}\un{e}^s(f^{-1}(y))\|^2}\\
&=\rho^2-\frac{2(\rho^2-1)}{s_\chi(f^{-1}(y))^2},\textrm{ by (\ref{fior})}\\
&\leq \rho^2-\frac{e^{-2\e^{6/\b}}\cdot 2(\rho^2-1)}{s_\chi(x)^2}, \textrm{ because $\frac{s_\chi(f^{-1}(y))}{s_\chi(x)}=\exp[\pm\e^{6/\b}]$ by  Lemma \ref{Lemma_QQ}}\\
&\leq \rho^2\left(1-\frac{2e^{-2\e^{6/\b}}(1-\rho^{-2})}{\|C_\chi(x)^{-1}\|^2}\right),\textrm{ since $s_\chi(x)=\|C_\chi(x)^{-1}\un{e}^s(x)\|\leq \|C_\chi(x)^{-1}\|$}\\
&\leq \rho^2\left(1-\frac{\e^{1/2}}{\|C_\chi(x)^{-1}\|^2}\right)\textrm{ for all $\e$ small enough, because $\rho\geq e^{\sqrt{\e}}$.}
\end{align*}
By the definition of  $Q_\e(x)$,
$$
\frac{\e^{1/2}}{\|C_\chi(x)^{-1}\|^2}>Q_\e(x)^{\b/6}=Q_\e(x)^{-{\b}/{12}}Q_\e(x)^{\b/4}>\e^{-1/4}Q_\e(x)^{\b/4}.
$$
In particular, for all $\e$ small enough,
$
\frac{\e^{1/2}}{\|C_\chi(x)^{-1}\|^2}>7Q_\e(x)^{\b/4},
$
and by the inequality $1-x<e^{-x}$ for $0<x<1$,
$
\mathrm I\leq \rho^2\exp[-7Q_\e(x)^{\b/4}]$.

\medskip
\noindent
{\em Analysis of $\mathrm{II}$.\/} Since $f$ is a  $C^{1+\b}$--diffeomorphism and $\|\un{e}^s(\cdot)\|=1$,  there exists a constant $K_0$, which only depends on $f$, so that
$$
\mathrm{II}\leq \exp \bigg[K_0
d_M(f^{-1}(q),f^{-1}(y))^\b+ K_0 d_{TM}\big(\un{e}^s(f^{-1}(q)),\un{e}^s(f^{-1}(y))\big)\bigg],
$$
where $d_M$ and $d_{TM}$ are the Riemannian distance functions on $M$ and its tangent bundle. Since $f$ is a  $C^{1+\b}$ diffeomorphism and $\un{e}^s(\cdot)$ are unit vectors, there is another constant  $H_1$ (which only depends on $f$), such that
$$
\mathrm{II}\leq \exp \bigg[H_1
d_M(q,y)^\b+ H_1 d_{TM}\big(\un{e}^s(q),\un{e}^s(y)\big)^\b\bigg].
$$

We estimate $d(q,y)$. By definition $q=\Psi_y(0,G(0))$ and $y=\Psi_y(0,0)$. Since  Pesin charts have Lipschitz constant smaller than or equal to $2$,
$$
d(q,y)<2|G(0)|\leq 2\cdot 10^{-3}(q^u\wedge q^s)\leq 2\cdot 10^{-3}\cdot e^\e (p^u\wedge p^s)
$$ (see Lemma \ref{Lemma_Subordinated_Tempered}). In particular, $d(q,y)<Q_\e(x)$.

We estimate $d_{TM}(\un{e}^s(q),\un{e}^s(y))$. By the definition of $\Psi_y$,  $\un{e}^s(y)$ is the normalization of
$
(d\Psi_y)_{\un{0}}{1\choose 0}=(d\exp_y)_{\un{0}}\left[C_\chi(y){1\choose 0}\right]
$
, and $\un{e}^s(q)$ is the normalization of
$$
(d\Psi_y)_{(0,G(0))}{1\choose G'(0)}=(d\exp_y)_{C_\chi(y){0\choose G(0)}}\left[C_\chi(y){1\choose G'(0)}\right].
$$
It is not difficult to see using the admissibility of $V^s$ and Lemma \ref{Lemma_Subordinated_Tempered} that $|G(0)|<Q_\e(x)$ and $|G'(0)|<Q_\e(x)^{\b/3}$. Since $C_\chi(y)$ is a contraction, $p\mapsto \exp_p$ is smooth, and $d(q,y)<Q_\e(x)$, there exists a constant $G_0$ (which only depends on the smoothness of the exponential function) such that $d_{TM}(\un{e}^s(q),\un{e}^s(y))<G_0 Q_\e(x)^{\b/3}$.

We see that $\mathrm{II}\leq \exp[(H_1+H_1 G_0)Q_\e(x)^{\b/3}]$. It follows that for all $\e$ sufficiently small,  $\mathrm{II}\leq \exp [Q_\e(x)^{\b/4}]$.

\medskip
\noindent
{\em Summary.\/} Combining the estimates of $\mathrm{I}$ and $\mathrm{II}$, we find that
$$
\frac{s_\chi(f^{-1}(q))}{s_\chi(f^{-1}(y))}\leq \rho\exp[-3 Q_\e(x)^{\b/4}].
$$
The other half of (\ref{WhatWeNeed}) is proved in a similar way. First, one proves that
\begin{equation*}
\hspace{-0.1cm}
\begin{aligned}
\frac{s_\chi(f^{-1}(q))^2}{s_\chi(f^{-1}(y))^2}&\geq \left(\frac{2+\rho^{-2} e^{2\chi}s_\chi(y)^2\|df_{f^{-1}(y)}\un{e}^s(f^{-1}(y))\|^2}{2+e^{2\chi}s_\chi(y)^2\|df_{f^{-1}(y)}\un{e}^s(f^{-1}(y))\|^2}\right)\x\\
&\x \exp\biggl(-2\left|\log\|df_{f^{-1}(q)}\un{e}^s(f^{-1}(q))\|
-\log\|df_{f^{-1}(y)}\un{e}^s(f^{-1}(y))\|
\right|\biggr),
\end{aligned}
\end{equation*}
and then one analyzes the two terms as before.
\end{proof}

\subsection{Comparison of $s_\chi(x_i),u_\chi(x_i)$ to  $s_\chi(y_i),u_\chi(y_i)$.}

\begin{prop}\label{Prop_Comparison_U/S}
The following holds for all $\e$ small enough.
 For any two regular chains $(\Psi_{x_i}^{p^u_i,p^s_i})_{i\in\Z}$, $(\Psi_{y_i}^{q^u_i,q^s_i})_{i\in\Z}$, if $\pi[(\Psi_{x_i}^{p^u_i,p^s_i})_{i\in\Z}]=\pi[(\Psi_{y_i}^{q^u_i,q^s_i})_{i\in\Z}]$, then
$$
e^{-4\sqrt{\e}}\leq \frac{s_\chi(x_i)}{s_\chi(y_i)}\leq e^{4\sqrt{\e}}\textrm{ and }e^{-4\sqrt{\e}}\leq \frac{u_\chi(x_i)}{u_\chi(y_i)}\leq e^{4\sqrt{\e}}\textrm{ for all $i\in\Z$}.
$$
\end{prop}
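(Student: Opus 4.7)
\textbf{Proof plan for Proposition \ref{Prop_Comparison_U/S}.}

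I will treat $s_\chi$; the $u_\chi$ statement follows by the symmetric argument with unstable manifolds, $f^{-1}$, and the infinitely many backward-recurrence times $v_{-m_k}=u$ supplied by regularity. Write $v_i=\Psi_{x_i}^{p_i^u,p_i^s}$, $u_i=\Psi_{y_i}^{q_i^u,q_i^s}$, $x=\pi(\underline v)=\pi(\underline u)$, and set
\[
V_i^x:=V^s[(v_j)_{j\ge i}],\quad V_i^y:=V^s[(u_j)_{j\ge i}],\quad \tilde\rho_i:=\frac{s_\chi(V_i^x)}{s_\chi(x_i)},\quad \tilde\tau_i:=\frac{s_\chi(V_i^y)}{s_\chi(y_i)}.
\]
The common point $f^i(x)$ lies on both $V_i^x$ and $V_i^y$ with a common tangent direction (from the proof of Proposition \ref{Prop_Alpha_Comparison}), and $f^i(x)\in\NUH_\chi(f)$, so by Proposition \ref{Prop_Staying_In_Windows}(3) applied to each manifold,
\[
s_\chi(V_i^x)=e^{\pm Q_\e(x_i)^{\b/4}}s_\chi(f^i(x)),\qquad s_\chi(V_i^y)=e^{\pm Q_\e(y_i)^{\b/4}}s_\chi(f^i(x)),
\]
both finite. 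Using $Q_\e<\e^{3/\b}$ this gives $s_\chi(V_i^x)/s_\chi(V_i^y)\in[e^{-2\e^{3/4}},e^{2\e^{3/4}}]$.

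The heart of the proof is the bound
\begin{equation}\label{eq:plan_heart}
\tilde\rho_i,\ \tilde\tau_i\in[e^{-\sqrt\e},e^{\sqrt\e}]\qquad(i\in\Z),
\end{equation}
after which the identity $\frac{s_\chi(x_i)}{s_\chi(y_i)}=\frac{\tilde\tau_i}{\tilde\rho_i}\cdot\frac{s_\chi(V_i^x)}{s_\chi(V_i^y)}$ combined with the estimate above yields $s_\chi(x_i)/s_\chi(y_i)\in[e^{-(2\sqrt\e+2\e^{3/4})},e^{2\sqrt\e+2\e^{3/4}}]\subset[e^{-4\sqrt\e},e^{4\sqrt\e}]$ once $\e$ is small enough. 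To prove \eqref{eq:plan_heart} I set $\rho^*_j:=\max(|\log\tilde\rho_j|,\sqrt\e)\ge\sqrt\e$. Applying Lemma \ref{Main_Lemma_One} to the edge $v_j\to v_{j+1}$ with the choice $\rho=e^{\rho^*_{j+1}}$ (which is $\ge e^{\sqrt\e}$ as required) gives
\[
\rho^*_j\ \le\ \max\bigl(\rho^*_{j+1}-Q_\e(x_j)^{\b/4},\ \sqrt\e\bigr),
\]
so $j\mapsto\rho^*_j$ is non-decreasing, and whenever $\rho^*_{j+1}>\sqrt\e$ one has the strict increment $\rho^*_{j+1}-\rho^*_j\ge Q_\e(x_j)^{\b/4}$.

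Suppose, for contradiction, that $\rho^*_i>\sqrt\e$ for some $i$. By monotonicity $\rho^*_j>\sqrt\e$ for all $j\ge i$, so telescoping the increment bound yields
\[
\rho^*_N\ \ge\ \rho^*_i+\sum_{j=i}^{N-1}Q_\e(x_j)^{\b/4}\qquad(N>i).
\]
By regularity $v_{n_k}=v=\Psi_w^{p^u,p^s}$ for some $n_k\uparrow\infty$, so $Q_\e(x_{n_k})=Q_\e(w)>0$ for each $k$, and the sum on the right contains at least $k-k_0$ copies of $Q_\e(w)^{\b/4}$. Taking $N=n_k$ forces $\rho^*_{n_k}\to\infty$. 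On the other hand, at each recurrence time $n_k$ the chart is the fixed $\Psi_w^{p^u,p^s}$, hence $s_\chi(x_{n_k})=s_\chi(w)$ is a fixed positive constant, and the inclusion $f^{n_k}(x)\in\Psi_w[R_{Q_\e(w)}(\underline 0)]$ together with Proposition \ref{Prop_Staying_In_Windows}(2)--(3) and the general bound $s_\chi\le\|C_\chi^{-1}\|$ provides an a priori upper bound $\rho^*_{n_k}\le C(w)$ depending only on the vertex $v$. This contradicts $\rho^*_{n_k}\to\infty$, so $\rho^*_i=\sqrt\e$, i.e.\ \eqref{eq:plan_heart} holds for $\tilde\rho_i$; exactly the same argument (with $y$ in place of $x$, using that $\underline u$ is itself a regular chain) delivers \eqref{eq:plan_heart} for $\tilde\tau_i$.

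The routine steps are the monotonicity of $\rho^*_j$, the divergence of $\sum Q_\e(x_j)^{\b/4}$ along recurrence, and the final decomposition. \emph{The main obstacle is the uniform upper bound on $\rho^*_{n_k}$}: although all the manifolds $V_{n_k}^x$ live in one and the same chart $\Psi_w^{p^u,p^s}$, they need not coincide, so one must show that their $s_\chi$-parameters cannot drift to $+\infty$ with $k$. Once this (essentially Pesin-theoretic) uniform bound is in place, the argument above closes, and the $u_\chi$ half of the proposition is obtained by running the dual argument along the past of the chain, where the backward recurrence $v_{-m_k}=u$ plays the role of $v_{n_k}=v$.
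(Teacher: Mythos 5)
Your reduction to the ratios $s_\chi(V^s_i)/s_\chi(x_i)$, and the idea of forcing them into $[e^{-\sqrt{\e}},e^{\sqrt{\e}}]$ by iterating the improvement of Lemma \ref{Main_Lemma_One} at the infinitely many passages through the recurring vertex, is essentially the paper's strategy, and your recursion/telescoping bookkeeping for $\rho^*_j$ is a correct repackaging of the paper's ``Claim 2''. But the step you yourself flag as ``the main obstacle'' --- the uniform a priori bound $\rho^*_{n_k}\leq C(w)$ --- is a genuine gap, not a routine Pesin-theoretic estimate, and the tools you cite cannot produce it. An upper bound on $\rho^*_{n_k}$ requires an upper bound (in particular finiteness) of $s_\chi(V^s_{n_k})$, i.e.\ of $\sqrt{2}\bigl(\sum_{k\geq 0}e^{2k\chi}\|df^k_p\un{e}^s(p)\|^2\bigr)^{1/2}$ at the base point $p$ of that manifold. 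Proposition \ref{Prop_Staying_In_Windows}(2) only gives $\|df^k_p\un{e}^s(p)\|\leq 6\|C_\chi(w)^{-1}\|e^{-\frac{1}{2}k\chi}$, and $e^{2k\chi}e^{-k\chi}$ diverges, so parts (2)--(3) of that proposition cannot even show $s_\chi(V^s_{n_k})<\infty$, let alone bound it uniformly in $k$; this is precisely why the paper defines $s_\chi$ on admissible manifolds with values in $(\sqrt{2},\infty]$. The inequality $s_\chi\leq\|C_\chi^{-1}\|$ is also unavailable: it holds at points of $\NUH_\chi(f)$, where $s_\chi$ comes from the Oseledets splitting, and neither $f^{n_k}(x)$ nor the base point of $V^s_{n_k}$ need lie in $\NUH_\chi(f)$ --- regularity of a chain means every coordinate is \emph{relevant}, not that $\pi(\un{v})\in\NUH_\chi(f)$. (Your earlier assertion ``$f^i(x)\in\NUH_\chi(f)$'' is unjustified for the same reason; it is harmless where you use it, since Proposition \ref{Prop_Staying_In_Windows}(3) compares points on one manifold with values allowed in $(\sqrt{2},\infty]$, but it cannot be recycled to give finiteness.) Moreover, without finiteness somewhere along the chain you cannot even invoke Lemma \ref{Main_Lemma_One}, whose hypothesis is $s_\chi(V^s)<\infty$, so the recursion never gets started.

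The paper closes exactly this hole by using \emph{relevance} of the recurring double chart $v=\Psi_w^{p^u,p^s}$, not merely its recurrence: choose a chain $\un{w}$ with $w_0=v$ and $\pi(\un{w})\in\NUH_\chi(f)$; then $W^s:=V^s[(w_i)_{i\geq 0}]$ is an $s$--admissible manifold in $v$ with \emph{finite} $s_\chi$--parameter, and $\rho_0:=\max\{s_\chi(W^s)/s_\chi(w),\,s_\chi(w)/s_\chi(W^s),\,e^{\sqrt{\e}}\}$ depends only on $v$. Transporting this one manifold from time $n_{k+\ell}$ down to $n_k$ by graph transforms, Lemma \ref{Main_Lemma_One} shows the ratio bound $\rho_0$ never deteriorates; since $\mathcal F_s^{n_{k+\ell}-n_k}[W^s]\to V^s_{n_k}$ in $C^1$ (Proposition \ref{Prop_V}), a finite-sum approximation of $s_\chi$ together with continuity of $df$ yields $s_\chi(V^s_{n_k})\leq\rho_0\, s_\chi(w)$ for every $k$. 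That is the uniform bound (and the finiteness) your argument needs; with it inserted, your telescoping contradiction completes the proof, and without it the proof does not close.
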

\begin{proof}
Write $\un{v}:=(\Psi_{x_i}^{p^u_i,p^s_i})_{i\in\Z}$,  $\un{u}=(\Psi_{y_i}^{q^u_i,q^s_i})_{i\in\Z}$, and $p:=\pi(\un{v})=\pi(\un{u})$.

Let
$
V^s_k:=V^s[(v_i)_{i\geq k}]$, $V^u_k:=V^u[(v_i)_{i\leq k}]$,
$U^s_k:=V^s[(u_i)_{i\geq k}]$, $U^u_k:=V^u[(u_i)_{i\leq k}]$.
We claim that it is enough to prove that
\begin{equation}\label{seven.five}
\frac{s_\chi(V^s_k)}{s_\chi(x_k)},\frac{u_\chi(V^s_k)}{u_\chi(x_k)},
\frac{s_\chi(U^s_k)}{s_\chi(y_k)},\frac{u_\chi(U^s_k)}{u_\chi(y_k)}\in[e^{-\sqrt{\e}},e^{\sqrt{\e}}].
\end{equation}
Here is the reason.
The manifolds $V^s_k$ stay in windows and contain $f^k(p)$, therefore by Proposition \ref{Prop_Staying_In_Windows}(3) $s_\chi(V^s_k)/s_\chi(f^k(p))\in [e^{-\sqrt{\e}},e^{\sqrt{\e}}]$. The same argument applies to $U^s_k, V^u_k, U^u_k$, so
$
\frac{s_\chi(V^s_k)}{s_\chi(f^k(p))},\frac{u_\chi(V^u_k)}{u_\chi(f^k(p))}, \frac{s_\chi(U^s_k)}{s_\chi(f^k(p))},\frac{u_\chi(U^u_k)}{u_\chi(f^k(p))}\in [e^{-\sqrt{\e}},e^{\sqrt{\e}}].
$
Decomposing
$
\frac{s_\chi(x_k)}{s_\chi(y_k)}=\frac{s_\chi(x_k)}{s_\chi(V^s_k)}\cdot\frac{s_\chi(V^s_k)}{s_\chi(f^k(p))}\cdot
\frac{s_\chi(f^k(p))}{s_\chi(U^s_k)}\cdot\frac{s_\chi(U^s_k)}{u_\chi(y_k)}$, we see that (\ref{seven.five}) implies that  $s_\chi(x_k)/s_\chi(y_k)\in [e^{-4\sqrt{\e}},e^{4\sqrt{\e}}].
$
Similarly,  ${u_\chi(x_k)}/{u_\chi(y_k)}\in [e^{-4\sqrt{\e}},e^{4\sqrt{\e}}]$.

\medskip
We  show  that $s_\chi(V^s_0)/s_\chi(x_0)\in [e^{-\sqrt{\e}},e^{\sqrt{\e}}]$. The other parts of (\ref{seven.five}) are proved in the same way, and are left to the reader.

\medskip
We are assuming that  $\un{v}$ is regular, therefore  there exists a relevant double chart $v$ and a sequence
 $n_k\uparrow\infty$ s.t. $v_{n_k}=v$ for all $k$. Write $v=\Psi_x^{p^u,p^s}$.

\medskip
\noindent
{\em Claim 1.\/} There exists some $\rho\geq \exp(\sqrt{\e})$  which only depends on $v$ such that  ${s_\chi(V^s_{n_k})}/{s_\chi(x_{n_k})}\in [\rho^{-1},\rho]$ for all $k$.

\medskip
\noindent
{\em Proof.\/}
By convention $v$ is relevant (see \S\ref{SectionRelevant}). Choose a chain $\un{w}$ s.t. $w_0=v$ and $w:=\pi(\un{w})\in\NUH_\chi(f)$. Let $W^s:=V^s[(w_i)_{i\geq 0}]$. This manifold has a finite $s_\chi$--parameter, because $s_\chi(W^s)\leq e^{\sqrt{\e}}s_\chi(w)$ and   $w\in\NUH_\chi(f)$ so $s_\chi(w)<\infty$. Let
$$
\rho_0:=\max\left\{\frac{s_\chi(W^s)}{s_\chi(x)}, \frac{s_\chi(x)}{s_\chi(W^s)},\exp(\sqrt{\e})
\right\}.
$$

$W^s$ is an admissible manifold in $v_{n_k}=v$.
By Proposition \ref{Prop_V}, if we take $W^s$ at $v_{n_{k+\ell}}$ and apply to it the graph transform $\mathcal F_s$ $n_{k+\ell}-n_k$ times using the path $(v_{n_k},\ldots,v_{n_{k+\ell}})$, then the resulting manifold
$$
W^s_\ell:=\mathcal F_s^{n_{k+l}-n_k}[W^s]
$$
is an $s$--admissible manifold in $v_{n_k}$, which  converges to $V^s_{n_k}$. By Lemma
\ref{Main_Lemma_One},
\begin{equation}\label{Yofi!}
\frac{s_\chi(W^s_\ell)}{s_\chi(x)}\in [\rho^{-1}_0,\rho_0].
\end{equation}

The convergence of $W^s_\ell$ to $V^s_{n_k}$ means that if $W^s_\ell$ is represented in $v_{n_k}=\Psi_x^{p^u,p^s}$ by the function $F_\ell$, and $V^s_{n_k}$ is represented in $\Psi_x^{p^u,p^s}$  by $F$, then
$
\|F_{\ell}-F\|_\infty\xrightarrow[\ell\to\infty]{}0.
$
In fact, since $\sup\|F_\ell'\|_{\b/3}<\infty$,  we have the stronger statement that
$$
\|F_\ell-F\|_\infty+\|F_\ell'-F'\|_\infty\xrightarrow[\ell\to\infty]{}0,
$$
see part 2 of the proof of Proposition \ref{Prop_V}.
Therefore, if $\xi:=\Psi_x(0,F(0))$ and $\xi_\ell=\Psi_x(0,F_\ell(0))$, then
$
\xi_\ell\xrightarrow[\ell\to\infty]{}\xi\textrm{ and }\un{e}^s(\xi_\ell)\xrightarrow[\ell\to\infty]{}\un{e}^s(\xi).
$

Fix some $N$ large and $\d>0$ small. Since  $df$ is continuous, there exists $\ell$ so large that
\begin{align*}
\sqrt{2}\left(\sum_{j=0}^N e^{2j\chi}\|df^j_{\xi}\un{e}^s(f^j(\xi))\|^2_{f^j(\xi)}\right)^{\frac{1}{2}}&\leq e^\d\cdot \sqrt{2}\left(\sum_{j=0}^N e^{2j\chi}\|df^j_{\xi_\ell}\un{e}^s(f^j(\xi_\ell))\|^2_{f^j(\xi_\ell)}\right)^{\frac{1}{2}}.
\end{align*}
The expression on the right is smaller than $e^\d s_\chi(W^s_\ell)$, and therefore by (\ref{Yofi!}), smaller than $e^\d\rho_0 s_\chi(x)$. Since this is true for all $N$ and $\d$,
$
s_\chi(V^s_{n_k})\leq \rho_0\cdot s_\chi(x)$.

Recalling that $x_{n_k}=x$ and that $s_\chi(V^s_{n_k})\geq \sqrt{2}$, we see that $s_\chi(V^s_{n_k})/s_\chi(x_{n_k})\in [\sqrt{2}/s_\chi(x),\rho_0]$. The claim follows with $\rho=\rho_0\cdot s_\chi(x)$.

\medskip
\noindent
{\em Claim 2.\/} $s_\chi(V^s_0)/s_\chi(x_0)\in [\exp(-\sqrt{\e}),\exp(\sqrt{\e})]$.

\medskip
\noindent
{\em Proof.\/} Fix $k$ large.
By claim 1,
$$
\frac{s_\chi(V^s_{n_k})}{s_\chi(x_{n_k})}\in [\rho^{-1},\rho].
$$
By Proposition \ref{Prop_V} (3), $\mathcal F_s(V^s_{n_k})=V^s_{n_k-1}$, and by  Lemma \ref{Main_Lemma_One}, the bounds for $
\frac{s_\chi(V^s_{n_k})}{s_\chi(x_{n_k})}$ improve. We ignore these improvements and write
$
\frac{s_\chi(V^s_{n_k-1})}{s_\chi(x_{n_k-1})}\in [\rho^{-1},\rho].
$
Another application of $\mathcal F_s$ gives $
\frac{s_\chi(V^s_{n_k-2})}{s_\chi(x_{n_k-2})}\in [\rho^{-1},\rho]
$.
Continuing this way, we eventually reach  the index $n_{k-1}+1$ and the bound
$$
\frac{s_\chi(V^s_{n_{k-1}+1})}{s_\chi(x_{n_{k-1}+1})}\in [\rho^{-1},\rho]
$$

Since $x_{n_k}=x$, the next application of $\mathcal F_s$ improves the ratio bound by at least $\exp[Q_\e(x)^{\b/4}]$:
$$
\frac{s_\chi(V^s_{n_{k-1}})}{s_\chi(x_{n_{k-1}})}\in [\rho^{-1}e^{Q_\e(x)^{\b/4}},\rho e^{-Q_\e(x)^{\b/4}}].
$$

We repeat the procedure by applying $\mathcal F_s$  $n_{k-1}-n_{k-2}+1$ times, whilst ignoring the potential improvements of the error bounds, and then applying $\mathcal F_s$ once more and arriving at
$$
\frac{s_\chi(V^s_{n_{k-2}})}{s_\chi(x_{n_{k-2}})}\in [\rho^{-1}e^{2Q_\e(x)^{\b/4}},\rho e^{-2Q_\e(x)^{\b/4}}].
$$

We are free to choose $k$ as large as we want. If we make it so large that  $\exp[k Q_\e(x)^{\b/4}]>\rho\exp(-\sqrt{\e})$ , then eventually we will reach a time  $n_{k_0}$ when the ratio bound is smaller than or equal to $\exp(\sqrt{\e})$:
$$
\frac{s_\chi(V^s_{n_{k_0}})}{s_\chi(x_{n_{k_0}})}\in [\exp(-\sqrt{\e}),\exp(\sqrt{\e})].
$$

This is the threshold the applicability of Lemma \ref{Main_Lemma_One}. Henceforth we cannot claim that the ratio bound improves. On the other hand it is guaranteed  that the ratio bound does not deteriorate. Therefore,  after additional  $n_{k_0}$ iterations, we obtain $
\frac{s_\chi(V^s_{0})}{s_\chi(x_{0})}\in [\exp(-\sqrt{\e}),\exp(\sqrt{\e})]$ as desired.
\end{proof}

\section{Window parameters}\label{SectionCompP/Q}
\subsection{$\e$--maximality}
Let $\un{v}=(\Psi_{x_i}^{p^u_i,p^s_i})_{i\in\Z}, \un{u}=(\Psi_{y_i}^{q^u_i,q^s_i})_{i\in\Z}$ be two regular chains such that $\pi[\un{v}]=\pi[\un{u}]$. We compare $p^u_i$ to $q^u_i$, and $p^s_i$ to $q^s_i$.
The idea is to use regularity to see that  the $q$--parameters of  $V^u[(v_i)_{i\leq 0}]$ and $V^s[(v_i)_{i\geq 0}]$ are ``almost maximal"  in a certain sense that we  describe below.

%

But first, some notation and terminology: (a) a positive or negative chain is called {\em regular}, if it can be completed to a regular chain (equiv. every coordinate is relevant, and some double chart appears infinitely many times); (b) if $v$ is a double chart, then  $p^{u}(v)$ and $p^s(v)$ means the $p^u$ and $p^s$ in $v=\Psi_x^{p^u,p^s}$.

\begin{defi}
A negative chain $(v_i)_{i\leq 0}$ is called {\em $\e$--maximal} if it is regular, and
$$
p^u(v_0)\geq e^{-\sqrt[3]{\e}}p^u(u_0)
$$
for every regular  chain $(u_i)_{i\in\Z}$ for which there is a positive regular chain $(v_i)_{i\geq 0}$ s.t.  $\pi[(v_i)_{i\in\Z}]=\pi[(u_i)_{i\in\Z}]$.
\end{defi}
\begin{defi}
A positive chain $(v_i)_{i\geq 0}$  is called {\em $\e$--maximal} if it is regular, and
$$
p^s(v_0)\geq e^{-\sqrt[3]{\e}}p^s(u_0)
$$
for every regular chain $(u_i)_{i\in \Z}$ for which there is a negative regular chain $(v_i)_{i\leq 0}$ s.t. $\pi[(v_i)_{i\in\Z}]=\pi[(u_i)_{i\in\Z}]$.
\end{defi}

\begin{prop}\label{Prop_Maximality}
The following holds for all $\e$ small enough: for every regular chain $(v_i)_{i\in\Z}$, $(v_i)_{i\leq 0}$ and $(v_i)_{i\geq 0}$ are $\e$--maximal.
\end{prop}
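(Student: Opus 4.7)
The plan is to deduce $\e$-maximality from the closeness of the caps $Q_\e(x_i),Q_\e(y_i)$ between regular chains with coinciding projection, combined with the structure of $\e$-subordination and Lemma \ref{Lemma_Subordinated_Sharp}. Let $\un{v}=(v_i)_{i\in\Z}$, $v_i=\Psi_{x_i}^{p^u_i,p^s_i}$, denote the given regular chain. I will focus on $(v_i)_{i\leq 0}$; the case $(v_i)_{i\geq 0}$ is symmetric, with $p^s$ replacing $p^u$ and the dual recurrence $p^s_{k-1}=\min\{e^\e p^s_k,Q_\e(x_{k-1})\}$. Fix an arbitrary regular $\un{u}=(\Psi_{y_i}^{q^u_i,q^s_i})_{i\in\Z}$ and a regular positive extension $(v'_i)_{i\geq 0}$ of $(v_i)_{i\leq 0}$ so that $\un{v}':=(v_i)_{i\leq 0}\cup(v'_i)_{i\geq 0}$ is a regular chain with $\pi[\un{v}']=\pi[\un{u}]$. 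The goal is $p^u_0\geq e^{-\sqrt[3]{\e}}\,q^u_0$.

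The first step is to compare $Q_\e(x_i)$ and $Q_\e(y_i)$. Applying Propositions \ref{Prop_Alpha_Comparison} and \ref{Prop_Comparison_U/S} to the regular pair $(\un{v}',\un{u})$ gives $\sin\a(x_i)/\sin\a(y_i)$, $s_\chi(x_i)/s_\chi(y_i)$ and $u_\chi(x_i)/u_\chi(y_i)$ in $[e^{-C\sqrt{\e}},e^{C\sqrt{\e}}]$ for every $i\in\Z$. Lemma \ref{Lemma_C_norm} transports this into a bound on $\|C_\chi(x_i)^{-1}\|_{Fr}/\|C_\chi(y_i)^{-1}\|_{Fr}$; raising to the power $-12/\b$ in (\ref{Qdef}) and rounding to $I_\e$ then produces
\[
e^{-\sqrt[3]{\e}/2}\;\leq\;\frac{Q_\e(x_i)}{Q_\e(y_i)}\;\leq\;e^{\sqrt[3]{\e}/2}\qquad(i\in\Z),
\]
for all $\e$ small enough depending on $\b$, since $\sqrt{\e}=o(\sqrt[3]{\e})$ as $\e\to 0$.

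The second step uses regularity to pin down $p^u_0$ exactly. Since $v_k$ revisits a single vertex for $k\to-\infty$, $\limsup_{k\to-\infty}(p^u_k\wedge p^s_k)>0$, so Lemma \ref{Lemma_Subordinated_Sharp} supplies infinitely many $k\leq 0$ with $p^u_k=Q_\e(x_k)$. Set $j_0:=\min\{j\geq 0:p^u_{-j}=Q_\e(x_{-j})\}$. For $1\leq k\leq j_0$, minimality forces $p^u_{-k+1}<Q_\e(x_{-k+1})$, so the recurrence $p^u_{-k+1}=\min\{e^\e p^u_{-k},Q_\e(x_{-k+1})\}$ collapses to $p^u_{-k+1}=e^\e p^u_{-k}$, and iteration yields $p^u_0=e^{j_0\e}Q_\e(x_{-j_0})$. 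The competitor $\un{u}$, however, needs only satisfy the generic cap coming from $\e$-subordination: iterating $q^u_{-k+1}\leq e^\e q^u_{-k}$ with $q^u_{-j_0}\leq Q_\e(y_{-j_0})$ gives $q^u_0\leq e^{j_0\e}Q_\e(y_{-j_0})$. Combining this with Step 1 at $i=-j_0$,
\[
q^u_0\;\leq\;e^{j_0\e}Q_\e(y_{-j_0})\;\leq\;e^{j_0\e}e^{\sqrt[3]{\e}/2}Q_\e(x_{-j_0})\;=\;e^{\sqrt[3]{\e}/2}\,p^u_0\;\leq\;e^{\sqrt[3]{\e}}\,p^u_0,
\]
which is the claimed bound.

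The only real obstacle is Step 1: one must carefully track the accumulation of $\sqrt{\e}$-factors coming from $s_\chi$, $u_\chi$, $\sin\a$, the large exponent $12/\b$ in (\ref{Qdef}), and the rounding to $I_\e$, and check that the resulting error remains dominated by $\sqrt[3]{\e}$ for $\e$ small enough (depending on $\b$ and the universal constants in Propositions \ref{Prop_Alpha_Comparison} and \ref{Prop_Comparison_U/S}). Once this is in hand, the rest of the argument is a clean interplay between $\e$-subordination and Lemma \ref{Lemma_Subordinated_Sharp}, the crucial asymmetry being that the regular $\un{v}$ attains its cap exactly at the index $-j_0$ while $\un{u}$ need only obey the generic cap.
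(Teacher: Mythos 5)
Your proof is correct and follows essentially the same route as the paper: the $Q_\e$--comparison for regular chains with the same $\pi$--image (the paper's Step 1, via Propositions \ref{Prop_Alpha_Comparison}, \ref{Prop_Comparison_U/S} and Lemma \ref{Lemma_C_norm}), Lemma \ref{Lemma_Subordinated_Sharp} plus regularity to locate an index where $p^u$ attains the cap, and the subordination recurrences to carry the estimate to index $0$. The only difference is organizational: where the paper proves maximality at cap times and propagates it edge by edge (its Steps 2--3, invoking the $Q_\e$--comparison at each step), you take the most recent cap time $-j_0$ and use the exact identity $p^u_0=e^{j_0\e}Q_\e(x_{-j_0})$ against the generic bound $q^u_0\leq e^{j_0\e}Q_\e(y_{-j_0})$, so the comparison is needed only at $-j_0$.
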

\begin{proof}
The proof is made of several steps.

\medskip
\noindent
{\em Step 1.\/} The following holds for all $\e$ small enough: Let $\un{u}$ and $\un{v}$  be two regular chains s.t. $\pi[\un{u}]=\pi[\un{v}]$. If $u_0=\Psi_x^{p^u,p^s}$ and $v_0=\Psi_y^{q^u,q^s}$, then $Q_\e(x)/Q_\e(y)\in [e^{-\sqrt[3]{\e}},e^{\sqrt[3]{\e}}]$.

\medskip
\noindent
{\em Proof.\/} Propositions \ref{Prop_Alpha_Comparison} and \ref{Prop_Comparison_U/S} say that
$
\frac{\sin\a(x)}{\sin\a(y)}\in [e^{-\sqrt{\e}},e^{\sqrt{\e}}]$, $\frac{s_\chi(x)}{s_\chi(y)}\in [e^{-4\sqrt{\e}},e^{4\sqrt{\e}}]$, and $\frac{u_\chi(x)}{u_\chi(y)}\in [e^{-4\sqrt{\e}},e^{4\sqrt{\e}}]$.
 By Lemma \ref{Lemma_C_norm}
$
\frac{\|C_\chi(x)^{-1}\|_{Fr}}{\|C_\chi(y)^{-1}\|_{Fr}}\in \bigl[\exp(-5\sqrt{\e}), \exp(5\sqrt{\e})\bigr],
$
whence  $Q_\e(x)/Q_\e(y)\in \bigl[
\exp(-\tfrac{60}{\b}\sqrt{\e}-\tfrac{1}{3}\e),\exp(\tfrac{60}{\b}\sqrt{\e}+\tfrac{1}{3}\e)
\bigr]$. If $\e$ is small enough,  then $Q_\e(x)/Q_\e(y)\in [\exp(-\sqrt[3]{\e}),\exp(\sqrt[3]{\e})]$.

\medskip
\noindent
{\em Step 2.\/}
The following holds for all $\e$ small enough: Every regular negative chain $(v_i)_{i\leq 0}$  s.t. $v_0=\Psi_x^{p^u,p^s}$ where $p^u=Q_\e(x)$ is $\e$--maximal, and every regular positive chain
$(v_i)_{i\geq 0}$  s.t. $v_0=\Psi_x^{p^u,p^s}$ where $p^s=Q_\e(x)$ is $\e$--maximal.

\medskip
\noindent
{\em Proof.\/}
Suppose $(v_i)_{i\leq 0}$ is regular, and $v_0=\Psi_x^{p^u,p^s}$ where $p^u=Q_\e(x)$. We show that $(v_i)_{i\leq 0}$ is $\e$--maximal.

Suppose $(v_i)_{i\in\Z}$ is a regular extension of $(v_i)_{i\leq 0}$ and let $(u_i)_{i\in\Z}$ be some regular chain s.t.   $\pi[(u_i)_{i\in\Z}]=\pi[(v_i)_{i\in\Z}]$. Write  $u_0=\Psi_y^{q^u,q^s}$. We have to show that  $p^u\geq e^{-\sqrt[3]{\e}}q^u$.
Indeed, by step 1, $p^u=Q_\e(x)\geq e^{-\sqrt[3]{\e}}Q_\e(y)\geq e^{-\sqrt[3]{\e}}q^u$.

The proof of the second half of step 2 is similar, and we therefore omit it.

\medskip
\noindent
{\em Step 3.\/} Let  $(v_i)_{i\leq 0}$ be a regular negative chain and suppose $v_0\to v_1$. If $(v_i)_{i\leq 0}$ is $\e$--maximal, then $(v_i)_{i\leq 1}$ is $\e$--maximal. Let $(v_i)_{i\geq 0}$ be a regular positive chain, and suppose $v_{-1}\to v_0$. If $(v_i)_{i\geq 0}$ is $\e$--maximal, then $(v_i)_{i\geq {-1}}$ is $\e$--maximal.

\medskip
\noindent
{\em Proof.\/}
 Let $(v_i)_{i\leq 0}$ be an $\e$--maximal regular positive chain, and suppose $v_0\to v_1$. We prove that $(v_i)_{i\leq 1}$ is $\e$--maximal.

Suppose $(u_i)_{i\in\Z}$, $(v_i)_{i\geq 1}$ are regular and  there is an extension of $(v_i)_{i\geq 1}$ to a regular chain $(v_i)_{i\in\Z}$ s.t.  $\pi[(v_{i+1})_{i\in\Z}]=\pi[(u_{i+1})_{i\in\Z}]$. We write $v_i=\Psi_{x_i}^{p^u_i,p^s_i}$ , $u_i=\Psi_{y_i}^{q^u_i,q^s_i}$, and  show that $p^u_1\geq e^{-\sqrt[3]{\e}}q^u_1$.

Since $\pi[(v_{i+1})_{i\in\Z}]=\pi[(u_{i+1})_{i\in\Z}]$ and $\pi\circ \s=f\circ\pi$,
$\pi[(v_{i})_{i\in\Z}]=\pi[(u_{i})_{i\in\Z}]$. Therefore, since   $(v_i)_{i\leq 0}$ is $\e$--maximal,
$
p^u_0\geq e^{-\sqrt[3]{\e}}q^u_0.
$
Also, by step 1, $Q_\e(x_1)\geq e^{-\sqrt[3]{\e}}Q_\e(y_1)$.
It follows that
\begin{align*}
p^u_1&=\min\{e^\e p^u_0,Q_\e(x_1)\} \ \ \ (\because v_0\to v_1)\\
&\geq \min\{e^{\e}\cdot e^{-\sqrt[3]{\e}}q_0^u, e^{-\sqrt[3]{\e}}Q_\e(y_1)\}\\
&=e^{-\sqrt[3]{\e}}\min\{e^{\e}q_0^u, Q_\e(y_1)\}=e^{-\sqrt[3]{\e}}q^u_1\ \ \ (\because u_0\to u_1).
\end{align*}
This proves the part of step 3  dealing with negative chains. The case of positive chains is similar, and we leave it  to the reader.

\medskip
\noindent
{\em Step 4.\/} Proof of the proposition.

\medskip
\noindent
Suppose $(v_i)_{i\in\Z}$ is a regular chain, and write $v_i=\Psi_{x_i}^{p^u_i,p^s_i}$. Since $(v_i)_{i\in\Z}$ is a chain, $\{(p^u_i,p^s_i)\}_{i\in\Z}$ is $\e$--subordinated to $\{Q_\e(x_i)\}_{i\in\Z}$.  Since $(v_i)_{i\in\Z}$ is regular, $\limsup\limits_{i\to\pm\infty} (p^u_i\wedge p^s_i)>0$, therefore by Lemma \ref{Lemma_Subordinated_Sharp}, $p^u_n=Q_\e(x_n)$ for some $n<0$ and $p^s_\ell=Q_\e(x_\ell)$ for some $\ell>0$.

By step 2, $(v_i)_{i\leq n}$ is an $\e$--maximal negative chain, and $(v_i)_{i\geq \ell}$ is an $\e$--maximal positive chain.

By step 3, $(v_i)_{i\leq 0}$ is an $\e$--maximal negative chain, and $(v_i)_{i\geq 0}$ is an $\e$--maximal positive chain.
\end{proof}

\subsection{Comparison of $p^{u/s}_i$ to $q^{u/s}_i$} We can now easily compare the window parameters of all regular chains with the same $\pi$ image.
\begin{prop}\label{Prop_Comparison_P}
 Let $(\Psi_{x_i}^{p^u_i,p^s_i})_{i\in\Z}$ and $(\Psi_{y_i}^{q^u_i,q^s_i})_{i\in\Z}$ be two regular chains such that  $\pi[(\Psi_{x_i}^{p^u_i,p^s_i})_{i\in\Z}]=\pi[(\Psi_{y_i}^{q^u_i,q^s_i})_{i\in\Z}]$, then
$
{p^u_i}/{q^u_i}, {p^s_i}/{q^s_i}\in [\exp(-\sqrt[3]{\e}),\exp(\sqrt[3]{\e})]$   for all $i\in\Z$.
\end{prop}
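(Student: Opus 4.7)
The proposition is essentially a direct corollary of Proposition \ref{Prop_Maximality}, which is the key technical tool. My plan is to apply $\e$-maximality symmetrically to both chains, and then translate via the shift to obtain the estimate at every index $i$.

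First, I would observe that regularity and the relation $\pi\circ\s=f\circ\pi$ are preserved under the shift: for every $i\in\Z$ the shifted sequences $\s^i\un{v}=(\Psi_{x_{i+j}}^{p^u_{i+j},p^s_{i+j}})_{j\in\Z}$ and $\s^i\un{u}=(\Psi_{y_{i+j}}^{q^u_{i+j},q^s_{i+j}})_{j\in\Z}$ are again regular chains, and $\pi[\s^i\un{v}]=f^i\pi[\un{v}]=f^i\pi[\un{u}]=\pi[\s^i\un{u}]$. Thus it suffices to prove the four inequalities at index $i=0$.

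Next, I would apply Proposition \ref{Prop_Maximality} to the regular chain $\un{v}$. It gives that the negative chain $(v_j)_{j\leq 0}$ is $\e$-maximal, and the positive chain $(v_j)_{j\geq 0}$ is $\e$-maximal; the same holds for $\un{u}$. Now we use these four maximality statements pairwise. Since $(v_j)_{j\leq 0}$ is $\e$-maximal, it admits the regular extension $(v_j)_{j\geq 0}$, and $\un{u}$ is a regular chain with $\pi[\un{u}]=\pi[\un{v}]$; so the definition of $\e$-maximality yields
\[
p^u_0=p^u(v_0)\geq e^{-\sqrt[3]{\e}}\,p^u(u_0)=e^{-\sqrt[3]{\e}}\,q^u_0.
\]
Interchanging the roles of $\un{v}$ and $\un{u}$ in the exact same argument, the $\e$-maximality of $(u_j)_{j\leq 0}$ gives $q^u_0\geq e^{-\sqrt[3]{\e}}p^u_0$. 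Together, $p^u_0/q^u_0\in[e^{-\sqrt[3]{\e}},e^{\sqrt[3]{\e}}]$. The corresponding two-sided bound for $p^s_0/q^s_0$ is obtained in the same way from the $\e$-maximality of the positive chains $(v_j)_{j\geq 0}$ and $(u_j)_{j\geq 0}$.

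Finally, applying the argument of the previous paragraph to $\s^i\un{v}$ and $\s^i\un{u}$ for each $i\in\Z$ gives the four ratios $p^u_i/q^u_i, p^s_i/q^s_i\in[e^{-\sqrt[3]{\e}},e^{\sqrt[3]{\e}}]$ simultaneously. There is no genuine obstacle here: the entire content lies in Proposition \ref{Prop_Maximality}, whose proof already combined Propositions \ref{Prop_Alpha_Comparison} and \ref{Prop_Comparison_U/S} via Lemma \ref{Lemma_C_norm} to compare the cut-off $Q_\e$, and used the subordination/regularity from Lemma \ref{Lemma_Subordinated_Sharp} to propagate maximality along the chain. Once that proposition is in hand, the current statement is a one-line symmetrization plus a shift.
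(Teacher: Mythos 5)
Your proposal is correct and follows essentially the same route as the paper: apply Proposition \ref{Prop_Maximality} to both regular chains, use the definition of $\e$--maximality symmetrically to get the two-sided bound at $i=0$, and then pass to the shifted chains (using $\pi\circ\s=f\circ\pi$) for general $i$. The extra remarks on regularity being preserved under the shift are fine but add nothing beyond what the paper's one-paragraph proof already uses implicitly.
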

\begin{proof}
By Proposition \ref{Prop_Maximality} $(\Psi_{x_i}^{p^u_i,p^s_i})_{i\leq 0}$ is $\e$--maximal, so
$p^u_0\geq e^{-\sqrt[3]{\e}}q^u_0$. $(\Psi_{y_i}^{q^u_i,q^s_i})_{i\leq 0}$ is also $\e$--maximal, so
$q^u_0\geq e^{-\sqrt[3]{\e}}p^u_0$. It follows that $p^u_0/q^u_0\in [e^{-\sqrt[3]{\e}},e^{\sqrt[3]{\e}}]$.
Similarly, $p^s_0/q^s_0\in [e^{-\sqrt[3]{\e}},e^{\sqrt[3]{\e}}]$.

Working with the shifted sequences $(\Psi_{x_{i+k}}^{p^u_{i+k},p^s_{i+k}})_{i\in\Z}$ and $(\Psi_{y_{i+k}}^{q^u_{i+k},q^s_{i+k}})_{i\in\Z}$, we obtain $p^s_k/q^s_k, p^u_k/q^u_k\in [e^{-\sqrt[3]{\e}},e^{\sqrt[3]{\e}}]$.
\end{proof}

\section{Proof of Theorem \ref{Theorem_Pi_Almost_1-1}}\label{SectionProof}
Parts (1) and (3) of the theorem are handled by Propositions \ref{Prop_x_Parameter} and \ref{Prop_Comparison_P}, so we focus on part (2).

Suppose $\pi[(\Psi_{x_i}^{p^u_i,p^s_i})_{i\in\Z}]=\pi[(\Psi_{y_i}^{q^u_i,q^s_i})_{i\in\Z}]$ where $(\Psi_{x_i}^{p^u_i,p^s_i})_{i\in\Z})$ and $(\Psi_{y_i}^{q^u_i,q^s_i})_{i\in\Z}$ are regular chains.
 We compare $\Psi_{x_i}$ and $\Psi_{y_i}$.
Write, as in  \S\ref{Section_Strategy},  $\Psi_{x_i}=\exp_{x_i}\circ \vartheta_{x_i}\circ C_{x_i}$ and $\Psi_{y_i}=\exp_{y_i}\circ \vartheta_{y_i}\circ C_{y_i}$. We also let $p_i:=p^u_i\wedge p^s_i$ and $q_i:=q^u_i\wedge q^s_i$.

\medskip
\noindent
{\em Claim 1.\/} $C_{y_i}^{-1}C_{x_i}=(-1)^{\s_i}\id+E$ where $\s_i\in\{0,1\}$ and $E$ is a matrix all of whose entries have absolute value less than $7\sqrt{\e}$.

\medskip
\noindent
\noindent{\em Proof.\/}
By (\ref{C_z}) and Proposition \ref{Prop_R_Comparison},
\begin{multline*}
C_{y_i}^{-1}C_{x_i}=\left(\begin{array}{cc}
s_\chi(y_i) & -\frac{s_\chi(y_i)}{\tan\a(y_i)}\\
0 & \frac{u_\chi(y_i)}{\sin\a(y_i)}
\end{array}
\right)R_{y_i}^{-1} R_{x_i}\left(
\begin{array}{cc}
s_\chi(x_i)^{-1} & u_\chi(x_i)^{-1}\cos\a(x_i)\\
0 & u_\chi(x_i)^{-1}\sin\a(x_i)
\end{array}
\right)\\
=\left(\begin{array}{cc}
s_\chi(y_i) & -\frac{s_\chi(y_i)}{\tan\a(y_i)}\\
0 & \frac{u_\chi(y_i)}{\sin\a(y_i)}
\end{array}
\right)\left[(-1)^{\s_i}\id+E'\right]\left(
\begin{array}{cc}
s_\chi(x_i)^{-1} & u_\chi(x_i)^{-1}\cos\a(x_i)\\
0 & u_\chi(x_i)^{-1}\sin\a(x_i)
\end{array}
\right),
\end{multline*}
where $\s_i\in\{0,1\}$ and   $E'=(\e_{ij})_{2\x 2}$ and $|\e_{ij}|<p_i^{\b/5}+q_i^{\b/5}<\sqrt{\e}$.

We call the contribution of $(-1)^{\s_i}\id$ the  ``main term", and the  contribution of $E'$, the  ``error term".

\medskip
\noindent
{\em Main term\/}: This equals $(-1)^{\s_i}\left(\begin{array}{cc}
\frac{s_\chi(y_i)}{s_\chi(x_i)} & \frac{s_\chi(y_i) \sin[\a(y_i)-\a(x_i)]}{u_\chi(x_i)\sin\a(y_i)}\\
0 & \frac{u_\chi(y_i)}{u_\chi(x_i)}\frac{\sin\a(x_i)}{\sin\a(y_i)}
\end{array}\right)$.

Proposition \ref{Prop_Comparison_U/S} says that  $\frac{s_\chi(y_i)}{s_\chi(x_i)}$ and $\frac{u_\chi(y_i)}{u_\chi(x_i)}$ belong to $[\exp(-4\sqrt{\e}),\exp(4\sqrt{\e})]$, and Proposition \ref{Prop_Alpha_Comparison} says that $\frac{\sin\a(x_i)}{\sin\a(y_i)}\in [\exp(-\sqrt{\e}),\exp\sqrt{\e}]$. It follows that the $(1,1)$ and (2,2) terms of the main term are, up to a sign $(-1)^{\s_i}$, in $[\exp(-5\sqrt{\e}),\exp(5\sqrt{\e})]$.

We bound the $(1,2)$ term: Since  $u_\chi(y_i)\geq \sqrt{2}>1$ and $\frac{s_\chi(y_i)}{|\sin\a(y_i)|}<\|C_\chi(y_i)^{-1}\|_{Fr}$ (Lemma (\ref{Lemma_C_norm})),
\begin{align*}
\left|\frac{s_\chi(y_i) \sin[\a(y_i)-\a(x_i)]}{u_\chi(x_i)\sin\a(y_i)}\right|&\leq
\|C_\chi(y_i)^{-1}\|_{Fr}\cdot|\sin(\a(y_i)-\a(x_i))|\\
&\hspace{-3cm}\leq \|C_\chi(y_i)^{-1}\|_{Fr}\cdot\bigl(|\sin\a(y_i)-\sin\a(x_i)|+|\cos\a(y_i)-\cos\a(x_i)|\bigr).
\end{align*}
By Lemma \ref{Lemma_Strong_Alpha_Comp}, if $\e$ is small enough,
\begin{align*}
\left|\frac{s_\chi(y_i) \sin[\a(y_i)-\a(x_i)]}{u_\chi(x_i)\sin\a(y_i)}\right|&\leq \|C_\chi(y_i)^{-1}\|_{Fr}\cdot 6(p_i^{\b/4}+q_i^{\b/4}).
\end{align*}
By Proposition \ref{Prop_Comparison_P}, $p_i\leq e^{\sqrt[3]{\e}}q_i$, therefore
$$
p_i^{\b/4}+q_i^{\b/4}<(e^{\sqrt[3]{\e}\b/4}+1) q_i^{\b/4}<2q_i^{\b/4}<2Q_\e(y_i)^{\b/4}<2\e^{3/4}\|C_\chi(y_i)^{-1}\|^{-3}_{Fr}.
$$
Since $\|C_\chi(\cdot)^{-1}\|_{Fr}>1$,
$
\left|\frac{s_\chi(y_i) \sin[\a(y_i)-\a(x_i)]}{u_\chi(x_i)\sin\a(y_i)}\right|<\sqrt{\e}
$, for all $\e$ small enough.
We see that the main term equals $(-1)^{\s_i}\id+(m_{ij})_{2\x 2}$ where   $|m_{ij}|<6\sqrt{\e}$.

\medskip
\noindent
{\em Error term\/}: This is
$$
\left(\begin{array}{cc}
s_\chi(y_i) & -\frac{s_\chi(y_i)}{\tan\a(y_i)}\\
0 & \frac{u_\chi(y_i)}{\sin\a(y_i)}
\end{array}
\right)\left(
\begin{array}{cc}
\e_{11} & \e_{12}\\
\e_{21} & \e_{22}
\end{array}
\right)\left(
\begin{array}{cc}
s_\chi(x_i)^{-1} & u_\chi(x_i)^{-1}\cos\a(x_i)\\
0 & u_\chi(x_i)^{-1}\sin\a(x_i)
\end{array}
\right).
$$

Every entry of the product matrix is the sum of four products, each consisting of three terms, one  for each matrix.

The term  from the left matrix is bounded by $\|C_\chi(y_i)^{-1}\|_{Fr}$ (Lemma \ref{Lemma_C_norm}). The term  from the middle matrix is bounded by
$$
p_i^{\b/5}+q_i^{\b/5}<q_i^{\b/5}(1+e^{\sqrt[3]{\e}\b/5})<2Q_\e(y_i)^{\b/5}.
$$
The term from the right matrix is bounded by one.
The product of these terms is  bounded by
$
4 \|C_\chi(y_i)^{-1}\|_{Fr}\cdot 2Q_\e(y_i)^{\b/5}\cdot 1
$.
By the definition of $Q_\e(y_i)$, this is less than $8\e^{3/5}<\sqrt{\e}$.

\medskip
Combining the two estimates we see that every entry of $C_{y_i}^{-1}C_{x_i}-(-1)^{\s_i}\id$ is less than $7\sqrt{\e}$ in absolute value.

\medskip
\noindent
{\em Claim 2.\/} $\Psi_{y_i}^{-1}\circ \Psi_{x_i}$ is well defined on $R_\e(\un{0})$.

\medskip
\noindent
{\em Proof.\/} We use the constants $L_1,\ldots,L_4$ introduced in the proof of Proposition \ref{Prop_Overlap_Meaning}, and the ball notation of \S\ref{SectionPC}. We assume that $\e$ satisfies (\ref{irene}).

Suppose $\un{v}\in R_\e(\un{0})$. By Proposition \ref{Prop_x_Parameter}, $d(x_i,y_i)<25^{-1}(p_i+q_i)$, and  by Proposition \ref{Prop_Comparison_P}, $p_i\leq e^{\sqrt[3]{\e}}q_i$, so $d(x_i,y_i)<q_i$. By the definition of $L_1$ (page \pageref{L_Page}),
$$
d\bigl((\exp_{x_i}\circ\vartheta_{x_i})(C_{x_i}\un{v}), (\exp_{y_i}\circ\vartheta_{y_i})(C_{x_i}\un{v})\bigr)\leq
L_1d({x_i},{y_i})<L_1 q_i.
$$
Therefore, $\Psi_{x_i}(\un{v})\in B:=B_{L_1q_i}(\exp_{y_i}\circ\vartheta_{y_i}(C_{x_i}\un{v}))$.

As in the proof of Proposition \ref{Prop_Overlap_Meaning},
$\exp_{y_i}^{-1}$ is well defined on $B$, and has Lipschitz constant at most $L_3$ there, so
$$
\exp_{y_i}^{-1}(B)\subset B_{L_1 L_3 q_i}^{y_i}(\vartheta_{y_i}(C_{x_i}\un{v})).
$$
It follows that $\Psi_{x_i}(\un{v})\in \exp_{y_i}[\exp_{y_i}^{-1}(B)]\subset \exp_{y_i}[B_{L_1 L_3 q_i}^{y_i}(\vartheta_{y_i}(C_{x_i}\un{v}))]\equiv \Psi_{y_i}[E]$, where
$
E:=C_{\chi}(y_i)^{-1}[B_{L_1 L_3 q_i}^{y_i}(\vartheta_{y_i}(C_{x_i}\un{v}))]\subset B_{L_1 L_3 \|C_{y_i}^{-1}\| q_i}(C_{y_i}^{-1}C_{x_i}\un{v})$.

We now use the inequalities  $q_i\leq Q_\e(y_i)<\e^{3/\b}\|C_{\chi}(y_i)^{-1}\|^{-1}$ and  (claim 1)
$$
\|C_{y_i}^{-1}C_{x_i}-(-1)^{\s_i}\id\|\leq \|C_{y_i}^{-1}C_{x_i}-(-1)^{\s_i}\id\|_{Fr}<14\sqrt{\e}.
$$
These give $E\subset  B_{L_1 L_3 \e^{3/\b}+14\sqrt{\e}\|\un{v}\|}((-1)^{\s_i}\un{v})\subset B_{L_1 L_3 \e^{3/\b}+14\sqrt{\e}\|\un{v}\|+\|\un{v}\|}(\un{0})$.
Since $\un{v}\in R_\e(\un{0})$, for all $\e$ small enough
$$
L_1 L_3 \e^{3/\b}+14\sqrt{\e}\|\un{v}\|+\|\un{v}\|<(L_1 L_2 \e^2+14\sqrt{\e}+1)\sqrt{2}\e<2\e<r(M),
$$
 where $r(M)$ is given in (\ref{r_M}). It follows that $E\subset B_{r(M)}(\un{0})$.

 We just showed that for every $\un{v}\in R_\e(\un{0})$, $\Psi_{x_i}(\un{v})\in \Psi_{y_i}[B_{r(M)}(\un{0})]$.   In other words,    $\Psi_{x_i}[R_\e(\un{0})]\subset \Psi_{y_i}[B_{r(M)}(\un{0})]$. By the definition of $r(M)$,   $\Psi_{y_i}:B_{r(M)}(\un{0})\to M$ is a diffeomorphism onto its image. It follows that  $\Psi_{y_i}^{-1}\circ\Psi_{x_i}$ is well defined and smooth on $R_\e(\un{0})$.

 \medskip
 \noindent
 {\em Claim 3.\/} $\Psi_{y_i}^{-1}\circ\Psi_{x_i}(\un{v})=(-1)^{\s_i}\un{v}+\un{c}_i+\Delta_i(\un{v})$ where
 $\s_i\in\{0,1\}$, $\un{c}_i$ is a constant vector s.t. $\|\un{c}_i\|<10^{-1} q_i$, and $\Delta_i(\cdot)$ is a vector field s.t.  $\Delta_i(\un{0})=\un{0}$ and $\|(d\Delta_i)_{\un{v}}\|<\sqrt[3]{\e}$ on $R_\e(\un{0})$.

\medskip
\noindent
{\em Proof.\/} Choose $\s_i$ as in claim 1. One can always put $\Psi_{y_i}^{-1}\circ\Psi_{x_i}$ in the form $$\Psi_{y_i}^{-1}\circ\Psi_{x_i}(\un{v})=(-1)^{\s_i}\un{v}+\un{c}_i+\Delta_i(\un{v})$$ where
$\un{c}_i:=(\Psi_{y_i}^{-1}\circ\Psi_{x_i})(\un{0})$ and
$\Delta_i(\un{v}):=(\Psi_{y_i}^{-1}\circ\Psi_{x_i})(\un{v})-(\Psi_{y_i}^{-1}\circ\Psi_{x_i})(\un{0})-(-1)^{\s_i}\un{v}$.
\begin{align*}
\Delta_i(\un{v})&=[C_{y_i}^{-1}\vartheta_{y_i}^{-1}\exp_{y_i}^{-1}\exp_{x_i}\vartheta_{x_i}C_{x_i}](\un{v})
-\un{c}_i-(-1)^{\s_i}\un{v}\\
&=C_{y_i}^{-1}(\vartheta_{y_i}^{-1}\exp_{y_i}^{-1}\exp_{x_i}\vartheta_{x_i}-\id)C_{x_i}\un{v}+
(C_{y_i}^{-1}C_{x_i}-(-1)^{\s_i}\id)\un{v}
-\un{c}_i\\
&=C_{y_i}^{-1}(\vartheta_{y_i}^{-1}\exp_{y_i}^{-1}-\vartheta_{x_i}^{-1}\exp_{x_i}^{-1})(\Psi_{x_i}(\un{v}))+
(C_{y_i}^{-1}C_{x_i}-(-1)^{\s_i}\id)\un{v}
-\un{c}_i.
\end{align*}
It is clear that $\Delta_i(\un{0})=\un{0}$, and that for all $\un{v}\in R_\e(\un{0})$
\begin{align*}
\|(d\Delta_i)_{\un{v}}\|&\leq \|C_{y_i}^{-1}\|\cdot\|d(\vartheta_{y_i}^{-1}\exp_{y_i}^{-1})_{\Psi_{x_i}(\un{v})}
-d(\vartheta_{x_i}^{-1}\exp_{x_i}^{-1})_{\Psi_{x_i}(\un{v})}\|\|(d\Psi_{x_i})_{\un{v}}\|\\
&\hspace{7cm}+\|C_{y_i}^{-1}C_{x_i}-(-1)^{\s_i}\id\|\\
&\leq 2\|C_{y_i}^{-1}\|\cdot \|d(\vartheta_{y_i}^{-1}\exp_{y_i}^{-1})_{\Psi_{x_i}(\un{v})}
-d(\vartheta_{x_i}^{-1}\exp_{x_i}^{-1})_{\Psi_{x_i}(\un{v})}\|+14\sqrt{\e}\\
&\leq 2\|C_{y_i}^{-1}\|\cdot L_2 d(x_i,y_i)+14\sqrt{\e},
\end{align*}
where $L_2$ is a common Lipschitz constant for the maps $x\mapsto \vartheta_x^{-1}\exp_x^{-1}$ from $D$ to $C^2(D,\R^2)$ $(D\in\mathfs D)$. As we saw above,  $d(x_i,y_i)<q_i<\e^{3/\b}\|C_{y_i}^{-1}\|^{-1}$, whence
$$
\|(d\Delta_i)_{\un{v}}\|\leq 2L_2\e^{3/\b}+14\sqrt{\e}.
$$
This is smaller than $\sqrt[3]{\e}$ for all $\e$ small enough.

Finally we estimate $\un{c}_i$. Let $z:=f^i(\pi[(\Psi_{x_i}^{p^u_i,p^s_i})_{i\in\Z}])=f^i(\pi[(\Psi_{y_i}^{q^u_i,q^s_i})_{i\in\Z}])$. This is the intersection of a $u$--admissible manifold and an $s$--admissible manifold in $\Psi_{x_i}^{p^u_i,p^s_i}$, therefore by Proposition \ref{Prop_Intersection},
$
f^i(z)=\Psi_{x_i}^{p^u_i,p^s_i}(\un{\zeta}),\textrm{ for some }\un{\zeta}\in R_{10^{-2}p_i}(\un{0}).
$
Similarly,
$
z=\Psi_{y_i}^{q^u_i,q^s_i}(\un{\eta}),\textrm{ for some }\un{\eta}\in R_{10^{-2}q_i}(\un{0}).
$
It follows that
$$
\un{\eta}=(\Psi_{y_i}^{-1}\circ\Psi_{x_i})(\un{\zeta})=(-1)^{\s_i}\un{\zeta}+\un{c}_i+\Delta_i(\un{\zeta}),
$$
and consequently $\|\un{c}_i\|\leq \|\un{\eta}\|+\|\un{\zeta}\|+\|\Delta_i(\un{\zeta})\|$.

Now $\|\un{\zeta}\|<10^{-2}\sqrt{2}p_i<10^{-2}\sqrt{2} e^{\sqrt[3]{\e}} q_i$, $\un{\eta}<10^{-2}\sqrt{2}q_i$, and by the bound on $\|d\Delta_i\|$, $\|\Delta_i(\un{\zeta})\|\leq\sqrt[3]{\e}\|\un{\zeta}\|$. It follows that $\|\un{c}_i\|<10^{-1}q_i$.
\hfill$\Box$

\part{Markov partitions and symbolic dynamics}\label{Part_Finite_To_One}

\section{A locally finite countable Markov cover}
\subsection{The cover}
In  \S\ref{SectionGraph} we constructed a countable Markov shift $\Sigma$ with countable alphabet $\mathfs V$, and a H\"older continuous map $\pi:\Sigma\to M$ which commutes with the left shift $\s:\Sigma\to\Sigma$, so that $\pi(\Sigma)$ has full measure w.r.t. any ergodic invariant probability measure with entropy larger than $\chi$.
Moreover, if\footnote{this uses the convention from \S\ref{SectionRelevant} that every element of $\mathfs V$ is relevant.}
\begin{align*}
\Sigma^\#&=\{\un{u}\in\Sigma:\un{u} \textrm{ is  a regular chain}\}\\
&=\{\un{v}\in\Sigma: \exists v,w\in\mathfs V\ \exists n_k,m_k\uparrow\infty \textrm{ s.t. }v_{n_k}=v, v_{-m_k}=w\},
\end{align*}
then $\pi(\Sigma^\#)\supset\NUH^\#_\chi(f)$, therefore $\pi(\Sigma^\#)$ has full probability w.r.t. any ergodic invariant probability measure with entropy larger than $\chi$.

\medskip
In this section we study the following countable cover of $\NUH_\chi^\#(f)$:
\begin{defi}
$\mathfs Z:=\{Z(v):v\in\mathfs V\}$, where
$
Z(v):=\{\pi(\un{v}):\un{v}\in\Sigma^\#,\ \un{v}_0=v\}.
$
\end{defi}
\noindent
This is a cover of $\NUH_\chi^\#(f)$. The following property of $\mathfs Z$ is the hinge on which our entire approach turns (see \S\ref{SectionOverview}):
\begin{thm}\label{Theorem_LF}
For every $Z\in\mathfs Z$,  $|\{Z'\in\mathfs Z: Z'\cap Z\neq \emptyset\}|<\infty$.
\end{thm}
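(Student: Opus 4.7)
\medskip
\noindent
\textbf{Proof proposal.} Fix $Z=Z(v)\in\mathfs Z$ with $v=\Psi_x^{p^u,p^s}$, and set $p:=p^u\wedge p^s>0$. The plan is to show that any $v'=\Psi_y^{q^u,q^s}\in\mathfs V$ whose cell $Z(v')$ meets $Z(v)$ is constrained to lie in a finite set. Indeed, if $z\in Z(v)\cap Z(v')$, then by the definition of $\mathfs Z$ there are regular chains $\un{u},\un{w}\in\Sigma^\#$ with $u_0=v$, $w_0=v'$, and $\pi(\un{u})=\pi(\un{w})=z$. The whole work of Part 2 is now available: I will feed these two chains into Theorem~\ref{Theorem_Pi_Almost_1-1}, which is precisely the inverse result quantifying how close two regular chains with the same $\pi$-image must be.

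Applying the conclusion of Theorem~\ref{Theorem_Pi_Almost_1-1} at index $i=0$ yields $q^u/p^u,\ q^s/p^s\in[e^{-\sqrt[3]{\e}},e^{\sqrt[3]{\e}}]$; in particular
\[
q^u\wedge q^s\ \geq\ e^{-\sqrt[3]{\e}}\,p\ =:\ t\ >\ 0.
\]
So every $v'$ contributing to the count satisfies the following two conditions: the underlying Pesin chart $\Psi_y^{q^u\wedge q^s}$ belongs to $\mathfs A$ (by the definition of $\mathfs V$) and carries a window parameter at least $t$; and the pair $(q^u,q^s)$ lies in $I_\e\x I_\e$ with $q^u, q^s\leq Q_\e(y)$ and $q^u\wedge q^s\geq t$.

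The first condition pins $\Psi_y^{q^u\wedge q^s}$ to the set $\mathfs A_t:=\{\Psi_z^\eta\in\mathfs A:\eta\geq t\}$, which is \emph{finite} by the discreteness clause of Proposition~\ref{Prop_A}(2). For each chart $\Psi_y^\eta\in\mathfs A_t$ the second condition restricts $(q^u,q^s)$ to pairs in $I_\e\cap[\eta,Q_\e(y)]$ with minimum equal to $\eta$; since $I_\e=\{e^{-k\e/3}:k\in\N\}$ is discrete and $Q_\e(y)$ is a fixed finite number, this is a finite set of pairs. Combining the two finiteness statements bounds the number of admissible double charts $v'$, hence the number of cells $Z(v')$ meeting $Z(v)$, by a finite quantity depending only on $v$.

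The only substantive input is Theorem~\ref{Theorem_Pi_Almost_1-1}; everything else is bookkeeping around the discreteness of $\mathfs A$ and of the scale grid $I_\e$. Accordingly there is no real obstacle in the proof of this theorem itself, and the real work, as advertised in \S\ref{SectionOverview}, was in establishing (A3$'$) via Theorem~\ref{Theorem_Pi_Almost_1-1}, which this argument harvests in one line.
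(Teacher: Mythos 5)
Your proposal is correct and coincides with the paper's own proof: the paper also takes a common point of $Z$ and $Z'$, produces two chains in $\Sigma^\#$ with the same $\pi$-image, invokes the window-parameter comparison (Proposition \ref{Prop_Comparison_P}, which is exactly part (3) of Theorem \ref{Theorem_Pi_Almost_1-1} that you quote) to get $q^u\wedge q^s\geq e^{-\sqrt[3]{\e}}(p^u\wedge p^s)$, and then concludes by the discreteness of $\mathfs A$ and of the grid $I_\e$. The bookkeeping at the end is the same counting the paper performs, so there is nothing to add.
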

\begin{proof}
Fix some  $Z=Z(\Psi_x^{p^u,p^s})$. If $Z'=Z(\Psi_y^{q^u,q^s})$ intersects $Z$, then there must exist two chains $\un{v},\un{w}\in\Sigma^\#$ s.t. $v_0=\Psi_x^{p^u,p^s}$, $w_0=\Psi_y^{q^u,q^s}$, and
$\pi(\un{v})=\pi(\un{w})$.
Proposition \ref{Prop_Comparison_P} says that in this case
$$
q^u\geq e^{-\sqrt[3]{\e}}p^u\textrm{ and }q^s\geq e^{-\sqrt[3]{\e}}p^s.
$$
It follows that $Z'$ belongs to
$
\{Z(\Psi_y^{q^u,q^s}):\Psi_y^{q^u,q^s}\in\mathfs V, \ q^u\wedge q^s\geq e^{-\sqrt[3]{\e}}(p^u\wedge p^s)\}.
$
By the definition of $\mathfs V$, this set has   cardinality   less than or equal to
$$
|\{\Psi_y^\eta\in\mathfs A:\eta\geq e^{-\sqrt[3]{\e}}(p^u\wedge p^s)\}|\x
|\{(q^u,q^s)\in I_\e\x I_\e:q^u\wedge q^s\geq e^{-\sqrt[3]{\e}}(p^u\wedge p^s)
\}|.
$$
This is a finite number, because of the discreteness of $\mathfs A$ (Proposition \ref{Prop_A}).
\end{proof}

\subsection{Product structure}
Suppose $x\in Z(v)\in\mathfs Z$, then $\exists\un{v}\in\Sigma^\#$ s.t. $v_0=v$ and $\pi(\un{v})=x$.
Associated to $\un{v}$ are two admissible manifolds in $v$: $V^s[(v_i)_{i\leq 0}]$ and $V^u[(v_i)_{i\geq 0}]$ (Proposition \ref{Prop_V}).
These manifolds do not depend on the choice of $\un{v}$: if $\un{w}\in\Sigma^\#$ is another chain s.t. $w_0=v$ and $\pi(\un{w})=x$, then
$$
V^u[(w_i)_{i\leq 0}]=V^u[(v_i)_{i\leq 0}]\textrm{ and }V^s[(w_i)_{i\geq 0}]=V^s[(v_i)_{i\geq 0}],
$$
because of Proposition \ref{Prop_Uniqueness}.
We are therefore free to make the following definition:

\begin{defi}
Suppose $Z=Z(v)\in\mathfs Z$. For any $x\in Z$:
\begin{enumerate}
\item $V^s(x,Z):=V^s[(v_i)_{i\geq 0}]$ for some (every) $\un{v}\in\Sigma^\#$ s.t. $v_0=v$ and $\pi(\un{v})=x$. $W^s(x,Z):=V^s(x,Z)\cap Z$.
\item $V^u(x,Z):=V^u[(v_i)_{i\leq 0}]$ for some (every) $\un{v}\in\Sigma^\#$ s.t. $v_0=v$ and $\pi(\un{v})=x$. $W^u(x,Z):=V^u(x,Z)\cap Z$.
\end{enumerate}
\end{defi}

It is important to understand the difference between $V^{s/u}(x,Z)$ and $W^{s/u}(x,Z)$.
Whereas $V^{u/s}(x,Z)$ are smooth manifolds, $W^{u/s}(x,Z)$ could in principle be  totally disconnected. Whereas $V^{u/s}(x,Z)$ extend all the way across $\Psi_x[R_{p^{u/s}}(\un{0})]$ (assuming $v=\Psi_x^{p^u,p^s}$), $W^{u/s}(x,Z)$ are subsets of the much smaller set $\Psi_x[R_{10^{-2}(p^u\wedge p^s)}(\un{0})]$, because every point in $W^{u/s}(x,Z)$ is the intersection of an $s$--admissible manifold in $v$ and a $u$--admissible manifold in $v$ (Proposition \ref{Prop_Intersection}).

\begin{prop}\label{Prop_W(Z)_disjoint}
Suppose $Z\in\mathfs Z$. For every $x,y\in Z$, $V^u(x,Z)$ and $V^u(y,Z)$ are either equal or they are disjoint. Similarly for $V^s(x,Z)$ and $V^s(y,Z)$, for $W^u(x,Z)$ and $W^u(y,Z)$, and for $W^s(x,Z)$ and $W^s(y,Z)$.
\end{prop}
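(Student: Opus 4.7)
Write $Z=Z(v)$ with $v=\Psi_{x_0}^{p^u,p^s}$. The plan is to reduce the claim for $V^u$ and $V^s$ to Proposition~\ref{Prop_Uniqueness}, and then deduce the statement for $W^u,W^s$ by taking intersection with $Z$.

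First I would verify that for every $x\in Z$, the manifolds $V^u(x,Z)$ and $V^s(x,Z)$ are genuine $u$-admissible and $s$-admissible manifolds in the single double chart $v$, and that they stay in windows. Both facts are immediate from the definition: $V^s(x,Z)=V^s[(v_i)_{i\ge 0}]$ and $V^u(x,Z)=V^u[(v_i)_{i\le 0}]$ for some $\un v\in\Sigma^\#$ with $v_0=v$ and $\pi(\un v)=x$; by Proposition~\ref{Prop_V}(2) these are admissible in $v_0=v$, and by Proposition~\ref{Prop_V}(3) they satisfy the defining inclusions $f^k(V^s)\subset V^s[(v_{i+k})_{i\ge 0}]$ and $f^{-k}(V^u)\subset V^u[(v_{i-k})_{i\le 0}]$, each of which is admissible in the appropriate $v_{\pm k}$; this is precisely the ``stays in windows'' condition.

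Next, given $x,y\in Z$, I would apply Proposition~\ref{Prop_Uniqueness} to the two $u$-admissible manifolds $V^u(x,Z),V^u(y,Z)$, both centered at the same Pesin chart $\Psi_{x_0}$: the conclusion is that they are either disjoint or one contains the other. But both are graphs of functions defined on the same interval $[-p^u,p^u]$ (since both are $u$-admissible in the same double chart $v=\Psi_{x_0}^{p^u,p^s}$, so each has $q$-parameter equal to $p^u$). Hence set-theoretic containment already forces equality. The same argument applied to $V^s(x,Z),V^s(y,Z)$ gives the dichotomy for the stable family.

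Finally, the statement for $W^{u/s}$ is a one-line consequence: $W^{u/s}(x,Z)=V^{u/s}(x,Z)\cap Z$, so if $V^{u/s}(x,Z)=V^{u/s}(y,Z)$ then $W^{u/s}(x,Z)=W^{u/s}(y,Z)$, and if $V^{u/s}(x,Z)\cap V^{u/s}(y,Z)=\emptyset$ then a fortiori $W^{u/s}(x,Z)\cap W^{u/s}(y,Z)=\emptyset$. The only subtle point --- the one I would double-check carefully --- is the passage from ``one contains the other'' to ``equal'' in the admissible setting; this uses only that both manifolds are defined by representing functions on the same $t$-interval, which is built into Definition~\ref{Def_Admissible} together with the fact that $V^u(x,Z)$ and $V^u(y,Z)$ live in the common chart $v$.
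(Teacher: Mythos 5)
Your proposal is correct and follows essentially the same route as the paper, which also deduces the $V^{u/s}$ dichotomy directly from Proposition \ref{Prop_Uniqueness} and treats the $W^{u/s}$ statement as an immediate corollary of intersecting with $Z$. Your extra care in checking the stays-in-windows hypothesis and in upgrading ``one contains the other'' to equality (both manifolds being graphs over the same interval $[-p^u,p^u]$ in the common chart $v$) just fills in details the paper leaves implicit.
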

\begin{proof}
The statement holds for $V^{u/s}$ because of Proposition \ref{Prop_Uniqueness}. The statement for $W^{u/s}$ is an immediate corollary.
\end{proof}

\begin{prop}
Suppose  $Z\in \mathfs Z$ and  $x,y\in Z$, then $V^u(x,Z)$ and $V^s(y,Z)$ intersect at a unique point $z$, and $z\in Z$.  Thus $W^u(x,Z)\cap W^s(y,Z)=\{z\}$.
\end{prop}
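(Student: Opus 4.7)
The plan is to build a ``hybrid chain'' from chains representing $x$ and $y$, and use it both to locate the intersection point of $V^u(x,Z)$ with $V^s(y,Z)$ and to certify that this point lies in $Z$.

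Write $Z=Z(v)$. Since $x,y\in Z$, pick $\un{v},\un{u}\in\Sigma^{\#}$ with $v_0=u_0=v$, $\pi(\un{v})=x$ and $\pi(\un{u})=y$. By definition $V^u(x,Z)=V^u[(v_i)_{i\leq 0}]$ is a $u$--admissible manifold in $v$, and $V^s(y,Z)=V^s[(u_i)_{i\geq 0}]$ is an $s$--admissible manifold in $v$. First I would invoke Proposition \ref{Prop_Intersection} (parts (1) and (2)), which is exactly the statement that a $u$--admissible and an $s$--admissible manifold in the same double chart meet at a unique point $z$, and that this point lies in $\Psi_w[R_{10^{-2}(p^u\wedge p^s)}(\un 0)]$ if $v=\Psi_w^{p^u,p^s}$. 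This gives existence and uniqueness of $z\in V^u(x,Z)\cap V^s(y,Z)$.

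The only real content is to show $z\in Z$, i.e.\ that $z=\pi(\un{w})$ for some $\un{w}\in\Sigma^{\#}$ with $w_0=v$. Define the concatenation
\[
w_i:=\begin{cases} v_i & i\leq 0,\\ u_i & i\geq 0.\end{cases}
\]
This is well defined because $v_0=u_0=v$. Admissibility of each transition $w_i\to w_{i+1}$ is inherited from $\un{v}$ for $i<0$ and from $\un{u}$ for $i\geq 0$; the transition $w_{-1}\to w_0$ is $v_{-1}\to v_0$ and $w_0\to w_1$ is $u_0\to u_1$, both given. Thus $\un{w}\in\Sigma$. Moreover $\un{w}\in\Sigma^{\#}$, since some symbol repeats infinitely often in the negative tail of $\un{v}$ (hence of $\un{w}$) and some symbol repeats infinitely often in the positive tail of $\un{u}$ (hence of $\un{w}$).

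Since $V^u[(v_i)_{i\leq 0}]$ depends only on the negative half of the chain and $V^s[(u_i)_{i\geq 0}]$ depends only on the positive half,
\[
V^u[(w_i)_{i\leq 0}]=V^u[(v_i)_{i\leq 0}]=V^u(x,Z),\qquad V^s[(w_i)_{i\geq 0}]=V^s[(u_i)_{i\geq 0}]=V^s(y,Z).
\]
By the definition of $\pi$ (Theorem \ref{Theorem_Markov_Extension}), $\pi(\un{w})$ is the unique intersection point of these two manifolds, so $\pi(\un{w})=z$. Because $w_0=v$, this yields $z\in Z(v)=Z$. Finally,
\[
W^u(x,Z)\cap W^s(y,Z)=\bigl(V^u(x,Z)\cap Z\bigr)\cap\bigl(V^s(y,Z)\cap Z\bigr)=\{z\}\cap Z=\{z\}.
\]
No step looks like a serious obstacle here: the intersection statement is supplied directly by Proposition \ref{Prop_Intersection}, and the only thing to watch is that the hybrid chain is indeed in $\Sigma^{\#}$ and not merely in $\Sigma$, which is automatic from the recurrence already present in $\un{v}$ and $\un{u}$.
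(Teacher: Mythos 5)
Your proposal is correct and follows essentially the same route as the paper: invoke Proposition \ref{Prop_Intersection} for the unique intersection point, then concatenate the negative half of a chain coding $x$ with the positive half of a chain coding $y$ (both with zeroth symbol $v$), and identify $z$ with the $\pi$--image of this hybrid chain in $\Sigma^\#$ to conclude $z\in Z$.
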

\begin{proof}
Write $Z=Z(v)$ where $v\in\mathfs V$. $V^u(x,Z)$ is a $u$--admissible manifold in $v$, and $V^s(x,Z)$ is an $s$--admissible manifold in $v$. Consequently, $V^u(x,Z)$ and $V^s(x,Z)$ intersect at a unique point $z$ (Proposition \ref{Prop_Intersection}).

We claim that $z\in Z$. There are chains $\un{v}, \un{w}\in\Sigma^\#$ s.t. $v_0=w_0=v$ and so that
$V^u(x,Z)=V^u[(v_i)_{i\leq 0}]$ and $V^s(x,Z)=V^s[(w_i)_{i\geq 0}]$. Define $\un{u}=(u_i)_{i\in\Z}$ by
$$
u_i=\begin{cases}
v_i & i\leq 0\\
w_i & i\geq 0
\end{cases}.
$$
It is easy to see that $\un{u}\in\Sigma^\#$ and $u_0=v$, therefore $\pi(\un{u})\in Z$. By definition,
$$
\{\pi(\un{u})\}=V^u[(u_i)_{i\leq 0}]\cap V^s[(u_i)_{i\geq 0}]=V^u[(v_i)_{i\leq 0}]\cap V^s[(w_i)_{i\geq 0}]=V^u(x,Z)\cap V^s(y,Z).
$$
It follows that $z=\pi(\un{u})\in Z$.
\end{proof}

\begin{defi}
The {\em Smale bracket} of two points $x,y\in Z\in\mathfs Z$ is the unique point  $[x,y]_Z\in W^u(x,Z)\cap W^s(x,Z)$.
\end{defi}
\noindent
Compare with \cite{Smale} or \cite[chapter 3]{B}.

\begin{lem}\label{Lemma_SB}
Suppose $x,y\in Z(v_0)$ and $f(x),f(y)\in Z(v_1)$. If $v_0\to v_1$,  then $f([x,y]_{Z(v_0)})=[f(x),f(y)]_{Z(v_1)}$.
\end{lem}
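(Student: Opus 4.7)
The plan is to show $f(z)\in V^u(f(x),Z(v_1))\cap V^s(f(y),Z(v_1))$ where $z:=[x,y]_{Z(v_0)}$; since this intersection consists of the single point $[f(x),f(y)]_{Z(v_1)}$ by Proposition~\ref{Prop_Intersection}, the lemma will follow.

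First I will establish the graph-transform identities
\[V^u(f(x),Z(v_1))=\mathcal F_u\bigl[V^u(x,Z(v_0))\bigr]\quad\text{and}\quad V^s(y,Z(v_0))=\mathcal F_s\bigl[V^s(f(y),Z(v_1))\bigr],\]
where $\mathcal F_u,\mathcal F_s$ are the graph transforms associated to the edge $v_0\to v_1$. Given chains $\un a,\un{a}^*\in\Sigma^\#$ with $a_0=v_0$, $\pi(\un a)=x$, $a^*_0=v_1$, $\pi(\un{a}^*)=f(x)$, the splicing $c_i:=a_{i+1}$ for $i\le -1$ and $c_i:=a^*_i$ for $i\ge 0$ is an admissible chain in $\Sigma^\#$ projecting to $f(x)$ (use~(\ref{ShadowPi}) together with the hypothesis $v_0\to v_1$ to check the junction $c_{-1}\to c_0$ is an edge and $\pi(\un c)=f(x)$). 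Thus $V^u(f(x),Z(v_1))=V^u[(c_i)_{i\le 0}]$, and expressing this limit via admissible manifolds in $c_{-n}=a_{-n+1}$ factors the last step of the iteration as $\mathcal F_u$ along $v_0\to v_1$; combined with $C^1$-contractivity of $\mathcal F_u$ (Proposition~\ref{Prop_Graph_Contracts}) and Proposition~\ref{Prop_V}(1), this gives the first identity. The second is symmetric.

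The second identity yields $f(V^s(y,Z(v_0)))\subseteq V^s(f(y),Z(v_1))$, hence $f(z)\in V^s(f(y),Z(v_1))$. The first identity gives $V^u(f(x),Z(v_1))\subseteq f(V^u(x,Z(v_0)))$, and moreover identifies $V^u(f(x),Z(v_1))$ as the $u$-admissible restriction of the smooth curve $f(V^u(x,Z(v_0)))$ to the $v_1$-window---a well-defined restriction because Proposition~\ref{Prop_f_xy} with $|B|>e^\chi$ makes the second $v_1$-chart coordinate strictly monotone along this curve. Since $f(z)$ lies on that curve and, as a point of $V^s(f(y),Z(v_1))$, has second $v_1$-chart coordinate bounded by $10^{-2}(q^u\wedge q^s)\le q^u$ (Definition~\ref{Def_Admissible} together with~(\ref{deriv})), it falls inside the restriction window; hence $f(z)\in V^u(f(x),Z(v_1))$. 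Combining the two inclusions, $f(z)$ is the unique point of $V^u(f(x),Z(v_1))\cap V^s(f(y),Z(v_1))$, so $f(z)=[f(x),f(y)]_{Z(v_1)}$.

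The main obstacle is the first step---showing that the infinite iterated graph transform defining $V^u(f(x),Z(v_1))$ telescopes into a single application of $\mathcal F_u$ along $v_0\to v_1$ starting from $V^u(x,Z(v_0))$. This requires the chain splicing above and a passage to the limit in the $C^1$-contraction estimate of Proposition~\ref{Prop_Graph_Contracts}; once this is in place, everything else is bookkeeping on admissible-manifold parameters.
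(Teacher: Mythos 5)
Your chain-splicing derivation of the identities $V^u(f(x),Z(v_1))=\mathcal F_u[V^u(x,Z(v_0))]$ and $V^s(y,Z(v_0))=\mathcal F_s[V^s(f(y),Z(v_1))]$ is sound, and it is a legitimate alternative to the paper's route (the paper gets the corresponding inclusions $f[V^s(y,Z(v_0))]\subset V^s(f(y),Z(v_1))$ and $f[V^u(x,Z(v_0))]\supset V^u(f(x),Z(v_1))$ by showing $\mathcal F_s[V^s(f(y),Z(v_1))]$ contains $y$, hence coincides with $V^s(y,Z(v_0))$ by Proposition \ref{Prop_Uniqueness}). The gap is in your last step, where you place $f(z)$ inside the admissible window of $V^u(f(x),Z(v_1))$. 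You claim that, merely as a point of $V^s(f(y),Z(v_1))$, $f(z)$ has second chart coordinate at most $10^{-2}(q^u\wedge q^s)$. But writing $v_1=\Psi_{x_1}^{q^u,q^s}$ and letting $H$ represent this $s$--admissible manifold, Definition \ref{Def_Admissible} and (\ref{deriv}) only give $|H(t)|\leq |H(0)|+\e|t|\leq 10^{-3}(q^u\wedge q^s)+\e q^s$, i.e.\ a bound of order $10^{-2}q^s$, not $10^{-2}(q^u\wedge q^s)$. Since $q^u$ and $q^s$ are independent parameters of a double chart (only the minima $q^u\wedge q^s$ are controlled along edges, Lemma \ref{Lemma_Subordinated_Tempered}), $10^{-2}q^s$ can be much larger than $q^u$, so membership in the $s$--manifold alone does not put $f(z)$ in the window $\{|\tau|\leq q^u\}$.

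The step is fixable, but you must use both memberships: $f(z)$ also lies on the curve $f(V^u(x,Z(v_0)))=\Psi_{x_1}\{(G(\tau),\tau):\tau\in[\a,\b]\}$, where the proof of Proposition \ref{Prop_Graph_Transform} gives $|G(0)|\leq 10^{-3}(q^u\wedge q^s)$ and $\Lip(G)\leq\e$; then $\tau_0=H(G(\tau_0))$ yields $(1-\e^2)|\tau_0|\leq 10^{-3}(q^u\wedge q^s)+\e|G(0)|$, so $|\tau_0|\leq 2\cdot 10^{-3}(q^u\wedge q^s)<q^u$, and $f(z)$ does land in the admissible restriction. Alternatively--and this is what the paper does--avoid locating $f(z)$ in the admissible window altogether: both $f(z)$ and $[f(x),f(y)]_{Z(v_1)}$ lie in $f(V^u(x,Z(v_0)))\cap V^s(f(y),Z(v_1))$, and Proposition \ref{Prop_Graph_Transform}(2) says that $f$ of a $u$--admissible manifold meets any $s$--admissible manifold in $v_1$ in exactly one point, so the two coincide.
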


\begin{proof}
Write $Y=Z(v_0)$, $Z=Z(v_1)$, and $w:=[x,y]_Y$. By definition
\begin{equation}\label{f(w)}
\{f(w)\}=f[W^u(x,Y)\cap W^s(y,Y)]\subset f[V^u(x,Y)]\cap f[V^s(y,Y)].
\end{equation}

\medskip
\noindent
{\em Claim\/}: $f[V^s(y,Y)]\subset V^s(f(y),Z)$ and $f[V^u(x,Y)]\supset V^u(f(x),Z)$.

\medskip
\noindent
{\em Proof.\/}
Since $f(y)\in Z(v_1)=Z$, $V^s:=V^s(f(y),Z)$ is an $s$--admissible manifold in $v_1$, and this manifold stays in windows. Applying the graph transform  (Proposition \ref{Prop_Graph_Transform}) we see that  $f^{-1}[V^s(f(y),Z)]$ contains an $s$--admissible manifold $\mathcal F_s[V^s]$ in $v_0$. Since $V^s$ stays in windows, $\mathcal F_s[V^s]$ stays in windows.

Since $\mathcal F_s[V^s]$ is $s$--admissible in $v_0$, it  intersects every $u$--admissible manifold in $v_0$.  The larger set $f^{-1}(V^s)$ intersects $V^u(y,Y)$ at a unique point (Proposition \ref{Prop_Graph_Transform} (2)). This point must be $y$, so
$
\mathcal F_s[V^s]\cap V^u(y,Y)=\{y\}
$, whence  $\mathcal F_s[V^s]\owns y$.

This means that $\mathcal F_s[V^s]$ intersects $V^s(y,Y)$. These manifolds are  $s$--admissible in $v_0$, and they stay in windows. Since they intersect,  they are equal. It follows that
$
f^{-1}(V^s)\supset \mathcal F_s[V^s]=V^s(y,Y)$,
whence $f[V^s(y,Y)]\subset V^s$, which is the first half of the claim.
The other half of the claim is proved in the same way.

\medskip
Returning to  (\ref{f(w)}) we  see that
$
f(w)\in f[V^u(x,Y)]\cap V^s(f(y),Z).
$
By the second half of the claim,
$$
f[V^u(x,Y)]\cap V^s(f(y),Z)\supseteq V^u(f(x),Z)\cap V^s(f(y),Z)\owns\{[f(x),f(y)]_Z\},
$$
thus $f[V^u(x,Y)]\cap  V^s(f(y),Z)\owns f(w),[f(x),f(y)]_Z$. But Proposition \ref{Prop_Graph_Transform} part (2) says that $f[V^u(x,Y)]$ intersects $V^s(f(y),Z)$ at a single point. It follows that $f(w)=[f(x),f(y)]_Z$.
\end{proof}

Occasionally we will need to form the Smale bracket of points belonging to different elements of $\mathfs Z$:

\begin{lem}\label{Lemma_Strong_bracket} The following holds for all $\e$ small enough:
Suppose $Z,Z'\in\mathfs Z$. If $Z\cap Z'\neq\emptyset$, then for any $x\in Z$ and $y\in Z'$, $V^u(x,Z)$ and $V^s(y,Z')$ intersect at a unique point.
\end{lem}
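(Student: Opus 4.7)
The idea is to transfer $V^s(y,Z')$ into the chart of $v$ via the normal-form change of coordinates provided by Theorem~\ref{Theorem_Pi_Almost_1-1}, and then to reduce the intersection problem to a one-dimensional monotonicity/intermediate value argument analogous to the one underlying Proposition~\ref{Prop_Intersection}.

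First, fix any $z\in Z\cap Z'$. Writing $Z=Z(v)=Z(\Psi_w^{p^u,p^s})$ and $Z'=Z(v')=Z(\Psi_{w'}^{p'^u,p'^s})$, use $z$ to produce regular chains $\un v,\un{v}'$ with $v_0=v$, $v'_0=v'$, and $\pi(\un v)=\pi(\un{v}')=z$, and apply Theorem~\ref{Theorem_Pi_Almost_1-1} at index $0$ to obtain
\begin{equation*}
(\Psi_w^{-1}\circ\Psi_{w'})(\un u)=(-1)^{\sigma}\un u+\un c+\Delta(\un u)\ \text{ on }R_\e(\un 0),
\end{equation*}
with $\|\un c\|<10^{-1}(p^u\wedge p^s)$, $\|(d\Delta)_{\un u}\|<\sqrt[3]{\e}$, and $p^{u/s}/p'^{u/s}\in[e^{-\sqrt[3]{\e}},e^{\sqrt[3]{\e}}]$. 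All these bounds depend only on $v, v'$ and on the fact that $Z\cap Z'\neq\emptyset$, not on the points $x$, $y$ or $z$. Now take arbitrary $x\in Z$ and $y\in Z'$, and represent $V^u(x,Z)$ in $\Psi_w$ as the graph $(F^u(t),t)$ for $|t|\le p^u$ and $V^s(y,Z')$ in $\Psi_{w'}$ as the graph $(s,F^s(s))$ for $|s|\le p'^s$, both enjoying the admissibility bounds $|F^u(0)|,|F^s(0)|\le 10^{-3}(p^u\wedge p^s)$ and $\|F^{u\prime}\|_\infty,\|F^{s\prime}\|_\infty<\e$ (via \eqref{Lip(F)}, once one uses the comparability of window parameters). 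A common point then corresponds to a pair $(s,t)$ satisfying
\begin{align*}
F^u(t)&=(-1)^{\sigma}s+c_1+\Delta_1(s,F^s(s)),\\
t&=(-1)^{\sigma}F^s(s)+c_2+\Delta_2(s,F^s(s)),
\end{align*}
so substituting the second equation into the first reduces the problem to finding a zero of a single scalar function $G(s)$ on $J\subset[-p'^s,p'^s]$.

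For uniqueness, differentiation together with $\|F^{u\prime}\|,\|F^{s\prime}\|<\e$ and $\|d\Delta\|<\sqrt[3]{\e}$ gives $|G'(s)|\ge 1-C\sqrt[3]{\e}>\tfrac{1}{2}$ for an absolute constant $C$ and $\e$ small, so $G$ is strictly monotone and has at most one zero. For existence, the size bounds on $F^u(0),F^s(0),\un c,\Delta$ and the comparability $p^{u/s}\sim p'^{u/s}$ together force the second equation to send $[-p'^s,p'^s]$ into $[-p^u,p^u]$ (so $G$ is defined on $[-p'^s,p'^s]$), and make $G$ change sign across this interval; combined with monotonicity, this yields the unique solution, which is the required intersection point.

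\textbf{Main obstacle.} The delicate technical point is the domain bookkeeping: verifying that after substitution $G$ is well-defined on enough of $[-p'^s,p'^s]$ and actually changes sign there. This boils down to the inequality that all error contributions (of order $\sqrt[3]{\e}\cdot(p^u\wedge p^s)$ from $\Delta$, or $10^{-1}(p^u\wedge p^s)$ from $\un c$) are absorbed by the admissibility margins and the window parameters; it is essentially the same constant-chasing that powers the proof of Proposition~\ref{Prop_Intersection}. Notably, the argument does \emph{not} require the transferred $V^s(y,Z')$ to be strictly $s$-admissible in the chart of $v$ — a slightly weakened shape (same derivative bounds, larger offset at $0$) is enough, because the contraction ratio $\e\cdot\sqrt[3]{\e}$ has substantial room to spare.
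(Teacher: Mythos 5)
Your overall route — fix $z\in Z\cap Z'$, invoke Theorem \ref{Theorem_Pi_Almost_1-1} to get the normal form $(\Psi_w^{-1}\circ\Psi_{w'})=(-1)^\sigma\id+\un c+\Delta$ and the window comparability, transfer one manifold into the other chart, and reduce to a one–dimensional problem — is exactly the paper's strategy (the paper then solves the 1D problem by a contraction fixed point rather than your monotonicity/IVT argument, which is an immaterial difference). But there is a genuine gap at the step you yourself flag as the "main obstacle": the claim that the second equation sends $[-p'^s,p'^s]$ into $[-p^u,p^u]$, so that $G$ is defined (and then monotone, and sign-changing) on all of $[-p'^s,p'^s]$. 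Theorem \ref{Theorem_Pi_Almost_1-1} only compares like with like: $p^u/p'^u$ and $p^s/p'^s$ are within $e^{\pm\sqrt[3]{\e}}$. It gives no control whatsoever on $p^s/p^u$, and indeed a vertex $\Psi_w^{p^u,p^s}\in\mathfs V$ may have $p^s\gg p^u$ (the two window parameters are independent, each only bounded by $Q_\e$). For $|s|\leq p'^s\approx p^s$, the term $\Delta_2(s,F^s(s))$ alone can be of size $\sqrt[3]{\e}\,p'^s$, and $F^s(s)$ of size $\e p'^s$, both of which can exceed $p^u$ when $p^s/p^u$ is large. So $t(s)$ can leave the domain of $F^u$, $G$ need not be defined on $[-p'^s,p'^s]$, and both your existence claim (sign change across that interval) and your uniqueness claim (global monotonicity on it) break down as stated; with $G$ only defined on the set $\{s:|t(s)|\leq p^u\}$, which need not be an interval, "at most one zero" does not follow from a derivative bound alone.

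The gap is repairable, and the repair is precisely where the paper spends its effort. For existence, restrict to the interval $[-\tfrac23(p^u\wedge p^s),\tfrac23(p^u\wedge p^s)]$: there $|t(s)|\leq |t(0)|+2\sqrt[3]{\e}|s|<p^u\wedge p^s$, so $G$ is defined, and the sign change at the endpoints follows from your error estimates. For uniqueness you need an extra localization step: from the first equation and the \emph{pointwise} bounds $|F^u(t)|\leq 10^{-3}(p^u\wedge p^s)+\e|t|$, $|F^s(s)|\leq 10^{-3}(p'^u\wedge p'^s)+\e|s|$, every solution satisfies $|s|\lesssim \tfrac16(p^u\wedge p^s)+2\e p^u$, hence lies inside a single interval on which $G$ is defined and monotone; alternatively, extend the representing function(s) Lipschitz-continuously to a large interval so the argument applies globally — this extension trick is exactly what the paper does in its uniqueness step ("extend $G$ and $H$ to $\e$--Lipschitz functions $\wt G,\wt H$ on $[-a,a]$"). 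With either repair your proof goes through; without it, the constant-chasing you defer does not "absorb" the discrepancy, because no admissibility margin bounds $p^s$ in terms of $p^u$.
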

\noindent We do not claim that this point is in $Z$ or $Z'$.
\begin{proof}
Suppose $Z=Z(\Psi_{x_0}^{p^u_0,p^s_0}), Z'=Z(\Psi_{y_0}^{q^u_0,q^s_0})$ and $z\in Z\cap Z'$, then there are $\un{v},\un{w}\in\Sigma^\#$ s.t. $v_0=\Psi_{x_0}^{p^u_0,p^s_0}$, $w_0=\Psi_{y_0}^{q^u_0,q^s_0}$, and $z=\pi(\un{v})=\pi(\un{w})$.  Write $p:=p^u_0\wedge p^s_0$ and $q:=q^u_0\wedge q^s_0$. By Theorem \ref{Theorem_Pi_Almost_1-1}, $p^u_0/q^u_0, p^s_0/q^s_0,p/q\in [e^{-\sqrt[3]{\e}},e^{\sqrt[3]{\e}}]$ and
$$
\Psi_{y_0}^{-1}\circ\Psi_{x_0}=(-1)^{\s}\id+\un{c}+\Delta\textrm{ on }R_\e(\un{0}),
$$
where $\s\in\{0,1\}$,  $\un{c}$ is a constant vector s.t. $\|\un{c}\|<10^{-1}q$, and $\Delta:R_\e(\un{0})\to\R^2$ satisfies   $\Delta(\un{0})=\un{0}$, and $\|(d\Delta)_{\un{u}}\|<\sqrt[3]{\e}$ for all $\un{u}\in R_\e(\un{0})$. By the Mean Value Theorem, $\|\Delta(\un{u})\|\leq \sqrt[3]{\e}\|\un{u}\|$ for all $\un{u}\in R_\e(\un{0})$.

Now suppose $x\in Z$.  $V^u:=V^u(x,Z)$ is a $u$--admissible in $\Psi_{x_0}^{p^u_0,p^s_0}$, therefore it can be put in the form
$
V^u(x,Z)=\Psi_{x_0}\{(F(t),t):|t|\leq p^u_0\},
$
where $F:[-p^u_0,p^u_0]\to\R$ satisfies $|F(0)|\leq 10^{-3}p$, $\|F\|_\infty\leq 10^{-2} p^u_0$ and  $\Lip(F)<\e$.

We write $V^u(x,Z)$ in $\Psi_{y_0}$--coordinates. Let $\un{c}=(c_1,c_2)$, $\Delta=(\Delta_1,\Delta_2)$,  then
\begin{align*}
V^u(x,Z)&=[\Psi_{y_0}\circ(\Psi_{y_0}^{-1}\circ\Psi_{x_0})]\{(F(t),t):|t|\leq p^u_0\}\\
&=\Psi_{y_0}\{((-1)^{\s}F(t)+c_1+\Delta_1(F(t),t),(-1)^\s t+c_2+\Delta_2(F(t),t))\!:\!|t|\leq p^u_0\}\\
&=\Psi_{y_0}\{(\wt{F}(\theta)+c_1+\wt{\Delta}_1(\wt{F}(\theta),\theta), \underset{=:\tau(\theta)}{\underbrace{\theta+c_2+\wt{\Delta}_2(\wt{F}(\theta),\theta)}}):|\theta|\leq p^u_0\},
\end{align*}
where we have used the transformations $\theta:=(-1)^\s t$, $\wt{F}(s):=(-1)^\s F((-1)^\s s)$, and $\wt{\Delta}_i(u,v):=\Delta_i((-1)^\s u,(-1)^\s v)$. Notice that $|\wt{F}(0)|=|F(0)|\leq 10^{-3}p$, $\|\wt{F}\|_\infty=\|F\|_\infty\leq 10^{-2} p^u_0$ and $\Lip(\wt{F})=\Lip(F)<\e$. Also $\wt \Delta(\un{0})=\un{0}$ and $\|(d\wt{\Delta})_{\un{u}}\|=\|(d\Delta)_{\un{u}}\|<\sqrt[3]{\e}$ on $R_\e(\un{0})$.

Let $\tau(\theta):=\theta+c_2+\wt{\Delta}_2(\wt{F}(\theta),\theta)$. Assuming $\e$ is small enough, we have
\begin{itemize}
\item $\tau'\in [e^{-2\sqrt[3]{\e}},e^{2\sqrt[3]{\e}}]$;
\item $|\tau(0)|\leq |c_2|+|\wt{\Delta}_2(\wt{F}(0),0)|<10^{-1}q+\sqrt[3]{\e}\cdot 10^{-3} p<\frac{1}{6}p$ \ $(\because p\leq e^{\sqrt[3]{\e}}q)$.
\end{itemize}
It follows that $\tau$ is one-to-one, and
$
\tau[-p^u_0,p^u_0]=[\a,\b]
$
where $\a:=\tau(-p^u_0)$ and $\b:=\tau(p^u_0)$.
It is easy to see that
$
|\a+p^u_0|<\frac{1}{6}p^u_0$  and $|\b-p^u_0|<\frac{1}{6}p^u_0$: both quantities are less than $|c_2|+\sup_{R_{p_0^u}(\un{0})}|\wt{\Delta}_2|$, which is less than $\frac{1}{6}p^u_0$ provided $\e$ is small enough. It follows that $\tau[-p^u_0,p^u_0]=[\a,\b]\supset [-\frac{2}{3}q,\frac{2}{3}q]$.

Since $\tau:[-p^u_0,p^u_0]\to [\a,\b]$ is one-to-one and onto, it has a well defined inverse function $\theta:[\a,\b]\to [-p^u_0,p^u_0]$.
Let $G(s):=\wt{F}(\theta(s))+c_1+\wt{\Delta}_1(\wt{F}(\theta(s)),\theta(s))$, then
$$
V^u(x,Z)=\Psi_{y_0}\{(G(s),s):s\in[\a,\b]\}.
$$

Using the properties of $\tau$, it is not difficult   to check that
 $\theta'\in [e^{-2\sqrt[3]{\e}},e^{2\sqrt[3]{\e}}]$ and
  $|\theta(0)|=|\theta(0)-\theta(\tau(0))|\leq e^{2\sqrt[3]{\e}}|\tau(0)|<\frac{1}{6}e^{2\sqrt[3]{\e}} p$.
It follows that $|\wt{F}(\theta(0))|\leq |\wt{F}(0)|+\e|\theta(0)|<(10^{-3}+\frac{1}{6}e^{2\sqrt[3]{\e}}\e)p<10^{-2}p$, whence
\begin{align*}
|G(0)|&\leq 10^{-2}p+10^{-1}q+\sqrt[3]{\e}p<\min\{\tfrac{1}{6}p,\tfrac{1}{6}q\}\ \ \ (\because q/p\in [e^{-\sqrt[3]{\e}},e^{\sqrt[3]{\e}}])\\
|G'|&\leq \|\wt{F}'\|_\infty |\theta'|+\sqrt[3]{\e}\sqrt{1+|\wt{F}'|^2}\cdot|\theta'|<2\sqrt[3]{\e}.
\end{align*}
It follows that (for all $\e$ small enough) $G[-\frac{2}{3}p,\frac{2}{3}p]\subset [-\frac{2}{3}p,\frac{2}{3}p]$.

We can now show that $|V^u(x,Z)\cap V^s(y,Z')|\geq 1$ (compare with \cite[S.3.7]{KM}). Represent
$$
V^s(y,Z')=\Psi_{y_0}\{(t,H(t)):|t|\leq q^s_0\}.
$$
By admissibility, $|H(0)|<10^{-3}q$ and $\Lip(H)<\e$, so $H[-\frac{2}{3}p,\frac{2}{3}p]\subset [-\frac{2}{3}p,\frac{2}{3}p]$. It follows that $H\circ G$ is a contraction of $[-\frac{2}{3}p,\frac{2}{3}p]$ into itself. Such a map has a (unique) fixed point $(H\circ G)(s_0)=s_0$. It is easy to see that $\Psi_{y_0}(G(s_0),s_0)$ belongs to $V^u(x,Z)\cap V^s(y,Z')$.

Next we claim that $V^u(x,Z)\cap V^s(y,Z')$ contains at most one point. Extend $G$ and $H$ to $\e$--Lipschitz functions $\wt{G}, \wt{H}$ on $[-a,a]$ where $a:=\max\{|\a|,|\b|,q_0^s\}$. By construction, $|\wt{G}(0)|\leq\frac{1}{6}a$, so $\wt{G}[-a,a]\subset [-a,a]$. Also $|\wt{H}(0)|\leq 10^{-3}a$, so $\wt{H}[-a,a]\subset [-a,a]$. It follows that $\wt{H}\circ\wt{G}$ is a contraction of $[-a,a]$ into itself, and therefore it has a unique fixed point. Every point in $V^u(x,Z)\cap V^s(y,Z')$ takes the form  $\Psi_{y_0}(G(s),s)$ where $s\in [\a,\b]$ and  $s=(H\circ G)(s)\equiv (\wt{H}\circ\wt{G})(s)$. Since the equation $s=(\wt{H}\circ\wt{G})(s)$ has at most one solution in $[-a,a]$, it has at most one solution in  $[\a,\b]$. It follows that $|V^u(x,Z)\cap V^s(y,Z')|\leq 1$.
\end{proof}

\subsection{The symbolic Markov property}
\begin{prop}\label{Prop_SMP}
If $x=\pi[(v_i)_{i\in\Z}]$ where $\un{v}\in\Sigma^\#$, then
$
f[W^s(x,Z(v_0))]\subset W^s(f(x),Z(v_1))$ and $f^{-1}[W^u(f(x),Z(v_1)]\subset W^u(x,Z(v_0)).
$
\end{prop}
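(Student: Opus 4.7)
The plan is to prove the $s$-manifold inclusion by constructing, for each $y\in W^s(x,Z(v_0))$, a single chain in $\Sigma^\#$ that codes $y$ and whose forward half coincides with the forward half of $\un{v}$. Shifting that chain by one will then witness $f(y)\in Z(v_1)$, while the invariance of stable leaves under $f$ (Proposition \ref{Prop_V}(3)) will give $f(y)\in V^s(f(x),Z(v_1))$. The $u$-manifold inclusion is symmetric and will be handled by reversing the roles of past and future.

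In more detail: fix $y\in W^s(x,Z(v_0))=V^s(x,Z(v_0))\cap Z(v_0)$. Since $y\in Z(v_0)$, choose $\un{w}\in\Sigma^\#$ with $w_0=v_0$ and $\pi(\un{w})=y$. Define the spliced sequence
\[
u_i=\begin{cases} w_i & i\leq 0,\\ v_i & i\geq 1.\end{cases}
\]
Then $\un{u}$ is a chain (the only new edge to check is $u_0\to u_1$, i.e.\ $w_0\to v_1$, which holds because $w_0=v_0$ and $v_0\to v_1$), and $\un{u}\in\Sigma^\#$ because its past agrees with that of $\un{w}$ and its future with that of $\un{v}$, both of which have infinitely recurring symbols. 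By construction $(u_i)_{i\leq 0}=(w_i)_{i\leq 0}$ and $(u_i)_{i\geq 0}=(v_i)_{i\geq 0}$, so
\[
V^u[(u_i)_{i\leq 0}]=V^u(y,Z(v_0)),\qquad V^s[(u_i)_{i\geq 0}]=V^s(x,Z(v_0)).
\]
The point $y$ lies in both manifolds — trivially in the first, and in the second because $y\in W^s(x,Z(v_0))$. Since the intersection of a $u$-admissible manifold with an $s$-admissible manifold in $v_0$ is a single point (Proposition \ref{Prop_Intersection}), we conclude $\pi(\un{u})=y$.

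Now apply the shift: $\s\un{u}\in\Sigma^\#$ with $(\s\un{u})_0=u_1=v_1$, and $\pi(\s\un{u})=f\pi(\un{u})=f(y)$. Therefore $f(y)\in Z(v_1)$. On the other hand, $y\in V^s[(v_i)_{i\geq 0}]$, so by Proposition \ref{Prop_V}(3),
\[
f(y)\in f\bigl(V^s[(v_i)_{i\geq 0}]\bigr)\subset V^s[(v_{i+1})_{i\geq 0}]=V^s(f(x),Z(v_1)).
\]
Combining these gives $f(y)\in W^s(f(x),Z(v_1))$, which proves the first inclusion.

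For the second inclusion, given $z\in W^u(f(x),Z(v_1))$, pick $\un{w}\in\Sigma^\#$ with $w_0=v_1$ and $\pi(\un{w})=z$, and define
\[
u_i=\begin{cases} v_{i+1} & i\leq 0,\\ w_i & i\geq 1,\end{cases}
\]
so $u_0=v_1$ and $\un{u}\in\Sigma^\#$ is a chain (the only new edge is $u_0\to u_1$, i.e.\ $v_1\to w_1$, which holds since $w_0=v_1$ and $\un{w}$ is a chain). The same leaf-identification as before shows $\pi(\un{u})=z$, and then $\s^{-1}\un{u}$ has zeroth symbol $u_{-1}=v_0$ and is sent by $\pi$ to $f^{-1}(z)$, giving $f^{-1}(z)\in Z(v_0)$; the containment $f^{-1}(z)\in V^u(x,Z(v_0))$ follows from the $u$-analogue of Proposition \ref{Prop_V}(3). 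The only real subtlety in the whole argument is the splicing step, where one must ensure the spliced chain really does lie in $\Sigma^\#$ and codes the right point; both facts are immediate once one writes down the pieces carefully.
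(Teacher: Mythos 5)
Your proof is correct and follows essentially the same route as the paper: splice the backward half of a chain coding $y$ (resp. $z$) with the forward half of $\un{v}$ (resp. of $\s\un{v}$), use $\pi\circ\s=f\circ\pi$ to place the image in $Z(v_1)$ (resp. $Z(v_0)$), and use Proposition \ref{Prop_V}(3) for the inclusion into the stable/unstable leaf. The only (harmless) difference is that you verify $\pi(\un{u})=y$ directly from the definition of $\pi$ as the unique intersection point of the spliced chain's admissible leaves (via Proposition \ref{Prop_Intersection}), whereas the paper checks that $y$ stays in the windows and invokes Proposition \ref{Prop_V}(4), and the paper handles the $u$-inclusion by symmetry rather than writing it out.
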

\begin{proof}
We prove the inclusion for the $s$--manifolds. The case of $u$--manifolds follows by symmetry.

\medskip
\noindent
{\em Step 1.\/} $
f[W^s(x,Z(v_0))]\subset V^s(f(x),Z(v_1))
$.

\medskip
\noindent
By definition, $W^s(x,Z(v_0))\subset V^s(x,Z(v_0))\equiv V^s[(v_i)_{i\geq 0}]$. By Proposition \ref{Prop_V}, $f(V^s[(v_i)_{i\geq 0}])\subseteq V^s[(v_{i+1})_{i\geq 0}]$. Since $f(x)=\pi[(v_{i+1})_{i\in\Z}]$, the last manifold is equal to $V^s(f(x),Z(v_1))$. Thus
$
f[W^s(x,Z(v_0))]\subset V^s(f(x),Z(v_1)).
$

\medskip
\noindent
{\em Step 2.\/} $f[W^s(x,Z(v_0))]\subset Z(v_1)$.

\medskip
\noindent
Suppose $y\in W^s(x,Z(v_0))$.
\begin{itemize}
\item  Since $y\in Z(v_0)$,
$
y\in\Psi_{x_0}[R_{10^{-2}(p^u_0\wedge p^s_0)}(\un{0})]
$ (it is the intersection of a $u$ and an $s$--admissible manifolds in $v_0$)
\item Since  $y\in V^s[(v_i)_{i\geq 0}]$,
$
f^k(y)\in V^s[(v_{i+k})_{i\geq 0}]\subset\Psi_{x_k}[R_{Q_\e(x_k)}(\un{0})]\textrm{ for all }k> 0,
$ where $v_k=\Psi_{x_k}^{p^u_k,p^s_k}$.
\item  Since $y\in Z(v_0)$, $\exists \un{w}\in\Sigma^\#$ s.t. $w_0=v_0$ and $y=\pi(\un{w})\in V^u[(w_i)_{i\leq 0}]$. It follows that
$
f^{-k}(y)\in V^u[(w_{i-k})_{i\leq 0}]\subset \Psi_{y_{-k}}[R_{Q_\e(y_{-k})}(\un{0})]\textrm{ for all }k\geq 0,
$
where $w_i=\Psi_{y_i}^{q^u_i,q^s_i}$.
\end{itemize}
 Writing
$
u_i=\begin{cases}
w_i & i\leq 0\\
v_i & i >0
\end{cases}$ and $u_i=\Psi_{z_i}^{r^u_i,r^s_i},
$
we see that $\un{u}\in\Sigma^\#$, $u_0=v_0$, $y\in \Psi_{z_0}[R_{p^u_0\wedge p^s_0}(\un{0})]$, and   $f^k(y)\in\Psi_{z_k}[R_{Q_\e(z_k)}(\un{0})]$ for all $k\in\Z$. By Proposition \ref{Prop_V} part (4), $y=\pi(\un{u})$. It follows that $f(y)=\pi[\s(\un{u})]\in Z(u_1)\equiv Z(v_1)$.
\end{proof}

\begin{lem}\label{Lemma_Overlapping_Z}
Suppose $Z,Z'\in\mathfs Z$ and $Z\cap Z'\neq\emptyset$.
\begin{enumerate}
\item If $Z=Z(\Psi_{x_0}^{p^u_0,p^s_0})$ and $Z'=Z(\Psi_{y_0}^{q^u_0,q^s_0})$, then $Z\subset \Psi_{y_0}[R_{q^u_0\wedge q^s_0}(\un{0})]$.
\item For any $x\in Z\cap Z'$, $W^u(x,Z)\subset V^u(x,Z')$ and $W^s(x,Z)\subset V^s(x,Z')$.
\end{enumerate}
\end{lem}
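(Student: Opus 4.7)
I will establish part (1) directly from Theorem \ref{Theorem_Pi_Almost_1-1} as a coordinate estimate, and then bootstrap it using the symbolic Markov property (Proposition \ref{Prop_SMP}) and the windowing characterization of admissible manifolds (Proposition \ref{Prop_V}(4)) to prove part (2).

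\emph{Part (1).} Pick any $p\in Z$. Since $Z=Z(\Psi_{x_0}^{p^u_0,p^s_0})$, the point $p$ is the intersection of a $u$--admissible and an $s$--admissible manifold in $v_0$, so Proposition \ref{Prop_Intersection}(2) yields $p=\Psi_{x_0}(\un{u})$ with $\|\un{u}\|_\infty\leq 10^{-2}(p^u_0\wedge p^s_0)$. Fix some $z\in Z\cap Z'$ together with chains $\un{v},\un{w}\in\Sigma^\#$ having $v_0=\Psi_{x_0}^{p^u_0,p^s_0}$, $w_0=\Psi_{y_0}^{q^u_0,q^s_0}$, and $\pi(\un{v})=\pi(\un{w})=z$. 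Theorem \ref{Theorem_Pi_Almost_1-1} applies: writing $p^u_0/q^u_0,p^s_0/q^s_0\in[e^{-\sqrt[3]{\e}},e^{\sqrt[3]{\e}}]$ and $\Psi_{y_0}^{-1}\circ\Psi_{x_0}(\un{u})=(-1)^{\s}\un{u}+\un{c}+\Delta(\un{u})$ with $\|\un{c}\|<10^{-1}(q^u_0\wedge q^s_0)$ and $\|d\Delta\|<\sqrt[3]{\e}$ on $R_\e(\un{0})$. Since $\|\un{u}\|_\infty<\e$ (because $p^u_0\wedge p^s_0\leq Q_\e(x_0)<\e^{3/\b}$), the triangle inequality together with the mean value theorem ($\Delta(\un{0})=\un{0}$) shows $\|\Psi_{y_0}^{-1}(p)\|_\infty<q^u_0\wedge q^s_0$ for all $\e$ small enough, giving $p\in\Psi_{y_0}[R_{q^u_0\wedge q^s_0}(\un{0})]$.

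\emph{Part (2), the $W^u$ inclusion.} Fix $x\in Z\cap Z'$ and chains $\un{v},\un{w}$ representing it, so that $V^u(x,Z)=V^u[(v_i)_{i\leq 0}]$ and $V^u(x,Z')=V^u[(w_i)_{i\leq 0}]$. Given $y\in W^u(x,Z)$, I will verify the two conditions of the characterization in Proposition \ref{Prop_V}(4) for membership in $V^u[(w_i)_{i\leq 0}]$. First, $y\in Z$ combined with part (1) gives $y\in\Psi_{y_0}[R_{q^u_0\wedge q^s_0}(\un{0})]\subset\Psi_{y_0}[R_{q^u_0}(\un{0})]$. Second, iterating the symbolic Markov property (Proposition \ref{Prop_SMP}) $k$ times yields $f^{-k}(y)\in W^u(f^{-k}(x),Z(v_{-k}))\subset Z(v_{-k})$. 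Since $f^{-k}(x)=\pi(\s^{-k}\un{v})=\pi(\s^{-k}\un{w})$ lies in both $Z(v_{-k})$ and $Z(w_{-k})$, part (1) applies to this pair and gives
$$
f^{-k}(y)\in Z(v_{-k})\subset \Psi_{y_{-k}}[R_{q^u_{-k}\wedge q^s_{-k}}(\un{0})]\subset \Psi_{y_{-k}}[R_{10Q_\e(y_{-k})}(\un{0})]
$$
for every $k\geq 0$. Proposition \ref{Prop_V}(4) then places $y$ in $V^u[(w_i)_{i\leq 0}]=V^u(x,Z')$.

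\emph{Part (2), the $W^s$ inclusion and main obstacle.} The statement $W^s(x,Z)\subset V^s(x,Z')$ is obtained by the symmetric argument: iterate the first half of Proposition \ref{Prop_SMP} forward to place $f^k(y)\in Z(v_k)$, then invoke part (1) for the pair $Z(v_k),Z(w_k)$ (which meet at $f^k(x)$) and apply Proposition \ref{Prop_V}(4) for $V^s$. The main subtlety—and what makes part (1) worth proving before part (2)—is that the windowing criterion in Proposition \ref{Prop_V}(4) involves the charts $\Psi_{y_{\pm k}}$, whereas the staying--in--windows property of $V^{u/s}(x,Z)$ naturally lives in the $\Psi_{x_{\pm k}}$ charts; part (1) is precisely the bridge that converts control in the $x$--charts to control in the $y$--charts, uniformly along the orbit, once the symbolic Markov property propagates the $Z$--membership.
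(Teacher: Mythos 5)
Your proposal is correct and follows essentially the same route as the paper: part (1) by combining $Z\subset\Psi_{x_0}[R_{10^{-2}(p^u_0\wedge p^s_0)}(\un{0})]$ (Proposition \ref{Prop_Intersection}) with the coordinate-change estimate of Theorem \ref{Theorem_Pi_Almost_1-1}, and part (2) by iterating the symbolic Markov property, invoking part (1) along the orbit (since $Z(v_{\pm k})$ and $Z(w_{\pm k})$ share the point $f^{\pm k}(x)$), and concluding via the characterization in Proposition \ref{Prop_V}(4). The only difference is that you spell out the $W^u$ inclusion while the paper spells out the $W^s$ one, which is immaterial by symmetry.
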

\begin{proof}
Fix some
$x\in Z\cap Z'$. Write $x=\pi(\un{v})$, $x=\pi(\un{w})$ where $\un{v},\un{w}\in\Sigma^\#$ satisfy $v_0=\Psi_{x_0}^{p^u_0,p^s_0}$ and $w_0=\Psi_{y_0}^{q^u_0,q^s_0}$. Write $p:=p^u_0\wedge p^s_0$ and $q:=q^u_0\wedge q^s_0$. Since $\pi(\un{v})=\pi(\un{w})$, we have by Theorem \ref{Theorem_Pi_Almost_1-1} that   $p/q\in [e^{-\sqrt[3]{\e}},e^{\sqrt[3]{\e}}]$ and
$$
\Psi_{y_0}^{-1}\circ\Psi_{x_0}=(-1)^\s\id+\un{c}+\Delta\textrm{ on }R_\e(\un{0}),
$$
where $\s\in\{0,1\}$,  $\un{c}$ is a constant vector s.t. $\|\un{c}\|<10^{-1}q$, and $\Delta:R_\e(\un{0})\to\R^2$ satisfies   $\Delta(\un{0})=\un{0}$, and $\|(d\Delta)_{\un{u}}\|<\sqrt[3]{\e}$ for all $\un{u}\in R_\e(\un{0})$. By the Mean Value Theorem, $\|\Delta(\un{u})\|\leq \sqrt[3]{\e}\|\un{u}\|$ for all $\un{u}\in R_\e(\un{0})$.

Every point in $Z$ is the intersection of a $u$--admissible and an $s$--admissible manifold in $\Psi_{x_0}^{p^u_0,p^s_0}$, therefore $Z$ is contained in $\Psi_{x_0}[R_{10^{-2}p}(\un{0})]$ (Proposition \ref{Prop_Intersection}). Thus
\begin{align*}
Z&\subseteq \Psi_{y_0}\bigl[(\Psi_{y_0}^{-1}\circ\Psi_{x_0})[R_{10^{-2}p}(\un{0})]\bigr]\subset \Psi_{y_0}\bigl[(\Psi_{y_0}^{-1}\circ\Psi_{x_0})[B_{\sqrt{2}\cdot 10^{-2}p}(\un{0})]\bigr]\\
&\subseteq \Psi_{y_0}\bigl[B_{(1+\sqrt[3]{\e})\sqrt{2}\cdot 10^{-2}p}(\un{c})\bigr]\subseteq \Psi_{y_0}\bigl[B_{(1+\sqrt[3]{\e})\sqrt{2}\cdot 10^{-2} e^{\sqrt[3]{\e}}q+10^{-1}q}(\un{0})\bigr]\\
&\subseteq \Psi_{y_0}\bigl[R_{(1+\sqrt[3]{\e})\sqrt{2}\cdot 10^{-2} e^{\sqrt[3]{\e}}q+10^{-1}q}(\un{0})\bigr]\subset \Psi_{y_0}[R_q(\un{0})]\ \ (\because 0<\e<1).
\end{align*}
This proves the first statement of the lemma.

\medskip
Next we show that $W^s(x,Z)\subset V^s(x,Z')$.
Write $v_i=\Psi_{x_i}^{p^u_i,p^s_i}$ and $w_i=\Psi_{y_i}^{q^u_i,q^s_i}$.
Since $x=\pi(\un{v})$ and $Z=Z(v_0)$,  we have by the symbolic Markov property that
$$
f^k[W^s(x,Z)]\subset W^s(f^k(x),Z(v_k))\ \ \ (k\geq 0).
$$
The sets $Z(v_k)$ and $Z(w_k)$ intersect, because they both contain $f^k(x)$. By the first part of the lemma,
$
Z(v_k)\subset \Psi_{y_k}[R_{q^u_k\wedge q^s_k}(\un{0})].
$ It follows that
$$
f^k[W^s(x,Z)]\subset \Psi_{y_k}[R_{q^u_k\wedge q^s_k}(\un{0})]\subset \Psi_{y_k}[R_{Q_\e(y_k)}(\un{0})]
$$
 for all $k\geq 0$. By Proposition \ref{Prop_V} part 4, $W^s(x,Z)\subset V^s[(w_i)_{i\geq 0}]\equiv V^s(x,Z')$. \end{proof}

\section{A countable Markov partition}
In the previous section we described a locally finite countable cover $\mathfs Z$ of $\NUH_\chi^\#(f)$ by sets equipped with a Smale bracket and satisfying the symbolic Markov property (Proposition \ref{Prop_SMP}). Here we produce a pairwise disjoint cover of $\NUH_\chi^\#(f)$ with similar properties.

Sinai and Bowen showed how to do this in the case of finite covers \cite{Sinai}, \cite{B}. Thanks to the finiteness property of $\mathfs Z$, their ideas  apply  to our case almost without change. The only difference is that in our case, the sets $Z\in\mathfs Z$ are not the closure of their interior, and therefore we cannot use  ``relative boundaries" and ``relative interiors" of $Z\in\mathfs Z$ as done in \cite{Sinai} and \cite{B}.

\subsection{The Bowen--Sinai refinement} Write $\mathfs Z=\{Z_1,Z_2,Z_3,\ldots\}$. Following \cite{B}, we define  for every  $Z_i,Z_j\in\mathfs Z$ s.t. $Z_i\cap Z_j\neq \emptyset$,
\begin{equation*}
\begin{aligned}
T^{us}_{ij}&:=\{x\in Z_i:W^u(x,Z_i)\cap Z_j\neq\emptyset\ , \ W^s(x,Z_i)\cap Z_j\neq \emptyset\},\\
T^{u\emptyset}_{ij}&:=\{x\in Z_i:W^u(x,Z_i)\cap Z_j\neq\emptyset\ , \ W^s(x,Z_i)\cap Z_j=\emptyset\},\\
T^{\emptyset s}_{ij}&:=\{x\in Z_i:W^u(x,Z_i)\cap Z_j=\emptyset\ , \ W^s(x,Z_i)\cap Z_j\neq \emptyset\},\\
T^{\emptyset\emptyset}_{ij}&:=\{x\in Z_i:W^u(x,Z_i)\cap Z_j=\emptyset\ , \ W^s(x,Z_i)\cap T_j=\emptyset\}.
\end{aligned}
\end{equation*}
Let $\mathfs T:=\{T^{\a\b}_{ij}:i,j\in\N, Z_i\cap Z_j\neq \emptyset, \ \a\in\{u,\emptyset\}, \b\in\{s,\emptyset\}\}$.

Notice that $T_{ii}^{us}=Z_i$, therefore $\mathfs T$ covers the same set as $\mathfs Z$, namely $\pi(\Sigma^\#)$.
Another useful identity is $T_{ij}^{us}=Z_i\cap Z_j$. The inclusion $\supseteq$ is trivial.
 To see $\subseteq$ suppose $x\in T_{ij}^{us}$.
Choose some $y\in W^u(x,Z_i)\cap Z_j$, then $y\in Z_i\cap Z_j$, so
$W^u(x,Z_i)=W^u(y,Z_i)\subset V^u(y,Z_j)$ (Lemma \ref{Lemma_Overlapping_Z}). Similarly, for every $z\in W^s(x,Z_i)\cap Z_j$,  $W^s(x,Z_i)\subset V^s(z,Z_j)$. It follows that
$$
\{x\}=W^u(x,Z_i)\cap W^s(x,Z_i)\subseteq V^u(y,Z_j)\cap V^s(z,Z_j)\subset Z_j,
$$
whence $x\in Z_i\cap Z_j$.

\begin{defi}
For every $x\in \pi(\Sigma^\#)$, let $R(x):=\bigcap\{T\in\mathfs T: T\owns x\}$, and set
$\mathfs R:=\{R(x):x\in\pi(\Sigma^\#)\}$.
\end{defi}

\begin{prop}
$\mathfs R$ is a countable pairwise disjoint cover of $\NUH_\chi^\#(f)$.
\end{prop}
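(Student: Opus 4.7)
I will verify the three properties (covering, countability, pairwise disjointness) separately.

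The covering is immediate: $\mathfs Z$ covers $\pi(\Sigma^\#)\supseteq\NUH_\chi^\#(f)$, and since $T^{us}_{ii}=Z_i\in\mathfs T$ for every $i$, every $x\in\pi(\Sigma^\#)$ satisfies $x\in R(x)$. For countability, I plan to show that $\{R\in\mathfs R:R\subseteq Z_i\}$ is finite for each $i$; since $\mathfs R=\bigcup_i\{R\in\mathfs R:R\subseteq Z_i\}$ and $\mathfs Z$ is countable, this suffices. If $R(x)\subseteq Z_i$ then $x\in Z_i$, so I need only analyze $x\in Z_i$. Every $Z_{i'}\in\mathfs Z$ containing such an $x$ meets $Z_i$, and by Theorem \ref{Theorem_LF} there are only finitely many such $Z_{i'}$; for each one, Theorem \ref{Theorem_LF} again limits to finitely many $Z_j$ meeting $Z_{i'}$. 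Thus $\Omega(x):=\{T\in\mathfs T:x\in T\}$ is determined by the choice, for each of a bounded finite collection of pairs $(i',j)$ (with $x\in Z_{i'}$ and $Z_{i'}\cap Z_j\neq\emptyset$), of exactly one of the four disjoint sets $T^{us}_{i'j},T^{u\emptyset}_{i'j},T^{\emptyset s}_{i'j},T^{\emptyset\emptyset}_{i'j}$ partitioning $Z_{i'}$. Hence $R(x)=\bigcap\Omega(x)$ has only finitely many possible values within $Z_i$.

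For disjointness I will show that $R(x)\cap R(y)\neq\emptyset$ forces $\Omega(x)=\Omega(y)$, hence $R(x)=R(y)$. Fix $z\in R(x)\cap R(y)$. The critical step---and the main obstacle---is to establish $\mathfs Z(x)=\mathfs Z(y)$, where $\mathfs Z(w):=\{Z\in\mathfs Z:w\in Z\}$. Given $Z_i\in\mathfs Z(x)$ and any $Z_k\in\mathfs Z(y)$, we have $z\in R(x)\subseteq Z_i$ and $z\in R(y)\subseteq Z_k$, so $z\in Z_k\cap Z_i=T^{us}_{ki}$. The point $y$ lies in exactly one of the four sets $T^{\a\b}_{ki}$, say $T^{\a_0\b_0}_{ki}$, and this set belongs to $\Omega(y)$, forcing $z\in T^{\a_0\b_0}_{ki}$. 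Because the four sets $\{T^{\a\b}_{ki}\}$ partition $Z_k$ and $z\in T^{us}_{ki}$, we conclude $\a_0\b_0=us$, whence $y\in T^{us}_{ki}\subseteq Z_i$. By symmetry, $\mathfs Z(x)=\mathfs Z(y)$.

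Once $\mathfs Z(x)=\mathfs Z(y)$ is known, $\Omega(x)=\Omega(y)$ follows at once. Every $T\in\mathfs T$ is of the form $T^{\a\b}_{ij}\subseteq Z_i$, so $T\in\Omega(x)$ forces $Z_i\in\mathfs Z(x)=\mathfs Z(y)$; for such $i$ and any $j$ with $Z_i\cap Z_j\neq\emptyset$, the points $x$ and $z$ lie in the same element of the partition $\{T^{\a\b}_{ij}\}_{\a\b}$ of $Z_i$ (as $z\in R(x)$), and similarly $y$ and $z$ lie in the same element; hence $x$ and $y$ lie in the same $T^{\a\b}_{ij}$. This yields $\Omega(x)=\Omega(y)$ and therefore $R(x)=R(y)$, completing the proof.
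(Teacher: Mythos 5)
Your proof is correct and follows essentially the paper's own argument: covering via $T^{us}_{ii}=Z_i$, finiteness through Theorem \ref{Theorem_LF}, and disjointness by showing that a common point of $R(x)$ and $R(y)$ forces $x$ and $y$ to lie in exactly the same sets $T^{\a\b}_{ij}$ (using that the four sets $T^{\a\b}_{ij}$ partition $Z_i$ and that $T^{us}_{ij}=Z_i\cap Z_j$), which is precisely what the paper does when it shows $R(x)$ is the equivalence class of $x$ under the relation (\ref{Relation}). The only minor variation is your countability step, which establishes the local-finiteness statement $|\{R\in\mathfs R:R\subseteq Z_i\}|<\infty$ (essentially Lemma \ref{Lemma_R_In_Z}(2), proved separately later in the paper) instead of simply observing that each $R(x)$ is a finite intersection of members of the countable family $\mathfs T$.
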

\begin{proof}
We claim that each $R(x)$ is a finite intersection. By the finiteness property of  $\mathfs Z$  (Theorem \ref{Theorem_LF}), there are at most finitely many $Z_i\in\mathfs Z$ which contain $x$. Again by Theorem \ref{Theorem_LF}, for every $Z_i\in\mathfs Z$ which contains $x$, there are at most finitely many $Z_j\in\mathfs Z$ which intersect $Z_i$. As a result, there are at most finitely many $T\in\mathfs T$ which contain $x$. Thus $R(x)$ is a finite intersection.

Since there are countably many finite subsets of $\mathfs T$, there are countably many elements in  $\mathfs R$.

Since every $x\in T\in\mathfs T$ belongs to $R(x)\in\mathfs R$, $\bigcup\mathfs R=\bigcup\mathfs T$. We saw above that for every $Z_i\in\mathfs Z$, $T_{ii}^{us}=Z_i$. Consequently, $\bigcup\mathfs T=\bigcup\mathfs Z=\pi(\Sigma^\#)$. Since $\pi(\Sigma^\#)\supset \NUH_\chi^\#(f)$ (see the proof of Theorem \ref{Theorem_Markov_Extension}), $\mathfs R$ covers $\NUH_\chi^\#(f)$.

It remains to prove that $\mathfs R$ is pairwise disjoint. We do this by proving that $R(x)$ is the equivalence class of $x$ for the following equivalence relation on $\bigcup\mathfs R$:
\begin{equation}\label{Relation}
x\sim y\textrm{ iff  $\forall Z,Z'\in\mathfs Z$, } \left(
\begin{array}{rcl}
x\in Z&\Leftrightarrow& y\in Z\\
W^u(x,Z)\cap Z'\neq \emptyset&\Leftrightarrow &W^u(y,Z)\cap Z'\neq\emptyset\\
W^s(x,Z)\cap Z'\neq \emptyset&\Leftrightarrow &W^s(y,Z)\cap Z'\neq\emptyset
\end{array}
\right)
\end{equation}
So for every $x,y\in\bigcup\mathfs R$, either $R(x)=R(y)$, or $R(x)\cap R(y)=\emptyset$.

\medskip
\noindent
{\em Part 1.\/} If $x\sim y$, then $x\in R(y)$.

\medskip
If $x\sim y$, then $x$ and $y$  belong to exactly the same elements of $\mathfs T$. So   $R(x)=R(y)$.

\medskip
\noindent
{\em Part 2.\/} If $x\in R(y)$, then $x\sim y$.

\medskip
Fix some $Z_i\in\mathfs Z$. We claim that $x\in Z_i\Leftrightarrow y\in Z_i$.
Recall that $Z_i=T^{us}_{ii}$.

If  $y\in Z_i$, then $T_{ii}^{us}$ is one of the sets in the intersection which defines $R(y)$. Consequently, $x\in R(y)\subseteq T_{ii}^{us}=Z_i$, and $x\in Z_i$.

Next suppose $x\in Z_i$. Pick some $Z_k\in\mathfs Z$ which contains both $x$ and $y$ (any $k$ s.t. $T_{k\ell}^{\a\b}\owns y$ will do, because for such $k$ $Z_k\supset R(y)\owns x,y$). Since $y\in Z_k$ and $Z_k\cap Z_i\neq \emptyset$,  $y\in T^{\a\b}_{ki}$ for some $\a,\b$. By the definition of $R(y)$, $R(y)\subset T^{\a\b}_{ki}$, whence $x\in T^{\a\b}_{ki}$. But $x\in Z_k\cap Z_i\equiv T^{us}_{ki}$, so necessarily $(\a,\b)=(u,s)$. Thus $y\in T^{us}_{ki}=Z_k\cap Z_i\subset Z_i$. This completes the proof that $x\in Z_i\Leftrightarrow y\in Z_i$.

\medskip
Next we show that if $x\in R(y)$, then  $W^u(x,Z_i)\cap Z_j\neq\emptyset\Leftrightarrow W^u(y,Z_i)\cap Z_j\neq\emptyset$. If $W^u(x,Z_i)\cap Z_j\neq\emptyset$, then $x\in T_{ij}^{u\ast}$, where $\ast$ stands for $s$ or $\emptyset$. In particular $x\in Z_i$. By the previous paragraph, $y\in Z_i$, and as a result $y\in T_{ij}^{\a\b}$ for some $\a,\b$. Therefore $x\in R(y)\subset T_{ij}^{\a\b}$, and since $T_{ij}^{u\ast}\cap T_{ij}^{\emptyset\ast}=\emptyset$, $\a=u$. It follows that $y\in T_{ij}^{u\ast}$, whence
$W^u(y,Z_i)\cap Z_j\neq\emptyset$ as required. The other implication is trivial: If  $W^u(y,Z_i)\cap Z_j\neq\emptyset$, then $y\in T_{ij}^{u\ast}$, whence $x\in R(y)\subseteq T_{ij}^{u\ast}$, and so $W^u(x,Z_i)\cap Z_j\neq \emptyset$.

The proof that if $x\in R(y)$, then  $W^s(x,Z_i)\cap Z_j\neq\emptyset\Leftrightarrow W^s(y,Z_i)\cap Z_j\neq\emptyset$ is exactly the same.
\end{proof}

\begin{lem}\label{Lemma_R_In_Z}
$\mathfs R$ is a locally finite refinement of $\mathfs Z$:
\begin{enumerate}
\item for every $R\in\mathfs R$ and $Z\in\mathfs Z$, if $R\cap Z\neq\emptyset$ then $R\subset Z$;
\item for every $Z\in\mathfs Z$, $\bigl|\{R\in\mathfs R:Z\supset R\bigr|<\infty$.
\end{enumerate}
\end{lem}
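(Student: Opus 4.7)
My plan is to deduce both parts of the lemma from the equivalence-class characterization of elements of $\mathfs R$ established in the previous proposition, together with the local finiteness of the cover $\mathfs Z$ from Theorem \ref{Theorem_LF}.

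For part (1), I will pick any $y \in R \cap Z$. From the previous proposition, $R$ is the equivalence class of $y$ under the relation $\sim$ defined by (\ref{Relation}), and in particular $R = R(y) = \bigcap\{T \in \mathfs T : T \owns y\}$. Writing $Z = Z_i$, the key observation is the identity $T_{ii}^{us} = Z_i$ noted in the definition of $\mathfs T$: indeed $W^u(x,Z_i) \cap Z_i \supseteq \{x\}$ and $W^s(x,Z_i) \cap Z_i \supseteq \{x\}$ for any $x \in Z_i$. Hence $y \in Z_i$ implies $y \in T_{ii}^{us}$, so $T_{ii}^{us}$ appears in the intersection defining $R(y)$, giving $R = R(y) \subseteq T_{ii}^{us} = Z_i = Z$.

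For part (2), fix $Z_i \in \mathfs Z$. Every $R \in \mathfs R$ with $R \subseteq Z_i$ is of the form $R(x)$ for some $x \in Z_i \cap \pi(\Sigma^\#)$, and $R(x) = \bigcap\{T \in \mathfs T : T \owns x\}$ depends only on the collection
\[
\mathfs T(x) := \{T \in \mathfs T : T \owns x\}.
\]
I will show that as $x$ ranges over $Z_i$, the sets $\mathfs T(x)$ are subsets of a fixed finite collection $\mathfs T_i \subset \mathfs T$. Any $T_{jk}^{\alpha\beta}$ that contains such an $x$ forces $x \in Z_j$ (by definition of $T_{jk}^{\alpha\beta} \subseteq Z_j$) and $Z_j \cap Z_k \ne \emptyset$ (by the definition of $\mathfs T$). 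Since $x \in Z_j \cap Z_i \ne \emptyset$, Theorem \ref{Theorem_LF} forces $Z_j$ to range over the finite set of elements of $\mathfs Z$ intersecting $Z_i$, and for each such $Z_j$, applying Theorem \ref{Theorem_LF} again bounds the number of $Z_k$ intersecting $Z_j$. Hence the set of quadruples $(j,k,\alpha,\beta)$ with $T_{jk}^{\alpha\beta}$ possibly containing some $x \in Z_i$ is finite. Call the resulting finite subfamily $\mathfs T_i$. Since $R(x)$ is determined by the subset $\mathfs T(x) \subseteq \mathfs T_i$, and a finite set has only finitely many subsets, there are at most $2^{|\mathfs T_i|}$ distinct equivalence classes, proving that $\{R \in \mathfs R : R \subseteq Z_i\}$ is finite.

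I do not anticipate a genuine obstacle here: both statements are essentially bookkeeping given Theorem \ref{Theorem_LF} and the equivalence-relation description of $R(x)$ that was already extracted in the proof of the preceding proposition. The only subtlety to make explicit is the identity $T_{ii}^{us} = Z_i$, which drives part (1), and the bookkeeping that reduces $\mathfs T(x)$ for $x \in Z_i$ to a subset of a fixed finite family, which drives part (2).
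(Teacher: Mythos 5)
Your proof is correct and follows essentially the same route as the paper: part (1) rests on the identity $T_{ii}^{us}=Z_i$ together with the equivalence-class description of $R(y)$, and part (2) bounds $\{R\in\mathfs R: R\subset Z\}$ by $2^{|\mathfs T(Z)|}$ after using Theorem \ref{Theorem_LF} twice to show that the family of sets $T_{jk}^{\a\b}$ meeting $Z$ is finite, exactly as in the paper's argument.
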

\begin{proof}
Suppose $R\cap Z\neq\emptyset$ and let $x\in R\cap Z$.
If $Z=Z_i$, then $Z=T^{us}_{ii}$. Since $x\in Z$,  $T^{us}_{ii}$ appears in the intersection which defines $R(x)$, therefore $R(x)\subset T^{us}_{ii}$. Since $x\in R$, $R$ intersects $R(x)$, and therefore by the previous proposition $R=R(x)$. It follows that $R=R(x)\subset T^{us}_{ii}=Z$, which proves the first part of the proposition.

We turn to the second part. If $R\subset Z$, then $R$ is the intersection of a subset of   $\mathfs T(Z):=\{T^{\a\b}_{ij}\in\mathfs T: T^{\a\b}_{ij}\cap Z\neq\emptyset\}$. If $T^{\a\b}_{ij}\cap Z\neq \emptyset$, then $Z_i\cap Z\neq\emptyset$, $Z_j\cap Z_i\neq \emptyset$, and $\{\a,\b\}\subset\{u,s,\emptyset\}$. By Theorem \ref{Theorem_LF}, there are finitely many possibilities for $Z_i$, and therefore also finite many possibilities for $Z_j$. Thus $\mathfs T(Z)$ is finite.

Since $\mathfs T(Z)$ is finite, and any $R\subset Z$ is the intersection of a subset of $\mathfs T(Z)$, $|\{R\in\mathfs R:R\subset Z\}|\leq 2^{|\mathfs T(Z)|}<\infty$.
\end{proof}

\subsection{Product structure and hyperbolicity}

\begin{defi}
For any $R\in\mathfs R$ and $x\in R$, let
\begin{align*}
W^s(x,R)&:=\bigcap\{W^s(x,Z_i)\cap T_{ij}^{\a\b}: T_{ij}^{\a\b}\in\mathfs T\text{ contains }R \},\\
W^u(x,R)&:=\bigcap\{W^u(x,Z_i)\cap T_{ij}^{\a\b}: T_{ij}^{\a\b}\in\mathfs T\text{ contains }R \}.
\end{align*}
\end{defi}
\begin{prop}
Suppose   $R\in\mathfs R$ and $x,y\in R$.
\begin{enumerate}
\item $W^u(x,R), W^s(x,R)\subset R$ and $W^u(x,R)\cap W^s(x,R)=\{x\}$.
\item Either $W^u(x,R), W^u(y,R)$  are equal, or they are disjoint. Similarly for $W^s(x,R)$ and $W^s(y,R)$.
\item $W^u(x,R)$ and $W^s(y,R)$ intersect at a unique point $z$, and  $z\in R$.
\item If $\xi,\eta\in W^s(x,R)$, then $d(f^n(\xi),f^n(\eta))\xrightarrow[n\to\infty]{}0$. If $\xi,\eta\in W^u(x,R)$, then $d(f^{-n}(\xi),f^{-n}(\eta))\xrightarrow[n\to\infty]{}0$.
\end{enumerate}
\end{prop}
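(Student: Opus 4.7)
The plan is to reduce each claim to a corresponding property of the $W^{u/s}(x,Z)$ ($Z\in\mathfs Z$) by exploiting two structural facts about $R$: first, $R=\bigcap\{T\in\mathfs T: T\supset R\}$ (since $R=R(x)=\bigcap\{T:T\ni x\}$ and every $T\ni x$ automatically contains $R(x)$), and second, for every $T_{ij}^{\a\b}\supset R$ one has $Z_i\supset R$ (because $T_{ij}^{\a\b}\subset Z_i$), so the family of ambient $Z_i$'s appearing in the defining intersections of $W^{u/s}(x,R)$ is precisely $\{Z_i:Z_i\supset R\}$.

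For (1), the defining intersection of $W^s(x,R)$ is taken over terms $W^s(x,Z_i)\cap T_{ij}^{\a\b}$ with $T_{ij}^{\a\b}\supset R$, so $W^s(x,R)\subset\bigcap\{T:T\supset R\}=R$; similarly for $W^u(x,R)$. Then picking any one $Z\supset R$, both sides of $W^u(x,R)\cap W^s(x,R)$ sit inside $W^u(x,Z)\cap W^s(x,Z)=\{x\}$, so the intersection equals $\{x\}$. For (2), if $z\in W^s(x,R)\cap W^s(y,R)$ then $z\in W^s(x,Z)\cap W^s(y,Z)$ for every $Z\supset R$, and Proposition \ref{Prop_W(Z)_disjoint} forces $W^s(x,Z)=W^s(y,Z)$; this makes the intersection defining $W^s(x,R)$ agree term-by-term with that defining $W^s(y,R)$, so $W^s(x,R)=W^s(y,R)$ (and symmetrically for $W^u$).

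The substance of the proposition is (3). I would fix $x,y\in R$, pick any $Z_0\in\mathfs Z$ with $Z_0\supset R$, and set $w:=[x,y]_{Z_0}$, the unique point of $W^u(x,Z_0)\cap W^s(y,Z_0)$. The first step is to show $w$ is independent of the choice of $Z_0$: for any other $Z_1\supset R$, $Z_0\cap Z_1\neq\emptyset$, so Lemma \ref{Lemma_Overlapping_Z} gives $W^u(x,Z_0)\subset V^u(x,Z_1)$ and $W^s(y,Z_0)\subset V^s(y,Z_1)$; the unique-intersection statement for $V^u(x,Z_1)$ and $V^s(y,Z_1)$ (the proposition preceding Lemma \ref{Lemma_SB}) produces a single point in $Z_1$, which must be $w$. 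In particular $w\in Z_i$ for every $Z_i\supset R$, and by Proposition \ref{Prop_W(Z)_disjoint} one has $W^u(w,Z_i)=W^u(x,Z_i)$ and $W^s(w,Z_i)=W^s(y,Z_i)$ in every such $Z_i$. This is enough to check $w\in T$ for every $T=T_{ij}^{\a\b}$ containing $x$: such a $T$ necessarily satisfies $T\supset R\ni x,y$, so the $(\a,\b)$-pattern of $W^u(x,Z_i)\cap Z_j$ and $W^s(y,Z_i)\cap Z_j$ is prescribed, and these intersections coincide respectively with $W^u(w,Z_i)\cap Z_j$ and $W^s(w,Z_i)\cap Z_j$. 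Hence $w\in\bigcap\{T:T\ni x\}=R(x)=R$. Combining $w\in Z_i$ and $w\in V^u(x,Z_i)$ gives $w\in W^u(x,Z_i)$ for every $Z_i\supset R$, and $w\in T$ for every $T\supset R$, which exactly says $w\in W^u(x,R)$; symmetrically $w\in W^s(y,R)$. Uniqueness is automatic since any candidate lies in $W^u(x,Z_0)\cap W^s(y,Z_0)=\{w\}$.

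For (4), $W^s(x,R)\subset W^s(x,Z)\subset V^s(x,Z)$ for any $Z\supset R$, and $V^s(x,Z)$ is an $s$-admissible manifold which stays in windows; Proposition \ref{Prop_Staying_In_Windows}(1) then gives $d(f^n(\xi),f^n(\eta))\leq e^{-n\chi/2}$ for $\xi,\eta\in V^s(x,Z)$. The $W^u$ statement is the mirror assertion for $V^u(x,Z)$ under $f^{-1}$. The main obstacle I expect is step (3): the key trick is to recognize that one need only verify $w\in T$ for every $T\ni x$ (rather than the full equivalence $w\sim x$ via (\ref{Relation})), and that the simultaneous substitutions $W^u(w,Z_i)=W^u(x,Z_i)$ and $W^s(w,Z_i)=W^s(y,Z_i)$ both work precisely because $x$ and $y$ are already $\sim$-equivalent inside $R$ and hence share the same $T$-pattern.
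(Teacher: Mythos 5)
Your proof is correct and follows essentially the same route as the paper: parts (1), (2) and (4) coincide with the paper's argument, and part (3) rests on exactly the same ingredients (the Smale bracket in an ambient $Z\supset R$, Proposition \ref{Prop_W(Z)_disjoint} to identify $W^u(w,Z_i)=W^u(x,Z_i)$ and $W^s(w,Z_i)=W^s(y,Z_i)$, Lemma \ref{Lemma_Overlapping_Z} plus uniqueness of $V^u\cap V^s$ to show the bracket is independent of the ambient chart, and the identity $R=\bigcap\{T\in\mathfs T:T\supset R\}$). The only difference is organizational — you verify $w\in R$ directly from the $T$-pattern of $(x,y)$ and then place $w$ in $W^u(x,R)\cap W^s(y,R)$, whereas the paper shows $z_i\in T_{ij}^{\a\b}$ for each covering $T$, passes to the intersection, and gets $z\in R$ from part (1) — which is an equivalent arrangement of the same argument.
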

\begin{proof} Suppose $R\in\mathfs R$ and $x,y\in R$.

\medskip
\noindent
{\em Part (1).\/}
By definition, $W^{u/s}(x,R)\subset\bigcap\{T_{ij}^{\a\b}\in\mathfs T:T_{ij}^{\a\b}\supset R\}\equiv R$. It follows that  $W^{u/s}(x,R)\subset R$.

If $x\in R$, then for every $T_{ij}^{\a\b}\in\mathfs T$ which contains $R$, $x\in W^{s/u}(x,Z_i)\cap R\subset W^{s/u}(x,Z_i)\cap T_{ij}^{\a\b}$. Passing to the intersection, we see that $x\in W^{s/u}(x,R)$. Thus
$
x\in W^u(x,R)\cap W^s(x,R)
$. On the other hand for every $Z_i\supseteq R$, $W^s(x,R)\cap W^u(x,R)\subset W^u(x,Z_i)\cap W^s(x,Z_i)=\{x\}$, so $W^u(x,R)\cap W^s(x,R)=\{x\}$.

\medskip
\noindent
{\em Part (2).\/}  Suppose $W^u(x,R)\cap W^u(y,R)\neq \emptyset$, then $W^u(x,Z_i)\cap W^u(y,Z_i)\neq\emptyset$ for every $i$ s.t. there is some $T_{ij}^{\a\b}\in\mathfs T$ which contains $R$. By Proposition \ref{Prop_W(Z)_disjoint}, $W^u(x,Z_i)=W^u(y,Z_i)$, whence $W^u(x,Z_i)\cap T_{ij}^{\a\b}=W^u(y,Z_i)\cap T_{ij}^{\a\b}$. Passing to the intersection, we see that $W^u(x,R)=W^u(y,R)$. Similarly, one shows that if $W^s(x,R)\cap W^s(y,R)\neq\emptyset$, then $W^s(x,R)=W^s(y,R)$.

\medskip
\noindent
{\em Part (3).\/}
For every $T_{ij}^{\a\b}\in\mathfs T$ which covers $R$ and for every $z\in R$, let
$$
W^u(z,T_{ij}^{\a\b}):=W^u(z,Z_i)\cap T_{ij}^{\a\b}\textrm{ and }W^s(z,T_{ij}^{\a\b}):=W^s(z,Z_i)\cap T_{ij}^{\a\b}.
$$

Fix  $x,y\in R$. For every $T_{ij}^{\a\b}\in\mathfs T$ which contains $R$,  $W^u(x,Z_i)\cap W^s(y,Z_i)=\{z_i\}$ where $z_i:=[x,y]_{Z_i}$. By Proposition \ref{Prop_W(Z)_disjoint}, $W^{u}(z_i,Z_i)=W^u(x,Z_i)$ and $W^s(z_i,Z_i)=W^s(y,Z_i)$. It follows that  $z_i\in T_{ij}^{\a\b}$, whence
$$
W^u(x,T_{ij}^{\a\b})\cap W^s(y,T_{ij}^{\a\b})=\{z_i\}.
$$
Since $z_i=[x,y]_{Z_i}$, $z_i$ is independent of $j,\a,$ and $\b$. In fact $z_i$ is also independent of $i$: If $T_{k\ell}^{\g\d}\in\mathfs T$ also covers $R$, then $x,y\in Z_i\cap Z_k$ and so
\begin{align*}
\{z_i\}&=W^u(x,Z_i)\cap W^s(y,Z_i)\subset V^u(x,Z_i)\cap V^s(y,Z_i)\\
\{z_k\}&=W^u(x,Z_k)\cap W^s(y,Z_k)\subset V^u(x,Z_i)\cap V^s(y,Z_i)\ \ (\textrm{Lemma \ref{Lemma_Overlapping_Z}}).
\end{align*}
Since $V^u(x,Z_i)\cap V^s(y,Z_i)$ is a singleton, $z_i=z_k$.

Denote the common value of $z_i$ by $z$, then $
W^u(x,T_{ij}^{\a\b})\cap W^s(y,T_{ij}^{\a\b})=\{z\}.
$ for all $T_{ij}^{\a\b}\in\mathfs T$ which covers $R$. Passing to the intersection, we obtain that $W^u(x,R)\cap W^s(y,R)=\{z\}$. By part (1) of the lemma, $z\in R$.

\medskip
\noindent
{\em Part (4).\/} Fix some $Z\in\mathfs Z$ such that $R\subseteq Z$, then $x=\pi(\un{v})$ where $\un{v}$ is a regular chain such that  $Z:=Z(v_0)$.  By construction, $W^s(x,R)\subset V^s[(v_i)_{i\geq 0}]$ and $W^u(x,R)\subset V^u[(v_i)_{i\leq 0}]$. Part (4) follows from Proposition \ref{Prop_Staying_In_Windows}(1).
\end{proof}

Given $x,y\in R$, we let $[x,y]$ denote the unique element of $W^u(x,R)\cap W^s(x,R)$. As the proof of the previous proposition shows, $[x,y]$ is equal to the Smale bracket of $x$ and $y$ in any of the $Z\in\mathfs Z$ which contain $R$.

\subsection{The Markov property}  $\mathfs R$ satisfies Sinai's {\em Markov property} \cite{Sinai}:
\begin{prop}\label{Theorem_Markov_Property}
Let $R_0, R_1\in\mathfs R$. If $x\in R_0$ and $f(x)\in R_1$, then
$$
f[W^s(x,R_0)]\subset W^s(f(x),R_1)\textrm{ and }f^{-1}[W^u(f(x),R_1)]\subset W^u(x,R_0).
$$
\end{prop}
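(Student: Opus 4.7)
The plan: By the construction's built-in $s\leftrightarrow u$, $f\leftrightarrow f^{-1}$ symmetry (double charts, chains, graph $\mathfs G$, $\pi$, $\mathfs Z$, $\mathfs R$, and all preceding results swap roles uniformly), the unstable inclusion follows from the stable one applied to $f^{-1}$, so I focus on proving $f[W^s(x,R_0)]\subset W^s(f(x),R_1)$. Fix $y\in W^s(x,R_0)$. Unfolding the definition, $W^s(f(x),R_1)=\bigcap\{W^s(f(x),Z_k)\cap T^{\gamma\delta}_{k\ell}:T^{\gamma\delta}_{k\ell}\supset R_1\}$, so the task becomes to verify, for every $T^{\gamma\delta}_{k\ell}\supset R_1$, both (i) $f(y)\in W^s(f(x),Z_k)$ and (ii) $f(y)\in T^{\gamma\delta}_{k\ell}$.

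For (i), pick $\un w\in\Sigma^\#$ with $\pi(\un w)=f(x)$ and $Z(w_0)=Z_k$; this is possible because $f(x)\in R_1\subset Z_k$. The shifted chain $\s^{-1}\un w$ represents $x$, and $Z(w_{-1})\supset R_0$ by Lemma \ref{Lemma_R_In_Z}(1); hence the factor $W^s(x,Z(w_{-1}))$ appears in the intersection defining $W^s(x,R_0)$, so $y\in W^s(x,Z(w_{-1}))$, and Proposition \ref{Prop_SMP} applied to $\s^{-1}\un w$ gives $f(y)\in W^s(f(x),Z_k)\subset Z_k$. The $\delta$-half of (ii) is then immediate: since $f(y)\in W^s(f(x),Z_k)$, Proposition \ref{Prop_W(Z)_disjoint} forces $W^s(f(y),Z_k)=W^s(f(x),Z_k)$, and these intersect $Z_\ell$ in the same pattern.

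The main work lies in the $\gamma$-half of (ii), which demands that $W^u(f(x),Z_k)\cap Z_\ell$ and $W^u(f(y),Z_k)\cap Z_\ell$ be simultaneously (non)empty. The key tool is the equivalence-class description of $R_0=R(x)$ from the proof of pairwise disjointness (equation (\ref{Relation})): $y\in R_0$ is equivalent to $x\sim y$, meaning $x$ and $y$ share all $Z$- and $W^{u/s}$-incidences with every pair $(Z_i,Z_j)\in\mathfs Z\times\mathfs Z$. Supposing $p\in W^u(f(x),Z_k)\cap Z_\ell$, the unstable half of Proposition \ref{Prop_SMP} applied to $\s^{-1}\un w$ gives $f^{-1}(p)\in W^u(x,Z(w_{-1}))$, and taking a chain through $p$ with zeroth chart $Z_\ell$ puts $f^{-1}(p)\in Z_j$ for some $Z_j\in\mathfs Z$; hence $W^u(x,Z(w_{-1}))\cap Z_j\neq\emptyset$, and by $x\sim y$ there exists $q\in W^u(y,Z(w_{-1}))\cap Z_j$. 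The main obstacle is then to convert $q$ into a witness for $W^u(f(y),Z_k)\cap Z_\ell$: since $f(y)\in Z_k$, one picks $\un{w'}\in\Sigma^\#$ with $\pi\un{w'}=f(y)$, $Z(w'_0)=Z_k$, whose pre-shift chart $Z(w'_{-1})$ may differ from $Z(w_{-1})$ but shares $y$; Lemma \ref{Lemma_Overlapping_Z}(2) embeds $W^u(y,Z(w_{-1}))\subset V^u(y,Z(w'_{-1}))$, Proposition \ref{Prop_Graph_Transform} identifies $V^u(f(y),Z_k)=\mathcal F_u(V^u(y,Z(w'_{-1})))$, and the Smale bracket $[f(y),p]_{Z_k}\in W^u(f(y),Z_k)\cap W^s(p,Z_k)$ together with Lemma \ref{Lemma_Strong_bracket} applied to $Z_k\cap Z_\ell\ni p$ (and an admissibility check inside the $\Psi_{y_\ell}$-box so that the bracket actually lies in $Z_\ell$, not merely on $V^s(p,Z_\ell)$) produces the required witness. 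The converse implication is obtained by exchanging $x$ and $y$, legitimate because $W^s(x,R_0)=W^s(y,R_0)$, whence $x\in W^s(y,R_0)$. Combined with (i) and the $\delta$-case, this yields $f(y)\in W^s(f(x),R_1)$; the unstable Markov inclusion then follows by the initial $f\leftrightarrow f^{-1}$ symmetry.
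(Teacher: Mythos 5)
Your overall skeleton matches the paper's proof: reduce to showing that $y\in W^s(x,R_0)$ implies $f(x)$ and $f(y)$ lie in exactly the same sets $T^{\a\b}_{k\ell}$, handle the $Z_k$-membership and the $W^s$-incidence via the symbolic Markov property (Proposition \ref{Prop_SMP}) exactly as the paper does, and for the $W^u$-incidence use the bracket witness $[f(y),p]_{Z_k}$, which is the same point as the paper's $f(w)$, $w=[y,f^{-1}(p)]_{Z(w_{-1})}$ (by Lemma \ref{Lemma_SB}). The problem is the final, and most delicate, step: showing that this witness actually lies in $Z_\ell$. Your proposed resolution --- Lemma \ref{Lemma_Strong_bracket} plus ``an admissibility check inside the $\Psi_{y_\ell}$-box'' --- cannot work. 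Lemma \ref{Lemma_Strong_bracket} explicitly does not assert that the intersection point lies in either $Z$; and $Z_\ell$ is not characterized by lying in a chart box or on admissible manifolds: it is the set of $\pi$-images of chains in $\Sigma^\#$ with prescribed zeroth symbol, so a point can sit on $V^s(p,Z_\ell)$, well inside $\Psi_{y_\ell}[R_{q}(\un{0})]$, without belonging to $Z_\ell$. Membership in $Z_\ell$ has to be produced by exhibiting (implicitly, via $W^{s}$-sets) a chain through the point, and no ``admissibility check'' substitutes for that.

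Notice that you manufactured exactly the right tool for this and then never used it: the point $q\in W^u(y,Z(w_{-1}))\cap Z_j$, obtained from the equivalence relation, plays the role of the paper's $y'$. The missing argument runs: since $q$ and $z:=f^{-1}(p)$ both lie in $Z(w_{-1})\cap Z_j$, Lemma \ref{Lemma_Overlapping_Z}(2) gives $W^u(q,Z(w_{-1}))\subset V^u(q,Z_j)$ and $W^s(z,Z(w_{-1}))\subset V^s(z,Z_j)$; hence $w=[y,z]_{Z(w_{-1})}$, which lies in $W^u(q,Z(w_{-1}))\cap W^s(z,Z(w_{-1}))$, lies in $V^u(q,Z_j)\cap V^s(z,Z_j)=\{[q,z]_{Z_j}\}$, a bracket of two points of $Z_j$, so $w\in Z_j$ and in fact $w\in W^s(z,Z_j)$. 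Now the symbolic Markov property along the edge $Z_j\to Z_\ell$ (coming from the chain through $p$ you chose) pushes this forward: $f(w)\in W^s(p,Z_\ell)\subset Z_\ell$, and combined with $f(w)=[f(y),p]_{Z_k}\in W^u(f(y),Z_k)$ this closes the argument. Your detour through a second chain $\un{w}'$, the graph transform, and Lemma \ref{Lemma_Strong_bracket} is unnecessary and does not bridge the gap; as written the proof is incomplete precisely at the point the whole proposition turns on.
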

\begin{proof} The proof is the same as Bowen's \cite[pages 54,55]{B}, except that our ``rectangles" $R\in\mathfs R$ are different. We give all the details to convince the reader that everything works out as it should.

It is enough to show that $f[W^s(x,R_0)]\subset W^s(f(x),R_1)$: the statement for $W^u$ follows by symmetry.

Suppose $y\in W^s(x,R_0)$. We prove that $f(y)\in W^s(f(x), R_1)$ by checking that for every $T_{ij}^{\a\b}\in\mathfs T$ which covers $R_1$, $f(y)\in W^s(f(x),Z_i)\cap T_{ij}^{\a\b}$.

That $f(y)\in W^s(f(x),Z_i)$ can be shown as follows.  Since $T_{ij}^{\a\b}$  covers $R_1$, $T_{ij}^{\a\b}$ contains $f(x)$. Thus $f(x)\in  T_{ij}^{\a\b}\subset Z_i$. Write $Z_i=Z(v)$ and $f(x)=\pi(\s\un{v})$ where $\un{v}\in \Sigma^\#$ satisfies $v_1=v$. Since $f\circ\pi=\pi\circ\s$, $x=\pi(\un{v})\in Z(v_{0})$. It follows that $Z(v_{0})\supseteq R(x)=R_0$, whence
$
y\in W^s(x,R_0)\subset W^s(x,Z(v_{0})).
$
By the  symbolic Markov property (Proposition \ref{Prop_SMP}),
$$
f[W^s(x,Z(v_{0}))]\subset W^s[f(x),Z(v_1)],
$$
so $
f(y)\in f[W^s(x,R_0)]\subset f[W^s(x,Z(v_{0}))]\subset W^s(f(x),Z(v_1))\equiv W^s(f(x),Z_i)$.

It remains to prove that if $y\in W^s(x,R_0)$, then $f(x)\in T^{\a\b}_{ij}\Leftrightarrow f(y)\in T^{\a\b}_{ij}$. Since
$
y\in W^s(x,R_0)\Leftrightarrow W^s(x,R_0)=W^s(y,R_0)$,
this is equivalent to showing that  if $W^s(x,R_0)=W^s(y,R_0)$, then for every $Z_i,Z_j\in\mathfs Z$ s.t. $Z_i\cap Z_j\neq \emptyset$,
\begin{itemize}
\item $f(x)\in Z_i\Leftrightarrow f(y)\in Z_i$;
\item $W^s(f(x),Z_i)\cap Z_j\neq \emptyset\Leftrightarrow W^s(f(y),Z_i)\cap Z_j\neq \emptyset$;
\item $W^u(f(x),Z_i)\cap Z_j\neq \emptyset\Leftrightarrow W^u(f(y),Z_i)\cap Z_j\neq \emptyset$.
\end{itemize}
We only prove $\Rightarrow$. The other implication follows by symmetry.

\medskip
\noindent
{\em Step 1.\/} $f(x)\in Z_i\Rightarrow f(y)\in Z_i$.

\medskip
If $f(x)\in Z_i$, then $f(x)\in T^{us}_{ii}\equiv Z_i$. Thus $T^{us}_{ii}\supseteq R(f(x))=R_1$.  We  saw above that if $T_{ij}^{\a\b}$ covers $R_1$, then $f(y)\in W^s(f(x),Z_i)$. Applying this to $T^{us}_{ii}$, we see that  $f(y)\in W^s(f(x),Z_i)\subset Z_i$.

\medskip
\noindent
{\em Step 2.\/} $W^s(f(x),Z_i)\cap Z_j\neq\emptyset\Rightarrow W^s(f(y),Z_i)\cap Z_j\neq\emptyset$.

\medskip
Write $Z_i=Z(v)$.
Since $f(x)\in Z_i$, $f(x)=\pi[\s\un{v}]$ where $\un{v}\in\Sigma^\#$ and $v_1=v$. Since $f\circ\pi=\pi\circ\s$, $x=\pi(\un{v})$. By the symbolic Markov property, $f[W^s(x,Z(v_0))]\subset W^s(f(x), Z(v_1))=W^s(f(x),Z_i)$. Since $x=\pi(\un{v})$, $x\in Z(v_0)$, whence $R_0=R(x)\subset Z(v_0)$. Consequently,
\begin{align*}
f(y)&\in f[W^s(y,R_0)]=f[W^s(x,R_0)]\ \ \ \textrm{(by assumption)}\\
&\subset f[W^s(x,Z(v_0))]\subset W^s(f(x),Z(v_1))\equiv W^s(f(x),Z_i).
\end{align*}
Since $f(y)\in W^s(f(x),Z_i)$, $W^s(f(y), Z_i)=W^s(f(x),Z_i)$. It is now clear that $W^s(f(x),Z_i)\cap Z_j\neq\emptyset\Rightarrow W^s(f(y),Z_i)\cap Z_j\neq\emptyset$.

\medskip
\noindent
{\em Step 3.\/} $W^u(f(x),Z_i)\cap Z_j\neq\emptyset\Rightarrow W^u(f(y),Z_i)\cap Z_j\neq\emptyset$.

\medskip
In order to reduce the number of indices, we write $Z_i=Z$, $Z_j=Z^\ast$, and prove that
$
W^u(f(x),Z)\cap Z^\ast\neq\emptyset\Rightarrow W^u(f(y),Z)\cap Z^\ast\neq\emptyset.
$
We do this by picking some
 $f(z)\in W^u(f(x),Z)\cap Z^\ast$, and  showing that $W^u(f(y),Z)\cap Z^\ast\owns f(w)$ where $w:=[y,z]_Y$ for some suitable $Y\in\mathfs Z$ that we proceed to construct.

 Since $f(x)\in Z_i=Z$, there exists $\un{v}\in\Sigma^\#$ such that $\pi(\s\un{v})=f(x)$ and $Z=Z(v_1)$. Let
 $
 Y:=Z(v_0),
 $
then $x=\pi(\un{v})\in Y$. By assumption, $R(x)=R_0=R(y)$, therefore, $x\sim y$  in the sense of (\ref{Relation}). Since $x\in Y$ and $y\sim x$, $y\in Y$.

 Since $f(z)\in W^u(f(x),Z)\cap Z^\ast$, $f(z)\in Z^\ast$. This means that there exists $\un{v}^\ast\in\Sigma^\#$ such that $\pi(\s\un{v}^\ast)=f(z)$ and $Z^\ast=Z(v^\ast_1)$. Let
 $
 Y^\ast:=Z(v^\ast_0),
 $
 then $z=\pi(\un{v}^\ast)\in Y^\ast$.
 By the symbolic Markov property,
 $$
 z\in f^{-1}[W^u(f(x),Z)]\equiv f^{-1}[W^u(f(x),Z(v_1))]\subset W^u(x,Z(v_0))\equiv W^u(x,Y).
 $$
 Thus $z\in W^u(x,Y)\cap Y^\ast$. In particular, $z\in Y\cap Y^\ast$.

Since $y,z\in Y$, the Smale bracket  $w:=[y,z]_Y$ is well defined.
We show that $f(w)\in W^u(f(y),Z)\cap Z^\ast$.

 By construction, $w=[y,z]_{Y}$. Since $f(y)\in Z$ (by Step 1), $f(z)\in Z$ (by choice), and
$Y=Z(v_0), Z=Z(v_1)$ and $v_0\to v_1$ (by construction), we have by Lemma \ref{Lemma_SB} that
$
f(w)=f([y,z]_Y)=[f(y),f(z)]_Z\in W^u(f(y),Z)$.

Next recall that  $W^u(x,Y)\cap Y^\ast$ is non--empty (it contains $z$). Since $x\sim y$,
$W^u(y,Y)\cap Y^\ast$ is non-empty. Pick some $y'\in W^u(y,Y)\cap Y^\ast$. Since $y', z\in Y\cap Y^\ast$, we have by Lemma \ref{Lemma_Overlapping_Z} that
$$
\{w\}=W^u(y',Y)\cap W^s(z,Y)\subset V^u(y',Y^\ast)\cap V^s(z,Y^\ast)\equiv\{[y',z]_{Y^\ast}\}.
$$
Thus $w=[y',z]_{Y^\ast}\in W^s(z,Y^\ast)$. Now $Y^\ast=Z(v_0^\ast)$, $Z^\ast=Z(v_1^\ast)$ and $z=\pi(\un{v}^\ast)$, therefore by the symbolic Markov property,
$$
f(w)\in f[W^s(z,Y^\ast)]\subset W^s(f(z),Z^\ast)\subset Z^\ast.
$$
It follows that $f(w)\in Z^\ast$. This completes the proof of Step 3. The theorem follows from the discussion before Step 1.
\end{proof}

\section{Symbolic dynamics}
\subsection{A directed graph} In the previous section we constructed a Markov partition $\mathfs R$ for $f$. Here we use this partition to relate  $f$ to a topological Markov shift. The shift is $\Sigma(\wh{\mathfs G})$ where
 $\wh{\mathfs G}$ is the directed graph with vertices  $\wh{\mathfs V}:=\mathfs R$ and edges
\begin{align*}
\wh{\mathfs E}&:=\{(R_1,R_2)\in\mathfs R^2:R_1, R_2\in\wh{\mathfs V}\textrm{ s.t. }R_1\cap f^{-1}(R_2)\neq \emptyset\}.
\end{align*}
If $(R_1,R_2)\in\wh{\mathfs E}$, then we write $R_1\to R_2$.

For every finite path  $R_m\to R_{m+1}\to\cdots\to R_n$ in $\wh{\mathfs G}$, let
$
_\ell[R_m,\ldots,R_n]:=\bigcap\limits_{k=\ell}^{\ell+n-m} f^{-k}(R_{k+m-\ell}).
$
In particular, $$
_{m}[R_{m},\ldots,R_n]=\bigcap\limits_{k=m}^n f^{-k}(R_k).
$$

\begin{lem}\label{Lemma_Non_Empty_Cylinders}
Suppose $m\leq n$ and $R_m\to R_{m+1}\to\cdots\to R_n$ is a finite path on $\wh{\mathfs G}$, then $_m[R_m,\ldots,R_n]\neq \emptyset$.
\end{lem}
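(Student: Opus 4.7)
The plan is to prove this by induction on the length $n-m$ of the path, using the Markov property of Proposition~\ref{Theorem_Markov_Property} together with the product structure of the rectangles $R\in\mathfs R$. By shifting indices (replacing $R_k$ with $R_{k+m}$ and using $f\circ\pi=\pi\circ\s$), it suffices to treat the case $m=0$; write $N=n-m$ and the path as $R_0\to R_1\to\cdots\to R_N$. The base case $N=0$ is trivial since $_{0}[R_0]=R_0\neq\emptyset$ (rectangles are nonempty by construction).

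For the inductive step, assume $x\in {}_0[R_0,\ldots,R_{N-1}]$ exists, so $f^k(x)\in R_k$ for $k=0,\ldots,N-1$. Since $R_{N-1}\to R_N$, the definition of $\wh{\mathfs E}$ supplies a point $y\in R_{N-1}$ with $f(y)\in R_N$. Both $f^{N-1}(x)$ and $y$ lie in the same rectangle $R_{N-1}$, so I can form the Smale bracket
\[
z:=[\,f^{N-1}(x),\,y\,]\in W^{u}(f^{N-1}(x),R_{N-1})\cap W^{s}(y,R_{N-1}).
\]
This point witnesses the two Markov-type pieces of information I need simultaneously: $z$ is on the unstable leaf through $f^{N-1}(x)$ and on the stable leaf through $y$.

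Now I push $z$ forward once and pull it back $N-1$ times. Forward: since $z\in W^{s}(y,R_{N-1})$ and $y\in R_{N-1}$, $f(y)\in R_N$, the Markov property gives $f(z)\in f[W^{s}(y,R_{N-1})]\subset W^{s}(f(y),R_N)\subset R_N$. Backwards: $z\in W^{u}(f^{N-1}(x),R_{N-1})$ together with $f^{N-2}(x)\in R_{N-2}$, $f(f^{N-2}(x))=f^{N-1}(x)\in R_{N-1}$, and the second half of Proposition~\ref{Theorem_Markov_Property} yield $f^{-1}(z)\in W^{u}(f^{N-2}(x),R_{N-2})\subset R_{N-2}$. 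Iterating the pullback $k$ times,
\[
f^{-k}(z)\in W^{u}(f^{N-1-k}(x),R_{N-1-k})\subset R_{N-1-k},\qquad k=0,1,\ldots,N-1.
\]
Set $w:=f^{-(N-1)}(z)$. Then $f^{k}(w)\in R_k$ for $k=0,\ldots,N-1$ and $f^{N}(w)=f(z)\in R_N$, so $w\in {}_0[R_0,\ldots,R_N]$.

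There is no real obstacle here beyond bookkeeping, because all the genuine hyperbolic/graph-transform work has already been absorbed into the construction of $\mathfs R$ and into the Markov property. The only point that deserves care is making sure the Smale bracket $[f^{N-1}(x),y]$ actually lies in $R_{N-1}$ (so that it has well-defined stable and unstable fibres there), and then invoking the correct halves of Proposition~\ref{Theorem_Markov_Property} to move forward on the stable side and backward on the unstable side; both are immediate from the product-structure properties established for elements of $\mathfs R$.
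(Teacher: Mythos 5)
Your argument is correct and is essentially the paper's own proof: the same induction on the length of the path, the same bracket point $W^u(f^{N-1}(x),R_{N-1})\cap W^s(y,R_{N-1})$ (the paper writes it as an intersection rather than invoking the bracket notation, but by the product-structure proposition for $\mathfs R$ this is exactly $[f^{N-1}(x),y]\in R_{N-1}$), and the same use of the two halves of the Markov property to push forward into $R_N$ and pull back through $R_{N-2},\ldots,R_0$. No gaps.
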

\begin{proof}
We use induction on $n$.

\medskip
If $n=m$,   then the statement is obvious.

\medskip
Suppose by induction the statement is true for $n-1$, and let $R_m\to\cdots \to R_{n-1}$ be a path on $\wh{\mathfs G}$. By the induction hypothesis, $_m[R_m,\ldots,R_{n-1}]\neq\emptyset$, therefore there exists a point
$
y\in \bigcap_{k=m}^{n-1} f^{-k}(R_k).
$
Since $R_{n-1}\to R_n$, there exists a point
$
z\in R_{n-1}\cap f^{-1}(R_n).
$
Let $x$ be the point such that $$
\{f^{n-1}(x)\}=W^u(f^{n-1}(y),R_{n-1})\cap W^s(z,R_{n-1}).
$$

We claim that $x\in {_m[R_m,\ldots,R_{n}]}$. This follows from the  Markov property (Theorem  \ref{Theorem_Markov_Property}):
\begin{itemize}
\item $f^n(x)\in R_n$, because $f^n(x)\in f[W^s(z,R_{n-1})]\subset W^s(f(z),R_n)\subset R_n$;
\item $f^{n-1}(x)\in R_{n-1}$ by construction;
\item $f^{n-2}(x)\in R_{n-2}$, because $f^{n-1}(x)\in W^u(f^{n-1}(y),R_{n-1})\subset R_{n-1}$ so
$$
f^{n-2}(x)\in f^{-1}[W^u(f^{n-1}(y),R_{n-1})]\subset W^u(f^{n-2}(y),R_{n-2})\subset R_{n-2}.
$$
\item $f^{n-3}(x)\in R_{n-3}$, because $f^{n-2}(x)\in W^u(f^{n-2}(y),R_{n-2})$ so
$$
f^{n-3}(x)\in f^{-1}[W^u(f^{n-2}(y),R_{n-2})]\subset W^u(f^{n-3}(y),R_{n-3})\subset R_{n-3}.
$$
\end{itemize}
Continuing this way, we see that $f^{n-k}(x)\in R_{n-k}$ for all $0\leq k\leq n-m$.
\end{proof}

We compare the paths on $\wh{\mathfs G}$ to the paths on $\mathfs G$ (the graph we introduced in \S\ref{SectionGraph}). Recall the map $\pi:\Sigma\to M$ from Theorem \ref{Theorem_Markov_Extension}, and define for any finite path $v_m\to\cdots \to v_n$ on $\mathfs G$,
$$
Z_m(v_m,\ldots,v_n):=\{\pi(\un{w}):\un{w}\in\Sigma^\#, w_i=v_i\textrm{ for all }i=m,\ldots,n\}.
$$

\begin{lem}\label{Lemma_G_G_hat}
For every infinite path $\cdots\to R_i\to R_{i+1}\to\cdots $ in $\wh{\mathfs G}$ there exists a chain $(v_i)_{i\in\Z}\in \Sigma$ such that for every $i$, $R_i\subset Z(v_i)$, and for every $n$,
$
_{-n}[R_{-n},\ldots,R_n]\subset Z_{-n}(v_{-n},\ldots,v_n).
$
\end{lem}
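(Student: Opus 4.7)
The plan is to construct the chain $\un{v}$ by a compactness/diagonal argument applied to finite codings, and then derive the cylinder inclusion by a chain-splicing trick.

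First I would use Lemma \ref{Lemma_Non_Empty_Cylinders} to pick, for each $N\geq 1$, a point $x_N\in{_{-N}[R_{-N},\ldots,R_N]}$. Since $\mathfs R$ partitions $\pi(\Sigma^\#)$, we may choose $\un{u}^{(N)}\in\Sigma^\#$ with $\pi(\un{u}^{(N)})=x_N$. Then for $|k|\leq N$, the point $f^k(x_N)=\pi(\sigma^k\un{u}^{(N)})$ lies in $R_k\cap Z(u_k^{(N)})$, so $R_k\cap Z(u_k^{(N)})\neq\emptyset$, and hence $R_k\subset Z(u_k^{(N)})$ by Lemma \ref{Lemma_R_In_Z}. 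The key finiteness observation is that the set $\{v\in\mathfs V:R_k\subset Z(v)\}$ is finite for each fixed $k$: any two such $v$ give $Z(v)\cap Z(v_\ast)\supseteq R_k\neq\emptyset$, and the local finiteness of $\mathfs Z$ (Theorem \ref{Theorem_LF}) forces the collection to be finite. Therefore the values $u_k^{(N)}$ ($N\geq |k|$) lie in a finite set, and a Cantor diagonal extraction along a subsequence $N_j\uparrow\infty$ produces a limit $v_k\in\mathfs V$ with $u_k^{(N_j)}=v_k$ eventually for every $k$.

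This limit sequence $\un{v}:=(v_i)_{i\in\mathbb Z}$ automatically satisfies $R_i\subset Z(v_i)$ for all $i$, and it is a chain in $\Sigma$: for each $k$, large $j$ gives both $u_k^{(N_j)}=v_k$ and $u_{k+1}^{(N_j)}=v_{k+1}$, so the edge $v_k\to v_{k+1}$ is inherited from the chain $\un{u}^{(N_j)}$.

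The cylinder inclusion ${_{-n}[R_{-n},\ldots,R_n]}\subset Z_{-n}(v_{-n},\ldots,v_n)$ is the delicate point and requires constructing, for each given $y$ in the left-hand cylinder, a regular chain $\un{w}\in\Sigma^\#$ with $\pi(\un{w})=y$ and $w_k=v_k$ for $|k|\leq n$. Since $f^{\pm n}(y)\in R_{\pm n}\subset Z(v_{\pm n})$, I would choose regular chains $\un{u}^{\pm}\in\Sigma^\#$ with $u_0^\pm=v_{\pm n}$ and $\pi(\un{u}^\pm)=f^{\pm n}(y)$, and splice:
\[
w_k:=\begin{cases} u_{k+n}^- & k\leq -n,\\ v_k & -n\leq k\leq n,\\ u_{k-n}^+ & k\geq n.\end{cases}
\]
The edges at the seams $k=\pm n$ are covered precisely because $u_0^\pm=v_{\pm n}$, so $\un{w}\in\Sigma$, and regularity at $\pm\infty$ is inherited from $\un{u}^{\pm}\in\Sigma^\#$. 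The main obstacle is to verify that this spliced chain actually codes $y$. For this I would invoke the characterization in Proposition \ref{Prop_V}(4): it suffices to check $f^k(y)\in\Psi_{x_k}[R_{Q_\e(x_k)}(\un{0})]$ when $w_k=\Psi_{x_k}^{p_k^u,p_k^s}$. For $|k|\leq n$ this follows from $f^k(y)\in R_k\subset Z(v_k)$ together with the containment $Z(v_k)\subset \Psi_{x_k}[R_{Q_\e(x_k)}(\un{0})]$ (itself a consequence of Proposition \ref{Prop_V}(4) applied to any coding used to define $Z(v_k)$); for $k\geq n$ (resp. $k\leq -n$) we use $f^k(y)=\pi(\sigma^{k-n}\un{u}^+)$ (resp. $\pi(\sigma^{k+n}\un{u}^-)$) so $f^k(y)\in Z(w_k)$, which again sits inside the required Pesin rectangle. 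This gives $y=\pi(\un{w})$ with $w_k=v_k$ for $|k|\leq n$, completing the inclusion.
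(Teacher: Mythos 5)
Your proof is correct and follows essentially the same route as the paper: code points of the cylinders ${_{-N}[R_{-N},\ldots,R_N]}$ by chains, extract a limit chain by a diagonal argument, and prove the cylinder inclusion by splicing chains through the block $v_{-n},\ldots,v_n$ and invoking Proposition \ref{Prop_V}(4). The only (harmless) deviation is in the diagonal step: the paper anchors all approximating chains at a fixed $v_0$ and uses the finite degree of vertices of $\mathfs G$, whereas you get the needed finiteness of $\{u_k^{(N)}\}$ directly from $R_k\subset Z(u_k^{(N)})$ and the local finiteness of $\mathfs Z$ (Theorem \ref{Theorem_LF}), exactly as the paper itself does when bounding the degree of $\wh{\mathfs G}$.
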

\begin{proof}
Fix, using Lemma \ref{Lemma_Non_Empty_Cylinders}, points $y_n\in {_{-n}[R_{-n},\ldots,R_n]}$.

Pick some $v_0\in\mathfs V$ s.t. $R_0\subset Z(v_0)$. Since $y_n\in R_0$, there is a chain $\un{v}^{(n)}=(v_i^{(n)})_{i\in\Z}\in\Sigma^\#$ such that $v_i^{(0)}=v_0$ and $y_n=\pi[\un{v}^{(n)}]$.

For every $|k|\leq n$,  $f^k(y_n)=\pi[\s^k(\un{v}^{(n)})]\in Z(v^{(n)}_k)$, therefore
$Z(v_k^{(n)})$ covers $R(f^k(y_n))$. Since, by construction, $f^k(y_n)\in R_k$, $R(f^k(y_n))=R_k$. It follows that
$$
R_k\subset Z(v_k^{(n)})\textrm{ for every }k=-n,\ldots,n.
$$

Every vertex in the graph $\mathfs G$ has finite degree (Lemma \ref{Lemma_Subordinated_Tempered}). Therefore, there are only finitely many paths of length $k$  on $\mathfs G$ which start at $v_0$. As a result, every set of the form
$\{v^{(n)}_k:n\in\N\}$ is finite.
Using the diagonal argument, choose a subsequence $n_i\uparrow\infty$ s.t. for every $k$ the sequence $\{v^{(n_i)}_{k}\}_{i\geq 1}$ is eventually constant. Call the constant  $v_k$.

The sequence $\un{v}:=(v_k)_{k\in\Z}$ is a chain, and $R_k\subset Z(v_k)$ for all $k\in\Z$. We claim that ${_n[R_{-n},\ldots,R_n]}\subset Z_{-n}(v_{-n},\ldots,v_n)$ for all $n$.

Suppose $y\in {_{-n}[R_{-n},\ldots,R_n]}$. Since $f^n(y)\in R_n$ and $R_n\subset Z(v_n)$, there exists a chain $\un{w}\in \Sigma^\#$ s.t.
$
f^n(y)=\pi[\s^n(\un{w})]\textrm{ and }w_n=v_n.
$
Since $f^{-n}(y)\in R_{-n}$ and $R_{-n}\subset Z(v_{-n})$, there exists a chain $\un{u}\in \Sigma^\#$ s.t.
$
f^{-n}(y)=\pi[\s^{-n}(\un{u})]\textrm{ and }u_{-n}=v_{-n}.
$
Let
$$
\un{a}=(a_i)_{i\in\Z}\textrm{ where }a_i=\begin{cases}
u_i & i\leq -n\\
v_i & -n\leq i\leq n\\
w_i & i\geq n.
\end{cases}
$$
For every $k$, $f^k(y)\in Z(a_k)$, because
\begin{itemize}
\item for all $k\leq -n$, $f^k(y)\in V^u[(u_i)_{i\leq k}]\subset Z(u_i)=Z(a_i)$,
\item for all $-n\leq k\leq n$, $f^k(y)\in R_k\subset Z(v_k)=Z(a_k)$,
\item for all $k\geq n$ $f^k(y)\in V^s[(w_i)_{i\geq k}]\subset Z(w_i)=Z(a_i)$.
\end{itemize}
Writing $a_i=\Psi_{x_i}^{p^u_i,p^s_i}$, we see that $y\in \Psi_{x_i}[R_{Q_\e(x_i)}(\un{0})]$ for all $i\in\Z$. By Proposition \ref{Prop_V} part 4, $y\in V^u[(a_i)_{i\leq 0}]\cap V^s[(a_i)_{i\geq 0}]$, so   $y=\pi(\un{a})\in Z_{-n}(v_{-n},\ldots,v_n)$.
\end{proof}

\begin{prop}
Every vertex of $\wh{\mathfs G}$ has finite degree.
\end{prop}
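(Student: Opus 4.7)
Fix $R\in\wh{\mathfs V}=\mathfs R$. I will show that both the out-neighborhood $\{R'\in\mathfs R:R\to R'\}$ and the in-neighborhood $\{R'\in\mathfs R:R'\to R\}$ are finite by routing the argument through the already-established finiteness properties of the cover $\mathfs Z$ and of the edge relation on $\mathfs G$. By Lemma \ref{Lemma_R_In_Z}(1), pick any $Z\in\mathfs Z$ with $R\subseteq Z$.

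Suppose $R\to R'$, so there exists $x\in R$ with $f(x)\in R'$. Since $\mathfs R$ refines $\mathfs T$ and each $T\in\mathfs T$ is contained in some element of $\mathfs Z$, we have $R'\subseteq\pi(\Sigma^\#)$; hence $f(x)=\pi(\un{u})$ for some $\un{u}\in\Sigma^\#$. Then $x=\pi(\s^{-1}\un{u})\in Z(u_{-1})$ and $f(x)\in Z(u_0)$, with the edge $u_{-1}\to u_0$ in $\mathfs G$. Since $x\in Z(u_{-1})\cap Z$, Theorem \ref{Theorem_LF} leaves only finitely many choices for $u_{-1}$. For each such $u_{-1}$, Lemma \ref{Lemma_Subordinated_Tempered} leaves only finitely many $u_0$ with $u_{-1}\to u_0$. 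Finally, $f(x)\in Z(u_0)$ forces $R(f(x))=R'\subseteq Z(u_0)$, and Lemma \ref{Lemma_R_In_Z}(2) says there are only finitely many elements of $\mathfs R$ inside any fixed element of $\mathfs Z$. Taking the union over the finitely many pairs $(u_{-1},u_0)$ bounds the number of possible $R'$.

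The argument for incoming edges is symmetric. If $R'\to R$, pick $x\in R'$ with $f(x)\in R$; write $x=\pi(\un{u})$ with $\un{u}\in\Sigma^\#$. Then $f(x)\in Z(u_1)$ and $x\in Z(u_0)$ with $u_0\to u_1$. Since $f(x)\in Z(u_1)\cap Z$, Theorem \ref{Theorem_LF} restricts $u_1$ to a finite set; Lemma \ref{Lemma_Subordinated_Tempered} then restricts $u_0$ to a finite set; and Lemma \ref{Lemma_R_In_Z}(2) restricts $R'\subseteq Z(u_0)$ to a finite set. Hence the in-degree of $R$ is finite as well.

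No serious obstacle is anticipated: the three finiteness inputs (local finiteness of $\mathfs Z$, finite degree of $\mathfs G$, and the refinement bound of Lemma \ref{Lemma_R_In_Z}) compose cleanly. The only subtlety to keep in mind is that an arbitrary $x\in R$ need not lie in $\NUH_\chi^\#(f)$, but it does lie in $\pi(\Sigma^\#)$, which is exactly what is needed to produce the auxiliary chain $\un{u}$ and invoke the structure of $\mathfs G$.
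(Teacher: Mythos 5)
Your proof is correct and follows essentially the same route as the paper: both reduce the count to the graph $\mathfs G$ via a covering chain and then compose the same three finiteness inputs — local finiteness of $\mathfs Z$ (Theorem \ref{Theorem_LF}), finite degree of $\mathfs G$ (Lemma \ref{Lemma_Subordinated_Tempered}), and the refinement bound of Lemma \ref{Lemma_R_In_Z}. The only cosmetic difference is that you produce the edge $u_{-1}\to u_0$ by coding a single point of $R\cap f^{-1}(R')$ through $\pi$, whereas the paper takes the chain supplied by Lemma \ref{Lemma_G_G_hat}; the counting argument is the same.
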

\begin{proof}
Fix $R_0\in\mathfs R$. We bound the number of paths $R_{-1}\to R_0\to R_1$.

Consider all the possible  paths $v_{-1}\to v_0\to v_1$ on $\mathfs G$ s.t. $_{-1}[R_{-1},R_0,R_1]\subset Z_{-1}(v_{-1},v_0,v_1)$. There are finitely many possibilities for $v_0$, because any two possible choices $v_0, v_0'$ satisfy
$
Z(v_0)\cap Z(v_0')\supset R_0\neq \emptyset$,
and $\mathfs Z$ has the finiteness property  (Theorem \ref{Theorem_LF}). Since every vertex of $\mathfs G$ has finite degree, there are also only finitely many possibilities for $v_{-1}$ and $v_1$.
By Lemma \ref{Lemma_R_In_Z}(1), $R_i\subset Z(v_i)$ $(|i|\leq 1$). By Lemma \ref{Lemma_R_In_Z}(2) the number of possible $R_{-1}$, $R_0$ or $R_1$ is finite.
\end{proof}

\subsection{The Markov extension} Let
$$
\wh{\Sigma}:=\Sigma(\wh{\mathfs G})=\{(R_i)_{i\in\Z}\in\mathfs R^\Z:R_i\to R_{i+1}\textrm{ for all }i\in\Z\}.
$$
Abusing notation, we denote the left shift map on $\wh{\Sigma}$ by $\s$, and the natural metric on $\wh{\Sigma}$ by $d(\cdot,\cdot)$: $d(\un{x},\un{y})=\exp[-\min\{|k|:x_k\neq y_k\}]$. Since every vertex of $\wh{\mathfs G}$ has finite degree, $\wh{\Sigma}$ is locally compact.
Define as before
$$
\wh{\Sigma}^\#:=\{(R_i)_{i\in\Z}:\exists R,S\in\mathfs R,\exists n_k,m_k\uparrow\infty \textrm{ s.t. }
R_{n_k}=R\textrm{ and }R_{-m_k}=S\}.
$$
Clearly $\wh{\Sigma}^\#$ contains every periodic point for $\s$. By Poincar\'e's Recurrence Theorem, every $\s$--invariant probability measure on $\wh{\Sigma}$ is supported on $\wh{\Sigma}^\#$.

Our aim is to construct a finite-to-one H\"older continuous map $\wh{\pi}:\wh{\Sigma}\to M$ which intertwines $\s$ and $f$, and such that $\wh{\pi}(\wh{\Sigma})$ (and even $\wh{\pi}(\Sigma^\#)$)  has full probability w.r.t any ergodic invariant probability measure with entropy larger than $\chi$.

We start with the following simple observation:
\begin{lem}\label{Lemma_Cylinder_Diameter}
There exist constants $C$ and $0<\theta<1$ s.t. for every $(R_i)_{i\in\Z}\in\wh{\Sigma}$,  $\diam({_{-n}[R_n,\ldots,R_n]})<C\theta^n$.
\end{lem}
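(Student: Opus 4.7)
The plan is to bound $\diam({_{-n}[R_{-n},\ldots,R_n]})$ by reducing it, via Lemma \ref{Lemma_G_G_hat}, to bounding $\diam(Z_{-n}(v_{-n},\ldots,v_n))$ for some chain $(v_i)_{i\in\Z}\in\Sigma$ in the underlying graph $\mathfs G$, and then exploiting the H\"older continuity of the stable/unstable manifold map from Proposition \ref{Prop_V}(5). The key observation is that the entire argument can be carried out inside a single Pesin chart, namely $\Psi_{x_0}$ where $v_0=\Psi_{x_0}^{p^u_0,p^s_0}$, which avoids any difficulty with switching coordinate systems.

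First I would apply Lemma \ref{Lemma_G_G_hat} to obtain a chain $\un{v}=(v_i)_{i\in\Z}$ with $_{-n}[R_{-n},\ldots,R_n]\subset Z_{-n}(v_{-n},\ldots,v_n)$ for every $n$, so it suffices to bound $\diam Z_{-n}(v_{-n},\ldots,v_n)$. Given two points $\pi(\un{w}), \pi(\un{w}')\in Z_{-n}(v_{-n},\ldots,v_n)$ with $w_i=w'_i=v_i$ for $|i|\leq n$, Proposition \ref{Prop_V}(5) provides constants $K>0$ and $\theta_0\in(0,1)$ (depending only on $f,\b,\chi$) with
\begin{align*}
\dist_{C^1}(V^u[(w_i)_{i\leq 0}],V^u[(w'_i)_{i\leq 0}])&<K\theta_0^n,\\
\dist_{C^1}(V^s[(w_i)_{i\geq 0}],V^s[(w'_i)_{i\geq 0}])&<K\theta_0^n,
\end{align*}
where the distances are measured in the common chart $\Psi_{x_0}$ (since $w_0=w'_0=v_0$).

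Next I would apply Proposition \ref{Prop_Intersection}(3): $\pi(\un{w})$ is the unique intersection of $V^u[(w_i)_{i\leq 0}]$ with $V^s[(w_i)_{i\geq 0}]$, and similarly for $\pi(\un{w}')$, and this intersection point, expressed in $\Psi_{x_0}$-coordinates, depends on the pair $(V^u,V^s)$ with Lipschitz constant less than $3$. Therefore the $\Psi_{x_0}$-coordinates of $\pi(\un{w})$ and $\pi(\un{w}')$ differ by at most $3\cdot 2K\theta_0^n=6K\theta_0^n$. Since Pesin charts are $2$-Lipschitz (Theorem \ref{Theorem_OP_charts}(1)), this gives
$$
d(\pi(\un{w}),\pi(\un{w}'))\leq 12K\theta_0^n.
$$
Taking $C:=12K$ and $\theta:=\theta_0$ finishes the proof.

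I do not foresee a real obstacle here: the only mild care needed is to verify that $\dist_{C^1}$ in Proposition \ref{Prop_V}(5) is the quantity controlling the Lipschitz dependence of the intersection point in Proposition \ref{Prop_Intersection}(3), and that both are defined with respect to the same Pesin chart $\Psi_{x_0}$, which is guaranteed because $w_0=w'_0=v_0$.
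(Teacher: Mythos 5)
Your proposal is correct and follows essentially the same route as the paper: reduce via Lemma \ref{Lemma_G_G_hat} to bounding $\diam Z_{-n}(v_{-n},\ldots,v_n)$, and then control that diameter by the H\"older continuity of $\pi$. The only cosmetic difference is that the paper simply cites the already-established H\"older continuity of $\pi$ (Theorem \ref{Theorem_Markov_Extension}(2)), whereas you re-derive it on the spot from Proposition \ref{Prop_V}(5) and Proposition \ref{Prop_Intersection}(3) --- which is exactly how that theorem was proved, so the argument is the same.
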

\begin{proof}
Recall that $\pi:\Sigma\to M$ is H\"older continuous, therefore there are  $C$ and $0<\theta<1$ s.t. for every $\un{v},\un{u}\in\Sigma$, if $v_i=u_i$ for all $|i|\leq n$ then $d(\pi(\un{u}),\pi(\un{v}))<C\theta^n$.
By Lemma \ref{Lemma_G_G_hat} there exists a chain $(v_i)_{i\in\Z}\in\Sigma$ s.t.
$$
_{-n}[R_{-n},\ldots,R_n]\subset Z_{-n}(v_{-n},\ldots,v_n).
$$
The diameter of $Z_{-n}(v_{-n},\ldots,v_n)$ is less than or equal to $C\theta^n$. Therefore the diameter of $_{-n}[R_{-n},\ldots,R_n]$ is less than or equal to $C\theta^n$.
\end{proof}

Suppose $(R_i)_{i\in\Z}\in\wh{\Sigma}$, and let
 $F_n:=\ov{_{-n}[R_{-n},\ldots,R_n]}$ (closure in $M$). Lemmas \ref{Lemma_Non_Empty_Cylinders} and \ref{Lemma_Cylinder_Diameter} say that $\{F_n\}_{n\geq 1}$ is a decreasing sequence of non--empty compact subsets of $M$, whose diameters tend to zero. It follows that $\bigcap_{n\geq 1} F_n$ consists of a single point. We call this point $\wh{\pi}[(R_i)_{i\in\Z}]$:
 $$
 \bigl\{\wh{\pi}[(R_i)_{i\in\Z}]\bigr\}=\bigcap_{n=0}^\infty \ov{_{-n}[R_{-n},\ldots,R_n]}
 $$

\begin{thm}\label{Theorem_Pi_Hat}
$\wh{\pi}:\wh{\Sigma}\to M$ has the following properties:
\begin{enumerate}
\item $\wh{\pi}\circ\s=f\circ\wh{\pi}$;
\item $\wh{\pi}$ is H\"older continuous;
\item $\wh{\pi}(\wh{\Sigma})\supset\wh{\pi}(\wh{\Sigma}^\#)\supset\NUH_\chi^\#(f)$, therefore the image of $\wh{\pi}$ has full measure w.r.t every ergodic invariant probability measure with entropy larger than $\chi$;
\end{enumerate}
\end{thm}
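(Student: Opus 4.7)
The plan is to verify the three properties in turn, exploiting the fact that $\wh\pi(\un R)$ has a clean characterization as the unique point of $\bigcap_{n\geq 0}\ov{_{-n}[R_{-n},\ldots,R_n]}$, together with the locally-finite structure connecting $\mathfs R$, $\mathfs Z$, and the symbolic space $\Sigma$. Parts (1) and (2) are essentially formal consequences of Lemma \ref{Lemma_Cylinder_Diameter} and the definition; the only substantive work will be in (3), where one must produce a regular path on $\wh{\mathfs G}$ coding a given point of $\NUH_\chi^\#(f)$.

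For (1), I will use the identity $f\bigl({_{-n+1}[R_{-n+1},\ldots,R_{n+1}]}\bigr)={_{-n}[R_{-n+1},\ldots,R_{n+1}]}$, which is immediate from the definition of cylinders. Since $\wh\pi(\un R)\in\ov{_{-m}[R_{-m},\ldots,R_m]}\subseteq\ov{_{-n+1}[R_{-n+1},\ldots,R_{n+1}]}$ for $m\geq n+1$, continuity of $f$ yields $f(\wh\pi(\un R))\in\ov{_{-n}[R_{-n+1},\ldots,R_{n+1}]}$ for every $n$, and by Lemmas \ref{Lemma_Non_Empty_Cylinders} and \ref{Lemma_Cylinder_Diameter} this intersection is the singleton $\{\wh\pi(\s\un R)\}$. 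For (2), if $d(\un R,\un{R}')<e^{-n}$, then $R_k=R_k'$ for $|k|\leq n$, so both $\wh\pi(\un R)$ and $\wh\pi(\un{R}')$ lie in $\ov{_{-n}[R_{-n},\ldots,R_n]}$, whose diameter is at most $C\theta^n$ by Lemma \ref{Lemma_Cylinder_Diameter}; choosing any $a\in(0,1)$ with $\theta\leq e^{-a}$ gives the Hölder estimate $d(\wh\pi(\un R),\wh\pi(\un{R}'))\leq C\,d(\un R,\un{R}')^{a}$.

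For (3), which I expect to be the main point, I will proceed as follows. Given $x\in\NUH_\chi^\#(f)$, the set $\NUH_\chi^\#(f)$ is $f$-invariant and covered by the pairwise disjoint family $\mathfs R$, so each $f^i(x)$ lies in a unique $R_i:=R(f^i(x))\in\mathfs R$. The transition $R_i\to R_{i+1}$ holds because $f^i(x)\in R_i\cap f^{-1}(R_{i+1})$, so $\un R:=(R_i)_{i\in\Z}\in\wh\Sigma$. Since $x\in\bigcap_{n\geq 0} {_{-n}[R_{-n},\ldots,R_n]}$, the uniqueness of the intersection point gives $x=\wh\pi(\un R)$. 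The subtle point is to show $\un R\in\wh\Sigma^\#$: by the proof of Theorem \ref{Theorem_Markov_Extension} (specifically the observation surrounding equation (\ref{Sigma_Sharp_is_Large})) there is a chain $\un v\in\Sigma^\#$ with $\pi(\un v)=x$, and some vertex $v\in\mathfs V$ with $v_{n_k}=v$ for infinitely many $n_k\uparrow\infty$. Then $f^{n_k}(x)=\pi(\s^{n_k}\un v)\in Z(v)$, and by Lemma \ref{Lemma_R_In_Z}(1) the unique $R_{n_k}\in\mathfs R$ containing $f^{n_k}(x)$ is contained in $Z(v)$. Lemma \ref{Lemma_R_In_Z}(2) bounds the number of such $R$ by a finite constant, so the pigeonhole principle supplies a single $R\in\mathfs R$ appearing as $R_{n_k}$ infinitely often. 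The same argument applied to the negative direction produces the required $S\in\mathfs R$, completing $\un R\in\wh\Sigma^\#$ and hence $\NUH_\chi^\#(f)\subset\wh\pi(\wh\Sigma^\#)$; the final claim about ergodic measures with entropy $>\chi$ then follows because such measures give full probability to $\NUH_\chi^\#(f)$.
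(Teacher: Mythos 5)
Your proposal is correct and follows essentially the same route as the paper: the Hölder estimate and the semiconjugacy come from Lemma \ref{Lemma_Cylinder_Diameter} and the nested-cylinder characterization of $\wh{\pi}$ (your uniqueness argument for part (1) is a mild repackaging of the paper's set manipulation), and part (3) is proved exactly as in the paper, by coding the orbit of $x\in\NUH_\chi^\#(f)$ with the rectangles containing $f^i(x)$, pulling a chain $\un v\in\Sigma^\#$ from Theorem \ref{Theorem_Markov_Extension}, and using Lemma \ref{Lemma_R_In_Z} plus the pigeonhole principle to land in $\wh{\Sigma}^\#$.
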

\begin{proof}
The commutation relation   is because for every $\un{R}=(R_i)_{i\in\Z}$ in $\wh{\Sigma}$,
\begin{align*}
\{\pi[\s(\un{R})]\}&=\bigcap_{n=0}^\infty\ov{_{-n}[R_{-n+1},\ldots,R_{n+1}]}\supset \bigcap_{n=0}^\infty\ov{_{-n-2}[R_{-n-1},\ldots,R_{n+1}]}\\
&=\bigcap_{n=0}^\infty \ov{\bigcap_{k=-n-2}^n f^{-k}(R_{k+1})}=\bigcap_{N=0}^\infty\ov{f\left(_{-N}[R_{-N},\ldots,R_{N}]\right)}\\
&=\bigcap_{N=0}^\infty f\left(\ov{_{-N}[R_{-N},\ldots,R_{N}]}\right),\ \textrm{ because $f$ is a homeomorphism}\\
&=f\left(\bigcap_{N=0}^\infty \ov{_{-N}[R_{-N},\ldots,R_{N}]}\right),\ \textrm{ because $f$ is a bijection}\\
&\equiv f\left(\{\pi(\un{R})\}\right)=\{f[\pi(\un{R})]\}.
\end{align*}

The H\"older continuity of $\pi$ is because if $\un{R},\un{S}\in\wh{\Sigma}$ and $R_i=S_i$ for all $|i|\leq N$, then $\wh{\pi}(\un{R}), \wh{\pi}(\un{S})\in {_{-N}[R_{-N},\ldots,R_N]}$, whence by Lemma \ref{Lemma_Cylinder_Diameter}
$$
d(\wh{\pi}(\un{R}),\wh{\pi}(\un{S}))\leq \diam ({_{-N}[R_{-N},\ldots,R_N]})\leq C\theta^N.
$$

Finally  we claim that $\wh{\pi}(\wh{\Sigma})$ and $\wh{\pi}(\wh{\Sigma}^\#)$  contain $\NUH_\chi^\#(f)$.
Suppose $x\in\NUH_\chi^\#(f)$. By Theorem \ref{Theorem_Markov_Extension},   $\pi(\Sigma^\#)\supset\NUH_\chi^\#(f)$, therefore there exists  a chain $\un{v}\in\Sigma^\#$ s.t. $\pi(\un{v})=x$.  $\Sigma^\#$ is $\s$--invariant and $f\circ\pi=\pi\circ\s$, so
$
f^i(x)\in\pi(\Sigma^\#)
$
for all $i\in\Z$. The collection $\mathfs R$ covers $\pi(\Sigma^\#)$, therefore for every $i\in\Z$ there is some $R_i\in\mathfs R$ s.t. $f^i(x)\in R_i$. Obviously $R_i\to R_{i+1}$, so $\un{R}:=(R_i)_{i\in\Z}$ belongs to $\wh{\Sigma}$. Also,
$$
x\in \bigcap_{n=0}^\infty \ov{{_{-n}[R_{-n},\ldots,R_n]}}
$$
(even without the closure), so $x=\pi(\un{R})$. It follows that $\wh{\pi}(\wh{\Sigma})\supset\NUH_\chi^\#(f)$.

We claim that the sequence $\un{R}$ which was constructed above belongs to $\wh{\Sigma}^\#$, and deduce that  $\wh{\pi}(\wh{\Sigma}^\#)\supset\NUH_\chi^\#(f)$.

The sequence $\un{v}$ is in $\Sigma^\#$ by construction, therefore there exists $v$ and $u$ s.t. $v_i=u$ for infinitely many  negative $i$, and $v_i=v$ for infinitely many positive $i$.

The sets $R_i$ and $Z(v_i)$ intersect, because  they both contain $f^i(x)$. By Lemma \ref{Lemma_R_In_Z}, $R_i\subset Z(v_i)$ for all $i\in\Z$.
It follows that there are infinitely many negative $i$ s.t. $R_i\subset Z(u)$, and infinitely many positive $i$ s.t. $R_i\subset Z(v)$.

 The sets $\mathfs R(w):=\{R\in\mathfs R: R\subset Z(w)\}$ $(w=u,v)$ are finite (Lemma \ref{Lemma_R_In_Z}). Therefore $\exists n_k\uparrow\infty$ and $\exists R\in\mathfs R(v)$ s.t. $R_{n_k}=R$ for all $k$, and  $\exists m_k\uparrow\infty$ and $\exists S\in\mathfs R(u)$ s.t. $R_{-m_k}=S$ for all $k$. Thus $\un{R}\in\wh{\Sigma}^\#$ as required.
\end{proof}

The following result is not needed for the purposes of this paper, but we  anticipate some future applications.
\begin{proposition}
For every $x\in \wh{\pi}(\wh{\Sigma})$, $T_x M=E^s(x)\oplus E^u(x)$ where
\begin{itemize}
\item[(a)] $\limsup\limits_{n\to\infty}\frac{1}{n}\log\|df^n_x\un{v}\|_{f^n(x)}\leq -\frac{\chi}{2}$ on $E^s(x)\setminus\{\un{0}\}$;
\item[(b)] $\limsup\limits_{n\to\infty}\frac{1}{n}\log\|df^{-n}_x\un{v}\|_{f^{-n}(x)}\leq -\frac{\chi}{2}$ on $E^u(x)\setminus\{\un{0}\}$.
\end{itemize}
The maps $\un{R}\mapsto E^{u/s}(\wh{\pi}(\un{R}))$   are H\"older continuous as maps from $\wh{\Sigma}$ to $TM$.
\end{proposition}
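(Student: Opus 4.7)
The plan is to identify $E^{s/u}(x)$ with the tangent lines at $x$ to the admissible $s/u$-manifolds that stay in windows, and to derive properties (a), (b), and H\"older continuity from Propositions \ref{Prop_Staying_In_Windows}, \ref{Prop_Intersection}(4), and \ref{Prop_V}(5).

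First I would fix $x=\wh\pi(\un R)\in\wh\pi(\wh\Sigma)$ and use Lemma \ref{Lemma_G_G_hat} to produce a chain $\un v\in\Sigma$ with $R_i\subset Z(v_i)$, so that $x=\pi(\un v)$. Then I set $E^s(x):=T_x V^s[(v_i)_{i\geq 0}]$ and $E^u(x):=T_x V^u[(v_i)_{i\leq 0}]$. Both admissible manifolds stay in windows, so Proposition \ref{Prop_Intersection}(4) implies that $|\sin\measuredangle(E^s(x),E^u(x))|$ is comparable to $|\sin\alpha(x_0)|>0$; in particular $T_xM=E^s(x)\oplus E^u(x)$. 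Property (a) then follows from Proposition \ref{Prop_Staying_In_Windows}(2) applied to the unit tangent $\un e^s(x)$ of $V^s[(v_i)_{i\geq 0}]$, which gives $\|df^n_x\un e^s(x)\|_{f^n(x)}\leq 6\|C_\chi(x_0)^{-1}\|e^{-n\chi/2}$; the constant prefactor is absorbed in the $\limsup$. Property (b) is symmetric, using the $u$-manifold version of Proposition \ref{Prop_Staying_In_Windows}(2). To see that $E^{s/u}(x)$ does not depend on the chain $\un v$, I would verify the dynamical characterization
$$E^s(x)=\bigl\{w\in T_xM:\limsup\nolimits_{n\to\infty}\tfrac{1}{n}\log\|df^n_x w\|_{f^n(x)}<0\bigr\},$$
and the analogous formula for $E^u$: any $w$ outside the constructed $E^s(x)$ has a nonzero $E^u$-component, which grows exponentially by the same mechanism used in the proof of Proposition \ref{Prop_Alpha_Comparison} to identify $E^s_{x_k}=E^s_{y_k}$.

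For H\"older continuity, let $\un R,\un R'\in\wh\Sigma$ agree on $|i|\leq N$. The base points $\wh\pi(\un R)$, $\wh\pi(\un R')$ are $O(\theta^N)$-close by Theorem \ref{Theorem_Pi_Hat}(2). For the directional part, the plan is to produce chains $\un v,\un v'$ covering $\un R,\un R'$ with a common central block $v_{-N}=v'_{-N},\ldots,v_N=v'_N$. This is possible because $\{v\in\mathfs V:R_i\subset Z(v)\}$ is finite for each $i$ (Theorem \ref{Theorem_LF} and Lemma \ref{Lemma_R_In_Z}), so by the argument in the proof of Lemma \ref{Lemma_G_G_hat} there exists a path $v_{-N}\to\cdots\to v_N$ in $\mathfs G$ with $R_i\subset Z(v_i)$ for each $|i|\leq N$; this path can then be extended forward and backward independently through $\un R$ to produce $\un v$, and through $\un R'$ to produce $\un v'$. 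Once the common block is in place, Proposition \ref{Prop_V}(5) gives $\dist_{C^1}\bigl(V^s[(v_i)_{i\geq 0}],V^s[(v'_i)_{i\geq 0}]\bigr)\leq K\theta^N$, and the corresponding bound for $V^u$. Since the tangent direction to an admissible manifold at a point equals the image under $d\Psi_{x_0}$ of the graph of the derivative of its representing function at the corresponding preimage, and since $d\Psi_{x_0}$ has bounded operator norm on the relevant domain, this translates into an $O(\theta^N)$ bound on the angle between $E^{s/u}(\wh\pi(\un R))$ and $E^{s/u}(\wh\pi(\un R'))$ after transport in $TM$.

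The principal obstacle is the construction of chains with a common central block: without it, Proposition \ref{Prop_V}(5) compares admissible manifolds living in different Pesin charts, which does not directly bound the angle between their tangent lines. I expect this to be resolved by a mild two-sided refinement of the diagonal argument in the proof of Lemma \ref{Lemma_G_G_hat}, constructing the common central segment first and then extending it independently to the left and right. Beyond that, the proof assembles routinely from results already in the paper.
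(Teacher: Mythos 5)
Your proposal is correct and follows essentially the same route as the paper: $E^{s/u}(x)$ are defined as the tangent lines at $x$ to $V^{s}[(v_i)_{i\geq 0}]$ and $V^{u}[(v_i)_{i\leq 0}]$ for a chain $\un v$ supplied by Lemma \ref{Lemma_G_G_hat}, (a) and (b) come from Proposition \ref{Prop_Staying_In_Windows}, independence of the chain from the dynamical characterization, and H\"older continuity from Proposition \ref{Prop_V}(5) after matching central blocks. The one place where the paper proceeds differently is exactly your ``principal obstacle'': it does not build a common central path first and extend it through both $\un R$ and $\un R'$. Instead, only the chain $\un v$ for $\un R$ is required to satisfy $R_i\subset Z(v_i)$ and the cylinder inclusions; for $y=\wh\pi(\un R')$ one only needs \emph{some} chain $\un w$ with $\pi(\un w)=y$ and $w_i=v_i$ for $|i|\leq N$, and this is produced by noting $y\in\ov{_{-N}[R_{-N},\ldots,R_N]}\subset\ov{Z_{-N}(v_{-N},\ldots,v_N)}$, approximating $y$ by $\pi(\un w^{(n)})$ with the prescribed central block, and extracting a coordinatewise limit using the finite degree of $\mathfs G$ and the continuity of $\pi$ --- no covering condition $R'_i\subset Z(w_i)$ outside the central block is needed, so the ``two-sided extension'' you anticipated reduces to this compactness argument. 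One small quantitative correction: since the two tangent lines are read off from the derivatives of the representing functions at two \emph{different} preimages, you should control their distance via the uniform Lipschitz dependence of the intersection point on the admissible manifolds (Proposition \ref{Prop_Intersection}(3), not via $\Psi_{x_0}^{-1}$, whose norm is not uniformly bounded over charts) and the $\tfrac{\b}{3}$--H\"older bound on the representing derivatives; this yields an angle bound $O(\theta^{\b N/3})$ rather than $O(\theta^{N})$, which is still ample for H\"older continuity.
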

\begin{proof}
Suppose $x=\wh{\pi}(\un{R})$ where $\un{R}\in\wh{\Sigma}$. By Lemma \ref{Lemma_G_G_hat}, there is a chain $(v_i)_{i\in\Z}$ s.t. $R_i\subset Z(v_i)$ for all $i$ and $_{-n}[R_{-n},\ldots,R_n]\subset Z_{-n}(v_{-n},\ldots,v_n)$ for every $n$. Then $f^n(x)\in \ov{Z(v_n)}$ for all $n$. Every element of $Z(v_n)$ is the intersection of $s/u$--admissible manifolds in $v_n$, so if $v_n=\Psi_{x_n}^{p^u_n,p^s_n}$, then $\ov{Z(v_n)}\subset \Psi_{x_n}[R_{p^s_n\wedge p^u_n}(\un{0})]$ (Proposition \ref{Prop_Intersection} (2)). By Proposition \ref{Prop_V} (4),  $x\in V^u[(v_i)_{i\leq 0}]\cap V^s[(v_i)_{i\geq 0}]$.

Let $E^s(x):=T_x V^s[(v_i)_{i\geq 0}]$ and $E^u(x):=T_x V^u[(v_i)_{i\leq 0}]$. These spaces satisfy (a) and (b), because they are tangent to admissible manifolds which stay in windows  (Proposition  \ref{Prop_Staying_In_Windows}). This definition of $E^s(x), E^u(x)$  is independent of the choice of $(v_i)_{i\in\Z}$, because there can be only one  decomposition of $T_x M$ into two spaces which satisfy (a) and (b).

Suppose $x=\wh{\pi}(\un{R})$ and $y=\wh{\pi}(\un{S})$ where $R_i=S_i$ for $i=-N,\ldots,N$, and let $\un{v}=(v_i)_{i\in\Z}$ be as before.
The argument in the first paragraph shows that $x=\pi(\un{v})$.
We claim that $y=\pi(\un{w})$ where $\un{w}$ is a chain s.t. $w_i=v_i$ for all $|i|\leq N$.

By assumption, $y\in \ov{_{-n}[S_{-N},\ldots,S_N]}=\ov{_{-n}[R_{-N},\ldots,R_N]}\subset\ov{Z_{-N}(v_{-N},\ldots,v_N)}$, so $y=\lim\pi(\un{w}^{(n)})$ where  $\un{w}^{(n)}\in\Sigma$ satisfy $w^{(n)}_i=v_i$ for all $|i|\leq N$.  Since every vertex of $\mathfs G$ has finite degree, each of the sets $\{w^{(n)}_i:n\in\N\}$ is finite. It follows that  there is a convergent subsequence $\un{w}^{(n_k)}\xrightarrow[k\to\infty]{}\un{w}$. The limit is a chain $\un{w}$ s.t. $y=\pi(\un{w})$ and $w_i=v_i$ for all $|i|\leq N$.

Write $v_0=\Psi_{x_0}^{p^u_0, p^s_0}$, and let $F_{u},F_s$ be the representing functions in $\Psi_{x_0}$ for  $V^{u}[(v_i)_{i\leq 0}]$, $V^s[(v_i)_{i\geq 0}]$.   Let $G_u, G_s$ be the representing functions  for  $V^{u}[(w_i)_{i\leq 0}]$, $V^s[(w_i)_{i\geq 0}]$. By Proposition \ref{Prop_V}(5),
\begin{equation}\label{FG}
\begin{aligned}
\|F_u-G_u\|_\infty+\|F_u'-G_u'\|_\infty&<K\theta^N\\
\|F_s-G_s\|_\infty+\|F_s'-G_s'\|_\infty&<K\theta^N
\end{aligned}
\end{equation}
for some global constants $K>0, \theta\in(0,1)$.

The intersection of the (vertical) graph of $F_u$ and the (horizontal) graph of $F_s$ is the point $\un{\xi}\in\R^2$ s.t. $\Psi_{x_0}(\un{\xi})=x$. The intersection of the vertical and horizontal graphs of $G_u$ and $G_s$ is the point $\un{\eta}\in\R^2$ s.t. $\Psi_{x_0}(\un{\eta})=y$. (\ref{FG}) and  Proposition \ref{Prop_Intersection} imply that  $\|\un{\xi}-\un{\eta}\|<3K\theta^N$.

By admissibility, $F_u, F_s, G_u, G_s$ have $\frac{\b}{3}$--H\"older exponent at most $\frac{1}{2}$. Together with (\ref{FG}), this implies
$
|F_s'(\xi_1)-G_s'(\eta_1)|,|F_u'(\xi_2)-G_u'(\eta_2)|<\frac{1}{2}(3K\theta^N)^{\b/3}+K\theta^N.
$ It follows that $\dist_{T\R^2}\bigl(T_{\un{\xi}}[\mathrm{graph}(F_s)],T_{\un{\eta}}[\mathrm{graph}(G_s)]\bigr)=O(\theta^{\frac{1}{3}\b N})$.

$E^s(x), E^s(y)$ are the  images of $T_{\un{\xi}}[\mathrm{graph}(F_s)]$ and $T_{\un{\eta}}[\mathrm{graph}(G_s)]$ under $d\Psi_{x_0}$. Since the norm of the differential of a Pesin chart is bounded above by two, $\dist_{TM}(E^s(x), E^s(y))=O(\theta^{\frac{1}{3}\b N})$. Similarly,  $\dist_{TM}(E^u(x), E^u(y))=O(\theta^{\frac{1}{3}\b N})$. All implied constants are uniform, so  $\un{R}\mapsto E^{s/u}(\wh{\pi}(\un{R}))$ are H\"older continuous.
\end{proof}

\subsection{The extension is finite-to-one} Say that $R,R'\in\mathfs R$  are {\em affiliated}, if there exist $Z,Z'\in\mathfs Z$ s.t. $R\subset Z$, $R'\subset Z'$, and $Z\cap Z'\neq\emptyset$.  For every $R\in\mathfs R$, let
$$
N(R):=|\{(R',Z')\in\mathfs R\x\mathfs Z: \textrm{$R'$ is affiliated to $R$ and }Z'\textrm{ contains } R'
\}|.
$$
\begin{lem}
$N(R)<\infty$.
\end{lem}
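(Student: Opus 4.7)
The plan is to show that $N(R)$ is finite by decomposing the counting into four successive finite bounds, using the two finiteness properties already established: Theorem \ref{Theorem_LF} (each $Z \in \mathfs Z$ meets only finitely many $Z' \in \mathfs Z$) together with Lemma \ref{Lemma_R_In_Z} (each $Z \in \mathfs Z$ contains only finitely many $R \in \mathfs R$, and whenever $R \cap Z \neq \emptyset$ then $R \subset Z$).

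First I would establish the following auxiliary claim: for every $R \in \mathfs R$, the set $\mathfs Z(R) := \{Z \in \mathfs Z : Z \supset R\}$ is finite and non-empty. Fix any $x \in R$; since $R \subset \bigcup\mathfs R = \pi(\Sigma^\#)$, there is $\un{v} \in \Sigma^\#$ with $\pi(\un{v}) = x$, and then $x \in Z(v_0)$. In particular $R \cap Z(v_0) \neq \emptyset$, so by Lemma \ref{Lemma_R_In_Z}(1) we have $R \subset Z(v_0)$; thus $\mathfs Z(R) \neq \emptyset$. Any other $Z \in \mathfs Z(R)$ also contains $x$, hence intersects $Z(v_0)$, and by Theorem \ref{Theorem_LF} there are only finitely many such $Z$.

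Next I would bound $N(R)$ by following the data implicit in the definition of affiliation. Every pair $(R', Z')$ counted by $N(R)$ comes equipped with a choice of $Z \in \mathfs Z(R)$ and $Z^\ast \in \mathfs Z$ with $R' \subset Z^\ast$ and $Z \cap Z^\ast \neq \emptyset$. The number of possible $Z$ is $|\mathfs Z(R)|$, finite by the auxiliary claim. For each such $Z$, the number of $Z^\ast \in \mathfs Z$ meeting $Z$ is finite by Theorem \ref{Theorem_LF}. For each such $Z^\ast$, the number of $R' \in \mathfs R$ with $R' \subset Z^\ast$ is finite by Lemma \ref{Lemma_R_In_Z}(2). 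Finally, for each such $R'$, the number of $Z' \in \mathfs Z$ containing $R'$ is $|\mathfs Z(R')|$, which is finite by the auxiliary claim applied to $R'$ in place of $R$. Multiplying the four bounds yields $N(R) < \infty$.

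There is no substantive obstacle here; the statement is essentially combinatorial bookkeeping once Theorem \ref{Theorem_LF} and Lemma \ref{Lemma_R_In_Z} are available. The only mild point to notice is that one must bound $|\mathfs Z(R')|$ for $R'$ ranging over affiliated rectangles, which is why the auxiliary claim has to be proved in full generality (for arbitrary $R \in \mathfs R$) rather than invoked merely for the fixed $R$ under consideration.
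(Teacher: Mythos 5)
Your proof is correct and follows essentially the same route as the paper: first show that the family of $Z\in\mathfs Z$ containing a given rectangle is finite (via Theorem \ref{Theorem_LF} and Lemma \ref{Lemma_R_In_Z}), then bound $N(R)$ by chaining the finiteness of $Z$'s meeting a given $Z$, of rectangles inside a given $Z$, and of charts containing each affiliated rectangle. The explicit non-emptiness check and the counting by quadruples are harmless refinements of the paper's argument, not a different method.
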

\begin{proof}
Suppose $R\in\mathfs R$.
The set $A(R):=\{Z\in\mathfs Z: Z\supset R\}$ is finite, because if
 $Y\in\mathfs Z$ contains $R$ then every $Z\in A(R)$ intersects $Y$, and the number of such $Z$ is finite (Theorem \ref{Theorem_LF}).

Since $A(R)$ is finite,  $B(R):=\{Z'\in\mathfs Z:\exists Z\in A(R)\textrm{ s.t. }Z'\cap Z\neq \emptyset\}$ is finite (Theorem \ref{Theorem_LF}). For every $Z'\in B$ there are at most finitely many $R'\in\mathfs R$ s.t. $R'\subset Z'$ (Lemma \ref{Lemma_R_In_Z}). Therefore, $C(R):=\{R'\in\mathfs R:\textrm{ $R, R'$ are affiliated}\}$ is finite. It follows that
$
N(R)= \sum_{R'\in C(R)}|A(R')|<\infty$.
\end{proof}

\begin{thm}\label{Theorem_Finite_To_One}
Every $x\in \wh{\pi}(\wh{\Sigma}^\#)$ has a finite number of $\wh{\pi}$--pre-images.
If $x=\wh{\pi}(\un{R})$ where $R_i=R$ for infinitely many $i<0$ and $R_i=S$ for infinitely many $i>0$, then $|\wh{\pi}^{-1}(x)|<\vf_\chi(R,S):=N(R)N(S)$.
\end{thm}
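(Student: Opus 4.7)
The plan is to lift each preimage $\un{R}' \in \wh{\pi}^{-1}(x)$ to a chain $\un{v}'$ of double charts via Lemma~\ref{Lemma_G_G_hat}, use the inverse problem Theorem~\ref{Theorem_Pi_Almost_1-1} to compare $\un{v}'$ with a canonical regular chain $\un{v}$ coming from $\un{R}$, and use this comparison to pin $R_{-m_k}'$ (resp.\ $R_{n_k}'$) down to a finite set of rectangles affiliated to $R$ (resp.\ $S$), then count.

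Arguing as in the last paragraph of the proof of Theorem~\ref{Theorem_Pi_Hat}, combined with a pigeonhole on the finite set of $w \in \mathfs V$ with $Z(w) \supset R$ (respectively $Z(w) \supset S$), I would obtain a regular chain $\un{v} \in \Sigma^\#$ satisfying $\pi(\un{v}) = x$, $R_i \subset Z(v_i)$ for all $i$, and (after passing to subsequences) $v_{-m_k} \equiv v^*$, $v_{n_k} \equiv v^{**}$ for all $k$. For any $\un{R}' \in \wh{\pi}^{-1}(x)$, Lemma~\ref{Lemma_G_G_hat} supplies a chain $\un{v}'$ with $R_i' \subset Z(v_i')$ for every $i$; applying Lemma~\ref{Lemma_Cylinder_Diameter} to the nested cylinders $\ov{_{-n}[R_{-n}',\ldots,R_n']} \subset \ov{Z_{-n}(v_{-n}',\ldots,v_n')}$ then forces $\pi(\un{v}') = \wh{\pi}(\un{R}') = x$.

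The main obstacle is to verify that $\un{v}' \in \Sigma^\#$, since Theorem~\ref{Theorem_Pi_Almost_1-1} requires both chains to be regular. To handle this, I would use that each approximating chain $\un{v}^{(n)}$ in Lemma~\ref{Lemma_G_G_hat} lies in $\Sigma^\#$, together with the local finiteness of $\mathfs Z$ near the charts of $\un{v}$, to show $\un{v}'$ cannot have its symbols escape to infinity. Concretely, I expect a bootstrap argument: if $\un{v}'$ were not sharp on the past side, then the window parameters $p_i^{u\prime} \wedge p_i^{s\prime}$ would tend to zero along every backward subsequence (Lemma~\ref{Lemma_Subordinated_Sharp} and Proposition~\ref{Prop_A}(2)); however, the admissibility condition used to represent $f^i(x)$ in $\Psi_{x_i'}$--coordinates propagates the lower bound on window sizes inherited from $\un{v}$ (which has sharp past windows) through $\pi(\un{v}') = \pi(\un{v})$, yielding a contradiction.

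Once $\un{v}' \in \Sigma^\#$ is established, Theorem~\ref{Theorem_Pi_Almost_1-1} applies and in particular $Z(v_i) \cap Z(v_i') \ni f^i(x) = \pi(\s^i \un{v}) = \pi(\s^i \un{v}')$, so $v_i$ and $v_i'$ are affiliated charts; hence $R_i \subset Z(v_i)$ and $R_i' \subset Z(v_i')$ are affiliated rectangles. Consequently $R_{-m_k}'$ lies in the set $\mathfs R_R$ of rectangles affiliated to $R$, and counting pairs $(R'',Z'')$ with $R''$ affiliated to $R$ and $Z'' \supset R''$ gives $|\mathfs R_R| \leq N(R)$; symmetrically $R_{n_k}' \in \mathfs R_S$ with $|\mathfs R_S| \leq N(S)$. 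The final step is to prove that the map $\Phi \colon \un{R}' \mapsto (R_{-m_1}', R_{n_1}')$ is injective into $\mathfs R_R \times \mathfs R_S$: knowing $R_{n_1}'$ and iterating Proposition~\ref{Theorem_Markov_Property} forward, the inclusion $f[W^s(f^i(x), R_i')] \subset W^s(f^{i+1}(x), R_{i+1}')$ together with the pairwise disjointness of $\mathfs R$ determines $R_{i+1}'$ uniquely for all $i > n_1$; symmetric reasoning via unstable slices determines $R_i'$ for $i < -m_1$, and the finite middle block is forced by Markov consistency. This gives $|\wh{\pi}^{-1}(x)| \leq N(R) \cdot N(S)$, as required.
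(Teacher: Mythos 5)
Your plan hinges on applying Theorem \ref{Theorem_Pi_Almost_1-1} to the pair $(\un{v},\un{v}')$, and you correctly identify that this requires $\un{v}'$ to be regular; but the bootstrap you sketch for that is circular. The ``propagation of the lower bound on window sizes from $\un{v}$ to $\un{v}'$ through $\pi(\un{v}')=\pi(\un{v})$'' is exactly the window--parameter comparison of Proposition \ref{Prop_Comparison_P}, and that comparison rests on $\e$--maximality (Proposition \ref{Prop_Maximality}), whose proof invokes Lemma \ref{Lemma_Subordinated_Sharp} and hence the hypothesis $\limsup_{i\to\pm\infty}(p^{u\prime}_i\wedge p^{s\prime}_i)>0$ for the very chain whose regularity you are trying to establish; the mere fact that $f^i(x)$ lies in the windows $\Psi_{x_i'}[R_{\cdot}(\un{0})]$ gives no lower bound on their sizes. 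Note also that the only conclusion you extract from Theorem \ref{Theorem_Pi_Almost_1-1} --- that $v_i$ and $v_i'$ are affiliated --- does not need that theorem at all: all the sets $Z(v_i)$, $Z(v_i')$ contain $f^i(x)$, hence pairwise intersect, which is how the paper obtains affiliation for the lifted chains without touching the inverse problem or regularity.

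The endgame is where the proposal genuinely breaks down. The objects $W^s(f^i(x),R_i')$, $W^u(f^i(x),R_i')$ you iterate are not defined: $\wh{\pi}$ is built from closures, so for a general preimage $\un{R}'$ one only knows $x\in\bigcap_n\ov{{_{-n}[R'_{-n},\ldots,R'_n]}}$, not $f^i(x)\in R_i'$. If one could assume $f^i(x)\in R_i'$ for all $i$, the pairwise disjointness of $\mathfs R$ would already force a unique itinerary and the bound would trivially be $1$; the whole content of the theorem lies in the boundary effects your argument ignores. Moreover, even formally, ``the finite middle block is forced by Markov consistency'' is false --- two prescribed rectangles at times $-m_1$ and $n_1$ are in general joined by many admissible words --- so $\Phi$ has no reason to be injective. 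The paper proceeds instead by a Bowen--style contradiction: assuming $N(R)N(S)+1$ preimages, a pigeonhole on the quadruples $(R^{(j)}_{-k},v^{(j)}_{-k};R^{(j)}_\ell,v^{(j)}_\ell)$ (the chart labels are part of the count --- your $\Phi$ forgets them, and they are needed to glue chains at the end) yields two preimages with equal endpoint data but different words; then non-empty cylinders (Lemma \ref{Lemma_Non_Empty_Cylinders}), Smale brackets, the Markov property (Proposition \ref{Theorem_Markov_Property}) and Proposition \ref{Prop_V}(4) produce a point forced to lie in two disjoint cylinders simultaneously. Some mechanism of this kind, working with auxiliary points inside the actual cylinders rather than with $x$ itself, is indispensable and is absent from your proposal.
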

\begin{proof}
The  proof is based on an idea of Bowen's \cite[pp. 13--14]{Bsimple} (see also   \cite[page 229]{PP}), who used it in  the context of Axiom A diffeomorphisms. We show that the product structure described above is sufficient to implement his argument in our setting.

\medskip
Suppose $x\in\wh{\pi}(\wh{\Sigma}^\#)$, then $x$ has a $\wh{\pi}$--preimage $\un{R}\in\wh{\Sigma}$ s.t. $R_i=R$ for infinitely many negative $i$, and $R_i=S$ for infinitely many positive $i$. We show that the number of $\wh{\pi}$--pre-images of $x$ is less than or equal to $N:=N(R)N(S)$.

Suppose by way of contradiction that there are $N+1$ different points in $\wh{\Sigma}$ whose image under $\wh{\pi}$ is equal to $x$. Call these points $\un{R}^{(j)}=(R_i^{(j)})_{i\in\Z}$ $(j=0,\ldots,N)$. Assume w.l.o.g. that $\un{R}^{(0)}=\un{R}$.

By Lemma \ref{Lemma_G_G_hat} there are  chains $\un{v}^{(j)}=(v_i^{(j)})_{i\in\Z}\in\Sigma$ s.t. for every $n$
\begin{equation}\label{Schwester}
R^{(j)}_n\subset Z(v_n^{(j)})\textrm{ and }
_{-n}[R^{(j)}_{-n},\ldots,R^{(j)}_n]\subset Z_{-n}(v_{-n}^{(j)},\ldots,v_n^{(j)}).
\end{equation}

\medskip
\noindent
{\em Claim 1.\/} $\pi(\un{v}^{(j)})=x$ for every $0\leq j\leq N$.

\medskip
The following inclusions hold:
\begin{align}
\pi(\un{v}^{(j)})&\in  \bigcap_{n=0}^\infty{Z_{-n}(v_{-n}^{(j)},\ldots,v_n^{(j)})}\subset
\bigcap_{n=0}^\infty\ov{Z_{-n}(v_{-n}^{(j)},\ldots,v_n^{(j)})},\label{Brun}
\end{align}
\begin{align}
x=\wh{\pi}(\un{R}^{(j)})&\in \bigcap_{n=0}^\infty \ov{_{-n}[R^{(j)}_{-n},\ldots,R^{(j)}_n]}\subset
\bigcap_{n=0}^\infty\ov{Z_{-n}(v_{-n}^{(j)},\ldots,v_n^{(j)})}.\notag
\end{align}
Since $\pi$ is H\"older continuous,   $\diam\biggl[\ov{Z_{-n}(v_{-n}^{(j)},\ldots,v_n^{(j)})}\biggr]\xrightarrow[n\to\infty]{}0$, so $\pi(\un{v}^{(j)})=x$.

\medskip
\noindent
{\em Claim 2\/}: Suppose $i\in\Z$, then  $R_{i}^{(0)},\ldots,R_{i}^{(N)}$  are  affiliated.

\medskip
\noindent

\medskip
\noindent
{\em Proof.\/} By (\ref{Brun})  $x=\pi(\un{v}^{(j)})\in\bigcap_{n=0}^\infty{Z_{-n}(v_{-n}^{(j)},\ldots,v_n^{(j)})}$, so
 $f^i(x)\in Z(v_i^{(j)})$. Thus $Z(v_i^{(0)}), \ldots,Z(v_i^{(N)})$ have a common intersection. Since  $R^{(j)}_i\subset Z(v^{(j)}_i)$, $R^{(0)}_i,\ldots,R^{(N)}_i$ are affiliated.

\medskip
\noindent
{\em Claim 3\/}: There exist $k,\ell\geq 0$ and $0\leq j_1,j_2\leq N$ such that
\begin{itemize}
\item $(R^{(j_1)}_{-k},\cdots,R^{(j_1)}_\ell)\neq (R^{(j_2)}_{-k},\cdots,R^{(j_2)}_\ell)$;
\item $R^{(j_1)}_{-k}=R^{(j_2)}_{-k}$ and $R^{(j_1)}_\ell=R^{(j_2)}_\ell$;
\item $v^{(j_1)}_{-k}=v^{(j_2)}_{-k}$ and $v^{(j_1)}_\ell=v^{(j_2)}_\ell$.
\end{itemize}

\medskip
\noindent
{\em Proof.\/} We are assuming that   $\un{R}^{(j)}$ are different, therefore there exists some $m$ such that the words
$
(R_{-m}^{(j)},\ldots,R_m^{(j)})
$ $(0\leq j\leq N)$ are different.

We are assuming that $R^{(0)}_i$ equals $R$ for infinitely many negative $i$, and equals $S$ for infinitely many positive $i$. Choose $k,\ell\geq m$ s.t. $R_{-k}^{(0)}=R$ and $R_\ell^{(0)}=S$.
The words
$
(R_{-k}^{(j)},\ldots,R_\ell^{(j)})
$ $(0\leq j\leq N)$ are different.

By claims 1 and 2, $R_{-k}^{(j)}$ are all affiliated to $R_{-k}^{(0)}=R$, and by (\ref{Schwester}) $R_{-k}^{(j)}\subset Z(v_{-k}^{(j)})$, therefore
$
\bigl|\{(R_{-k}^{(j)},v_{-k}^{(j)}):j=0,\ldots,N\}
\bigr|\leq N(R).
$
In the same way, one can show that $
\bigl|\{(R_{\ell}^{(j)},v_{\ell}^{(j)}):j=0,\ldots,N\}
\bigr|\leq N(S)$. It follows that
$$
\bigl|
\{\bigl(R_{-k}^{(j)},v_{-k}^{(j)};R_{\ell}^{(j)},v_{\ell}^{(j)}\bigr):j=0,\ldots,N\}\bigr|\leq N(R)N(S)=N.
$$
By the pigeonhole principle, at least two quadruples coincide, proving the claim.

\medskip
To ease up the notation, we let $\un{A}:=\un{R}^{(j_1)}$, $\un{B}:=\un{R}^{(j_2)}$, $\un{a}:=\un{v}^{(j_1)}$ and $\un{b}:=\un{v}^{(j_2)}$, and we write
$
A_{-k}=B_{-k}=:B\ , \ A_\ell=B_\ell=:A\ , \ a_{-k}=b_{-k}=:b\ , a_\ell=b_\ell=:a.
$
By Lemma \ref{Lemma_Non_Empty_Cylinders}, there are  two points
$$
x_A\in {_{-k}[A_{-k},\ldots,A_\ell]}\textrm{ and }
x_B\in {_{-k}[B_{-k},\ldots,B_\ell]}.
$$
By definition, $f^{-k}(x_A),f^{-k}(x_B)\in B\subset Z(b)$ and $f^{\ell}(x_A),f^{\ell}(x_B)\in A\subset Z(a)$.
Define two points $z_A,z_B$ by the equations
\begin{align*}
f^{-k}(z_A)&\in W^u(f^{-k}(x_B),B)\cap W^s(f^{-k}(x_A),B);\\
f^{\ell}(z_B)&\in W^u(f^{\ell}(x_B),A)\cap W^s(f^{\ell}(x_A),A).
\end{align*}

\medskip
\noindent
{\em Claim 4.\/} $z_A\neq z_B$.

\medskip
\noindent
{\em Proof.\/} By construction, $f^{-k}(z_A)\in W^s(f^{-k}(x_A),A_{-k})$.   By the Markov property (Theorem \ref{Theorem_Markov_Property}),
\begin{align*}
f^{-k+1}(z_A)&\in f[W^s(f^{-k}(x_A),A_{-k})]\subset W^s(f^{-k+1}(x_A),A_{-k+1})\\
f^{-k+2}(z_A)&\in f[W^s(f^{-k+1}(x_A),A_{-k+1})]\subset W^s(f^{-k+2}(x_A),A_{-k+2})
\end{align*}
and so on. It follows that $f^{-k}(z_A)\in {_{-k}[A_{-k},\ldots,A_\ell]}$.
Similarly, if we start from $f^\ell(z_B)\in W^u(f^\ell(x_B),B_\ell)$ and apply $f^{-1}$ repeatedly, then the Markov property will give us that $f^{-k}(z_B)\in {_{-k}[B_{-k},\ldots,B_\ell]}$.

But
$
(A_{-k},\ldots,A_\ell)\equiv(R^{(j_1)}_{-k},\ldots,R^{(j_1)}_\ell)\neq (R^{(j_2)}_{-k},\ldots,R^{(j_2)}_\ell)\equiv (B_{-k},\ldots,B_\ell)
$, and the elements of $\mathfs R$ are pairwise disjoint,
so ${_{-k}[A_{-k},\ldots,A_\ell]}\cap {_{-k}[B_{-k},\ldots,B_\ell]}=\emptyset$ and $z_A\neq z_B$.

\medskip
\noindent
{\em Claim 5.\/} $z_A=z_B$ (a contradiction).

\medskip
\noindent
{\em Proof.\/} We saw above that  $f^{-k}(z_A)\in {_{-k}[A_{-k},\ldots,A_\ell]}$ , $f^{-k}(z_B)\in {_{-k}[B_{-k},\ldots,B_\ell]}$. In particular, $f^{-k}(z_B)\in B_{-k}=B\subset Z(b)$ and $f^\ell(z_A)\in A_\ell=A\subset Z(a)$.

Construct chains $\un{\a},\un{\b}\in\Sigma^\#$ such that
$
z_A=\pi(\un{\a})\textrm{ , }\a_{\ell}=a\textrm{ and }
z_B=\pi(\un{\b})\textrm{ , }\b_{-k}=b.
$
Define a sequence $\un{c}$ by
$$
c_i=\begin{cases}
\b_i & i\leq -k\\
a_i & -k+1\leq i\leq \ell-1\\
\a_i & i\geq \ell.
\end{cases}
$$
This is a chain because $\b_{-k}=b=a_{-k}$ and $\a_\ell=a=a_\ell$. This chain belongs to $\Sigma^\#$, because $\un{\a},\un{\b}\in\Sigma^\#$. We write $c_i:=\Psi_{x_i}^{p^u_i,p^s_i}$.

We claim that $f^{-k}(z_A), f^{-k}(z_B)\in V^u[(c_i)_{i\leq -k}]$. Note firstly that both points belong to $W^u(f^{-k}(x_B),B)$: $f^{-k}(z_A)$ by definition, and $f^{-k}(z_B)$ because of  the inclusion
$f^{\ell}(z_B)\in W^u(f^\ell(x_B),B_\ell)$ and the Markov property. Since $B\subset Z(b)$,
$$
W^u(f^{-k}(x_B),B)\subset V^u(f^{-k}(x_B),Z(b))= V^u[(\b_i)_{i\leq -k}]\equiv V^u[(c_i)_{i\leq -k}].
$$
It follows that $f^{-k}(z_A), f^{-k}(z_B)\in V^u[(c_i)_{i\leq -k}]$.

This together with the fact that  $f^{-k}(z_A), f^{-k}(z_B)\in Z(b)=Z(c_{-k})$ implies that
\begin{equation}\label{rach}
f^{i}(z_A),f^{i}(z_B)\in Z(c_i)\subset \Psi_{x_i}[R_{p^u_i\wedge p^s_i}(\un{0})]\textrm{ for all }i\leq -k.
\end{equation}
Similarly, one can show that $f^\ell(z_A),f^\ell(z_B)\in V^s[(c_i)_{i\geq \ell}]$, whence
\begin{equation}\label{ma}
f^{i}(z_A),f^{i}(z_B)\in Z(c_i)\subset \Psi_{x_i}[R_{p^u_i\wedge p^s_i}(\un{0})]\textrm{ for all }i\geq \ell.
\end{equation}
Using the inclusions $f^{-k}(z_A)\in {_{-k}[A_{-k},\ldots,A_\ell]}$, $f^{-k}(z_B)\in {_{-k}[B_{-k},\ldots,B_\ell]}$ (see the proof of claim 4), we see that if $-k<i<\ell$ then
$f^i(z_A),f^i(z_B)\in A_i\cup B_i$. Therefore $f^i(z_A),f^i(z_B)\in Z(a_i)\cup Z(b_i)$. The sets $Z(a_i)$, $Z(b_i)$ intersect, because by claim 1
$
f^i(x)=\pi[\s^i(\un{a})]=\pi[\s^i(\un{b})]\in Z(a_i)\cap Z(b_i).
$ Thus by Lemma \ref{Lemma_Overlapping_Z},
\begin{equation}\label{ninov}
f^{i}(z_A),f^{i}(z_B)\in Z(a_i)\cup Z(b_i)\subset \Psi_{x_i}[R_{Q_\e(x_i)}(\un{0})]\textrm{ for all }-k< i< \ell.
\end{equation}

In summary, $f^i(z_A),f^i(z_B)\in \Psi_{x_i}[R_{Q_\e(x_i)}(\un{0})]$, where $c_i=\Psi_{x_i}^{p^u_i,p^s_i}$ is a chain. By Proposition \ref{Prop_V}(4),
$
z_A,z_B\in V^u[(c_i)_{i\leq 0}]\cap V^s[(c_i)_{i\geq 0}].
$
So $z_A=\pi(\un{c})=z_B$, and the claim is proved.

\medskip
The contradiction between claims 4 and 5 shows that $x$ cannot have more than $N$ pre-images.
\end{proof}
\section{Invariant measures}\addtocounter{subsection}{1}
Let $\s:\wh{\Sigma}\to\wh{\Sigma}$ denote the finite-to-one Markov extension of $f$ which we constructed in part \ref{Part_Finite_To_One}.
We compare the invariant Borel measures of $\s:\wh{\Sigma}\to\wh{\Sigma}$ to the invariant Borel measures of $f:M\to M$. We restrict our attention to measures whose entropy is larger than $\chi$.

\begin{prop}
Suppose $\wh{\mu}$ is an ergodic  Borel probability measure on $\wh{\Sigma}$, then $\mu:=\wh{\mu}\circ\wh{\pi}^{-1}$ is an ergodic Borel probability measure on $M$, and $h_\mu(f)=h_{\wh{\mu}}(\s)$
\end{prop}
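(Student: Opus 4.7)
The plan is to verify by standard soft arguments that $\mu := \wh{\mu} \circ \wh{\pi}^{-1}$ is a well-defined ergodic $f$-invariant Borel probability measure, and then prove $h_\mu(f) = h_{\wh{\mu}}(\s)$ by combining the finite-to-one property of $\wh{\pi}$ (Theorem \ref{Theorem_Finite_To_One}) with the Abramov--Rokhlin relative entropy formula.

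First I would observe that $\wh{\pi}$ is continuous (Theorem \ref{Theorem_Pi_Hat}(2)), hence Borel measurable, so $\mu$ is a Borel probability measure. The identity $\mu \circ f^{-1} = \wh{\mu} \circ (f \wh{\pi})^{-1} = \wh{\mu} \circ (\wh{\pi} \s)^{-1} = \wh{\mu} \circ \s^{-1} \circ \wh{\pi}^{-1} = \wh{\mu} \circ \wh{\pi}^{-1} = \mu$ gives $f$-invariance. For ergodicity: if $f^{-1}A = A$ then $\wh{\pi}^{-1}A$ is $\s$-invariant, so $\wh{\mu}(\wh{\pi}^{-1}A) \in \{0,1\}$, hence $\mu(A) \in \{0,1\}$. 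The inequality $h_\mu(f) \leq h_{\wh{\mu}}(\s)$ is immediate because $(M,f,\mu)$ is a measure-theoretic factor of $(\wh{\Sigma},\s,\wh{\mu})$.

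For the reverse inequality, I would first use ergodicity to upgrade Theorem \ref{Theorem_Finite_To_One} into a \emph{uniform} fiber bound. Since $\wh{\mathfs V} = \mathfs R$ is countable and $\wh{\mu}$ is a probability measure, some $R_0 \in \mathfs R$ satisfies $\wh{\mu}([R_0]) > 0$. By Birkhoff's ergodic theorem, $\wh{\mu}$-a.e. $\un{R}$ visits $[R_0]$ with positive forward and backward frequency, so $R_i = R_0$ for infinitely many $i > 0$ and infinitely many $i < 0$. Combined with $\wh{\mu}(\wh{\Sigma}^\#) = 1$ (Poincar\'e recurrence), Theorem \ref{Theorem_Finite_To_One} applied with $R = S = R_0$ yields
$$|\wh{\pi}^{-1}(\wh{\pi}(\un{R}))| \leq N := \vf_\chi(R_0, R_0) < \infty$$
for $\wh{\mu}$-a.e.\ $\un{R}$. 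Now I would invoke the Abramov--Rokhlin formula $h_{\wh{\mu}}(\s) = h_\mu(f) + h_{\wh{\mu}}(\s \,|\, \wh{\pi}^{-1}\mathcal{B}_M)$ and show the conditional term vanishes. For any finite partition $\alpha$ of $\wh{\Sigma}$, the refinement $\alpha_n := \bigvee_{i=0}^{n-1}\s^{-i}\alpha$ induces a partition of each fiber $\wh{\pi}^{-1}(x)$ into at most $|\wh{\pi}^{-1}(x)| \leq N$ atoms, independently of $n$. Hence $H_{\wh{\mu}}(\alpha_n \,|\, \wh{\pi}^{-1}\mathcal{B}_M) \leq \log N$ uniformly in $n$, and dividing by $n$ gives $h_{\wh{\mu}}(\s, \alpha \,|\, \wh{\pi}^{-1}\mathcal{B}_M) = 0$. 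Taking the supremum over finite $\alpha$ yields $h_{\wh{\mu}}(\s \,|\, \wh{\pi}^{-1}\mathcal{B}_M) = 0$, hence $h_{\wh{\mu}}(\s) = h_\mu(f)$.

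The hard part is the last paragraph, and the only genuine subtlety is invoking the Abramov--Rokhlin formula cleanly in this non-compact, countable-alphabet setting; since the formula is purely measure-theoretic and both $\wh{\Sigma}$ and $M$ are standard Borel spaces, it applies without modification. The specific dynamical content of the paper enters only through the uniform fiber bound, which is a soft corollary of Theorem \ref{Theorem_Finite_To_One} combined with ergodicity.
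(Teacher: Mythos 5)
Your proposal is correct and takes essentially the same route as the paper: establish the soft facts, use ergodicity/recurrence to find a single $R_0$ giving the uniform a.e.\ fiber bound $\vf_\chi(R_0,R_0)$ from Theorem \ref{Theorem_Finite_To_One}, and conclude that an (a.e.) finite-to-one extension preserves entropy. The only difference is that the paper simply cites ``finite extensions preserve entropy,'' while you supply its proof via the Abramov--Rokhlin addition formula and the vanishing of the conditional entropy over uniformly finite fibers --- a correct filling-in of the cited fact rather than a different argument.
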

\begin{proof}
It is clear that $\mu$ is well-defined,  ergodic and invariant.

By Poincar\'e's Recurrence Theorem there exists a vertex $R\in\mathfs R$ s.t.
$$
\Upsilon:=\{\un{R}\in\wh{\Sigma}:\exists n_k,m_k\uparrow\infty\textrm{ s.t. }R_{n_k}, R_{-m_k}=R\}
$$ has full measure with respect to $\wh{\mu}$. The map $\wh{\pi}:\Upsilon\to M$ is bounded-to-one (the bound is $\vf_\chi(R,R)$).
Finite extensions preserve entropy, so $h_\mu(f)=h_{\wh{\mu}}(\s)$.
\end{proof}

The other direction, ``every invariant measure $\mu$ supported on $\wh{\pi}(\wh{\Sigma})$ lifts to an invariant measure on $\wh{\Sigma}$", is less clear.\footnote{$\mu\circ\wh{\pi}$ does not work: it is not even $\s$--additive.} Lifting measures to  Markov extensions is a difficult issue in general, and it  has  received considerable attention (see e.g.  \cite{Hof},\cite{Ke},\cite{Br},\cite{BT},\cite{PSZ},\cite{BuzziMulti},\cite{Z}). But our case is very   simple, because our Markov extension is finite-to-one.

Indeed, suppose $\mu$ is an ergodic $f$--invariant probability measure on $M$ s.t. $h_\mu(f)>\chi$. Define $\wt{\mu}$ by
\begin{equation}\label{Mu Tilde}
\wt{\mu}(E):=\int_M\biggl(\frac{1}{|\wh{\pi}^{-1}(x)|}\sum_{\wh{\pi}(\un{R})=x}1_E(\un{R})\biggr)d\mu(x).
\end{equation}

\begin{prop}
Suppose $\mu$ is an ergodic $f$--invariant Borel probability measure on $M$ s.t. $h_\mu(f)>\chi$.
\begin{enumerate}
\item  $\wt\mu$ is a well--defined $\s$--invariant Borel probability measure on $\wh{\Sigma}$.
\item Almost every ergodic component $\wh{\mu}$ of $\wt{\mu}$ is an ergodic $\s$--invariant probability measure such that   $\wh{\mu}\circ\wh{\pi}^{-1}=\mu$ and  $h_{\wh{\mu}}(\s)=h_\mu(f)$.
\end{enumerate}
\end{prop}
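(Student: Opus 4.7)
The plan is to treat (1) and (2) separately, organizing (1) around three checks: finiteness of fibers on the support, Borel measurability of the integrand, and $\s$-invariance after normalization.

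\textbf{Step 1: $\wt\mu$ is a well-defined $\s$-invariant probability.} Since $h_\mu(f)>\chi$, the Oseledets--Ruelle theorem gives $\mu(\NUH_\chi(f))=1$; Lemma~\ref{Lemma_C_tempered} together with Poincar\'e recurrence on the return times defining $\NUH_\chi^\#(f)$ upgrades this to $\mu(\NUH_\chi^\#(f))=1$. By Theorem~\ref{Theorem_Pi_Hat}(3), $\NUH_\chi^\#(f)\subset\wh\pi(\wh\Sigma^\#)$, and by Theorem~\ref{Theorem_Finite_To_One} every point of $\wh\pi(\wh\Sigma^\#)$ has a finite $\wh\pi$-fiber, so the integrand in (\ref{Mu Tilde}) makes sense for $\mu$-a.e.~$x$. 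For Borel measurability I invoke the Lusin--Novikov theorem: $\wh\pi$ is a continuous Borel map between Polish spaces with at-most-countable fibers, so $\wh\pi(\wh\Sigma)$ is Borel, $\wh\Sigma$ splits into a countable disjoint Borel union of Borel sections on which $\wh\pi$ is a Borel isomorphism onto its image, and $x\mapsto |\wh\pi^{-1}(x)\cap E|$ is Borel for every Borel $E\subset\wh\Sigma$. Taking $E=\wh\Sigma$ and using $|\wh\pi^{-1}(x)|/|\wh\pi^{-1}(x)|=1$ gives $\wt\mu(\wh\Sigma)=1$.

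\textbf{Step 2: $\s$-invariance and the pushforward.} Because $\wh\pi\circ\s=f\circ\wh\pi$ and $\s$ is a bijection on $\wh\Sigma$, it restricts to a bijection $\wh\pi^{-1}(x)\to\wh\pi^{-1}(f(x))$; in particular $|\wh\pi^{-1}(x)|=|\wh\pi^{-1}(f(x))|$ for $\mu$-a.e.~$x$. Hence for any Borel $E$,
$$\wt\mu(\s^{-1}E)=\int_M\frac{1}{|\wh\pi^{-1}(x)|}\sum_{\wh\pi(\un R)=x}1_E(\s\un R)\,d\mu(x)=\int_M\frac{1}{|\wh\pi^{-1}(f(x))|}\sum_{\wh\pi(\un S)=f(x)}1_E(\un S)\,d\mu(x),$$
which equals $\wt\mu(E)$ after the change of variables $y=f(x)$ and use of $\mu\circ f^{-1}=\mu$. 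Taking $E=\wh\pi^{-1}(B)$ for Borel $B\subset M$ collapses the integrand to $1_B(x)$, so $\wt\mu\circ\wh\pi^{-1}=\mu$.

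\textbf{Step 3: Ergodic components.} Let $\wt\mu=\int\wh\mu\,dP(\wh\mu)$ be the ergodic decomposition. Since any factor of an ergodic system is ergodic, each $\wh\mu\circ\wh\pi^{-1}$ is an ergodic $f$-invariant probability. Integrating gives $\int(\wh\mu\circ\wh\pi^{-1})\,dP=\wt\mu\circ\wh\pi^{-1}=\mu$, but $\mu$ is ergodic and hence extremal in the simplex of $f$-invariant probabilities, so $\wh\mu\circ\wh\pi^{-1}=\mu$ for $P$-a.e.~$\wh\mu$.

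\textbf{Step 4: Entropy equality.} The factor inequality gives $h_{\wh\mu}(\s)\geq h_{\wh\mu\circ\wh\pi^{-1}}(f)=h_\mu(f)$. For the reverse direction I apply the Abramov--Rokhlin relative entropy formula
$$h_{\wh\mu}(\s)=h_\mu(f)+h_{\wh\mu}(\s\mid\wh\pi^{-1}\mathcal B_M).$$
By Poincar\'e recurrence applied to $\wh\mu$, there exist $R,S\in\mathfs R$ such that for $\wh\mu$-a.e.~$\un R$ one has $R_{n_k}=R$ and $R_{-m_k}=S$ along suitable sequences; Theorem~\ref{Theorem_Finite_To_One} then bounds $|\wh\pi^{-1}(x)|\leq\vf_\chi(R,S)$ on a set of full $\wh\mu$-measure. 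A uniformly finite-to-one Borel extension has zero relative entropy (the partition into fibers is measurable with respect to $\wh\pi^{-1}\mathcal B_M$ modulo a null set and has uniformly bounded atoms, so $h_{\wh\mu}(\s\mid\wh\pi^{-1}\mathcal B_M)\leq\log\vf_\chi(R,S)/n$ for every $n$ via the standard Rokhlin-style argument). Hence $h_{\wh\mu}(\s)=h_\mu(f)$.

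\textbf{Main obstacle.} The most delicate point is the Borel measurability in Step~1, for which I rely on Lusin--Novikov applied to the continuous countable-to-one factor $\wh\pi$; once that is in place, the remaining ingredients (Poincar\'e recurrence to uniformize fiber cardinality, preservation of ergodicity under factors, and the Abramov--Rokhlin formula together with vanishing of relative entropy for uniformly finite extensions) are standard.
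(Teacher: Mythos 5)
Your proposal is correct and follows essentially the same route as the paper: a Lusin-type selection theorem for the Borel measurability of the integrand in (\ref{Mu Tilde}), the direct computation of $\s$--invariance and of $\wt{\mu}\circ\wh{\pi}^{-1}=\mu$, extremality of the ergodic $\mu$ to identify the ergodic components' projections, and preservation of entropy under finite-to-one extensions (which you justify via the conditional entropy formula instead of quoting it --- a harmless elaboration). The only point to tighten is that Lusin--Novikov should be applied not to $\wh{\pi}$ on all of $\wh{\Sigma}$, where countability of fibers is not established, but to its restriction to the Borel set $\wh{\pi}^{-1}\bigl(\wh{\pi}(\wh{\Sigma}^\#)\bigr)$, which carries $\wt{\mu}$ and on which fibers are finite by Theorem \ref{Theorem_Finite_To_One} --- exactly the restriction the paper makes when it declares that it works inside $\wh{\pi}(\wh{\Sigma}^\#)$.
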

\begin{proof}
The first thing to do is to verify that the integrand in (\ref{Mu Tilde}) is  measurable.
We recall some basic facts from set theory (see e.g. \cite[\S 4.5, \S 4.12]{Sriv}): Let $X,Y$ be two complete separable metric spaces.
 \begin{enumerate}
\item[(I)] $F:X\to Y$ is Borel iff $\graph(F)$ is a Borel subset of $X\x Y$.
\item[(II)] Suppose $F:X\to Y$ is Borel and countable-to-one (i.e. $F^{-1}(y)$ is finite or countable for all $y\in Y$). If $E\subset X$ is Borel, then $F(E)\subset Y$ is Borel.
\item[(III)] Lusin's Theorem: Suppose $B\subset X\x Y$ is Borel. If  $B_x:=\{y:(x,y)\in B\}$ is finite or countable for every $x\in X$, then $B$ is a countable disjoint union of Borel graphs of partially defined Borel functions.
 \end{enumerate}

Since $h_\mu(f)>\chi$, $\mu$ is carried by $\wh{\pi}(\Sigma^\#)$. Since $\wh{\pi}:\Sigma^\#\to M$ is finite-to-one, $\wh{\pi}(\Sigma^\#)$ is Borel. Henceforth we work inside  $\wh{\pi}(\Sigma^\#)$.

\medskip
\noindent
{\em Step 1.\/} $x\mapsto |\wh{\pi}^{-1}(x)|$ is constant on a Borel set $\Omega$ s.t.  $\mu(\Omega)=1$.

\medskip
\noindent
{\em Proof.\/} Since $\wh{\pi}\circ\s=f\circ\wh{\pi}$ and $f$ is a bijection, $x\mapsto |\wh{\pi}^{-1}(x)|$ is $f$--invariant.

We show that  the restriction of $x\mapsto |\wh{\pi}^{-1}(x)|$ to $\wh{\pi}(\Sigma^\#)$  is  Borel measurable. The claim will then follow from the ergodicity of $\mu$.

 Graphs of Borel functions are Borel, therefore
$
B:=\{(\wh{\pi}(\un{R}),\un{R}):\un{R}\in\wh{\Sigma}^\#\}
$
is a Borel subset of $M\x \wh{\Sigma}$.

By Lusin's theorem, $\exists$ partially defined Borel functions $\vf_n: M_n\to \wh{\Sigma}^\#$ s.t. $M_n$ are pairwise disjoint Borel subsets of $M$ and
$B=\{(x,\vf_n(x)):x\in M_n,\ n\in\N\}$. In particular,
$
\wh{\pi}^{-1}(x)=\{\vf_i(x):i\in\N \textrm{ s.t. } M_i\owns x\}.
$
The graphs of $\vf_n$ are pairwise disjoint, so $i\neq j\Rightarrow \vf_i(x)\neq \vf_j(x)$. Consequently,
 $$
 |\wh{\pi}^{-1}(x)|=\sum_{i=1}^\infty 1_{M_i}(x)\textrm{ on }\wh{\pi}(\wh{\Sigma}^\#).
 $$
Since $M_i$ are Borel, $x\mapsto |\wh{\pi}^{-1}(x)|$ is Borel on $\wh{\pi}(\wh{\Sigma}^\#)$.

\medskip
\noindent
{\em Step 2.\/} Let $\Upsilon:=\wh{\pi}^{-1}(\Omega)$ and let $N$ denote the number of pre-images of points $x\in\Omega$. There exists a Borel partition $\Upsilon=\biguplus_{i=1}^N\Upsilon_i$ such that
$\wh{\pi}:\Upsilon_i\to \Omega$ is one-to-one and onto for every $i$.

\medskip
\noindent
{\em Proof.\/} This is a consequence of Lusin's Theorem.

Let $B_1:=\{(\wh{\pi}(\un{y}),\un{y}):\un{y}\in\wh{\pi}^{-1}(\Omega)\}$. Each $x$--fibre of $B_1$ has $N$ elements.  By Lusin's Theorem $B_1=\biguplus_{n\geq 1}\graph(\vf_n)$ where $\vf_n:M_n\to\wh{\Sigma}$ are Borel. $\Omega=\biguplus_{n\geq 1}M_n$.

Define $\psi_1:\Omega\to \wh{\Sigma}$ by $\psi_1=\vf_i$ on $M_i\setminus\bigcup_{j<i}M_j$ $(i\in\N)$, then $\psi_1$ is Borel and $\psi_1(x)\in\wh{\pi}^{-1}(x)$ for all $x$. Since $\wh{\pi}\circ \psi_1=\id$, $\psi_1$ is
one-to-one. It follows that $\Upsilon_1:=\psi_1(\Omega)$ is Borel, and $\wh{\pi}:\Upsilon_1\to \Omega$ is one-to-one and onto.

Now take $B_2:=B_1\setminus\graph\psi_1$. Each $x$--fibre of $B_2$ has $N-1$ elements, and $B_2$ is disjoint from $\graph(\psi_1)$. Apply  the previous process to $B_2$ to obtain  $\Upsilon_2$. After $N$ steps, we are done.

\medskip
\noindent
{\em Step 3.\/} The restriction of the integrand in (\ref{Mu Tilde}) to $\Omega$ is Borel measurable.

\medskip
\noindent
{\em Proof.\/}
Every $x\in\Omega$ has exactly $N$ pre-images, one in every $\Upsilon_i$. It follows that for every Borel set $E\subset \wh{\Sigma}$,
$$
\frac{1}{|\wh{\pi}^{-1}(x)|}\sum\limits_{\wh{\pi}(\un{y})=x} 1_E(\un{y})=\frac{1}{N}\sum_{i=1}^N 1_{\wh{\pi}(E\cap\Upsilon_i)}(x)\textrm{ on }\Omega.
$$
Since $\wh{\pi}$ is one-to-one on $\Upsilon_i$, $\wh{\pi}(E\cap\Upsilon_i)$ is a Borel set. It follows that the right-hand-side is Borel measurable.

\medskip
\noindent
{\em Step 4.\/} $\wt{\mu}$ is an invariant Borel probability measure such that $\wt{\mu}\circ\wh{\pi}^{-1}=\mu$ and $h_{\wt{\mu}}(\s)=h_\mu(f)$.

\medskip
\noindent
{\em Proof.\/} We saw that $\wh{\mu}(E)$ is well--defined for all Borel sets $E\subset\wh{\Sigma}$. This set function is obviously $\s$--additive, and it is clear that $\wt{\mu}(\wh{\Sigma})=1$. Thus $\wt{\mu}$ is a Borel probability measure.

This measure is $\s$--invariant, because
\begin{align*}
\wt{\mu}(\s^{-1}E)&=\int_M\biggl(\frac{1}{|\wh{\pi}^{-1}(x)|}
\sum_{\wh{\pi}(\un{R})=x}1_E(\s(\un{R}))\biggr)d\mu(x)\\
&=\int_M\biggl(\frac{1}{|\wh{\pi}^{-1}(f(x))|}
\sum_{\wh{\pi}(\s\un{R})=f(x)}1_E(\s(\un{R}))\biggr)d\mu(x)\ \ (\because \wh{\pi}\circ\s=f\circ\wh{\pi})
\end{align*}
\begin{align*}
&=\int_M\biggl(\frac{1}{|\wh{\pi}^{-1}(f(x))|}
\sum_{\wh{\pi}(\un{S})=f(x)}1_E(\un{S})\biggr)d\mu(x)\\
&=\wt{\mu}(E) \ \ (\because \mu\circ f^{-1}=\mu).
\end{align*}
It is a lift of $\mu$ because
\begin{align*}
\wt{\mu}(\wh{\pi}^{-1}E)&=\int_M\biggl(\frac{1}{|\wh{\pi}^{-1}(x)|}
\sum_{\wh{\pi}(\un{R})=x}1_E(\wh{\pi}(\un{R}))\biggr)d\mu(x)=\int_M 1_E(x) d\mu(x)=\mu(E).
\end{align*}
Finally $\wt{\mu}$ and $\mu$ have the same entropy, because $\wh{\pi}$ is $N$--to--one on a set of full measure, and finite extensions preserve entropy.

\medskip
\noindent
{\em Step 5.\/} Almost every ergodic component of $\wt{\mu}$ satisfies $\wh{\mu}\circ\wh{\pi}^{-1}=\mu$ and $h_{\wh{\mu}}(\s)=h_\mu(\mu)$.

\medskip
\noindent
Let $\wt{\mu}=\int \wh{\mu}_y d\nu(y)$ be the ergodic decomposition of $\wt{\mu}$, then
$\mu=\wt{\mu}\circ\wh{\pi}^{-1}=\int \wh{\mu}_y\circ\wh{\pi}^{-1} d\nu_y$. Each of the measures
$\wh{\mu}_y\circ\wh{\pi}^{-1}$ is $f$--invariant. Since $\mu$ is ergodic,  $\wh{\mu}_y\circ\wh{\pi}^{-1}=\mu$ for a.e. $y$.

The equality of the entropies follows as before from the fact that finite extensions preserve entropy.
\end{proof}

\appendix\part{Appendix: Proofs of standard results in Pesin Theory}
\setcounter{section}{1}
\addtocounter{equation}{-1}
\medskip
\noindent
{\bf Proof of Theorem \ref{TheoremOPR}} (Compare with  Theorem {3.5.5} in \cite{BP}.) The idea is to evaluate  $A_\chi(x):=C_\chi(f(x))^{-1}\circ df_x\circ C_\chi(x)$ on the standard basis of $\R^2$.

We start from the identity $df_x E^s(x)=E^s(f(x))$. Both sides of the equation are one--dimensional, therefore
$
df_x \un{e}^s(x)=\pm\|df_x\un{e}^s(x)\|_{f(x)}\un{e}^s(f(x))$.
It follows that
\begin{align*}
A_\chi(x)\un{e}_1&=s_\chi(x)^{-1}[C_\chi(f(x))^{-1}\circ df_x]\un{e}^s(x)\\
&=\pm s_\chi(x)^{-1}\|df_x \un{e}^s(x)\|_{f(x)} C_\chi(f(x))^{-1}\un{e}^s(f(x))\\
&=\pm\frac{s_\chi(f(x))}{s_\chi(x)}\|df_x \un{e}^s(x)\|_{f(x)}\un{e}_1.
\end{align*}
We see that $\un{e}_1$ is an eigenvector of $A_\chi(x)$ with eigenvalue
\begin{equation}\label{lambda_e}
\l_\chi(x):=\pm\frac{s_\chi(f(x))}{s_\chi(x)}\|df_x \un{e}^s(x)\|_{f(x)}.
\end{equation}
Similarly, $\un{e}_2$ is an eigenvector of $A_\chi(x)$ with eigenvalue
\begin{equation}\label{mu_e}
u_\chi(x):=\pm\frac{u_\chi(f(x))}{u_\chi(x)}\|df_x \un{e}^u(x)\|_{f(x)}.
\end{equation}

We estimate the eigenvalues:
\begin{align*}
s_\chi(x)^2&\equiv 2\sum_{k=0}^\infty e^{2k\chi}\|(df^k)_x \un{e}^s(x)\|^2_{f^k(x)}>
2\sum_{k=1}^\infty e^{2k\chi}\|(df^k)_x \un{e}^s(x)\|^2_{f^k(x)}\\
&=2\sum_{k=0}^\infty e^{2(k+1)\chi}\|(df^k)_{f(x)}df_x \un{e}^s(x)\|^2_{f^{k+1}(x)}\\
&=2\|df_x \un{e}^s(x)\|_{f(x)}^2\sum_{k=0}^\infty e^{2(k+1)\chi}\|(df^k)_{f(x)} \un{e}^s(f(x))\|^2_{f^{k+1}(x)}\\
&=e^{2\chi}\|df_x \un{e}^s(x)\|_{f(x)}^2 s_\chi(f(x))^2.
\end{align*}
Rearranging terms, we find that
$
e^{-2\chi}>\frac{s_\e(f(x))^2}{s_\e(x)^2}\|df_x \un{e}^s(x)\|^2_{f(x)}=\l_\chi(x)^2.
$
It follows that $|\l_\chi(x)|<e^{-\chi}$. Similarly, one shows that $|\mu_\chi(x)|>e^\chi$.

\medskip

Since $f$ is a diffeomorphism, the number  $M_f:=\max\{\|df_x\|, \|df_x^{-1}\|:x\in M\}$ is well defined and finite. It is easy to see that $M_f\geq 1$. By \cite[Cor. 3.2.10]{KH}, $h_{top}(f)\leq 2\log M_f$.

By definition of $s_\chi(x)$, and the identity
$
df_x\un{e}^s(x)=\pm\|df_x\un{e}^s(x)\|\un{e}^s(f(x))$,
\begin{align*}
s_\chi(x)^2&=2\left(1+\sum_{k=1}^\infty e^{2k\chi}\|df^{k-1}_{f(x)}\un{e}^s(f(x))\|^2_{f^k(x)}\|df_x\un{e}^s(x)\|_x^2\right)\\
&\leq 2\left(1+e^{2\chi}M_f^2\sum_{k=0}^\infty e^{2k\chi}\|df^{k}_{f(x)}\un{e}^s(f(x))\|^2_{f^{k+1}(x)}\right)\\
&\leq 2+e^{2\chi}M_f^2s_\chi(f(x))^2\\
&\leq (M_f^6+1)s_\chi(f(x))^2\ \ (\because s_\chi>\sqrt{2}\textrm{ and }\chi<h_{top}(f)\leq 2\log M_f).
\end{align*}
Therefore by  (\ref{lambda_e})
\begin{align}
|\l_\chi(x)|&>(1+M_f^6)^{-1/2}\|df_x\un{e}^s(x)\|_{f(x)}\geq M_f^{-1}(1+M_f^6)^{-1/2}.\label{clarinet}
\end{align}
Similarly, one can bound $|\mu_\chi(x)|$ from above by a function of $M_f$.
 \hfill$\Box$

\medskip
\noindent
{\bf Proof of Lemma \ref{Lemma_C_norm}}
We put  the standard basis $\un{e}_1={1\choose 0}, \un{e}_2={0\choose 1}$ on $\R^2$, and the basis $\un{e}^s(x),\un{e}^s(x)^{\perp}$ on $T_x M$, where $\un{v}^\perp$ denotes the unique vector s.t. the signed angle from $\un{v}$ to $\un{v}^\perp$ is $\pi/2$.
The linear map $C_\chi(x):\R^2\to T_x$ is represented in these bases by the matrix
$$
\left(
\begin{array}{cc}
s_\chi(x)^{-1} & u_\chi(x)^{-1}\cos\a(x)\\
0 & u_\chi(x)^{-1}\sin\a(x)
\end{array}
\right).
$$
Inverting, we find that $C_\chi(x)^{-1}:T_x M\to\R^2$ is represented by
$$
\left(
\begin{array}{cc}
s_\chi(x) & -s_\chi(x)/\tan\a(x)\\
0 & u_\chi(x)/\sin\a(x)
\end{array}
\right).
$$
The lemma follows by direct calculation, using the  fact that  the Frobenius norm of a linear map represented by a matrix $(a_{ij})$ is equal to $(\sum a_{ij}^2)^{1/2}$.\hfill $\Box$

\medskip
\noindent
{\bf Proof of Lemma \ref{Lemma_C_contracts}}
Define an inner product $\<\cdot,\cdot\>_x^\ast$ on $T_x M$ by the conditions (a)  $\|\un{e}^s(x)\|_x^\ast=s_\chi(x)$, (b) $\|\un{e}^u(x)\|_x^\ast=u_\chi(x)$, and (c) $\<\un{e}^u(x), \un{e}^s(x)\>_x^\ast=0$ (compare with \cite[\S 3.5.1]{BP}).
The inner product $\|\cdot\|_x^\ast$ satisfies  $\|\cdot\|_x^\ast\geq \|\cdot\|_x$, because  for every ${\xi},{\eta}\in\R$
\begin{align*}
\|\xi \un{e}^s(x)+\eta \un{e}^u(x)\|_x^\ast&=\sqrt{\xi^2 s_\chi(x)^2+\eta^2 u_\chi(x)^2}>\sqrt{2(\xi^2+\eta^2)}\ \ (\because s_\chi, u_\chi>\sqrt{2})\\
&\geq |\xi|+|\eta|=\|\xi\un{e}^s(x)\|_x+\|\eta\un{e}^u(x)\|_x\geq \|\xi \un{e}^s(x)+\eta \un{e}^u(x)\|_x.
\end{align*}
$\therefore\|C_\chi(x){\xi\choose\eta}\|_x\leq \|C_\chi(x){\xi\choose\eta}\|_x^\ast=\|\xi s_\chi(x)^{-1}\un{e}^s(x)+\eta u_\chi(x)^{-1}\un{e}^u(x)\|_x^\ast=\sqrt{\xi^2+\eta^2}$. The lemma follows.
\hfill$\Box$

\medskip
\noindent
{\bf Proof of Lemma \ref{Lemma_C_tempered}}
Let $A_\chi(x):=C_\chi(f(x))^{-1}\circ df_x\circ C_\chi(x)$. Extend $A_\chi$  to  a cocycle $A_\chi^{(n)}$ using the identities $A_\chi^{(0)}:=A_\chi$ and  $A_\chi^{(m+n)}(x)=A_\chi^{(m)}(f^n(x)) A_\chi^{(n)}(x)$. The extension is unique, and is given by
$
A_\chi^{(n)}(x)=C_\chi(f^n(x))^{-1}df_x^n C_\chi(x).
$

Theorem \ref{TheoremOPR} says that $A_\chi(x)$ is a diagonal matrix with  entries in $[C_f^{-1},C_f]$ for every $x\in\NUH_\chi(f)$. In particular,  $\log\|A^{(0)}_\chi\|$ and $\log\|(A^{(0)}_\chi)^{-1}\|$ are uniformly bounded on $\NUH_\chi(f)$, whence absolutely integrable w.r.t any ergodic invariant probability measure with entropy larger than $\chi$. This allows us to apply the Multiplicative Ergodic Theorem to $A_\chi^{(n)}$ w.r.t. every ergodic invariant probability measure with entropy larger than $\chi$.

Let $\NUH^\dagger_\chi(f)$ denote the set points $x\in\NUH_\chi(f)$ for which there is a decomposition $T_x \R^2=E^s_\chi(x)\oplus E^u_\chi(x)$ so that
\begin{enumerate}
\item $E^s_\chi(x)=\Span\{\un{e}_\chi^s(x)\}$, $\|\un{e}_\chi^s(x)\|=1$,  $\lim\limits_{n\to\pm\infty}\frac{1}{n}\log\|A_\chi^{(n)}(x) \un{e}^s_\chi(x)\|<0$;
 \item $E^u_\chi(x)=\Span\{\un{e}^u_\chi(x)\}$, $\|\un{e}^u_\chi(x)\|=1$,  $\lim\limits_{n\to\pm\infty}\frac{1}{n}\log\|A_\chi^{(n)}(x) \un{e}^u_\chi(x)\|>0$;
      \item $\lim\limits_{n\to\infty}\frac{1}{n}\log|\sin\a_\chi(f^n(x))|=0$, where $\a_\chi(x):=\measuredangle(\un{e}^s_\chi(x),\un{e}^u_\chi(x))$;
 \item $A_\chi(x)[E^s_\chi(x)]=E^s_\chi(f(x))$ and $A_\chi(x)[E^u_\chi(x)]=E^u_\chi(f(x))$.
\end{enumerate}
By the discussion above, $\NUH^\dagger_\chi(f)$ has full measure w.r.t. to any ergodic invariant probability measure with entropy larger than $\chi$.

Let $\NUH^\ast_\chi(f)$ denote the subset of $\NUH^\dagger_\chi(f)$ which consists of all points $x$ for which there exist a sequence $n_k\uparrow\infty$ s.t. $C_\chi(f^{n_k}(x))\xrightarrow[k\to\infty]{}C_\chi(x)$ and a sequence $m_k\downarrow -\infty$ s.t. $C_\chi(f^{m_k}(x))\xrightarrow[k\to\infty]{}C_\chi(x)$. By the Poincar\'e Recurrence Theorem, every invariant probability  measure which is carried by $\NUH^\dagger_\chi(f)$ is carried by $\NUH^\ast_\chi(f)$, so  $\NUH^\ast_\chi(f)$ has full measure w.r.t. to every ergodic invariant measure with entropy greater than $\chi$.

On the set $\NUH^\ast_\chi(f)$, the Multiplicative Ergodic Theorem holds for both $df_x$ and $A_\chi^{(n)}(x)$, so the following two limits exist:
\begin{equation}\label{limits}
\lim\limits_{n\to\pm\infty}\frac{1}{n}\log\|df^n_x C_\chi(x)\un{e}_i\|_{f^n(x)}\textrm{ , }
\lim\limits_{n\to\pm\infty}\frac{1}{n}\log\|C_\chi(f^n(x))^{-1}df^n_x C_\chi(x)\un{e}_i\|.
\end{equation}

Let $n_k\uparrow\infty$ be a  subsequence for which $C_\chi(f^{n_k}(x))\xrightarrow[k\to\infty]{}C_\chi(x)$. The norms of $C_\chi(f^{n_k}(x))$ and $C_\chi(f^{n_k}(x))^{-1}$ are bounded along this sequence,   so $$\|C_\chi(f^{n_k}(x))^{-1}df^{n_k}_x C_\chi(x)\un{e}_i\|\asymp \|df^{n_k}_x C_\chi(x)\un{e}_i\|.$$ We see that the limits in (\ref{limits}) agree.
As a result
$E^s_\chi(x)=\R\x\{\un{0}\}$, $E^u_\chi(x)=\{\un{0}\}\x\R$, and $x$ has Lyapunov exponents $\log\l(x)$ and $\log\mu(x)$ w.r.t. $A_\chi^{(n)}$.

Let $\L_\chi(x):=\left(\begin{array}{cc}
\l(x) & 0\\
0 & \mu(x)
\end{array}
\right),$
then the limits (\ref{limits}) mean that
$$
\|(A_\chi^{(n)}(x)\L_\chi(x)^{-n})^{\pm 1}\|^{1/n}\xrightarrow[n\to\pm\infty]{}1.
$$
Similarly, if $\L(x)$ is the linear operator s.t. $\L(x)\un{e}^s(x)=\l(x) \un{e}^s(x)$ and $\L(x)\un{e}^u(x)=\mu(x)\un{e}^u(x)$, then
$$
\|(df_x^n\L(x)^{-n})^{\pm 1}\|^{1/n}\xrightarrow[n\to\pm\infty]{}1.
$$
Since  $
\L_\chi(x)=C_\chi(x)^{-1}\L(x) C_\chi(x)
$ and $A_\chi^{(n)}(x)=C_\chi(f^n(x))^{-1}\circ df_x^n\circ C_\chi(x)$,
\begin{align*}
\|(C_\chi\circ f^n)^{-1}\|^{1/n}&=\|A_\chi^{(n)}  C_\chi^{-1}(df_x^n)^{-1}\|^{1/n}\\
&=\|A_\chi^{(n)}  C_\chi^{-1}\L^{-n} C_\chi\cdot C_\chi^{-1}\cdot\L^n(df_x^n)^{-1}\|^{1/n}\\
&\leq \|A_\chi^{(n)} \L_\chi^{-n}\|^{1/n}\|C_\chi^{-1}\|^{1/n}\|(df_x^n\L^{-n})^{-1}\|^{1/n}\xrightarrow[n\to\pm\infty]{}1.
\end{align*}
Thus $\limsup\frac{1}{n}\log\|(C_\chi\circ f^n)^{-1}\|\leq 0$. On the other hand $C_\chi$ is a contraction (Lemma \ref{Lemma_C_contracts}), so  $\|(C_\chi\circ f^n)^{-1}\|^{1/n}\geq 1$, whence  $\liminf\frac{1}{n}\log\|C_\chi(f^n(x))^{-1}\|\geq 0$. The first part of the Lemma is proved.

\medskip
We prove  the second part of the Lemma: $\frac{1}{n}\log\|C_\chi(f^n(x))\un{e}_i\|_{f^n(x)}\xrightarrow[n\to\pm\infty]{}0$. We do this for $i=1$, and leave the case $i=2$ to the reader. Since the $A_\chi^{(n)}(\cdot)$ is diagonal, $A_\chi^{(n)}(x)\un{e}_1$ is proportional to $\un{e}_1$. The multiplicative ergodic theorem for $A_\chi^{(n)}(x)$ says that
$A_\chi^{(n)}(x)\un{e}_1=\pm \l(x)^n \exp[o(n)]\un{e}_1$, therefore
\begin{align*}
\lim\limits_{n\to\pm\infty}\|C_\chi(f^n(x))\un{e}_1\|^{1/n}_{f^n(x)}&=
\l(x)^{-1}\lim\limits_{n\to\pm\infty}\|C_\chi(f^n(x))A_\chi^{(n)}(x)
\un{e}_1\|^{1/n}_{f^n(x)}\\
&=\l(x)^{-1}\lim\limits_{n\to\pm\infty}\|(df_x^n)C_\chi(x)\un{e}_1\|^{1/n}_{f^n(x)}\\
&=\l(x)^{-1}\lim\limits_{n\to\pm\infty}\|(df_x^n)\un{e}^s(x)\|^{1/n}_{f^n(x)}=1,
\end{align*}
proving that $\frac{1}{n}\log\|C_\chi(f^n(x))\un{e}_1\|_{f^n(x)}\xrightarrow[n\to\pm\infty]{}0$.

\medskip
Finally, we prove that $\frac{1}{n}\log|\det C_\chi(f^n(x))|\xrightarrow[n\to\pm\infty]{}0$. We begin with some general comments on determinants.

Suppose  $L:V\to W$ is a linear operator between two two dimensional vector spaces with inner product. The determinant of  $\det L$ can be defined as $\det (L\Theta)$ for some (every) isometry $\Theta:W\to V$. The following fact holds:\footnote{Proof:
Let $\omega_V$, $\omega_W$ denote the volume $2$--forms on $V,W$, then $\omega_V(\un{u},\un{v})=\|\un{u}\|\|\un{v}\|\sin\measuredangle(\un{u},\un{v})$ and $\omega_W(\un{u},\un{v})=\|\un{u}\|\|\un{v}\|\sin\measuredangle(\un{u},\un{v})$. Since  $\omega_W(L\un{u},L\un{v})$ is also a $2$--form on $V$, and any two $2$--forms on $V$ are proportional, $\exists c$ s.t.  $\omega_W(L\un{u},L\un{v})=c\omega_V(\un{u},\un{v})$. Evaluating on an orthonormal basis of $V$, we find that  $c=\det L$. Consequently,
$\|L\un{u}\|\|L\un{v}\|\sin\measuredangle(L\un{u},L\un{v})=\det L \|\un{u}\|\|\un{v}\|\sin\measuredangle(\un{u},\un{v})$.
}
 If  $\un{u},\un{v}$ span $V$, then
\begin{equation}\label{Angle_Formula}
\frac{\sin \measuredangle(L\un{u}, L\un{v})}{\sin \measuredangle(\un{u}, \un{v})}=\frac{\|\un{u}\|\|\un{v}\|\det L}{\|L\un{u}\|\|L\un{v}\|}.
\end{equation}
It follows that
$$
|\det L|=\frac{\|L\un{u}\|\|L\un{v}\||\sin\measuredangle(L\un{u}, L\un{v})|}{\|\un{u}\|\|\un{v}\||\sin\measuredangle(\un{u},\un{v})|}\ \ \ \ \ (\un{u},\un{v}\textrm{ independent}).
$$

Applying this to $L=A_\chi^{(n)}$ with $\un{u}=\un{e}_1$, $\un{v}=\un{e}_2$, and to $L=df_x^n$ with $\un{u}=\un{e}^s(x)$, $\un{v}=\un{e}^u(x)$,  we find that
$$
\lim\limits_{n\to\pm\infty}\frac{1}{n}\log|\det A_\chi^{(n)}(x)|=\log\l(x)+\log\mu(x)=
\lim\limits_{n\to\pm\infty}\frac{1}{n}\log|\det df_x^n|.
$$
But $|\det A_\chi^{(n)}(x)|=|\det C_\chi(f^n(x))|^{-1}|\det df_x^n||\det C_\chi(x)|$. It follows that
$$
\frac{1}{n}\log|\det C_\chi(f^n(x))|\xrightarrow[n\to\infty]{}0
$$
as required. \hfill$\Box$

\medskip
\noindent
{\bf Proof of Lemma \ref{Lemma_Q_tempered}}
Parts (1) and (3) are obvious, and part (4) is a consequence of Lemma \ref{Lemma_C_tempered} and the  estimate
$
Q_\e(f^n(x))\asymp\|C_\chi(f^n(x))^{-1}\|^{-12/\b}.
$
For part (6), define $q_\e(x)$ on $\NUH^\ast(f)$ by the formula
$$
\frac{1}{q_\e(x)}=\frac{1}{\e}\sum_{k=-\infty}^\infty e^{-\frac{1}{3}|k|\e}\frac{1}{Q_\e(f^k(x))}.
$$
The sum converges because $\frac{1}{k}\log Q_\e(f^k(x))\xrightarrow[k\to\pm\infty]{}0$, and it is easy to check that $q_\e(x)$ behaves as required. (Compare with \cite[Lemma 3.5.7]{BP}.)

\medskip
It remains to prove parts (2) and (5). First we prove the following claim.

\medskip
\noindent
{\em Claim.\/} There exists a constant $C$, which only depends on $M,f$ and $\chi$, such that
$
C^{-1}\leq \|C_\chi(f(x))^{-1}\|/\|C_\chi(x)^{-1}\|\leq C\textrm{ on }\NUH_\chi(f).
$

\medskip
\noindent
{\em Proof.\/} By Lemma \ref{Lemma_C_norm}  it is enough to show that
$$
\frac{s_\chi\circ f}{s_\chi}\ , \ \frac{u_\chi\circ f}{u_\chi}\ ,\ \frac{|\sin\a\circ f|}{|\sin\a|}
$$
are uniformly bounded away from zero and infinity on $\NUH_\chi(f)$.

The following quantity is well defined and finite, because
 $f$ is a diffeomorphism and $M$ is compact:
$$
F_0:=\max\{\|df_x\|, \|df_x^{-1}\|, |\det(df_x)|, |\det(df_x^{-1})|:x\in M\}.
$$
Notice  that $F_0>1$.

Equation (\ref{lambda_e}) makes it clear that $\frac{s_\chi(f(x))}{s_\chi(x)}=F_0^{\pm 1}|\l_\e(x)|\in [(C_f F_0)^{-1},C_f F_0]$ on $\NUH_\chi(f)$.
Similarly, $\frac{u_\chi(f(x))}{u_\chi(x)}$ takes values in $[(C_f F_0)^{-1},C_f F_0]$ on $\NUH_\chi(f)$.
Finally, by (\ref{Angle_Formula}) and the fact that   $\un{e}^{s/u}(f(x))$ have the same direction as $df_x \un{e}^{s/u}(x)$ up to a sign,
$$
\frac{|\sin\a(f(x))|}{|\sin\a(x)|}=
\frac{|\sin\measuredangle(\un{e}^s(f(x)),\un{e}^u(f(x))|}{|\sin\measuredangle(\un{e}^s(x),\un{e}^u(x))|}=\frac{|\det df_x|}{\|df_x \un{e}^s(x)\|\|df_x \un{e}^u(x)\|}.
$$
The last quantity takes values in $[F_0^{-3}, F_0^3]$. The claim follows.

\medskip
Part (5) follows directly from the claim. For part (2), we start by noting that $Q_\e(x)<\e^{3/\b}\|C_\chi(x)^{-1}\|_{Fr}^{-12/\b}<\e^{3/\b}\|C_\chi(x)^{-1}\|^{-12}$, therefore also
$Q_\e(x)<(\e^{3/\b} C^{12/\b})\cdot \|C_\chi(f^{\pm 1}(x))^{-1}\|^{-12}$. If $\e$ is small enough then $\e^{1/\b} C^{12/\b}<1$, and the proof of part (2) is complete.\hfill$\Box$

\medskip
\noindent
{\bf Proof of Theorem \ref{Theorem_OP_charts}} What follows is based on   \cite[Theorem 5.6.1]{BP}.

Recall the following basic fact from differential geometry \cite[chapter 9]{Spivak}: Every $p\in M$ has an open neighborhood $W_p$ and a positive number $r>0$ s.t.
\begin{enumerate}
\item any $q,q'\in W_p$ are connected by a unique geodesic of length less than $r$;
\item for each $q\in W_p$, $\exp_q$ maps $B_r^q(\un{0})\subset T_q M$ diffeomorphically onto an open set $U_q\supseteq W_p$ in a $2$--bi-Lipschitz way, and $d(\exp_q)_{\un{0}}=\id$;
\item for every $q,q'\in W_p$, there is a unique vector $\un{v}(q,q')\in T_q M$ s.t. $\|\un{v}(q,q')\|_q<r$ and $\exp_q[\un{v}(q,q')]=q'$;
\item
$(q,q')\mapsto \un{v}(q,q')$ is a well--defined $C^\infty$ map from $W_p\x W_p$ to $M$.
\end{enumerate}

Since $M$ is compact, there exist positive constants $r(M), \rho(M)$ s.t. for every $p\in M$, $\exp_p$ maps $B_{r(M)}^p(\un{0})\subseteq T_p M$ diffeomorphically onto a neighborhood of $B_{\rho(M)}(p)\subset M$, in a $2$--bi-Lipschitz way.
Let
\begin{equation}\label{r_0}
r_0:=\frac{\min\{1,r(M),\rho(M)\}}{10[\Lip(f)+\Lip(f^{-1})]}.
\end{equation}
Note that $r_0<1$.

Suppose $\e<r_0/5$.
By the definition of $Q_\e(x)$, $Q_\e(x)<\e^3$, so $10Q_\e(x)<r_0/\sqrt{2}$. By Lemma \ref{Lemma_C_contracts}, $C_\chi(x)$ maps $R_{10 Q_\e(x)}(\un{0})$ contractively into $B_{r_0}(\un{0})$. Therefore $\Psi_x=\exp_x\circ C_\chi(x)$ maps $R_{10 Q_\e(x)}(\un{0})$ diffeomorphically in a $2$--Lipschitz way into $M$.
The first part of the theorem is proved.

Next we show that $f_x:=\Psi_{f(x)}^{-1}\circ f\circ \Psi_x$ is well defined on $R_{10 Q_\e(x)}(\un{0})$ and establish its properties.

Since  $\exp_x$ is $2$--Lipschitz, $C_\chi(x)$ is a contraction, and $10 Q_\e(x)<r_0/\sqrt{2}$,
$$
\Psi_x\textrm{ maps }R_{10Q_\e(x)}(\un{0})\textrm{ diffeomorphically into }B_{2r_0}(x).
$$
It follows that $f\circ \Psi_x$ maps $R_{10 Q_\e(x)}(\un{0})$ diffeomorphically into $B_{2\Lip(f) r_0}(f(x))$, which by the definition of $r_0$ is a subset of $B_{\rho(M)}(f(x))$, whence a subset of $\exp_{f(x)}[B_{r(M)}^x(\un{0})]$. It follows that $f_x:=\Psi_{f(x)}^{-1}\circ f\circ \Psi_x$ is well defined, smooth and injective on $R_{10 Q_\e(x)}(\un{0})$.

For every $p\in M$,  $\exp_p(\un{0})=p$ and $d(\exp_p)_{\un{0}}=\id$. It easily follows that  $f_x(\un{0})=\un{0}$, and $(df_x)_{\un{0}}=C_\chi(f(x))^{-1}\circ (df)_x\circ C_\chi(x)$. By Theorem \ref{TheoremOPR}, this is a diagonal matrix with diagonal elements $A(x)=\l_\e(x), B(x)=\mu_\e(x)$, and  $C_f^{-1}<|A(x)|<e^{-\chi}$, $e^\chi<|B(x)|<C_f$.

We compare $f_x$ to its linearization at $\un{0}$ by analyzing
$$
r_x(\un{u}):=f_x(\un{u})-(df_x)_{\un{0}}(\un{u}).
$$
By assumption $f$ is $C^{1+\b}$, so there is a constant $L$ s.t. for all $\un{u},\un{v}\in R_{r_0}(\un{0})$,
$\|d(\exp_{f(x)}^{-1}\circ f\circ\exp_x)_{\un{u}}-d(\exp_{f(x)}^{-1}\circ f\circ\exp_x)_{\un{v}}\|\leq L\|\un{u}-\un{v}\|^{\b}$. For every $\un{u},\un{v}\in R_{r_0}(\un{0})$,
\begin{align*}
\|(dr_x)_{\un{u}}-(dr_x)_{\un{v}}\|&=\|C_\chi(f(x))^{-1}d(\exp_{f(x)}^{-1}\circ f\circ \exp_x)_{C_\chi(x)\un{u}}C_\chi(x)\\
&\hspace{1cm}-C_\chi(f(x))^{-1}d(\exp_{f(x)}^{-1}\circ f\circ \exp_x)_{C_\chi(x)\un{v}}C_\chi(x)\|\\
&=\|C_\chi(f(x))^{-1}[d(\exp_{f(x)}^{-1}\circ f\circ \exp_x)_{C_\chi(x)\un{u}}\\
&\hspace{3cm}-d(\exp_{f(x)}^{-1}\circ f\circ \exp_x)_{C_\chi(x)\un{v}}]C_\chi(x)\|\\
&\leq \|C_\chi(f(x))^{-1}\|\cdot L\|C_\chi(x)\|^\b\|\un{u}-\un{v}\|^{\b}\cdot\|C_\chi(x)\|\\
&\leq (\|C_\chi(f(x))^{-1}\|\cdot L\|\un{u}-\un{v}\|^{\b/2})\cdot \|\un{u}-\un{v}\|^{\b/2}\ \ (\because \|C_\chi(x)\|<1).
\end{align*}
If $\un{u},\un{v}\in R_{10 Q_\e(x)}(\un{0})$, then the term in the brackets is smaller than
\begin{align*}
\|C_\chi(f(x))^{-1}\|\cdot L (20\sqrt{2} Q_\e(x))^{\b/2}.
\end{align*}
Plugging in the definition of $Q_\e(x)$ from (\ref{Qdef}), and recalling that $\|C_\chi(\cdot)^{-1}\|>1$ (because $C_\chi(\cdot)$ is a contraction),  we see that the term in the brackets is smaller than
$30^{\b/2}L \e^{3/2}$. Thus, if  $\e<\frac{1}{3}\cdot 30^{-\b/2}L^{-1}$, then
$$
\|(dr_x)_{\un{u}}-(dr_x)_{\un{v}}\|\leq \tfrac{1}{3}\e\|\un{u}-\un{v}\|^{\b/2}\ \ \ \ (\un{u},\un{v}\in R_{10 Q_\e(x)}(\un{0})).
$$

Since $(dr_x)_{\un{0}}=0$, we have that   $\|(dr_x)_{\un{u}}\|\leq \frac{1}{3}\e\|\un{u}\|^{\b/2}$ on $R_{10 Q_\e(x)}(\un{0})$. Now $Q_\e(x)<\e^{3/\b}$, so  $\|\un{u}\|\leq (10\sqrt{2})Q_\e(x)<15\e^{3/\b}$.
If $\e<15^{-\b/3}$, then $\|\un{u}\|<1$, so
$$
\|(dr_x)_{\un{u}}\|\leq \tfrac{1}{3}\e\textrm{ on }R_{10 Q_\e(x)}(\un{0}).
$$
Since $r_x(\un{0})=\un{0}$, we have by the mean value theorem that
$$
\|r_x(\un{u})\|\leq \tfrac{1}{3}\e \|\un{u}\|<\tfrac{1}{3}\e\text{ on }R_{10 Q_\e(x)}(\un{0}).
$$

In summary, if $\e$ is small enough,  then  the $C^{1+\b/2}$--distance between $r_x$ and $0$ on $R_{10 Q_\e(x)}(\un{0})$ is less than $\e$.
This shows that the $C^{1+\b/2}$--distance between $f_x$ and $(df_x)_{\un{0}}$ on this set is less than $\e$.

The treatment of $f_x^{-1}$ is similar, and is left to the reader.\hfill$\Box$

\medskip
\noindent
{\bf Proof of Proposition \ref{Prop_Intersection}} The proof of parts (1),(2) and (3) of the proposition is taken from \cite{KM}. Part (4) is new, but routine.
Assume that $0<\e<\frac{1}{2}$.

Write $V^u=\Psi_x\{(F(w),w):|w|\leq p^u\}$ and $V^s=\Psi_x\{(v,G(v)):|v|\leq p^s\}$, and let $\eta:=p^u\wedge p^s$. Note that $\eta<\e$, and that
$
|F(0)|, |G(0)|\leq 10^{-3}\eta$  and $\mathrm{Lip}(F), \mathrm{Lip}(G)\leq \e,
$
see (\ref{Lip(F)}).

The maps $H=F,G$ are contractions (with Lipschitz constant less than $\e$), and they  map the  interval $[-10^{-2}\eta,10^{-2}\eta]$ into itself, because for every $|t|<10^{-2}\eta$,
$$
|H(t)|\leq |H(0)|+\textrm{Lip}(H)|t|<10^{-3}\eta+\e\cdot 10^{-2}\eta=(10^{-1}+\e)10^{-2}\eta<10^{-2}\eta.
$$
It follows that  $G\circ F$ is a $\e^2$--contraction of $[-10^{-2}\eta,10^{-2}\eta]$ into itself. By the Banach Fixed Point Theorem, $G\circ F$ has a unique fixed point: $(G\circ F)(w)=w$.

Let $v:=F(w)$. We claim that $V^u, V^s$ intersect at $P:=\Psi_x(v,w)$.
\begin{itemize}
\item $P\in V^u$, because $v=F(w)$ and $|w|\leq 10^{-2}\eta<p^u$;
\item $P\in V^s$, because $w=(G\circ F)(w)=G(v)$, and $|v|<|F(0)|+\mathrm{Lip}(F)|w|\leq 10^{-3}\eta+\e\cdot 10^{-2}\eta<10^{-2}\eta<p^s$.
\end{itemize}
We also see that  $|v|,|w|\leq 10^{-2}\eta$.

We claim that $P$ is the unique intersection point of $V^u$ and $V^s$. Let $\xi:=p^u\vee p^s$ and extend $F,G$ (arbitrarily) to $\e$--Lipschitz continuous functions  $\wt{F}, \wt{G}:[-\xi,\xi]\to [-Q_\e(x),Q_\e(x)]$. Let $\wt{V}^u$ and $\wt{V}^s$ denote the $u/s$--sets represented by $\wt{F}, \wt{G}$. Any intersection point of $V^u, V^s$ is an intersection point of $\wt{V}^u, \wt{V}^s$. Such points take the form $\wt{P}=\Psi_x(\wt{v},\wt{w})$ where $\wt{v}=\wt{F}(\wt{w})$ and $\wt{w}=\wt{G}(\wt{v})$. Notice that  $\wt{w}$ is a fixed point of $\wt{G}\circ\wt{F}$.
The same calculations as before show that $\wt{G}\circ\wt{F}$ contracts $[-\xi,\xi]$ into itself. Such a map has a unique fixed point, therefore $\wt{w}=w$, whence $\wt{P}=P$.

Next we show that  $P$ is a Lipschitz function of $V^u, V^s$.  Suppose $V^u_i, V^s_i$ $(i=1,2)$ are represented by $F_i$ and $G_i$ $(i=1,2)$ respectively. Let $P_i$ denote the intersection points of $V^u_i\cap V^s_i$. We saw above that $P_i=\Psi_x(v_i,w_i)$ where $w_i$ is a fixed point of $G_i\circ F_i:[-10^{-2}\eta,10^{-2}\eta]\to [-10^{-2}\eta,10^{-2}\eta]$.  The maps $f_i:=G_i\circ F_i$ are  $\e^2$--contractions of $[-10^{-2}\eta,10^{-2}\eta]$ into itself, therefore
\begin{align*}
|w_1-w_2|&=|f_1^n(w_1)-f^n_2(w_2)|
\leq |f_1(f_1^{n-1}(w_1))-f_2(f_1^{n-1}(w_1))|\\
&\hspace{6cm}+|f_2(f_1^{n-1}(w_1))-f_2(f^{n-1}_2(w_2))|\\
&\leq \|f_1-f_2\|_\infty+\e^2|f_1^{n-1}(w_1)-f^{n-1}_2(w_2)|\\
&\leq \cdots \leq \|f_1-f_2\|_\infty(1+\e^2+\cdots +\e^{2n})\\
&\leq \frac{1}{1-\e^2}\|f_1-f_2\|_\infty.
\end{align*}
Similarly, $v_i$ is a fixed point of $F_i\circ G_i:[-10^{-2}\eta,10^{-2}\eta]\to [-10^{-2}\eta,10^{-2}\eta]$, and the same argument gives that $|v_1-v_2|\leq (1-\e^2)^{-1}\|g_1-g_2\|_\infty$ where $g_i=F_i\circ G_i$.
Since $\Psi_x$ is $2$--Lipschitz, this means that
$$
d(P_1,P_2)<\frac{2}{1-\e^2}\left(\|G_1\circ F_1-G_2\circ F_2\|_\infty+\|F_1\circ G_1-F_2\circ G_2\|_\infty\right).
$$
Now
\begin{align*}
\|F_1\circ G_1-F_2\circ G_2\|_\infty&\leq \|F_1\circ G_1-F_1\circ G_2\|_\infty+\|F_1\circ G_2-F_2\circ G_2\|_\infty\\
&\leq \textrm{Lip}(F_1)\|G_1-G_2\|_\infty+\|F_1-F_2\|_\infty\\
\|G_1\circ F_1-G_2\circ F_2\|_\infty&\leq \textrm{Lip}(G_1)\|F_1-F_2\|_\infty+\|G_1-G_2\|_\infty
\end{align*}
Since $\Lip(F_i), \Lip(G_i)\leq \e^2$,
$
d(P_1,P_2)<\frac{2(1+\e)}{1-\e^2}[\dist(V^u_1, V^u_2)+\dist(V^s_1,V^s_2)].
$
The coefficient is less than $3$ for all  $\e$ small enough. For such $\e$, $P$ is a $3$--Lipschitz function of $V^u, V^s$.

\medskip
Finally, we analyze the angle of intersection at $P$. We assume throughout that $\e$ is so small that
$
0<t\leq \e\Longrightarrow e^{-2t}<1-t<1+t<e^{2t}
$. In what follows we drop the subscript $x$ in $\|\cdot\|_x$.

Let $\un{v}=(v,w)$ be the $\Psi_x$--coordinates of $P$ (i.e. $P=\Psi_x(\un{v})$), and write $E^s=E^s(x)$, $E^u=E^u(x)$. The following identities hold:
\begin{align*}
\measuredangle(E^s, E^u)&=\measuredangle\bigl((d\Psi_x)_{\un{0}}\un{e}^1, (d\Psi_x)_{\un{0}}\un{e}^2 \bigr),\textrm{ where }\un{e}_1={1\choose 0},\textrm{ and } \un{e}_2={0\choose 1}\\
\measuredangle(V^s, V^u)&=\measuredangle\bigl((d\Psi_x)_{\un{v}}\un{v}^s, (d\Psi_x)_{\un{v}}\un{v}^u \bigr),\textrm{ where }  \un{v}^s={1\choose F'(v)}\text{ and  } \un{v}^u={F'(w)\choose 1}.
\end{align*}
It is not difficult to see that the admissibility of $V^s, V^u$ and the inequalities  $|v|,|w|<10^{-2}\eta$ imply that
$
|F'(w)|, |G'(v)|< \eta^{\b/3}.
$

We begin with the estimate of   $\frac{\sin \measuredangle(V^s, V^u)}{\sin\measuredangle(E^s, E^u)}=\frac{\sin\measuredangle((d\Psi_x)_{\un{v}}\un{v}^s, (d\Psi_x)_{\un{v}}\un{v}^u )}{\sin\measuredangle((d\Psi_x)_{\un{0}}\un{e}^1, (d\Psi_x)_{\un{0}}\un{e}^2)}$. By  (\ref{Angle_Formula}),
\begin{align*}
\frac{\sin \measuredangle(V^s, V^u)}{\sin\measuredangle(E^s, E^u)}&=\frac{\sin\measuredangle(\un{v}^s, \un{v}^u)}{\sin\measuredangle(\un{e}^1,\un{e}^2)}
\cdot \frac{\|\un{v}^s\|\|\un{v}^u\|}{\|\un{e}^1\|\|\un{e}^2\|} \cdot\frac{\det (d\Psi_x)_{\un{v}}}{\det (d\Psi_x)_{\un{0}}}\cdot\frac{\|(d\Psi_x)_{\un{0}}\un{e}^1\|\|(d\Psi_x)_{\un{0}}\un{e}^2\|}
 {\|(d\Psi_x)_{\un{v}}\un{v}^s\|\|(d\Psi_x)_{\un{v}}\un{v}^u\|}.
\end{align*}

\medskip
\noindent
{\bf First factor:} The first factor equals $\sin\measuredangle(\un{v}^s,\un{v}^u)$. Using the  formula for the sine of the difference of two angles it is not difficult to see that
$$
\sin\measuredangle(\un{v}^s,\un{v}^u)=\frac{1}{\|\un{v}^s\|\|\un{v}^u\|}
\det\left(\begin{array}{cc}
1 &F'(w) \\
F'(v) & 1
\end{array}
\right).
$$
Since    $|F'(v)|, |F'(w)|<\eta^{\b/3}$,  the first factor is $e^{\pm 2\eta^{2\b/3}}$.

\medskip
\noindent
{\bf Second factor:}  Since $|F'(v)|, |F'(w)|<\eta^{\b/3}$, the numerator is $e^{\pm \eta^{2\b/3}}$. Since the denominator is equal to one, the second factor is $e^{\pm \eta^{2\b/3}}$.

\medskip
\noindent
{\bf Third factor:} $\det(d\Psi_x)_{\un{v}}=\det (d\exp_x)_{C_\chi(x)\un{v}}\cdot \det C_\chi(x)$, and $\det (d\Psi_x)_{\un{0}}=\det C_\chi(x)$, therefore the third factor is equal to
$
\det (d\exp_x)_{C_\chi(x)\un{v}}.
$

The exponential map on $M$ is smooth, and $\det(d\exp_x)_{\un{0}}=1$,  therefore there exists a constant $K_1$ which only depends on $M$ s.t.
$$
\left|\det[(d\exp_x)_{\un{u}}]-1\right|<K_1\|\un{u}\|\textrm{ for all $x\in M$ and  $\|\un{u}\|<1$.}
$$
Since $C_\chi(x)$ is a contraction (Lemma \ref{Lemma_C_contracts}) and $\|\un{v}\|<2\eta$,
$
\det (d\exp_x)_{C_\chi(x)\un{v}}=1\pm 2K_1\eta.
$
Since $0<\eta<\e$, $2K_2\eta\ll\sqrt{\eta}$ for all $\e$ small enough. For such $\e$,  the third factor is $e^{\pm \sqrt{\eta}}$ (provided $\e$ is small enough).

\medskip
\noindent
{\bf Fourth factor:} Find a global constant $K_2$ s.t. $\|(\Theta_D d\exp_x)_{\un{u}}-\id\|<K_2\|\un{u}\|$ for all $x\in D\in\mathfs D$ and $\|\un{u}\|<1$ (cf. \S\ref{SectionOC}).

Write $\un{u}=C_\chi(x)\un{v}$, and choose some $D\in\mathfs D$ which contains $\Psi_x[R_{Q_\e(x)}(\un{0})]$, then
\begin{equation}\label{Bim}
\begin{aligned}
\|\Theta_D (d\Psi_x)_{\un{v}}\un{v}^s-\Theta_D(d\Psi_x)_{\un{0}}\un{e}^1\|
&\leq \|\Theta_D(d\Psi_x)_{\un{v}}-\Theta_D(d\Psi_x)_{\un{0}}\|\|\un{v}^s\|\\
&\hspace{2cm}+
\|\Theta_D(d\Psi_x)_{\un{0}}\|\|\un{v}^s-\un{e}^1\|\\
&\leq \|\Theta_D(d\exp_x)_{\un{u}}-\id\|\|C_\chi(x)\|\|\un{v}^s\|\\
&\hspace{2cm}+2\|C_\chi(x)\|\|\un{v}^s-\un{e}^1\|\\
&< 3K_2\eta+2\eta^{\b/3},
\end{aligned}
\end{equation}
because $C_\chi(x)$ is a contraction, $\|\un{v}\|<2\eta$,  and $\un{v}^s={1\choose 0\pm \eta^{\b/3}}$. Consequently,
$\left|\|(d\Psi_x)_{\un{v}}\un{v}^s\|-\|(d\Psi_x)_{\un{0}}\un{e}^1\|\right|<(3K_2+2)\eta^{\b/3}$.
Since also
\begin{equation}\label{Bam}
\|(d\Psi_x)_{\un{0}}\un{e}^1\|=\|C_\chi(x)\un{e}^1\|\geq \|C_\chi(x)^{-1}\|^{-1},
\end{equation}
$\left|\frac{\|(d\Psi_x)_{\un{v}}\un{v}^s\|}{\|(d\Psi_x)_{\un{0}}\un{e}^1\|}-1\right|
<(3K_2+2)\|C_\chi(x)^{-1}\|\eta^{\b/3}$.

Since  $\eta\leq Q_\e(x)$ and $Q_\e(x)<\e^{3/\b}\|C_\chi(x)^{-1}\|^{-12/\b}$,
\begin{equation}\label{Bom}
\|C_\chi(x)^{-1}\|\eta^{\b/3}\leq \|C_\chi(x)^{-1}\|\eta^{\b/12}\cdot\eta^{\b/4}<\e^{1/4}\eta^{\b/4}.
\end{equation}
It follows that for all $\e$ small enough,
$
\frac{\|(d\Psi_x)_{\un{v}}\un{v}^s\|}{\|(d\Psi_x)_{\un{0}}\un{e}^1\|}=\exp\left[\pm\left(\frac{1}{3}\eta^{\b/4}\right)\right].
$
How small depends only on $K_2$, and therefore only on the surface $M$.

Similarly, one can show that  $\frac{\|(d\Psi_x)_{\un{u}}\un{v}^u\|}{\|(d\Psi_x)_{\un{0}}\un{e}^2\|}=\exp[\pm\frac{1}{3}\eta^{\b/4}]$, with the result that the fourth factor is $\exp[\pm \frac{2}{3}\eta^{\b/4}]$.

\medskip
Putting all these estimates together, we see that
$$
\frac{\sin \measuredangle(V^u, V^s)}{\sin\measuredangle(E^u, E^s)}=\exp\left[\pm(2\eta^{2\b/3}+\eta^{2\b/3}+\sqrt{\eta}+\frac{2}{3}\eta^{\b/4})
\right].
$$
Since $0<\eta<\e$, for all $\e$ small enough, this is $e^{\pm\eta^{\b/4}}$.  How small just depends  on $K_1$, $K_2$, and $\b$.

\medskip
Next we estimate $|\cos\measuredangle(V^s,V^u)-\cos\measuredangle(E^s,E^u)|$. This is equal to
$$
\hspace{-4.5cm}\left|
\frac{\left<
(d\Psi_x)_{\un{v}}\un{v}^s, (d\Psi_x)_{\un{v}}\un{v}^u
\right>}{\|(d\Psi_x)_{\un{v}}\un{v}^s\| \| (d\Psi_x)_{\un{v}}\un{v}^u\|}
- \frac{\left<(d\Psi_x)_{\un{0}}\un{e}^1, (d\Psi_x)_{\un{0}}\un{e}^2
\right>}{\|(d\Psi_x)_{\un{0}}\un{e}^1\| \| (d\Psi_x)_{\un{0}}\un{e}^2\|}
\right|\leq
$$
\begin{align*}
&\leq \frac{|\left<(d\Psi_x)_{\un{v}}\un{v}^s, (d\Psi_x)_{\un{v}}\un{v}^u
\right>|}{\|(d\Psi_x)_{\un{0}}\un{e}^1\| \| (d\Psi_x)_{\un{0}}\un{e}^2\|}\x
\left|\frac{\|(d\Psi_x)_{\un{0}}\un{e}^1\| \| (d\Psi_x)_{\un{0}}\un{e}^2\|}{\|(d\Psi_x)_{\un{v}}\un{v}^s\| \| (d\Psi_x)_{\un{v}}\un{v}^u\|}-1
\right|+\\
&\hspace{0.7cm}+\frac{1}{\|(d\Psi_x)_{\un{0}}\un{e}^1\| \| (d\Psi_x)_{\un{0}}\un{e}^2\|}\x\left|
\left<(d\Psi_x)_{\un{v}}\un{v}^s, (d\Psi_x)_{\un{v}}\un{v}^u
\right>-\left<(d\Psi_x)_{\un{0}}\un{e}^1, (d\Psi_x)_{\un{0}}\un{e}^2
\right>
\right|\\
&\leq \frac{\|(d\Psi_x)_{\un{v}}\un{v}^s\| \|(d\Psi_x)_{\un{v}}\un{v}^u
\|}{\|(d\Psi_x)_{\un{0}}\un{e}^1\| \| (d\Psi_x)_{\un{0}}\un{e}^2\|}\x
\left|\frac{\|(d\Psi_x)_{\un{0}}\un{e}^1\| \| (d\Psi_x)_{\un{0}}\un{e}^2\|}{\|(d\Psi_x)_{\un{v}}\un{v}^s\| \| (d\Psi_x)_{\un{v}}\un{v}^u\|}-1
\right|+\\
&\hspace{0.7cm}+\frac{1}{\|(d\Psi_x)_{\un{0}}\un{e}^1\| \| (d\Psi_x)_{\un{0}}\un{e}^2\|}\x\left|
\left<(d\Psi_x)_{\un{v}}\un{v}^s, (d\Psi_x)_{\un{v}}\un{v}^u
\right>-\left<(d\Psi_x)_{\un{0}}\un{e}^1, (d\Psi_x)_{\un{0}}\un{e}^2
\right>
\right|.
\end{align*}
By (\ref{Bam}) and the estimate of the ``fourth factor" above, this is smaller than
\begin{equation}\label{stimmt}
e^{\frac{2}{3}\eta^{\b/4}}\cdot \eta^{\b/4}+\|C_\chi(x)^{-1}\|^2\left|
\left<(d\Psi_x)_{\un{v}}\un{v}^s, (d\Psi_x)_{\un{v}}\un{v}^u
\right>-\left<(d\Psi_x)_{\un{0}}\un{e}^1, (d\Psi_x)_{\un{0}}\un{e}^2
\right>
\right|.
\end{equation}

Since $\Theta_D$ is an isometry,  the difference of the inner products is equal to
$$
\hspace{-3cm}\left|
\left<\Theta_D(d\Psi_x)_{\un{v}}\un{v}^s, \Theta_D(d\Psi_x)_{\un{v}}\un{v}^u
\right>-\left<\Theta_D(d\Psi_x)_{\un{0}}\un{e}^1, \Theta_D(d\Psi_x)_{\un{0}}\un{e}^2
\right>
\right|
$$
\begin{align*}
&\leq \|\Theta_D(d\Psi_x)_{\un{v}}\un{v}^s-\Theta_D(d\Psi_x)_{\un{0}}\un{e}^1\|\cdot\|(d\Psi_x)_{\un{v}}\un{v}^u\|\\
&\hspace{1cm}+\|\Theta_D (d\Psi_x)_{\un{0}}\un{e}^1\|\cdot\|\Theta_D(d\Psi_x)_{\un{v}}\un{v}^u-\Theta_D(d\Psi_x)_{\un{0}}\un{e}^2\|\\
&\leq 3\!\left(\|\Theta_D(d\Psi_x)_{\un{v}}\un{v}^s-\Theta_D(d\Psi_x)_{\un{0}}\un{e}^1\|
+\|\Theta_D(d\Psi_x)_{\un{v}}\un{v}^u-\Theta_D(d\Psi_x)_{\un{0}}\un{e}^2\|
\right)\\
&\leq 3\!\left(
\|\Theta_D(d\Psi_x)_{\un{v}}\|\|\un{v}^s-\un{e}^1\|+2\|\Theta_D(d\Psi_x)_{\un{v}}-\Theta_D(d\Psi_x)_{\un{0}}\|\right.\\
&\hspace{6cm}+\left.
\|\Theta_D(d\Psi_x)_{\un{v}}\|\|\un{v}^u-\un{e}^2\|
\right)\\
&\leq 3[2\eta^{\b/3}+2\cdot 2K_2\eta+2\eta^{\b/3}],
\end{align*}
because  $\Theta_D$ is an isometry, $\|d\Psi_x\|\leq 2$ on $R_{Q_\e(x)}(\un{0})$, and $\|\un{v}^{s/u}-\un{e}^{1/2}\|<\eta^{\b/3}$. Thus
$
\left|
\left<(d\Psi_x)_{\un{v}}\un{v}^s, (d\Psi_x)_{\un{v}}\un{v}^u
\right>-\left<(d\Psi_x)_{\un{0}}\un{e}^1,(d\Psi_x)_{\un{0}}\un{e}^2
\right>
\right|<K_3\eta^{\b/3},
$
where $K_3$ only depends on $M$.
It now follows from  (\ref{stimmt}) and the inequality $\eta<\e$ that
\begin{align*}
|\cos\measuredangle(V^s,V^u)-\cos\measuredangle(E^s,E^u)|&\leq e^{\frac{2}{3}\e^{3/4}}\eta^{\b/4}+
\|C_\chi(x)^{-1}\|^2\cdot K_3\eta^{\b/3}.
\end{align*}
We now argue as in (\ref{Bom}) and deduce that
\begin{align*}
|\cos\measuredangle(V^s,V^u)-\cos\measuredangle(E^s,E^u)|&\leq (e^{\frac{2}{3}\e^{3/4}}+K_3\e^{1/4})\eta^{\b/4}.
\end{align*}
This is smaller than $2\eta^{\b/4}$, for all $\e$ small enough.
\hfill$\Box$

\medskip
\noindent
{\bf Proof of Proposition \ref{Prop_Graph_Transform} (Graph Transform)} The proof is a straightforward adaptation of the arguments in \cite{KM} and \cite[chapter 7]{BP} (see also \cite{Pesin}).

Let  $V^u=\Psi_x\{(F(t),t):|t|\leq p^u\}$ be a $u$--admissible manifold in $\Psi_x^{p^u,p^s}$. We denote the parameters of $V^u$ by $\s,\g,\vf$, and $q$, and let $\eta:=p^u\wedge p^s$. $V^u$ is admissible, so
\begin{equation}\label{AdCon}
\s\leq\frac{1}{2}, \g\leq \frac{1}{2}\eta^{\b/3}, \vf\leq 10^{-3}\eta,q=p^u,\textrm{ and }\Lip(F)<\e,
\end{equation}
see Definition \ref{Def_Admissible} and Equation (\ref{Lip(F)}).

We analyze $\Gamma_y^u:=\Psi_y^{-1}[f(V^u)]\subset\R^2$, looking for parameterizations of large $u$--sub-manifolds. Notice that  $$\Gamma_y^u=f_{xy}[\mathrm{graph}(F)],$$ where $f_{xy}=\Psi_y^{-1}\circ f\circ\Psi_x$ and $\mathrm{graph}(F):=\{(F(t),t):|t|\leq q\}$.

Since $V^u$ is admissible, $\mathrm{graph}(F)\subset R_{Q_\e(x)}(\un{0})$. On this domain, $f_{xy}$ can be expanded as follows (Proposition \ref{Prop_f_xy}):
\begin{equation}\label{f_xy}
f_{xy}(u,v)=\bigl(Au+h_1(u,v), Bv+h_2(u,v)\bigr)
\end{equation}
where $C_f^{-1}<|A|<e^{-\chi}$, $e^\chi<|B|<C_f$;  and  $h_i$ are $C^{1+\frac{\b}{3}}$--functions s.t. $|h_i(0)|<\e\eta$, $\|\nabla h_i(\un{0})\|<\e\eta^{\b/3}$, and $\|\nabla h_i(\un{u})-\nabla h_i(\un{v})\|\leq \e\|\un{u}-\un{v}\|^{\b/3}$.
Necessarily,
 $\|\nabla h_i\|<\e\eta^{\b/3}+\e [\sqrt{2}Q_\e(x)]^{\b/3}<3\e Q_\e(x)^{\b/3}$  and $|h_i|<\e\eta+3\e Q_\e(x)^{\b/3}\cdot Q_\e(x)$.
 Since $\eta\leq Q_\e(x)$, and $Q_\e(x)<\e^{3/\b}$, the following holds for provided $\e$ is small enough:
\begin{equation}\label{nabla_h}
\|\nabla h_i\|<3\e^2\textrm{ and }|h_i|<\e^2\textrm{ on }\graph(F).
\end{equation}

Using (\ref{f_xy}), we can put $\Gamma^u_y$ in the following form:
\begin{equation}\label{Gamma_y}
\Gamma_y^u=\{(AF(t)+h_1(F(t),t), Bt+h_2(F(t),t)):|t|\leq q\}.
\end{equation}
The idea is to call the second coordinate $\tau$, solve $t=t(\tau)$, and substitute the result in the first coordinate.

\medskip
\noindent
{\em Claim 1.\/} The following holds for all $\e$ small enough:  $Bt+h_2(F(t),t)=\tau$ has a unique solution $t=t(\tau)$ for all
$
\tau\in
[-e^{\chi-\sqrt{\e}} q, e^{\chi-\sqrt{\e}}q]
$, and
\begin{enumerate}
\item[(a)] $\Lip(t)<e^{-\chi+\e}$;
\item[(b)] $|t(0)|<2\e\eta$;
\item[(c)] the $C^{\b/3}$--norm of $t'$ is  smaller than $|B|^{-1}e^{3\e}$.
\end{enumerate}

\medskip
\noindent
{\em Proof.\/} Let $\tau(t):=Bt+h_2(F(t),t)$. For every $|t|\leq q$,
\begin{align*}
|\tau'(t)|&\geq |B|-\max\|\nabla h_2\|\cdot\|(F'(t),1)\|>|B|-3\e^2\sqrt{1+\e^2}\ \ (\because (\ref{nabla_h}), (\ref{AdCon}))\\
&>|B|(1-3\e^2\sqrt{1+\e^2})\ \ (\because |B|>e^\chi>1 )\\
&> e^{-\e} |B|>1 \textrm{ provided $\e$ is small enough.}
\end{align*}
It follows that $\tau$ is $e^{-\e}|B|$--expanding, whence  one-to-one.

Since $\tau$ is one-to-one, $\tau^{-1}$ is well--defined on $\tau[-q,q]$. We estimate this set.
Since $\tau$ is continuous and $e^{-\e}B$--expanding,
$
\tau[-q,q]\supset (\tau(0)-e^{-\e}|B|q, \tau(0)+e^{\e}|B|q).
$
The center of the interval can be estimated as follows:
\begin{align*}
|\tau(0)|&=|h_2(F(0),0)|\leq |h_2(\un{0})|+\max\|\nabla h_2\|\cdot |F(0)|\\
&\leq \e\eta+3\e^2\cdot 10^{-3}\eta<2\e\eta\ \ (\textrm{admissibility and }(\ref{nabla_h})).
\end{align*}
Recall that $\eta\equiv p^u\wedge p^s\leq p^u\equiv q$, therefore $|\tau(0)|<2\e q$. Since $|\tau'|>e^{-\e}|B|$,
\begin{align*}
\tau[-q,q]&\supseteq [2\e q-e^{-\e}|B|q, -2\e q+e^{-\e}|B|q]
\supseteq [-(|B| e^{-\e}-2\e)q, (|B|e^{-\e}-2\e)q]\\
&\supseteq [-|B|(e^{-\e}-2\e)q, |B|(e^{-\e}-2\e)q].
\end{align*}
Since $|B|(e^{-\e}-2\e)> e^\chi (e^{-2\e}-2\e)>e^{\chi-\sqrt{\e}}$ for all $\e$ small enough,  $\tau^{-1}$ is well defined on
$
[-e^{\chi-\sqrt{\e}}q, e^{\chi-\sqrt{\e}} q].
$

\medskip
Since $t(\cdot)$ is the inverse of a $|B|e^{-\e}$--expanding map, $\Lip(t)\leq e^\e|B|^{-1}<e^{-\chi+\e}$, proving (a).

We saw above that $|\tau(0)|<2\e \eta$. For all $\e$ small enough, this is (much) smaller than $e^{\chi-\sqrt{\e}} q$, therefore $\tau(0)$ belongs to the domain of $t$. It follows that
$$
|t(0)|=|t(0)-t(\tau(0))|<\Lip(t)|\tau(0)|<e^{-\chi+\e}\cdot 2\e \eta.
$$
For all $\e$ small enough, this is less than $2\e\eta$, proving (b).

\medskip
Next we calculate the $C^{\b/3}$--norm of $t'(\cdot)$.

\medskip
We remind the reader that the $C^{\a}$--norm of  $\vf:[-q,q]^{d_1}\to\R^{d_2}$ $(0<\a<1)$ is defined by $\|\vf\|_{\a}:=\|\vf\|_\infty+\textrm{H\"ol}_\a(\vf)$, where
$$
\textrm{H\"ol}_\a(\vf):=\sup\left\{\frac{\|\vf(\un{u})-\vf(\un{v})\|}{\|\un{u}-\un{v}\|^\a}:\un{u},\un{v}\in[-q,q]^{d_1}\textrm{ different}\right\}.
$$
The following inequalities are easy to verify:
\begin{enumerate}
\item[(H1)] $\|\vf \cdot \psi\|_\a\leq \|\vf\|_\a \|\psi\|_\a$ for all $\vf,\psi\in C^\a[-q,q]$;
\item[(H2)] $\|\vf\circ g\|_\a\leq \|\vf\|_\infty+\text{H\"ol}_\a(\vf)\Lip(g)^\a$ for all $\vf$ $\a$--H\"older and $g$ Lipschitz;
\item[(H3)] In case $d_2=1$ and $\|\vf\|_\a<1$, $\|1/(1+\vf)\|_\a\leq (1-\|\vf\|_\a)^{-1}$.
\end{enumerate}

Differentiating the identity
$s=\tau(t(s))=Bt(s)+h_2(F(t(s)),t(s))$ w.r.t $s$, we obtain after some manipulations
$$
t'(s)=B^{-1}\left(1+B^{-1}\frac{\del h_2}{\del x}\bigl(F(t(s)),t(s)\bigr)F'(t(s))+B^{-1}\frac{\del h_2}{\del y}\bigl(F(t(s)),t(s)\bigr)\right)^{-1}.
$$
We write this in the form $t'(s)=B^{-1}(1+T(s))^{-1}$, where
$$
T(s):=B^{-1}\frac{\del h_2}{\del x}\bigl(F(t(s)),t(s)\bigr)F'(t(s))+B^{-1}\frac{\del h_2}{\del y}\bigl(F(t(s)),t(s)\bigr).
$$
By (H3), it is enough to find $\|T\|_{\b/3}$. Here is the estimation:
\begin{align*}
\left\|\frac{\del h_2}{\del x}\bigl(F(t(s)),t(s)\bigr)\right\|_{\b/3}&\leq \left\|\frac{\del h_2}{\del x}\right\|_\infty+\text{H\"ol}_{\b/3}(\nabla h_2)[\Lip(F\circ t,t)]^{\b/3}\ \ \because\text{(H2)}\\
&< 3\e^2+\e\cdot \left[\Lip(F)^2(\Lip(t))^2+(\Lip(t))^2\right]^{\b/6}\\
&<3\e^2 +\e [\sqrt{\e^2+1}(e^{\e}|B|^{-1})]^{\b/3}\ \ \because (\ref{AdCon}), (\ref{nabla_h})\\
&<\e,\textrm{ provided $\e$ is small enough.}\\
\left\|\frac{\del h_2}{\del y}\bigl(F(t(s)),t(s)\bigr)\right\|_{\b/3}&<\e\textrm{ (same proof).}\\
\|F'(t(s))\|_{\b/3}&\leq \|F'\|_\infty+\|F'\|_{\b/3}\Lip(t)^{\b/3}\ \textrm{ (see (H2) above)}\\
&\leq \s+\s\cdot (e^{-\chi+\e})^{\b/3}<1\textrm{ provided $\e$ is small enough.}
\end{align*}
Putting these estimates together, we see that
$
\|T\|_{\b/3}<2\e.
$
It now follows from (H3) that $\|t'\|_{\b/3}<|B|^{-1}(1-2\e)^{-1}$. This is smaller than $e^{3\e}|B|^{-1}$  for all  $\e$ small enough. This proves (c), and completes the proof of the claim.

\medskip
We now return to (\ref{Gamma_y}). Substituting $t=t(\tau)$,  we find that
$$
\Gamma^u_y\supset\{(G(\tau),\tau): |\tau|<e^{\chi-\sqrt{\e}} q\},
$$
where $G(\tau):=AF(t(\tau))+h_1(F(t(\tau)),t(\tau))$. Claim 1 guarantees that $G(\tau)$ is well-defined and $C^{1+\b/3}$ on $[-e^{\chi-\sqrt{\e}} q,e^{\chi-\sqrt{\e}} q]$. We find the parameters of $G$.
\medskip

\noindent
{\em Claim 2.\/} For all $\e$ small enough,  $|G(0)|<e^{-\chi+\sqrt{\e}} [\vf+\sqrt{\e}(q^u\wedge q^s)]$, and
$|G(0)|<10^{-3}(q^u\wedge q^s)$.

\medskip
\noindent
{\em Proof.\/}
Claim 1 says that $|t(0)|<2\e\eta$. Since $\Lip(F)<\e$, $|F(0)|<\vf$ and $\vf\leq 10^{-3}\eta$,  $|F(t(0))|<\vf+2\e^2\eta<\eta$ provided $\e$ is small enough. Thus
\begin{align*}
|G(0)|&\leq |A|\cdot |F(t(0))|+|h_1(F(t(0)),t(0))|\\
&\leq |A|(\vf+2\e^2\eta)+\left[|h_1(\un{0})|+\max\|\nabla h_1\|\cdot\|(F(t(0)),t(0))\|\right]\\
&\leq |A|(\vf+2\e^2\eta)+\left[\e\eta+3\e^2\cdot \sqrt{\eta^2+(2\e\eta)^2}\right]\ \ (\because|F(t(0))|<\eta)\\
&\leq |A|\left[\vf+\eta\bigl(2\e^2+\e+3\e^2\sqrt{1+4\e^2}\,\bigr)\right].
\end{align*}
Recalling that $|A|<e^{-\chi}$ and  $\eta\equiv(p^u\wedge p^s)\leq e^\e(q^u\wedge q^s)$ (Lemma \ref{Lemma_Subordinated_Tempered}), we see that
$
|G(0)|<e^{-\chi+\e}[\vf+2\e(q^u\wedge q^s)]
$
for all $\e$ small enough.

Since  $\vf\leq 10^{-3}(p^u\wedge p^s)\leq 10^{-3} e^\e(q^u\wedge q^s)$,
$
|G(0)|<e^{-\chi+\e}[10^{-3}+2\e](q^u\wedge q^s).
$
This is less than $10^{-3}(q^u\wedge q^s)$ for all $\e$ sufficiently small. The claim follows.

\medskip
\noindent
{\em Claim 3.\/} For all $\e$ small enough, $|G'(0)|<e^{-2\chi+\sqrt{\e}}[\g+\e^{\b/3}(q^u\wedge q^s)^{\b/3}]$, and $|G'(0)|<\frac{1}{2}(q^u\wedge q^s)^{\b/3}$.

\medskip
\noindent
{\em Proof.\/} $|G'(0)|\leq |t'(0)|\bigl[|A| \cdot|F'(t(0))|+\|\nabla h_1(F(t(0)),t(0))\|\cdot \|(F'(t(0)),1)\|\bigr]$, and
\begin{itemize}
\item $|t'(0)|\leq \Lip(t)<e^{-\chi+\e}$ (Claim 1).
\item $|F'(t(0))|<\g+\frac{2}{3}\e^{\b/3}\eta^{\b/3}$, because by Claim 1(b)
$$
\hspace{1.5cm}|F'(t(0))|<|F'(0)|+\Hol_{\b/3}(F')|t(0)|^{\b/3}<\g+\s\cdot (2\e\eta)^{\b/3}<\g+\tfrac{2}{3}\e^{\b/3}\eta^{\b/3}.
$$
\item $\|\nabla h_1(F(t(0)),t(0))\|\leq 3\e\eta^{\b/3}$, because $|F(t(0))|<\eta$ (proof of Claim 2), and  $|t(0)|<2\e\eta$ (Claim 1), so by the H\"older regularity of $\nabla h_i$,
\begin{align*}
\|\nabla h_1(F(t(0)),t(0))\|&\leq \|\nabla h_1(\un{0})\|+\e \left(\sqrt{|F(t(0))|^2+|t(0)|^2}\right)^{\b/3}\\
&\leq \e\eta^{\b/3}+\e(\sqrt{\eta^2+(2\e\eta)^2})^{\b/3}<3\e\eta^{\b/3}.
\end{align*}
\item $\|(F'(t(0)),1)\|<\sqrt{1+\e^2}<2$.
\end{itemize}

Putting these estimates together, we see that
\begin{align*}
|G'(0)|&<e^{-\chi+\e}|A|\left[\g+\frac{2}{3}\e^{\b/3}\eta^{\b/3}+|A|^{-1}\cdot 3\e\eta^{\b/3}\cdot 2\right]\\
&<e^{-2\chi+\e}\left[\g+\left(\frac{2}{3}\e^{\b/3}+6C_f\e\right)\eta^{\b/3}\right],\ \because C_f^{-1}<|A|<e^{-\chi}\\
&\leq e^{-2\chi+\e}\left[\g+\left(\frac{2}{3}\e^{\b/3}+6C_f\e\right)e^{\e\b/3}(q^u\wedge q^s)^{\b/3}\right]\ \ \because p^u\wedge p^s\leq e^\e(q^u\wedge q^s).
\end{align*}

This implies that
 for all $\e$ small enough,  $|G'(0)|<e^{-2\chi+\e}\left[\g+\e^{\b/3}(q^u\wedge q^s)^{\b/3}\right]$, which is stronger than the estimate in the claim.

 Since $\g\leq \frac{1}{2}(p^u\wedge p^s)^{\b/3}$ and $(p^u\wedge p^s)\leq e^{\e}(q^u\wedge q^s)$,  we also get that  for all $\e$ small enough, $|G'(0)|<\frac{1}{2}(q^u\wedge q^s)^{\b/3}$, as required.

\medskip
\noindent
{\em Claim 4.\/} For all $\e$ small enough,  $\|G'\|_{\b/3}<e^{-2\chi+\sqrt{\e}}[\s+\sqrt{\e}]$, and $\|G'\|_{\b/3}<\frac{1}{2}$.

\medskip
\noindent
{\em Proof.\/} Differentiating, we see that $G'=t'\cdot[AF'\circ t+\frac{\del h_1}{\del x}(F\circ t, t) F'\circ t+\frac{\del h_1}{\del y}(F\circ t,t)]$. By Claim 1 and its proof
\begin{itemize}
\item $\|t'\|_{\b/3}\leq |B|^{-1}e^{3\e}$ ,
\item $\|F'\circ t\|_{\b/3}\leq \s$, because $\|F'\|_{\b/3}\leq \s$  and $t$ is a contraction,
\item $\|\frac{\del h_1}{\del x}(F\circ t, t)\|_{\b/3}<\e$, and  $\|\frac{\del h_1}{\del y}(F\circ t, t)\|_{\b/3}< \e$.
\end{itemize}
Thus by (H1), $\|G'\|_{\b/3}\leq |B|^{-1}e^{3\e}\left[|A|\s+\e\s+\e\right]$.
Since $\s\leq \frac{1}{2}$, $e^{\chi}<|B|<C_f$, and   $C_f^{-1}<|A|<e^{-\chi}$,
$
\|G'\|_{\b/3}\leq e^{-2\chi+3\e}\left[\s+\tfrac{3}{2}C_f\e\right].
$
If $\e$ is small enough, then  $\|G'\|_{\b/3}<e^{-2\chi+\sqrt{\e}}[\s+\sqrt{\e}]$, and  $\|G'\|_{\b/3}<\frac{1}{2}$.

\medskip
\noindent
{\em Claim 5.\/} For all $\e$ small enough, $\wh{V}^u:=\Psi_y\{(G(\tau), \tau):|\tau|\leq \min\{e^{\chi-\sqrt{\e}} q,Q_\e(y)\}\}$ is a $u$--manifold in $\Psi_y$, the parameters of $\wh{V}^u$ satisfy (\ref{GraphTransform}), and $\wh{V}^u$  contains a $u$--admissible manifold  in $\Psi_y^{q^u, q^s}$.

\medskip
\noindent
{\em Proof.\/}
To see that $\wh{V}^u$ is a $u$--manifold in $\Psi_y$ we have to check that $G$ is $C^{1+\b/3}$ and $\|G\|_\infty\leq Q_\e(y)$.

Claim 1 shows that  $G$ is $C^{1+\b/3}$. To see that $\|G\|_\infty\leq Q_\e(y)$, we first observe that
for all $\e$ small enough, $\Lip(G)<\sqrt{\e}$, because
\begin{align*}
|G'|&\leq |G'(0)|+\Hol_{\b/3}(G)Q_\e(y)^{\b/3}\leq \e +\frac{1}{2}  \e<\sqrt{\e},\textrm{ provided $\e$ is small enough.}
\end{align*}
It follows that $\|G\|_\infty\leq |G(0)|+\sqrt{\e}Q_\e(y)<(10^{-3}+\sqrt{\e})Q_\e(y)<Q_\e(y)$.

Next we claim that $\wh{V}^u$ contains a $u$--admissible manifold in $\Psi_y^{q^u,q^s}$. Since $\Psi_x^{p^u,p^s}\to\Psi_y^{q^u,q^s}$, $q^u=\min\{e^\e p^u, Q_\e(y)\}$. Consequently,
for every  $\e$ small enough,
\begin{equation}\label{pergo}
e^{\chi-\sqrt{\e}} q\equiv e^{\chi-\sqrt{\e}}p^u>e^\e p^u\geq q^u,
\end{equation}
so
$
\wh{V}^u
$
restricts to a $u$-manifold with $q$--parameter equal to $q^u$.
Claims 2--4 guarantee that  this manifold is $u$--admissible in $\Psi_y^{q^u,q^s}$, and that (\ref{GraphTransform}) holds.

\medskip
\noindent
{\em Claim 6.\/} $f(V^u)$ contains exactly one $u$--admissible manifold in $\Psi_y^{q^u,q^s}$. This manifold contains $f(p)$ where  $p=\Psi_x(F(0),0)$.

\medskip
\noindent
{\em Proof.\/} The previous claim shows existence. We prove uniqueness. By formula (\ref{Gamma_y}), any $u$--admissible manifold in $\Psi_y^{q^u,q^s}$ which is contained in $f(V^u)$ must be a subset of
$$
\Psi_y\{(AF(t)+h_1(F(t),t), Bt+h_2(F(t),t)):|t|\leq q, |Bt+h_2(F(t),t))|\leq q^u\}.
$$
We saw in (\ref{pergo}) that for all $\e$ small enough, $q^u<e^{\chi-\sqrt{\e}} q$. By claim 1, the equation
$$
\tau=Bt+h_2(F(t),t)
$$
has a unique solution $t=t(\tau)\in [-q,q]$ for all $|\tau|\leq q^u$. Our manifold must therefore  equal
$
\Psi_y\{(AF(t(\tau))+h_1(F(t(\tau)),t(\tau)), \tau):|\tau|\leq q^u\}.
$
This is exactly the $u$--admissible manifold that we constructed above.

\medskip
Let $\mathcal F_u[V^u]$ denote the unique $u$--admissible manifold in $\Psi_y^{q^u,q^s}$ contained in $f(V^u)$. We claim that $\mathcal F_u[V^u]\owns f(p)$ where $p=\Psi_x(F(0),0)$.
 By the previous paragraph, it is enough to check that the second coordinate of $\Psi_y^{-1}[f(p)]$ has absolute value less than $q^u$. Call this second coordinate $\tau$, then
\begin{align*}
|\tau|&=\textrm{second coordinate of }f_{xy}(F(0),0)=|h_2(F(0),0)|\\
&\leq |h_2(\un{0})|+\max\|\nabla h_2\|\cdot |F(0)|<\e\eta+3\e^2\cdot 10^{-3}\eta<e^{-\e}\eta<(q^u\wedge q^s)\leq q^u.
\end{align*}

\medskip
\noindent
{\em Claim 7.\/} $f({V}^u)$ intersects any $s$--admissible manifold in $\Psi_y^{q^u,q^s}$ at a unique point.

\medskip
\noindent
{\em Proof.\/} Let $W^s$ be an $s$--admissible manifold in $\Psi_y^{q^u,q^s}$. We saw in the previous claim that $f({V}^u)$ contains a $u$--admissible manifold $W^u$ in $\Psi_y^{q^u,q^s}$. By  Proposition \ref{Prop_Intersection}, $W^u$ and  $W^s$ intersect. Therefore  $f({V}^u)$ and $W^s$ intersect at least at one point.

\medskip
We claim that the intersection point it unique. Recall that one can put $f(V^u)$ in the form
$$
f(V^u)=\Psi_y\{(A F(t)+h_1(F(t),t),Bt+h_2(F(t),t)):|t|\leq q\}.
$$
We saw in the proof of claim 1 that
the second coordinate, $\tau(t):=Bt+h_2(F(t),t)$, is a one-to-one continuous map  whose image is an interval $[\a,\b]$ with endpoints
$
\a<-e^{\chi-\sqrt{\a}}q<-q^u\ , \ \b>e^{\chi-\sqrt{\e}}q>q^u$. We also saw that $|\tau'|>e^{-\e}|B|\geq e^{\chi-\e}$.
Consequently, the inverse function $t:[\a,\b]\to [-q,q]$ satisfies $|t'(\tau)|<1$, and so
$$
f(V^u)=\Psi_y\{(G(\tau),\tau):\tau\in [\a,\b]\},\textrm{ where }\Lip(G)\leq \e.
$$

Let $H:[-q^u,q^u]\to\R$ denote the function which represents $W^s$ in $\Psi_y$, then $\Lip(H)\leq \e$. Extend it to an $\e$--Lipschitz function on $[\a,\b]$. The extension represents a Lipschitz manifold $\wt{W}^s\supset W^s$.
The same argument we used  to prove Proposition \ref{Prop_Intersection} shows that $f(V^u)$ and $\wt{W}^u$ intersect at a unique point. We see that $f({V}^u)$ and $W^s$ intersect at most at one point.

\medskip
This completes the proof of the proposition, in the case of $u$-manifolds. The case of $s$--manifolds follows from the symmetry between $s$ and $u$--manifolds:
\begin{enumerate}
\item $V$ is a $u$--admissible manifold w.r.t. $f$ iff $V$ is a an $s$--admissible manifold w.r.t. $f^{-1}$, and the parameters are the same.
\item $\Psi_x^{p^u,p^s}\to\Psi_y^{q^u,q^s}$ w.r.t. $f$ iff $\Psi_y^{q^u,q^s}\to \Psi_x^{p^u,p^s}$ w.r.t. $f^{-1}$.\hfill$\Box$
\end{enumerate}

\medskip
\noindent
{\bf Proof of Proposition \ref{Prop_Graph_Contracts}.} We prove the proposition for $\mathcal F_u$, and leave the case of $\mathcal F_s$ to the reader.

Suppose $\Psi_x^{p^u,p^s}\to\Psi_y^{q^u,q^s}$, and
 let $V_i^u$ be two $u$--admissible manifolds in $\Psi_{x}^{p^u,p^s}$. We take $\e$ to be small enough for the arguments of the previous section to work.

 We saw in the previous section that if $V_i=\Psi_x\{(F_i(t),t):|t|\leq p^u\}$, then $\mathcal F_u[V_i]=\Psi_y\{(G_i(\tau),\tau):|\tau|\leq q^u\}$, where
 \begin{itemize}
 \item $G_i(\tau)=A F_i (t_i(\tau))+h_1(F_i(t_i(\tau)),t_i(\tau))$;
 \item $t_i(\tau)$ is defined implicitly by $B t_i(\tau)+h_2(F_i(t_i(\tau)), t_i(\tau))=\tau$, and $|t_i'|<1$;
 \item $C_f^{-1}<|A|<e^{-\chi}$, $e^{\chi}<|B|<C_f$;
 \item $|h_i(\un{0})|<\e(p^u\wedge p^s)$, $\Hol_{\b/3}(\nabla h_u)\leq \e$, and $\max\|\nabla h_i\|<3\e^2$.
 \end{itemize}
In order to prove the proposition, we need to  estimate  $\|G_1-G_2\|_\infty$ and $\|G_1'-G_2'\|_\infty$ in terms of $\|F_1-F_2\|_\infty$ and $\|F_1'-F_2'\|_\infty$.

\medskip
\noindent
{\em Part 1.\/} For all $\e$ small enough,  $\|t_1-t_2\|_\infty\leq \e\|F_1-F_2\|_\infty$.

\medskip
By definition, $B t_i(\tau)+h_2(F_i(t_i(\tau)), t_i(\tau))=\tau$. Taking differences, we see that
\begin{align*}
|B|\cdot |t_1-t_2|&\leq |h_2(F_1(t_1),t_1)-h_2(F_2(t_2),t_2)|\\
&\leq \left\|\frac{\del h_2}{\del x}\right\|_\infty |F_1(t_1)-F_2(t_2)|+\left\|\frac{\del h_2}{\del x}\right\|_\infty |t_1-t_2|\\
&\leq 3\e^2\bigl(|F_1(t_1)-F_2(t_1)|+|F_2(t_1)-F_2(t_2)|+|t_1-t_2|\bigr)\\
&\leq 3\e^2\bigl(\|F_1-F_2\|_\infty+(\Lip(F_2)+1)|t_1-t_2|\bigr)\\
&\leq 3\e^2\|F_1-F_2\|_\infty+3\e^2(1+\e)|t_1-t_2|,\textrm{ see (\ref{Lip(F)}).}
\end{align*}
Rearranging terms, and recalling that $|B|>e^{\chi-\e}$,  we see that
$$
\|t_1-t_2\|_\infty<\frac{3\e^2\|F_1-F_2\|_\infty}{e^{\chi-\e}-3\e^2(1+\e)}.
$$
The claim follows.

\medskip
\noindent
{\em Part 2.\/} For all $\e$ small enough, $\|G_1-G_2\|_\infty<e^{-\chi/2}\|F_1-F_2\|_\infty$, whence $(\ref{magnifico})$.

\medskip
Subtracting the defining equations for $G_i$, we find that
\begin{align*}
|G_1-G_2|&\leq |A|\cdot |F_1(t_1)-F_2(t_2)|+|h_1(F_1(t_1),t_1)-h_1(F_2(t_2),t_2)|\\
&\leq |A|\cdot |F_1(t_1)-F_2(t_2)|+\|\nabla h_1\|\sqrt{|F_1(t_1)-F_2(t_2)|^2+|t_1-t_2|^2}\\
&\leq (|A|+3\e^2)|F_1(t_1)-F_2(t_2)|+3\e^2|t_1-t_2|\\
&\leq (|A|+3\e^2)(|F_1(t_1)-F_2(t_1)|+|F_2(t_1)-F_2(t_2)|)+3\e^2|t_1-t_2|\\
&\leq (|A|+3\e^2)(\|F_1-F_2\|_\infty+\Lip(F_2)|t_1-t_2|)+3\e^2|t_1-t_2|\\
&\leq (|A|+3\e^2)(1+\e\cdot\e+3\e^2\cdot\e)\|F_1-F_2\|_\infty,\textrm{ see part 1}\\
&\leq |A|(1+3C_f\e^2)(1+\e^2+3\e^3)\|F_1-F_2\|_\infty\\
&\leq e^{-\chi}(1+3C_f\e^2)(1+\e^2+3\e^3)\|F_1-F_2\|_\infty.
\end{align*}
It follows that for every $\e$ small enough, $\|G_1-G_2\|_\infty<e^{-\chi/2}\|F_1-F_2\|_\infty$.

\medskip
\noindent
{\em Part 3.\/} For all $\e$ small enough, $\|t_1'-t_2'\|_\infty<\sqrt{\e}(\|F_1'-F_2'\|_\infty+\|F_1-F_2\|_\infty^{\b/3})$.

\medskip
\noindent
Differentiating both sides of the defining equation of $t_i$ gives
$$
t_i'\left[B+\frac{\del h_2}{\del x}(F_i\circ t_i,t_i)F_i'\circ t_i+\frac{\del h_2}{\del y}(F_i\circ t_i,t_i)\right]=1.
$$
Taking differences, we obtain after some re-arrangement
$$
\hspace{-3cm}(t_1'-t_2')\left[B+\frac{\del h_2}{\del x}(F_1\circ t_1,t_1)F_1'\circ t_1+\frac{\del h_2}{\del y}(F_1\circ t_1,t_1)\right]=
$$
\begin{align*}
\hspace{2cm}&-t_2'\left[\frac{\del h_2}{\del x}(F_1\circ t_1,t_1)-
\frac{\del h_2}{\del x}(F_2\circ t_2,t_2)\right]F_1'\circ t_1 & =:\mathrm{I}\\
&-t_2'\frac{\del h_2}{\del x}(F_2\circ t_2,t_2)
\left[(F_1'\circ t_1-F_2'\circ t_1)+(F_2'\circ t_1-F_2'\circ t_2)\right] & =:\mathrm{II}\\
&-t_2'\left[\frac{\del h_2}{\del y}(F_1\circ t_1,t_1)-\frac{\del h_2}{\del y}(F_2\circ t_2,t_2)\right] &=:\mathrm{III}
\end{align*}
Since $|B|>e^{\chi}$, $|F_1'|<1$ and $\|\nabla h_2\|<3\e^2$,
$$
\|t_1'-t_2'\|_\infty\leq \frac{1}{e^\chi-6\e^2}\left\|\mathrm{I}+\mathrm{II}+\mathrm{III}\right\|_\infty.
$$

Since $\mathrm{I, II}$ and $\mathrm{III}$ involve partial derivatives of $h_2$ evaluated at $(F_i\circ t_i,t_i)$, we begin by analyzing $\nabla h_2(F_i\circ t_i,t_i)$.  Since
$\Hol_{\b/3}(\nabla h_i)\leq \e$,
\begin{itemize}
\item $\|\nabla h_2(F_1\circ t_1,t_1)-\nabla h_2(F_2\circ t_1,t_1)\|\leq \e\|F_1-F_2\|_\infty^{\b/3}$;
\item $\|\nabla h_2(F_2\circ t_1,t_1)-\nabla h_2(F_2\circ t_2,t_1)\|\leq \e\|t_1-t_2\|^{\b/3}$ (because $\Lip(F_2)<1$);
\item $\|\nabla h_2(F_2\circ t_2,t_1)-\nabla h_2(F_2\circ t_2,t_2)\|
\leq \e\|t_1-t_2\|_\infty^{\b/3}$.
\end{itemize}
By part 1, $\|t_1-t_2\|_\infty\leq \e\|F_1-F_2\|_\infty$. It follows that
$$
\|\nabla h_2(F_1\circ t_1,t_1)-\nabla h_2(F_2\circ t_2,t_2)\|<3\e\|F_1-F_2\|_\infty^{\b/3}.
$$
Using the facts that $|t_1'|<1$,
$|F_1'|<1$, $\Lip(F_2)<1$, and $\Hol_{\b/3}(F_2')<1$ (see the definition of admissible manifolds and the proof of Proposition \ref{Prop_Graph_Transform}), we get that
\begin{align*}
|\mathrm I|&\leq 3\e\|F_1-F_2\|_\infty^{\b/3};\\
|\mathrm{II}| &\leq 3\e^2\bigl(\|F_1'-F_2'\|_\infty+\|t_1-t_2\|_\infty^{\b/3}\bigr)\leq 3\e^2\|F_1'-F_2'\|_\infty+3\e^2\|F_1-F_2\|_\infty^{\b/3};\\
|\mathrm{III}|&\leq 3\e\|F_1-F_2\|_\infty^{\b/3}.
\end{align*}
So for all $\e$ sufficiently small,  $\|t_1'-t_2'\|_\infty<\sqrt{\e}\bigl(\|F_1'-F_2'\|_\infty+\|F_1-F_2\|_\infty^{\b/3}\bigr)$.

\medskip
\noindent
{\em Part 4.\/} $\|G_1'-G_2'\|_\infty<e^{-\chi/2}(\|F_1'-F_2'\|_\infty+\|F_1-F_2\|_\infty^{\b/3})$.

\medskip
By the definition of $G_i$, $G_i'=t_i'[A F_i'\circ t_i+\frac{\del h_1}{\del x}(F_i\circ t_i,t_i)F_i'\circ t_i+\frac{\del h_1}{\del y}(F_i\circ t_i,t_i)]$.
Taking differences, we see that
\begin{align*}
|G_1'-G_2'|&\leq |t_1'-t_2'|\cdot \left|A F_1'\circ t_1+\frac{\del h_1}{\del x}(F_1\circ t_1,t_1)F_1'\circ t_1+\frac{\del h_1}{\del y}(F_1\circ t_1,t_1)\right|& =:\mathrm{I'}\\
&\hspace{1cm}+ |t_2'|\cdot |A|\cdot\bigl(
\left|F_1'\circ t_1-F_2'\circ t_1\right|+\left|F_2'\circ t_1-F_2'\circ t_2\right|\bigr)& =:\mathrm{II'}\\
&\hspace{1cm}+|t_2'|\left|\frac{\del h_1}{\del x}(F_1\circ t_1,t_1)-\frac{\del h_1}{\del x}(F_2\circ t_2,t_2)\right| |F_1'\circ t_1|& =:\mathrm{III'}\\
&\hspace{1cm}+|t_2'|\left|\frac{\del h_1}{\del x}(F_2\circ t_2,t_2)
\right| |F_1'\circ t_1-F_2'\circ t_2|& =:\mathrm{IV'}\\
&\hspace{1cm}+|t_2'|\left|
\frac{\del h_1}{\del y}(F_1\circ t_1,t_1)-\frac{\del h_1}{\del y}(F_2\circ t_2,t_2)
\right|& =:\mathrm{V'}
\end{align*}
Using the same arguments that we used in part 3, one can show that
\begin{align*}
\mathrm{I'}&\leq \|t_1'-t_2'\|_\infty(e^{-\chi}+6\e^2)< \sqrt{\e}(\|F_1'-F_2'\|_\infty+\|F_1-F_2\|_\infty^{\b/3})\\
\mathrm{II'}&\leq e^{-\chi}(\|F_1'-F_2'\|_\infty+\|t_1-t_2\|_\infty^{\b/3})\leq e^{-\chi}(\|F_1'-F_2'\|_\infty+\|F_1-F_2\|_\infty^{\b/3})\ (\textrm{part 1})\\
\mathrm{III'}&\leq 3\e\|F_1-F_2\|_\infty^{\b/3}\ (\textrm{see the estimate of I in part 3})\\
\mathrm{IV'}&\leq  3\e^2\|F_1'-F_2'\|_\infty+3\e^3\|F_1-F_2\|_\infty^{\b/3}\ (\textrm{see the estimate of II in part 3})\\
\mathrm{V'}&\leq 3\e\|F_1-F_2\|_\infty^{\b/3} \ (\textrm{see the estimate of III in part 3}).
\end{align*}
It follows that
$\|G_1'-G_2'\|_\infty<(e^{-\chi}+10\e+\sqrt{\e})(\|F_1'-F_2'\|_\infty+\|F_1-F_2\|_\infty^{\b/3})$. If $\e$ is small enough, then
$\|G_1'-G_2'\|_\infty<e^{-\chi/2}(\|F_1'-F_2'\|_\infty+\|F_1-F_2\|_\infty^{\b/3})$.
\hfill$\Box$

\medskip
\noindent
{\bf Proof of Proposition \ref{Prop_Staying_In_Windows}}\label{AppendixSW} The following proof is based on   \cite[Chapter 7]{BP}.

Suppose $V^s$ is an $s$--admissible manifold in $\Psi_x^{p^u,p^s}$ which stays in windows, then there is a positive chain $(\Psi_{x_i}^{p^u_i,p^s_i})_{i\geq 0}$ s.t. $\Psi_{x_0}^{p^u_0,p^s_0}=\Psi_{x}^{p^u,p^s}$, and there are $s$--admissible manifolds $W^s_i$ in $\Psi_{x_i}^{p^u_i,p^s_i}$ s.t. $f^i(V^s)\subset W^s_i$ for all $i\geq 0$. We write
\begin{itemize}
\item $V^s=\Psi_x\{(t,F_0(t)):|t|\leq p^s\}$,
\item $W^s_i=\Psi_{x_i}\{(t,F_i(t)):|t|\leq p^s_i\}$,
\item $\eta_i:=p^u_i\wedge p^s_i$.
\end{itemize}
Admissibility means that
$\|F'_i\|_{\b/3}\leq \frac{1}{2}$, $|F'_i(0)|\leq \frac{1}{2}\eta_i^{\b/3}$ and  $|F_i(0)|\leq 10^{-3}\eta_i$. By Lemma \ref{Lemma_Subordinated_Tempered}, $e^{-\e}\leq \eta_i/\eta_{i+1}\leq e^\e$. By (\ref{Lip(F)}), $\Lip(F_i)< \e$.

\medskip
\noindent
{\em Part 1.\/} If $\e$ is so small that $e^{-\chi}+4\e^2<e^{-\chi/2}$, then  for every $y,z\in V^s$, $d(f^k(y),f^k(z))\leq 6 p_0^s e^{-\frac{1}{2}k\chi}$ for all $k\geq 0$.

\medskip
\noindent
{\em Proof.\/}
Since $V^s$ stays in windows, $f^k(V^s)\subset\Psi_{x_k}[R_{Q_\e(x_k)}(\un{0})]$ for all $k\geq 0$. Therefore, for any $y,z\in V^s$, one can write $f^k(y)=\Psi_{x_k}(\un{y}_k)$ and $f^k(z)=\Psi_{x_k}(\un{z}_k)$, where
$\un{y}_k=(y_k,F_k(y_k)), \un{z}_k=(z_k,F_k(z_k))$ belong to $R_{Q_\e(x_k)}(\un{0})$.

For every $k$, $\un{y}_{k+1}=f_{x_k x_{k+1}}(\un{y}_k)$ and
$\un{z}_{k+1}=f_{x_k x_{k+1}}(\un{z}_k)$, where $f_{x_k x_{k+1}}:=\Psi_{x_{k+1}}^{-1}\circ f\circ\Psi_{x_k}$. By (\ref{offenbach}),
$$
f_{x_k x_{k+1}}(v,w)=(A_k v+h_1(v,w), B_k w+h_2(v,w))\textrm{ on  $R_{Q_\e(x_k)}(\un{0})$},
$$
where $C_f^{-1}<|A_k|<e^{-\chi}$, $e^{\chi}<|B_k|<C_f$, and $\max\|\nabla h_i\|<3\e^2$. Thus
\begin{align*}
|y_{k+1}-z_{k+1}|&\leq |A_k|\cdot|y_k-z_k|+3\e^2\bigl(|y_k-z_k|+\Lip(F_k)|y_k-z_k|\bigr)\\
&\leq (e^{-\chi}+4\e^2)|y_k-z_k|<e^{-\frac{1}{2}\chi}|y_k-z_k|\leq \cdots\leq e^{-\frac{1}{2}(k+1)\chi}|y_0-z_0|.
\end{align*}
Since $\un{y}_0,\un{z}_0$ are on the graph of an $s$--admissible manifold in $\Psi_{x_0}^{p^u_0,p^s_0}$, their $x$--coordinates are in $[-p^s_0,p^s_0]$, so $|y_0-z_0|\leq 2p^s_0$. Thus $|y_k-z_k|\leq 2 e^{-\frac{1}{2}k\chi} p^s_0$.
Since $\un{y}_k=(y_k,F_k(y_k))$, $\un{z}_k=(z_k,F_k(z_k))$, and $\Lip(F_k)<\e$,
$\|\un{y}_k-\un{z}_k\|<3 p^s_0 e^{-\frac{1}{2}k\chi}$.

Pesin charts have Lipschitz constant less than two, so  $d(f^k(y),f^k(z))<6 p^s_0 e^{-\frac{1}{2}k\chi}$.

\medskip
\noindent
{\em Part 2.\/} Suppose $\e$ is so small that $e^{-\chi}+3\e^2+3\e^3<e^{-\frac{2}{3}\chi}$ and $C_f\e+3\e^2<1$. For every $y\in V^s$, let $\un{e}^s(y)$ denote the positively oriented unit tangent vector to $V^s$ at $y$. If $y\in V^s$,  then $\|df_y^k\un{e}^s(y)\|\leq 6e^{-\frac{2}{3}k\chi}\|C_\chi(x_0)^{-1}\|$  for all $k\geq 0$.

\medskip
\noindent
{\em Proof.\/}
If $y\in V^s$, then $f^k(y)\in W^s_k\subset\Psi_{x_k}[R_{Q_\e(x_k)}(\un{0})]$. So $df_y^k\un{e}^s(y)=(d\Psi_{x_k})_{\un{y}_k}{a_k\choose b_k}$ where ${a_k\choose b_k}$ is tangent to the graph of $F_k$. Since $\Lip(F_k)<\e$,
$
|b_k|\leq \e|a_k|
$
for all $k$.
The identity ${a_{k+1}\choose b_{k+1}}=(df_{x_k x_{k+1}})_{\un{y}_k}{a_k\choose b_k}$ holds. Since $\|\nabla h_i\|\leq 3\e^2$,
$$
{a_{k+1}\choose b_{k+1}}=\left(
\begin{array}{cc}
A_k+\frac{\del h_1}{\del x}(\un{y}_k) & \frac{\del h_1}{\del y}(\un{y}_k)\\
\frac{\del h_2}{\del x}(\un{y}_k) & B_k+\frac{\del h_2}{\del y}(\un{y}_k)
\end{array}
\right){{a_k}\choose{b_k}}={{(A_k\pm 3\e^2) a_k\pm 3\e^2|b_k|}\choose{(B_k\pm 3\e^2) b_k\pm 3\e^2 |a_k|}}.
$$
It follows that $|a_{k+1}|\leq (|A_k|+3\e^2+3\e^3)|a_k|$. By the bounds on $A_k$ and $B_k$ and the assumption on $\e$,
$$
|a_k|\leq e^{-\frac{2}{3}k\chi} |a_0|\textrm{ and }|b_k|\leq \e|a_k|\leq e^{-\frac{2}{3}k\chi}|a_0|.
$$
Returning to the defining relation $df_y^k\un{e}^s(y)=(d\Psi_{x_k})_{\un{y}_k}{a_k\choose b_k}$, and recalling that $\|d\Psi_{x_k}\|\leq 2$ (Theorem \ref{Theorem_OP_charts}),  we see that $\|df_y^k\un{e}^s(y)\|\leq 2\sqrt{2} e^{-\frac{2}{3}k\chi}|a_0|$.

Since  ${a_0\choose b_0}=(d\Psi_{x_0})_{\un{y}_0}^{-1}\un{e}^s(y)$,
$|a_0|\leq \|d\Psi_{x_0}^{-1}\|$, so $\|df_y^k\un{e}^s(y)\|\leq 2\sqrt{2} e^{-\frac{2}{3}k\chi}\|d\Psi_{x_0}^{-1}\|$.

 For every $x$, $\|d\Psi_x^{-1}\|\leq 2\|C_\chi(x)^{-1}\|$ because $C_\chi(x)^{-1}$ maps $B_{2Q_\e(x)}^{x}(\un{0})$ into
$B_{2\e^{3/\b}}(\un{0})\subset B_{2\e}(\un{0})\subset B_{\rho(M)}(\un{0})$, provided  $\e<\frac{1}{2}\rho(M)$, and by the definition  $\rho(M)$ is so small that $\|(d\exp_x^{-1})_y\|\leq 2$ for all $x\in M$ and $y\in B_{\rho(M)}(\un{0})$.

 It follows that $\|df_y^k\un{e}^s(y)\|\leq 6\|C_\chi(x_0)^{-1}\| e^{-\frac{2}{3}k\chi}$.

%
%

\medskip
\noindent
{\em Part 3.\/} The following holds for all $\e$ small enough: for all $y,z\in V^s$ and $n\geq 0$,
$\bigl|\log\|df^n_y \un{e}^s(y)\|-\log\|df^n_z \un{e}^s(z)\|\bigr|\leq Q_\e(x_0)^{\b/4}$.

\medskip
\noindent
{\em Proof.\/}
Call the quantity to be estimated $A$. For every $p\in V^s$,
\begin{align*}
df^n_p[\un{e}^s(p)]&=df^{n-1}_{f(p)}[df_p\un{e}^s(p)]=\pm\|df_p\un{e}^s(p)\|\cdot df_{f(p)}^{n-1}[\un{e}^s(f(p))]\\
&=\cdots =\pm\prod_{k=0}^{n-1}\|df_{f^k(p)}\un{e}^s(f^{k}(p))\|\cdot \un{e}^s(f^n(p)).
\end{align*}
Thus
$
A:=\left|\log\frac{\|df_y^n\un{e}^s(y)\|}{\|df_z^n\un{e}^s(z)\|}\right|\leq \sum\limits_{k=0}^{n-1}\left|\log\|df_{f^k(y)}\un{e}^s(f^k(y))\|-\log\|df_{f^k(z)}\un{e}^s(f^k(z))\|\right|.
$
We shall estimate the sum term-by-term, using the H\"older continuity of $df$.

In section \ref{SectionOC}
 we  covered $M$ by a finite collection $\mathfs D$ of open sets $D$, equipped with a smooth map $\Theta_D:TD\to \R^2$ s.t. $\Theta_D|_{T_x M}: T_x M\to\R^2$ is an isometry, and $\vartheta_x:=\Theta_D^{-1}|_{\R^2}:\R^2\to TD$ has the property that
$
(x,\un{v})\mapsto \vartheta_x(\un{v})
$
is Lipschitz on $D\x B_{1}(\un{0})$.
Since $f$ is a $C^{1+\b}$--diffeomorphism and $M$ is compact, $df_p[\un{v}]$  depends in a $\b$--H\"older way on $p$, and  in a Lipschitz way on $\un{v}$. It follows that   there exists a constant $H_0>1$ s.t. for every $D\in\mathfs D$, for every $y,z\in D$, and for every $\un{u},\un{v}\in\R^2$ of length one,
$
\biggl|\log\|df_y(\vartheta_y(\un{u}))\|-\log\|df_z(\vartheta_z(\un{v}))\|\biggr|<H_0\bigl(d(y,z)^\b+\|\un{u}-\un{v}\|\bigr)
$.

Choose $D_k\in\mathfs D$ s.t. $D_k\owns f^k(y),f^k(z)$. Such sets exist provided $\e$ is much smaller than the Lebesgue number of $\mathfs D$, because by part 1 $d(f^k(y),f^k(z))<6\e$.
Writing $\id=\Theta_{D_k}\circ\vartheta_{f^k(y)}$ and  $\id=\Theta_{D_k}\circ\vartheta_{f^k(z)}$, we see that
\begin{align}
A&\leq  \sum\limits_{k=0}^{n-1}\left|\log\|df_{f^k(y)}\vartheta_{f^k(y)}\Theta_{D_k}\un{e}^s(f^k(y))\|-
\log\|df_{f^k(z)}\vartheta_{f^k(z)}\Theta_{D_k}\un{e}^s(f^k(z))\|\right|\notag\\
&\leq \sum_{k=0}^{n-1}H_0\left(d(f^k(y),f^k(z))^\b+
\|\Theta_{D_k}\un{e}^s(f^k(y))-\Theta_{D_k}\un{e}^s(f^k(z))\|\right)\notag
\\
&\leq
\frac{H_0 (6p_0^s)^\b}{1-e^{-\frac{1}{2}\b\chi}}
+H_0\sum_{k=0}^{n-1}
\|\Theta_{D_k}\un{e}^s(f^k(y))-\Theta_{D_k}\un{e}^s(f^k(z))\|, \textrm{ by part 1.}\label{Plug_N_Here}
\end{align}

We estimate $N_k:=\|\Theta_{D_k}\un{e}^s(f^k(y))-\Theta_{D_k}\un{e}^s(f^k(z))\|$.
By definition, $\un{e}^s(f^k(y))$ and $\un{e}^s(f^k(z))$  are the positively oriented unit tangent vectors to $f^k(V^s)\subset W^s_k$, at $f^k(y)$ and $f^k(z)$. Defining $\un{y}_k$ and $\un{z}_k$ as before,  we obtain
 $$
 \un{e}^s(f^k(y))=\frac{(d\Psi_{x_k})_{\un{y}_k}{{1}\choose F_k'(y_k)}}{\|(d\Psi_{x_k})_{\un{y}_k}{{1}\choose F_k'(y_k)}\|}\textrm{ , }
  \un{e}^s(f^k(z))=\frac{(d\Psi_{x_k})_{\un{z}_k}{{1}\choose F_k'(z_k)}}{\|(d\Psi_{x_k})_{\un{z}_k}{{1}\choose F_k'(z_k)}\|}.
 $$
 We saw in part 1 that
$\|(d\Psi_{x_k})^{-1}_{\un{y}_k}\|$ and $\|(d\Psi_{x_k})^{-1}_{\un{z}_k}\|$ are bounded by $2\|C_{\chi}(x_k)^{-1}\|$, so the
 denominators are bounded below by $\frac{1}{2}\|C_\chi(x_k)^{-1}\|^{-1}$. Since for any two non-zero vectors $\un{v}, \un{u}$,
 $\bigl\|\un{v}/\|\un{v}\|-\un{u}/\|\un{u}\|\bigr\|<2\|\un{v}-\un{u}\|/\|\un{v}\|,
 $
 \begin{align*}
 N_k&\leq 2\|C_\chi(x_k)^{-1}\|\cdot
 \left\|
 \Theta_{D_k}(d\Psi_{x_k})_{\un{y}_k}{{1}\choose F_k'(y_k)}-\Theta_{D_k}(d\Psi_{x_k})_{\un{z}_k}{{1}\choose F_k'(z_k)}
 \right\|.\notag\\
  \end{align*}

On $D_k$ we can write  $\Psi_{x_k}=\exp_{x_k}\circ\vartheta_{x_k}\circ C_{x_k}$, where $\vartheta_{x_k}\circ C_{x_k}=C_\chi(x_k)$. Let
 $$
 \un{u}_k:=C_{\chi}(x_k)\un{y}_k, \un{u}_k':=C_{\chi}(x_k)\un{z}_k,\textrm{ and }\un{v}_k:=C_{x_k}{1\choose F_k'(y_k)},
 \un{v}_k':=C_{x_k}{1\choose F_k'(z_k)},
 $$
 then $
 N_k\leq 2\|C_\chi(x_k)^{-1}\|\cdot
 \left\|
 \Theta_{D_k}(d\exp_{x_k})_{\un{u}_k}[\vartheta_{x_k}(\un{v}_k)]-
 \Theta_{D_k}(d\exp_{x_k})_{\un{u}_k'}[\vartheta_{x_k}(\un{v}_k')] \right\|.
$
Since $\Theta_D, \vartheta_{x_k}$ are isometries,  $C_{x_k}$ are contractions, $\|(d\exp_{x_k})_{\un{u}_k}\|\leq 2$, and
$|F_k'(y_k)-F_k'(z_k)|\leq \frac{1}{2}|y_k-z_k|^{\b/3}$,
\begin{align*}
N_k&\leq 2\|C_\chi(x_k)^{-1}\|\cdot\left\|
\Theta_{D_k}(d\exp_{x_k})_{\un{u}_k}[\vartheta_{x_k}(\un{v}_k)]-
 \Theta_{D_k}(d\exp_{x_k})_{\un{u}_k}[\vartheta_{x_k}(\un{v}_k')]
\right\|+\\
&\hspace{1cm}+
2\|C_\chi(x_k)^{-1}\|\cdot\left\|
\Theta_{D_k}(d\exp_{x_k})_{\un{u}_k}[\vartheta_{x_k}(\un{v}_k')]-
 \Theta_{D_k}(d\exp_{x_k})_{\un{u}_k'}[\vartheta_{x_k}(\un{v}_k')]
\right\|\\
&\leq 2\|C_\chi(x_k)^{-1}\|\cdot |y_k-z_k|^{\b/3}+\\
&\hspace{1cm}+2\|C_\chi(x_k)^{-1}\|\cdot
\left\|
\Theta_{D_k}(d\exp_{x_k})_{\un{u}_k}[\vartheta_{x_k}(\un{v}_k')]-
 \Theta_{D_k}(d\exp_{x_k})_{\un{u}_k'}[\vartheta_{x_k}(\un{v}_k')]
\right\|.
\end{align*}

We study this expression. In what follows we identify the differential of a linear map with the map itself.

By construction, the map
$(x,\un{u},\un{v})\mapsto \left[\Theta_D\circ (d\exp_x)_{\un{u}}\right][\vartheta_x(\un{v})]$ is smooth on $D\x B_2(\un{0})\x B_2(\un{0})$ for every $D\in\mathfs D$. Therefore there exists a constant $E_0>1$   s.t. for every $(x,\un{u}_i,\un{v}_i)\in D\x B_2(\un{0})\x B_2(\un{0})$ and every $D\in \mathfs D$,
$$
\|\Theta_D(d\exp_x)_{\un{u}_1}[\vartheta_x(\un{v}_1)]-\Theta_D(d\exp_x)_{\un{u}_2}[\vartheta_x(\un{v}_2)]\|\leq E_0\bigl(\|\un{u}_1-\un{u}_2\|+
\|\un{v}_1-\un{v}_2\|\bigr).
$$
It follows that
\begin{align*}
N_k&\leq 2\|C_\chi(x_k)^{-1}\|\cdot \left( |y_k-z_k|^{\b/3}+E_0\left(\|\un{u}_k-\un{u}_k'\|+\|\un{v}_k-\un{v}_k'\|\right)\right)\\
&\leq 2\|C_\chi(x_k)^{-1}\|\cdot\left(|y_k-z_k|^{\b/3}+E_0\left(\|\un{y}_k-\un{z}_k\|+|y_k-z_k|^{\b/3}\right)\right)\\
&\leq 6E_0\|C_\chi(x_k)^{-1}\|\|\un{y}_k-\un{z}_k\|^{\b/3}\ \ (\because E_0>1)\\
&\leq 6E_0\|C_\chi(x_k)^{-1}\|(3p^s_0)^{\b/3} e^{-\frac{1}{6}\b\chi k}\ \ \textrm{because $\|\un{y}_k-\un{z}_k\|<3p_0^s e^{-\frac{1}{2}k\chi}$ (part 1)}\\
&\leq 9 E_0 \|C_\chi(x_k)^{-1}\|(p^s_0)^{\b/3} e^{-\frac{1}{6}\b\chi k}.
\end{align*}

By the definition of $Q_\e(\cdot)$,  $\|C_\chi(x_k)^{-1}\|\leq
\e^{1/4}Q_\e(x_k)^{-\b/12}\leq \e^{1/4}(p^s_k)^{-\b/12}$, and therefore
$
N_k\leq 9\e^{1/4} E_0 (p^s_k)^{-\b/12}(p^s_0)^{\b/3}e^{-\frac{1}{6}\b\chi k}.
$
Since $(\Psi_{x_i}^{p^u_i,p^s_i})_{i\in\Z}$ is a chain, $p^s_i=\min\{e^\e p^s_{i+1},Q_\e(x_i)\}\leq e^\e p^s_{i+1}$ for all $i$, whence $p^s_0\leq e^{k\e} p^s_k$.  It follows  that for all  $\e$ small enough,
\begin{equation}\label{N}
N_k\leq 9\e^{1/4}E_0 (p_0^s)^{\b/4}\exp[-\tfrac{1}{7}\b\chi k].
\end{equation}
Plugging this  in (\ref{Plug_N_Here}), we obtain
\begin{align*}
\left|\log\frac{\|df_y^n\un{e}^s(y)\|}{\|df_z^n\un{e}^s(z)\|}\right|&\leq \left(\frac{6^\b H_0(p_0^s)^{3\b/4}}{1-e^{-\frac{1}{2}\b\chi}}+\frac{9e^{1/4}E_0 H_0}{1-e^{-\frac{1}{7}\b\chi }}\right)(p_0^s)^{\b/4}\\
&< \left(\frac{9\e^{3\b/4}E_0 H_0}{1-e^{-\frac{1}{7}\b\chi}}\right)Q_\e(x_0)^{\b/4}.
\end{align*}
The term in the brackets is less than one for every $\e$ small enough. How small depends only on $M$ (through $E_0$), $f$ (through $H_0$ and $\b$), and $\chi$.
\hfill$\Box$

\medskip
\noindent
{\bf Proof of Proposition \ref{Prop_Uniqueness}}
We continue to use the notation of the previous proof.

Assume that $V^s\cap U^s\neq \emptyset$. We show that  $V^s\subseteq U^s$ or $U^s\subseteq V^s$.

Since $V^s$ stays in windows, there is a positive chain $(\Psi_{x_i}^{p^u_i,p^u_i})_{i\geq 0}$ such that
$\Psi_{x_i}^{p^u_i,p^s_i}=\Psi_x^{p^u,p^s}$ and such that for all $i\geq 0$, $f^i(V^s)\subset W^s_i$ where $W^s_i$ is an $s$--admissible manifold in $\Psi_{x_i}^{p^u_i,p^s_i}$.

\medskip
\noindent
{\em Claim 1.\/}
The following holds for all $\e$ small enough.
 $f^n(V^s)\subseteq \Psi_{x_n}[R_{\frac{1}{2}Q_\e(x_n)}(\un{0})]$ for all $n$ large enough.

\medskip
\noindent
{\em Proof.\/} Suppose $y\in V^s$, and write as in part 1 of the previous proof,
$f^n(y)=\Psi_{x_n}(\un{y}_n)$ where $\un{y}_n=(y_n,F_n(y_n))$ and $F_n$ is the function which represents $W^s_n$ in $\Psi_{x_n}$. We have $\un{y}_{n+1}=f_{x_n x_{n+1}}(\un{y}_n)$, which implies in the notation of the previous proof that if $\e$ is small enough, then
\begin{align*}
|y_{n+1}|&\leq |A_n|\cdot|y_n|+|h_1(\un{y}_n)|\leq |A_n|\cdot|y_n|+|h_1(\un{0})|+\|\nabla h_1\|(|y_n|+|F_n(y_n)|)\\
&< e^{-\chi}|y_n|+\e \eta_n+3\e^2(|y_n|+p^s_n)<(e^{-\chi}+3\e^2)|y_n|+2\e p^s_n\\
&<(e^{-\chi}+3\e^2)|y_n|+2\e \min\{ e^\e p^s_{n+1},Q_\e(x_n)\}\\
&<(e^{-\chi}+3\e^2)|y_n|+2e^\e \e p^s_{n+1}<e^{-\chi/2}|y_n|+4\e p^s_{n+1}.
\end{align*}

We see that $|y_n|\leq a_n$ where $a_n$ is defined by induction by
$$
a_0:=Q_\e(x_0)\textrm{ and }a_{n+1}=e^{-\chi/2}a_n+4\e p^s_{n+1}.
$$

We claim that if $\e$ is small enough, then $a_n<\frac{1}{4}p^s_n$ for some $n$.
Otherwise, $p^s_n\leq 4 a_n$ for all $n$, whence
$a_{n+1}\leq (e^{-\chi/2}+16\e) a_n$ for all $n$, which implies that
$
a_n<(e^{-\frac{1}{2}\chi}+16\e)^n a_0.
$
But by assumption,  $a_n\geq \frac{1}{4}p^s_n\geq \frac{1}{4}(p^u_n\wedge p^s_n)\geq \frac{1}{4}e^{-\e n}(p^u_0\wedge p^s_0)$ (Lemma \ref{Lemma_Subordinated_Tempered}), so necessarily
$
e^{-\e}\leq e^{-\chi/2}+16\e.
$
If $\e$ is small enough, this is false and we obtain a contradiction. It follows that $\exists n$ s.t.  $a_n<\frac{1}{4}p^s_n$.

It is clear from the definition of $a_n$, that  if $\e$ is small enough then
$a_n<\frac{1}{4}p^s_n\Longrightarrow a_{n+1}<\frac{1}{4}p^s_{n+1}$. Thus $a_n<\frac{1}{4}p^s_n$ for all $n$ large enough.

In particular, $|y_n|<\frac{1}{4}Q_\e(x_n)$ for all $n$ large enough. Since $\un{y}_n=(y_n,F_n(y_n))$ and
$
|F_n(y_n)|\leq |F_n(0)|+\Lip(F_n)|y_n|<(10^{-3}+\e)Q_\e(x_n),
$
$\|\un{y}_n\|<\frac{1}{2}Q_\e(x_n)$ for all $n$ large enough.

\medskip
\noindent
{\em Claim 2.\/}
The following holds for all $\e$ small enough:
 $f^n(U^s)\subseteq \Psi_{x_n}[R_{Q_\e(x_n)}(\un{0})]$ for all $n$ large enough.

\medskip
\noindent
{\em Proof.\/} $U^s$ stays in windows, so there exists a positive chain $\{\Psi_{y_i}^{q^u_i,q^s_i}\}_{i\geq 0}$ such that $\Psi_{y_0}^{q^u_0,q^s_0}=\Psi_y^{q^u,q^s}$ and such that for all $i\geq 0$, $f^i(U^s)$ is a subset of an $s$--admissible manifold in $\Psi_{y_i}^{q^u_i,q^s_i}$.

Let $z$ be a point in $U^s\cap V^s$. By Part 1 of Theorem \ref{Prop_Staying_In_Windows},  for any $w\in U^s$ $d(f^n(z),f^n(w))\leq 6 q^s_0 e^{-\frac{1}{2}n\chi}$. Therefore
$f^n(z), f^n(w)\in B_{Q_\e(x_n)+6q^s_0}(x_n)\subset B_{7\e}(x_n)$.
If $\e<\frac{1}{7}\rho(M)$ (cf. \S\ref{SectionPC}), then
$
\|\exp_{x_n}^{-1}[f^n(z)]-\exp_{x_n}^{-1}[f^n(w)]\|<12e^{-\frac{1}{2}n\chi} q^s_0,
$
so
$$
\left\|\Psi_{x_n}^{-1}[f^n(z)]-\Psi_{x_n}^{-1}[f^n(w)]\right\|<\|C_\chi(x_n)^{-1}\|\cdot 12 e^{-\frac{1}{2}n\chi}q^s_0.
$$
Since $p^s_n\leq Q_\e(x_n)\ll \|C_\chi(x_n)^{-1}\|^{-1}$,
$
\left\|\Psi_{x_n}^{-1}[f^n(z)]-\Psi_{x_n}^{-1}[f^n(w)]\right\|
\leq 12(p_n^s)^{-1}q^s_0e^{-\frac{1}{2}n\chi}.
$

Since $\{\Psi_{x_i}^{p^u_i,p^s_i}\}_{i\in\Z}$ is a chain, $p^s_i=\max\{e^\e p^s_{i+1},Q_\e(x_i)\}\leq e^\e p^s_{i+1}$ for all $i$. It follows that  $p^s_0\leq e^{n\e}p^s_n$, whence
\begin{align*}
\left\|\Psi_{x_n}^{-1}[f^n(z)]-\Psi_{x_n}^{-1}[f^n(w)]\right\|<12\left(\frac{q^s_0}{p^s_0}\right) e^{-\frac{1}{2}n\chi+n\e}\xrightarrow[n\to\infty]{}0\textrm{ exponentially fast}.
\end{align*}
Since  $Q_\e(x_n)\geq (p^u_n\wedge p^s_n)\geq e^{-\e n}(p^u_0\wedge p^s_0)$, for all $n$ large enough
\begin{align*}
\left\|\Psi_{x_n}^{-1}[f^n(z)]-\Psi_{x_n}^{-1}[f^n(w)]\right\|<\frac{1}{2}Q_\e(x_n).
\end{align*}
How large depends only on $(p^s_0,p^u_0)$ and $q^s_0$.

Since, by claim 1, $\|\Psi_{x_n}^{-1}(f^n(z))\|<\frac{1}{2}Q_\e(x_n)$ for all $n$ large enough, we have that $\|\Psi_{x_n}^{-1}(f^n(w))\|<Q_\e(x_n)$ for all $n$ large enough.
All the estimates are uniform in $w\in U^s$, so the claim is proved.

\medskip
\noindent
{\em Claim 3.\/} Recall that $V^s$ is $s$--admissible in $\Psi_x^{p^u,p^s}$ and $U^s$ is $s$--admissible in $\Psi_y^{q^u,q^s}$. If $p^s\leq q^s$ then $V^s\subseteq U^s$, and if $q^s\leq p^s$ then $U^s\subseteq V^s$.

\medskip
\noindent
{\em Proof.\/}
W.l.o.g.  $p^s\leq q^s$.
Pick $n_0$ s.t. $f^n(U^s), f^n(V^s)\subset\Psi_{x_n}[R_{Q_\e(x_n)}(\un{0})]$ for all $n\geq n_0$, then
$
f^{n_0}(V^s), f^{n_0}(U^s)\subset W^s:=V^s[(\Psi_{x_i}^{p^u_i,p^s_i})_{i\geq n_0}]
$
(Proposition \ref{Prop_V} (4)).

Let $G$ denote the function which represents $W^s$ in $\Psi_{x_{n_0}}$, then
 $\Psi_{x_n}^{-1}[f^n(U^s)]$ and  $\Psi_{x_n}^{-1}[f^n(V^s)]$ are two connected subsets of $\graph(G)$. Write
 \begin{align*}
 f^n(V^s)&=\Psi_{x_n}\{(t,G(t)):t\in [\a,\b]\},\\
 f^n(U^s)&=\Psi_{x_n}\{(t,G(t)):t\in [\a',\b']\}.
 \end{align*}
The manifold $f^n(V^s)$ has endpoints $A:=\Psi_{x_n}(\a,G(\a))$, $B:=\Psi_{x_n}(\b,G(\b))$, and the manifold $f^n(U^s)$ has endpoints $A':=\Psi_{x_n}(\a',G(\a'))$, $B':=\Psi_{x_n}(\b',G(\b'))$.

Since $V^s$ and $U^s$ intersect, $f^n(V^s)$ and $f^n(U^s)$ intersect. Consequently,   $[\a,\b]$ and $[\a',\b']$ overlap. We use the assumption that $p^s\leq q^s$ to show that $[\a,\b]\subseteq [\a',\b']$.

Otherwise $\a<\a'$ or $\b>\b'$. Assume by contradiction that $\a<\a'$. Then  $A'$ is in the relative interior
of $f^n(V^s)$. Since $f$ is a homeomorphism, $f^{-n}(A')$  is in the relative interior of $V^s$. Since $f^{-n}(A')$ is an endpoint of $U^s$, we obtain that $U^s$ has an endpoint at the relative interior of $V^s$.

We now use the assumption that $x=y$, and view $V^s$ and $U^s$ as sub-manifolds of the chart $\Psi_x$.
The endpoints of $U^s$ have $s$--coordinates equal in absolute value to $q^s$, and the points on $V^s$ have $s$--coordinates in $[-p^s,p^s]$. It follows that $q^s<p^s$, in contradiction to our assumption. The contradiction shows that $\a\geq \a'$.  Similarly one shows that $\b\leq \b'$, with the conclusion that $[\a,\b]\subset [\a',\b']$. It follows that  $f^n(V^s)\subseteq f^n(U^s)$, whence $V^s\subseteq U^s$.\hfill$\Box$

\section*{Acknowledgements}
The author would like to thank J. Buzzi, A. Katok, F. Ledrappier, and  M. Pollicott for useful discussions.

\end{document}